\g@addto@macro \normalsize {%
	\setlength\abovedisplayskip{5pt plus 2pt minus 2pt}%
	\setlength\belowdisplayskip{5pt plus 2pt minus 2pt}%
}
\newtheorem{lemma}{\textbf{Lemma}}[section]
\theoremstyle{definition}
\newtheorem{assumption}{\textbf{Assumption}}
\newtheorem{theorem}{\textbf{Theorem}}[section]
\newtheorem{remark}{Remark}[section]
\theoremstyle{plain}
\begin{document}

\title{Penalized Sieve GEL for Weighted Average Derivatives of Nonparametric
Quantile IV Regressions\thanks{%
We appreciate discussions with Roger Koenker about quantiles and other
interesting topics. Roger's creativity, curiosity and kindness have inspired
us all these years. We thank guest editors and anonymous referees for their patience and helpful comments. Any errors are the responsibility of the authors. First version: August 2017.}}
\author{Xiaohong Chen\thanks{%
Tel.: +1 203 432 5852; \textit{Email}: xiaohong.chen@yale.edu.} , Demian
Pouzo\thanks{%
Corresponding author. Tel.: +1 510 642 6709. \textit{Email}: dpouzo@econ.berkeley.edu.},
 and James
L. Powell \thanks{%
Tel.: +1 510 643 0709. \textit{Email}: powell@econ.berkeley.edu.}}

\bigskip
\maketitle

\begin{abstract}
\singlespacing This paper considers estimation and inference for a
weighted average derivative (WAD) of a nonparametric quantile instrumental
variables regression (NPQIV). NPQIV is a non-separable and
nonlinear ill-posed inverse problem, which might be why there is no
published work on the asymptotic properties of any estimator of its WAD.
We first characterize the semiparametric
efficiency bound for a WAD of a NPQIV, which, unfortunately, depends on an unknown
conditional derivative operator and hence an unknown degree of ill-posedness, making
it difficult to know if the information bound is singular or not. In either case,
we propose a penalized sieve generalized empirical likelihood (GEL) estimation
and inference procedure, which is based on the unconditional WAD moment
restriction and an increasing number of unconditional moments that are implied by the conditional NPQIV restriction, where the unknown quantile function is approximated by a penalized sieve. Under some regularity conditions, we show that the self-normalized penalized sieve GEL estimator of the WAD of a NPQIV is asymptotically standard normal. We also show that the quasi likelihood ratio statistic based on the
penalized sieve GEL criterion is asymptotically chi-square distributed
regardless of whether or not the information bound is singular.

\medskip

\smallskip

\noindent \emph{JEL Classification:} C14; C22

\medskip

\noindent \emph{Keywords:} Nonparametric quantile instrumental variables;
Weighted average derivatives; Penalized sieve generalized empirical
likelihood; Semiparametric efficiency; Chi-square inference.
\end{abstract}

\newpage 

\section{Introduction}

Since the seminal paper by \cite{koenker1978regression}, quantile
regressions and functionals of quantile regressions have been the subjects
of ever-expanding theoretical research and applications in economics,
statistics, biostatistics, finance, and many other science and social
science disciplines. See \cite{koenker2005quantile} and the forthcoming
Handbook of Quantile Regression (2017) for the latest theoretical advances and empirical applications.

The presence of endogenous regressors is common in many empirical
applications of structural models in economics and other social sciences.
The Nonparametric Quantile Instrumental Variable (NPQIV) regression, $%
E[1\{ Y \leq h_{0}(W) \} - \tau|X]=0$, was, to our knowledge, first proposed in
\cite{chernozhukov2005iv} and \cite{CIN2007instrumental}. This model is a
leading important example of nonlinear and non-separable ill-posed inverse
problems in econometrics, which has been an active research topic following the Nonparametric (mean) Instrumental Variables (NPIV) regression, $E[ Y  - h_{0}(W)|X]=0$, studied by \cite{NP2003instrumental}, \cite{hall2005nonparametric}, \cite{BCK2007}, \cite{CFR2007linear}, \cite{darolles2011nonparametric} and others. See, for example, \cite{horowitz2007nonparametric},
\cite{CP2009efficient,CP2012estimation,CP2015sieve}, \cite{gagliardini2012nonparametric}, \cite{CH2013quantilerev}, \cite{CCLN2014local} and others for recent work on the NPQIV and its
various extensions.

In this paper, we consider
estimation and inference for a Weighted
Average Derivative (WAD) functional of a NPQIV. For models without nonparametric
endogeneity, WAD functionals of nonparametric (conditional) mean regression, $E[ Y  - h_{0}(X)|X]=0$, and of quantile regression, $E[1\{ Y \leq h_{0}(X) \} - \tau|X]=0$, have been extensively studied in both statistics and
econometrics. In particular, under some mild regularity conditions, plug-in estimators for WADs of any nonparametric mean and quantile regressions can be shown to be semiparametrically efficient
and root-$n$ asymptotically normal (where $n$ is the sample size). See, for example, \cite{newey1993efficiency},
\cite{newey1994asymptotic}, \cite{NeweyPowell1999}, \cite%
{ackerberg2014asymptotic} and the references therein. Although unknown functions of endogenous
regressors occur frequently in empirical work, due to the ill-posed nature
of NPIV and NPQIV, there is not much research on WAD functionals of NPIV and
NPQIV yet. In fact, even for the simpler NPIV model that is a linear and
separable ill-posed inverse problem, it is still a difficult question whether a
linear functional of a NPIV could be estimated at the root-$n$ rate; see, e.g.,
\cite{SeveriniTripathi2012} and \cite{davezies2015existence}. Although \cite%
{ai2007estimation} provide low-level sufficient conditions for a root-$n$
consistent and asymptotically normal estimator of the WAD of the NPIV model, and \cite%
{ai2012semiparametric} provide a semiparametric efficient estimator of WAD
for that model, to our knowledge, there is no published work on semiparametric efficient estimation
of the WAD for the NPQIV model yet.

We first characterize the semiparametric efficiency bound for the WAD functional of a
NPQIV model. Unfortunately, the bound depends on an unknown conditional derivative
operator and hence an unknown degree of ill-posedness. Therefore, it is difficult to
know if the semiparametric information bound is singular or not. Further, even if a researcher
assumes that the information bound is non-singular and the WAD is root-$n$
consistently estimable, the results in \cite{ai2012semiparametric} and \cite%
{CS2015overidentification} show that a simple plug-in estimator of a WAD
might not be semiparametrically efficient. This is in contrast to the
results of \cite{newey1993efficiency} and \cite{ackerberg2014asymptotic}
who show that plug-in estimators of a WAD of a nonparametric mean and quantile
regression are semiparametrically efficient.

We then propose penalized sieve Generalized Empirical Likelihood
(GEL) estimation of the WAD for the NPQIV model, which is based on the unconditional
WAD moment restriction and an increasing number of unconditional moments implied by the conditional moment restriction of the NPQIV model, where the unknown quantile function is approximated by a flexible penalized sieve.
Under some regularity conditions, we show that the self-normalized penalized sieve GEL estimator of the WAD of a NPQIV is asymptotically standard normal. We also show that the Quasi Likelihood Ratio (QLR) statistic based on the penalized sieve GEL criterion is asymptotically chi-squared distributed
regardless of whether the information bound is singular or not; this can be
used to construct confidence sets for the WAD of NPQIV without the need to
estimate the variance nor the need to know the precise convergence rates of
the WAD estimator.

Our estimation procedure builds upon \cite{DIN03}, who approximate a
conditional moment restriction $E[\rho (Y,\theta_0 )|X]=0$ by an increasing
sequence of unconditional moment restrictions, and then consider estimation of the Euclidean parameter $\theta_0$ (of fixed and finite dimension) and specification
tests based on GEL (and related) procedures. For the same model $E[\rho (Y,\theta_0 )|X]=0$, \cite{kitamura2004empirical}
directly estimate the conditional moment restriction via kernel and then
apply a kernel-based conditional empirical likelihood (EL) to estimate $%
\theta_0$. However, the model considered in these papers does not contain any
unknown functions  (say $h()$) and the residuals $\rho (.,\theta)$ are assumed
to be twice continuously differentiable with respect to $\theta$ at $%
\theta_0 $. For the semiparametric conditional moment restriction $E[\rho (Y,\theta_0,
h_0 (\cdot) )|X]=0$ when the
unknown function $h(\cdot )$ could depend on an endogenous variable, \cite{otsu2011large} and \cite{tao2013empirical} consider a
sieve conditional EL extension of \cite%
{kitamura2004empirical}, and \cite{sueishi2017} provides a sieve unconditional GEL extension of \cite{DIN03}, where the unknown function $h(.)$ is approximated by a finite dimensional linear sieve (series) as in \cite{ai2003efficient}. However, like
\cite{ai2003efficient}, all these papers assume twice continuously differentiable residuals $%
\rho(.,\theta, h(.))$ with respect to $(\theta_0, h_0(.))$, and hence rule out
the NPQIV model.

\cite{ParenteSmith2011gel} study GEL properties for non-smooth residuals $%
g(.,.)$ in the unconditional moment models $E[g(Y,\theta _{0})]=0$, but
require the dimensions of both $g(.,.)$ and $\theta _{0}$ to be fixed and finite.
Finally, \cite{horowitz2007nonparametric}, \cite{gagliardini2012nonparametric}, \cite%
{CP2009efficient,CP2012estimation,CP2015sieve}, and \cite{CNS2015constrained}  do include the NPQIV model, but none of these papers addresses the issues of 
estimation and inference for the WAD of the NPQIV.

The rest of the paper is organized as follows. Section \ref{sec:model}
introduces notation and the model. Section \ref{sec:eff-bound} characterizes the
semiparametric efficiency bound for the WAD of the NPQIV model. Section \ref%
{sec:PSGEL} introduces a flexible penalized sieve GEL procedure. Section \ref%
{sec:conv-rate} derives the consistency and the convergence rates of the
penalized sieve GEL estimator for the NPQIV model. Section \ref{sec:ADT}
establishes the asymptotic distributions of the WAD estimator and of the QLR statistic based on penalized sieve GEL for the WAD of a NPQIV. Section \ref{sec:conclusion} concludes with a discussion
of extensions.

\section{Preliminaries and Notation}

\label{sec:model}

Let $Z\equiv (Y,W,X)$ be the observable data vector, where $Y$ is the outcome
variable, $W$ is the endogenous variable and $X$ is the instrumental variable (IV); we assume the observable data, $Z$, is distributed according to a probability distribution $\mathbf{P}$. In order to simplify the exposition, we restrict attention to real-valued continuous random variables, i.e., we assume $\mathbf{P}$ has a density $\mathbf{p}$ with support given by $\mathbb{Z}\equiv \mathbb{Y}\times \mathbb{W}\times
\mathbb{X}\subseteq \mathbb{R}^{3}$; extending our results to vector-valued endogenous and instrumental variables would be straightforward but cumbersome in terms of notation.

\textbf{Notation.} For any subset, $\mathbb{Z}$, of an Euclidean space let $\mathcal{P}(\mathbb{Z})$ be the class of Borel probability measures over $\mathbb{Z}$. For any $P\in \mathcal{P}(\mathbb{Z})$, we use $p$ to denote its probability density function (pdf) (with respect to Lebesgue (Leb) measure) and $supp(P)$ to denote its support. We also use $P_{X}$ ($p_{X}$) to denote the marginal probability (pdf) of a random variable $X$; and $P_{Y|X}$ ($p_{Y|X}$) to denote the conditional probability (pdf) of $Y$ given $X$. For expectation, we write $E_{Q}[.]$ to be explicit about the fact that $Q$ is the measure of integration; throughout we sometimes use $E[.]\equiv E_{\mathbf{P}}[.]$ when $\mathbf{P}$ is the true probability of the data. The term ``wpa1" stands for ``with probability approaching one (under $\mathbf{P}$)"; for any two real-valued sequences $(x_{n},y_{n})_{n}$ $x_{n} \precsim y_{n}$ denotes $x_{n} \leq C y_{n}$ for some $C$ finite and universal; $\succsim$ is defined analogously. For any $q \geq 1$, we use $L^{q}(Q)\equiv L^{q}(\mathbb{Z},Q)$ to denote the class of measurable functions $f : \mathbb{Z} \mapsto \mathbb{R}$ such that $||f||_{L^{q}(Q)}=\left( \int_{z\in \mathbb{Z}}
|f(z)|^{q}Q(dz)\right) ^{1/q} < \infty$; as usual $L^{\infty}(Leb)$ denotes the class of essentially bounded real-valued functions. We use $||.||_{e}$ to denote the Euclidean norm, $\mathbb{R}_{+}=[0,\infty )$ and $\mathbb{R}_{++}=(0,\infty )$.

For any subset $S$ of a vector space $(\mathbb{S},||.||_S)$, $lin \{S\}$ denotes the smallest
linear space containing $S$; for any subspace $A \subseteq \mathbb{S}$, $A^{\perp}$ denotes its orthogonal complement in $(\mathbb{S},||.||_S)$. For any linear operator, $M : (\mathbb{S}_1,||.||_1 ) \rightarrow (\mathbb{S}_2,||.||_2 )$, let $Kernel(M) \equiv \{ x\in \mathbb{S}_1 \colon M[x] = 0  \}$ and $Range(M) \equiv \{ y \in \mathbb{S}_2 \colon \exists x \in \mathbb{S}_1,~M[x] = y  \}$; it is bounded if and only if $\sup_{x\in \mathbb{S}_1 : ||x||_1=1} ||M[x]||_2 <\infty$. For any linear bounded operator $M$, $M^{+}$ denotes its generalized inverse; see, e.g., \cite{engl1996regularization}.

\subsection{The WAD of the NPQIV model}

Let $\mathbb{A}\equiv \mathbb{R}\times \mathbb{H}$, where
$\mathbb{H} = \{ h \in L^{2}(Leb) \colon h^{\prime}~exists~and~||h^{\prime}||_{L^{2}(Leb)} < \infty  \}$, i.e., $\mathbb{H}$ is a Sobolev space of order $1$, here $h^{\prime}$ should be viewed as a weak derivative of $h$ (see \cite{brezis2010functional}). We note that $\mathbb{H}$ is a Hilbert space under the norm $||h||_{\mathbb{H}}\equiv ||h||_{L^{2}(Leb)}+||h'||_{L^{2}(Leb)}$, and $\mathbb{A}$ is a Hilbert space under the norm $||(\theta, h)||_{\mathbb{A}}\equiv ||\theta||_e + ||h||_{\mathbb{H}}$. In this paper we measure convergence in $\mathbb{A}$ using another norm $||(\theta, h)||\equiv ||\theta||_e + ||h||$ for $||h||\leq ||h||_{\mathbb{H}}$ (such as $||h||=||h||_{L^{2}(Leb)}$). The parameter set is given by $\mathcal{A} \equiv \Theta \times \mathcal{H} \subseteq \mathbb{A}$, where $\Theta$ is bounded and convex  and $\mathcal{H}$ is a set that contains additional restrictions on $h \in \mathbb{H}$ which will be specified below. We assume that $\mathbf{P}$ is such that there exists a parameter $\alpha_{0} \equiv (\theta_{0},h_{0}) \in \mathcal{A}$ that satisfies
\begin{align}  \label{eqn:model1}
0 = & E_{\mathbf{P}}[1\{ Y \leq h_{0}(W) \} - \tau|X] \\
\theta_{0} = & E_{\mathbf{P}}[\mu(W)h^{\prime }_{0}(W)] \label{eqn:model2}
\end{align}
for $\tau \in (0,1)$, where $\mu$ is a nonnegative, continuously differentiable scalar function in $L^{\infty}(Leb) \cap \mathbb{H}$ and should be viewed as the weighting function of the average derivative, $\theta_{0}$, of $h_{0}$.

The following assumption ensures that the conditions above uniquely identify $\alpha_{0}$; it will be maintained throughout the paper and will not be explicitly referenced in the results below.

\begin{assumption}
\label{ass:ident} There is a unique $\alpha _{0}\in int(\mathcal{A})$ that
satisfies model (\ref{eqn:model1})-(\ref{eqn:model2}).
\end{assumption}

The interior assumption is needed only for the asymptotic distribution results in Section \ref%
{sec:ADT}. In cases where $\mathcal{H}$ has an empty interior, one can use
the concept of relative interior of $\mathcal{H}$.
This assumption is clearly high level. The goal of this paper is to
characterize the asymptotic behavior of a modified GEL estimator of $\alpha $%
, taking as given the identification part; for a discussion of primitive
conditions for Assumption \ref{ass:ident}, we refer the reader to \cite{CCLN2014local} and
references therein.

The following assumption
imposes additional restrictions over the primitives: $\mu $, $\mathbf{P}$ and $%
\alpha _{0}$.

\begin{assumption}
\label{ass:pdf0} (i) $\mathbf{P}$ has a continuously differentiable pdf, $\mathbf{p}$, such that: the marginal density $\mathbf{p}_{W}$ of $W$ is uniformly
bounded, zero at the boundary of the support and $\mathbf{p}^{\prime}_{W} \in L^{2}(Leb)$; the marginal density $\mathbf{p}_{X}$ of $X$ is uniformly bounded away from 0 on its support; $\sup_{y,w,x \in \mathbb{Z}} \mathbf{p}_{Y|WX}(y
\mid w, x) < \infty$, $\sup_{y,w,x \in \mathbb{Z}} \frac{d\mathbf{p}_{Y|WX}(y \mid w, x)}{dy} < \infty$%
;
(ii) $\mathcal{H}$ is convex and such that for all $h \in \mathcal{H}$, $\sup_{w \in \mathbb{W}} |\mu(w) h(w)| < \infty$;
(iii) $Var_{\mathbf{P}}(\mu(W)h^{\prime }(W)) > 0$ for all $h \in \mathcal{H}$ in a $||\cdot ||$-neighborhood of $h_{0}$.
\end{assumption}

Part (i) of this condition imposes differentiability and boundedness
restrictions on different elements of $\mathbf{p}$; part (ii) ensures that $\lim_{w \rightarrow \pm \infty} \mathbf{p}_{W}(w) \mu (w) h(w) = 0$ which allows for an alternative representation for $\theta_{0}$ using integration by parts (see expression \ref{eqn:int-parts}  below); part (iii) is a high level
assumption and essentially implies $Var_{\mathbf{P}}(\mu(W)h^{\prime }_{0}(W)) > 0$ as
well as continuity of $h \mapsto Var_{\mathbf{P}}(\mu(W)h^{\prime }(W))$.

\section{Efficiency Bound for $\theta_{0}$}

\label{sec:eff-bound}

By definition of $\mathbb{H}$, Assumption \ref{ass:pdf0} and integration
by parts, it follows that
\begin{align}\label{eqn:int-parts}
\theta_{0} = E[\mu(W)h_0^{\prime }(W)] = - \int \ell(w) h_0 (w) dw
\end{align}
where $$w \mapsto \ell(w) \equiv \mu^{\prime }(w) \mathbf{p}_{W}(w) + \mu(w)
\mathbf{p}_{W}^{\prime }(w).$$
For the derivations of the
efficiency bound, it is important to recall that $\ell$ depends on $p_{W}$,
so we sometimes use $\ell_{\mathbf{P}}$ to denote $\ell$. Finally, observe that under our assumptions over $\mu$ and $\mathbf{p}_{W}$, $\ell \in L^{2}(Leb)$.

The formal definition of the efficiency bound for the unknown parameter $\theta _{0}$ is given at the beginning of Appendix \ref{app:eff-bound}. Loosely speaking, the efficiency bound is a lower bound for the asymptotic variance of all locally regular and asymptotically linear estimators of $\theta_{0}$; see \cite{bickeletal1998efficient} for details and formal definitions. If it is infinite, then the parameter $\theta_{0}$ cannot be estimated at root-$n$ rate by these estimators. We now derive this bound. For this, we introduce some useful
notation. For any $(y,w,\alpha )\in \mathbb{Y}\times \mathbb{W}\times
\mathbb{A}$, let $$\rho (y,w,\alpha )\equiv \left(\rho_{1}(y,w,\alpha),\rho_{2}(y,w,h)\right)^{T} \equiv \left(\theta -\mu (w)h^{\prime}(w), 1\{y \leq h(w) \} - \tau \right)^{T}.$$
Let $\mathbf{T}:\mathbb{H}\rightarrow L^{2}(\mathbf{P}_{X})$ be given by
\begin{align*}
	\mathbf{T}[g](x)=\int	\mathbf{p}_{Y|WX}(h_{0}(w)|w,x)g(w)\mathbf{p}_{W|X}(w|x)dw
\end{align*}
for all $x\in \mathbb{X}$ and $%
g\in \mathbb{H}$. The fact that $\mathbf{T}$ maps into $L^{2}(\mathbf{P}_{X})$ follows from
Jensen inequality and the fact that $\sup_{w,x} \mathbf{p}_{YW\mid X}(h_{0}(w),w\mid
x)<\infty $ (see Assumption \ref{ass:pdf0}). Its adjoint operator is denoted
as $\mathbf{T}^{\ast }:L^{2}(\mathbf{P}_{X})\rightarrow L^{2}(\mathbf{P}_{W})$. Finally, let
\[
x\mapsto \Gamma (x)\equiv E[\rho _{1}(Y,W,\alpha _{0})\rho _{2}(Y,W,h_{0})|X=x]/(\tau (1-\tau ))
\]
and $z\mapsto \epsilon (z)\equiv \rho
_{1}(y,w,\alpha _{0})-\Gamma (x)\rho _{2}(y,w,h _{0})$. Then $E[\epsilon (Z)\rho _{2}(Y,W,h_{0})|X]=0$ and $E[\epsilon (Z)]=0$.

\begin{theorem}
\label{thm:eff-bound} Suppose Assumptions \ref{ass:ident} and \ref{ass:pdf0}
hold and $\ell \in Kernel(\mathbf{T})^{\perp}$. Then

\begin{enumerate}
\item The efficiency bound of $\theta_{0}$ is finite iff $\ell \in Range
(\mathbf{T}^{\ast})$.

\item If it is finite, its efficient variance $V_0$ is given by
\begin{align*}
V_0= ||\epsilon(\cdot) ||^{2}_{L^{2}(\mathbf{P})} +
\left \Vert \mathbf{T}(\mathbf{T}^{\ast}\mathbf{T})^{+} [ \ell - \mathbf{T}^{\ast}[\Gamma] ] \right
\Vert^{2}_{L^{2}(\mathbf{P})}.
\end{align*}
\end{enumerate}
\end{theorem}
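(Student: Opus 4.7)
My plan is to apply the standard semiparametric efficiency recipe (e.g., \cite{bickeletal1998efficient}): parametrize regular submodels $\{P_\eta\}$ through $\mathbf{P}$ with mean-zero scores $s \in L^{2}(\mathbf{P})$; use identification (Assumption \ref{ass:ident}) to obtain the induced path $(\theta_\eta, h_\eta)\in\mathcal{A}$ through $\alpha_0$; derive the tangent set and the pathwise derivative $\dot\theta$ of $\eta\mapsto\theta_\eta$; and read off the efficient variance as the squared $L^{2}(\mathbf{P})$-norm of the Riesz representer of $\dot\theta$ on the tangent set, when one exists. Part 1 reduces to existence of the representer; Part 2 reduces to computing its norm.

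First I would differentiate the conditional quantile restriction $E_\eta[\rho_2(Y,W,h_\eta)\mid X]=0$ at $\eta_0$. Because $\rho_2$ is a non-smooth indicator, the derivative must be pushed onto the conditional density $\mathbf{p}_{Y\mid WX}$ using its smoothness granted by Assumption \ref{ass:pdf0}; this yields the adjoint-type identity
\begin{equation*}
\mathbf{T}[\dot h](X) = -E[\rho_2(Y,W,h_0) s(Z)\mid X], \qquad \dot h \equiv \tfrac{d h_\eta}{d\eta}\Big|_{\eta_0},
\end{equation*}
which, modulo $Kernel(\mathbf{T})$, expresses admissible $\dot h$ in terms of $s$ and characterizes the tangent set. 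Next I would differentiate $\theta_\eta = E_\eta[\mu(W) h_\eta'(W)]$, use integration by parts as in (\ref{eqn:int-parts}), and substitute the decomposition $\rho_1(Y,W,\alpha_0) = \epsilon(Z) + \Gamma(X)\rho_2(Y,W,h_0)$ together with the adjoint relation $\langle \mathbf{T}^{\ast}[\Gamma],\dot h\rangle = \langle \Gamma, \mathbf{T}[\dot h]\rangle_{L^{2}(\mathbf{P}_X)}$ and the previous identity. This rearranges to
\begin{equation*}
\dot\theta = E[\epsilon(Z) s(Z)] - \langle \ell - \mathbf{T}^{\ast}[\Gamma],\, \dot h\rangle.
\end{equation*}
The hypothesis $\ell \in Kernel(\mathbf{T})^{\perp}$ makes this expression well-defined in $s$ (since $\dot h$ is pinned down only mod $Kernel(\mathbf{T})$). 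The first summand is automatically a bounded linear functional of $s$; the second is bounded in $s$ iff $\ell - \mathbf{T}^{\ast}[\Gamma] \in Range(\mathbf{T}^{\ast})$, equivalently $\ell \in Range(\mathbf{T}^{\ast})$, since $\mathbf{T}^{\ast}[\Gamma]$ lies trivially in that range. This establishes Part 1.

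For Part 2, when $\ell \in Range(\mathbf{T}^{\ast})$, the minimum-norm $g^{\ast}$ solving $\mathbf{T}^{\ast}[g^{\ast}] = \ell - \mathbf{T}^{\ast}[\Gamma]$ equals $g^{\ast} = \mathbf{T}(\mathbf{T}^{\ast}\mathbf{T})^{+}[\ell - \mathbf{T}^{\ast}[\Gamma]]$ by the Moore--Penrose identities. Back-substitution produces the efficient influence function $\psi^{\ast}(Z) = \epsilon(Z) + g^{\ast}(X)\rho_2(Y,W,h_0)$, and since $E[\epsilon(Z)\rho_2(Y,W,h_0)\mid X] = 0$ by the construction of $\Gamma$ and $\epsilon$, the two summands are uncorrelated and their squared $L^{2}(\mathbf{P})$-norm splits into the two pieces claimed in $V_0$. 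The main obstacle is Step 1: pointwise differentiation of the indicator $\rho_2$ is unavailable, so the interchange of differentiation and integration must be justified via the smoothness of $\mathbf{p}_{Y\mid WX}$ in Assumption \ref{ass:pdf0} together with a dominated-convergence argument. A secondary technicality is the rigorous handling of $(\mathbf{T}^{\ast}\mathbf{T})^{+}$ in the ill-posed regime where $Range(\mathbf{T})$ need not be closed, which is addressed via the spectral decomposition of $\mathbf{T}^{\ast}\mathbf{T}$ on $Kernel(\mathbf{T})^{\perp}$.
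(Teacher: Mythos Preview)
Your approach is essentially the paper's for Part 1: both differentiate the two moment restrictions along regular submodels, obtain the relation $\mathbf{T}[\dot h]=-E[\rho_2\,s\mid X]$ (the paper packages this as its operator $A_{\mathbf{P}}$ in Lemma \ref{lem:diff-alpha}), derive the same pathwise derivative of $\theta$, and read off the $Range(\mathbf{T}^{\ast})$ criterion for finiteness.

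For Part 2 the routes diverge in execution. The paper first writes down an influence function and then explicitly characterizes the projection operator onto $\bar{\mathcal{T}}$ (Lemma \ref{lem:proj-T}), applying it to obtain the efficient influence function. You instead solve the adjoint equation $\mathbf{T}^{\ast}[g^{\ast}]=\ell-\mathbf{T}^{\ast}[\Gamma]$ directly via the Moore--Penrose identities and posit $\psi^{\ast}=\epsilon+g^{\ast}\rho_2$ as the Riesz representer. Your route is more economical, but it has one gap: having shown that $\psi^{\ast}$ represents $\dot\theta$ on the tangent set, you still owe a verification that $\psi^{\ast}$ lies in the \emph{closure} of the tangent space (otherwise a smaller-variance influence function could exist). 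The paper's projection device builds this in automatically; indeed its projected influence function differs from your $\psi^{\ast}$ by the centering term $-E[\epsilon\mid X]=-E[\rho_1\mid X]$, which is generally nonzero since the model only imposes $E[\rho_1]=0$ unconditionally. You should either check directly that $\psi^{\ast}\in\bar{\mathcal{T}}$ or, equivalently, argue that this centering term is orthogonal to $\bar{\mathcal{T}}$; without this step the variance decomposition you state is only an upper bound for $V_0$.
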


\begin{proof}
	 	See Appendix \ref{app:eff-bound}.
	 \end{proof}

The first result in Theorem \ref{thm:eff-bound} is obtained following the
approach of \cite{bickeletal1998efficient}.  The condition $\ell \in Kernel(\mathbf{T})^{\perp}$ ensures that only the ``identified part" of $h_{0}$ --- that is, the part of $h_{0}$ that is orthogonal to the kernel of $\mathbf{T}$ ---  matters for computing the weighted average derivative; we refer the reader to Appendix \ref{app:eff-bound} and the paper by  \cite{SeveriniTripathi2012} for further discussion.

\cite{SeveriniTripathi2012} provides an analogous result to Theorem \ref{thm:eff-bound}(1) for linear functionals
in a nonparametric \emph{linear} IV regression model. Our condition $\ell
\in Range(\mathbf{T}^{\ast })$, is analogous to theirs, but with a subtle yet
important difference. In \cite{SeveriniTripathi2012}, the object that plays the role of $\ell $ does not depend on $\mathbf{P}$, whereas in our case it does. This
observation changes the nature of our condition vis-a-vis theirs, because,
in our setup, $\ell \in Range(\mathbf{T}^{\ast })$ implies a restriction on $\mathbf{P}$
since both quantities, $\ell $ and $\mathbf{T}$ depend on it.\footnote{%
It is worth pointing out that this restriction was not imposed as one of the
conditions that defined the model used to construct the tangent space; see
Appendix \ref{app:eff-bound} for a definition.} It is also important to note that, if $\mathbf{T}$ is compact, then the range of $%
\mathbf{T}^{\ast }$ is a strict subset of $L^{2}(\mathbf{P}_{W})$ so that $\ell \in
Range(\mathbf{T}^{\ast })$ may not hold. Hence, in this case the weighted average
derivative may not be root-n estimable, and, moreover, the condition that
determines the finiteness of the efficiency bound depends on unknown
quantities. This observation highlights a difference with the no-endogeneity
case, where the efficiency bound is always finite, provided that $\ell \in
L^{2}(\mathbf{P}_{W})$ (see \cite{newey1993efficiency}).

Another discrepancy between the no-endogeneity case and ours is that in the
former case the ``plug in" is always efficient (see \cite%
{newey1993efficiency}, \cite{newey1994asymptotic}) due to the fact that the
tangent space is the whole of $\{ f \in L^{2}(\mathbf{P}) \colon E[f] = 0 \}$. On
the other hand, for NPQIV \cite{CS2015overidentification} show that the
closure of the tangent space is the whole space iff the $Range(\mathbf{T})$ is dense
in $L^{2}(\mathbf{P}_{X})$, which in turn is equivalent to $Kernel(\mathbf{T}^{\ast}) = \{ 0\}$%
. This last condition is comparable to a completeness condition on the
conditional distribution of the exogenous variable given the endogenous
ones, which may or may not hold for a particular $\mathbf{P}$.\footnote{%
In the NPIV setting, $Kernel(\mathbf{T}^{\ast}) = \{ 0\}$ is equivalent to the pdf of
$X$ given $W$ satisfying a completeness condition.}

The second result in Theorem \ref{thm:eff-bound} follows from projecting the
influence function onto the closure of the tangent space (see \cite%
{bickeletal1998efficient} and \cite{VdV2000} and references therein). So as
to shed some light on the expression for the efficiency bound, we point out
that it corresponds to the efficiency bound of the semiparametric
sequential conditional moment model via the ``orthogonalized moments" approach in \cite%
{ai2012semiparametric}.
In their notation, let $\varepsilon_{2}(z,\alpha) \equiv \rho_{2}(y,w,h)$ and
$\varepsilon_{1}(z,\alpha) \equiv \rho_{1}(y,w,\alpha) - \Gamma(x)\rho_{2}(y,w,h)$.
Note that $E[\varepsilon_{1}(Z,\alpha_{0})\varepsilon_{2}(Z,\alpha_{0}) \mid X] = 0$ (and $\varepsilon_{1}(z,\alpha_0 )=\epsilon (z)$). The model (\ref{eqn:model1})-(\ref{eqn:model2}) becomes equivalent to their orthogonalized moment model:
\begin{equation}\label{eqn:model-orthog}
E[\varepsilon_{2}(Z,\alpha_0)|X] = 0~,~~~E[\varepsilon_{1}(Z,\alpha_0)]=0.
\end{equation}
The expression in our Theorem \ref{thm:eff-bound}(2) coincides with their theorem 2.3 semiparametric efficient variance bound for $\theta_0$ of the model (\ref{eqn:model-orthog}). Also see proposition 3.3 in \cite{ai2012semiparametric} for the semiparametric efficient variance bound for the WAD of a NPIV model.

\section{The Penalized-Sieve-GEL Estimator}

\label{sec:PSGEL}

In this section we introduce our estimator for $\alpha_{0}\in \mathcal{A} \equiv \Theta
\times \mathcal{H} \subseteq \mathbb{A}\equiv \Theta \times \mathbb{H}$. In order to do
this, it will be useful to define some quantities. Given the i.i.d. sample $(Z_{i})_{i=1}^{n}$, let $P_{n}$ be the corresponding empirical probability. Let $(q_{k})_{k\in \mathbb{N}}$ be a complete basis in $L^{2}(\mathbb{X},Leb)$. For any $J\in \mathbb{N}$, let $q^{J}(x)=(q_{1}(x),...,q_{J}(x))^{T}$ be $J\times 1$ vector-valued function of $x$, and for any $(z,\alpha )\in \mathbb{Z}\times \mathcal{A}$, let
\[
g_{J}(z,\alpha )\equiv  \left (\rho _{1}(y,w,\alpha ),\rho _{2}(y,w,\alpha)q^{J}(x)^{T} \right)^{T}
=\left (\theta -\mu (w)h^{\prime}(w), [ 1\{y \leq h(w) \} - \tau ]q^{J}(x)^{T} \right)^{T}~.
\]
Let $\mathcal{S}\subseteq \mathbb{R}$ be an open interval that contains $0$. For any $P \in \mathcal{P}(\mathbb{Z})$,  any $\alpha \in \mathcal{A}$ and any $J
\in \mathbb{N}$, denote $\Lambda_{J}(\alpha,P)\equiv \cap_{z \in supp (P)} \{
\lambda \in \mathbb{R}^{J+1} \colon \lambda^{T}g_{J}(z,\alpha) \in \mathcal{S}
\}$, and $\hat{\Lambda}_{J}(\alpha)\equiv \Lambda_{J}(\alpha,P_{n})$.

Let $s:\mathcal{S}\rightarrow \mathbb{R}$ be strictly
concave, twice-continuously differentiable with Lipschitz continuous second
derivative; and $s^{\prime }(0)=s^{\prime \prime }(0)=-1$; see, e.g., \cite{Smith1997} and \cite{DIN03} for examples of such $s(.)$ functions. For any $\lambda \in \Lambda_{J}(\alpha,P)$, let
\begin{align*}
S_{J}(\alpha,\lambda,P) \equiv E_{P}[s(\lambda^{T}g_{J}(Z,\alpha))] - s(0)~,~~~\hat{S}_{J}(\alpha,\lambda)\equiv S_{J}(\alpha,\lambda,P_{n}).
\end{align*}
If $\mathcal{A}$ were a finite-dimensional compact set with $\dim (\mathcal{A}) \leq J+1$, then $\alpha_0$ could be estimated by the GEL procedure: $\arg \min_{\alpha \in \mathcal{A}} \sup_{\lambda \in
\hat{\Lambda}_{J}(\alpha )}\hat{S}_{J}(\alpha ,\lambda )$ (see, e.g., \cite{DIN03}).

Due to the presence of the infinite-dimensional nuisance parameter $h_0 \in \mathcal{H}$ in the NPQIV model (\ref{eqn:model1}), the parameter space $\mathcal{A} \equiv \Theta
\times \mathcal{H}$ is  an infinite-dimensional function space that is typically non-compact subset in $(\mathbb{A}, ||.||)$ and hence the identifiable uniqueness condition needed for consistency in $||.||$-norm might fail; see, e.g., \cite{NP2003instrumental} and \cite{chen2007large}. The above GEL procedure needs to be regularized to regain consistency and/or to speed up rate of convergence in $||.||$-norm. 
To this end, we introduce a \emph{regularizing structure}, which, jointly with $(q_{k})_{k \in \mathbb{N}}$, consists of a sequence of sieve spaces $(\mathcal{A}_k \equiv \Theta
\times \mathcal{H}_{k})_{k\in \mathbb{N}}$ in $(\mathbb{A}, ||.||)$, and a sequence of penalties $(\gamma_{k}\times Pen (\cdot))_{k \in \mathbb{N}}$ with tuning parameters $\gamma_{k} \downarrow 0$ and a penalty function $Pen : \mathbb{A} \rightarrow \mathbb{R}_{+}$.

The \emph{Penalized-Sieve-GEL (PSGEL)} estimator
is defined as
\begin{equation*}
\hat{\alpha}_{L,n} \in \arg \min_{\alpha \in \mathcal{A}_{K}} \left[\sup_{\lambda \in
\hat{\Lambda}_{J}(\alpha )}\hat{S}_{J}(\alpha ,\lambda )+\gamma
_{K}Pen(\alpha )\right],
\end{equation*}%
for any $(L=(J,K),n)\in \mathbb{N}^{3}$. If the \textquotedblleft arg min" in the previous expression is empty, one can replace it by an approximate minimizer.

The following assumption imposes restrictions over the regularizing structure $\{(q_{k}, \mathcal{H}_{k},\gamma_{k}Pen)_{k \in \mathbb{N}}\}$. Let $(\varphi _{k})_{k\in \mathbb{N}}$ be a basis functions in $\mathbb{H}$, and $\nabla \varphi
^{K}=(\varphi _{1}^{\prime },...,\varphi _{K}^{\prime })^{T}$.

\begin{assumption}
\label{ass:reg} (i) $(q_{k})_{k\in \mathbb{N}}$ is a basis in $L^{2}(\mathbf{P}_{X})$, and $E[q^{J}(X)q^{J}(X)^{T}]=I$ for each finite $J$;
\newline (ii) For all $K $, $\mathcal{H}_{K}\subseteq lin\{\varphi _{1},...,\varphi _{K}\}$ is
closed and convex, and $\overline{\cup _{k}\mathcal{H}_{k}}\supseteq
\mathcal{H}$, i.e., for any $\alpha \in \mathcal{A} \equiv \Theta \times \mathcal{H}$ there is an $\Pi_K\alpha \in \mathcal{A}_K \equiv \Theta \times \mathcal{H}_K$ such that $||\Pi_K\alpha - \alpha ||=o(1)$; and for some finite $C\geq 1$,
$C^{-1}I\leq E\left[ \left( \varphi ^{K}(W)\right) \left(
\varphi ^{K}(W)\right) ^{T}+\left( \nabla \varphi ^{K}(W)\right) \left(
\nabla \varphi ^{K}(W)\right) ^{T}\right] \leq CI$;
\newline (iii) (a) $Pen : \mathbb{A} \rightarrow \mathbb{R}_{+}$
is lower semi-compact (in $||.||$), $|Pen (\Pi_K\alpha_0 ) - Pen (\alpha_0)|=O(1)$, $Pen (\alpha_0 )<\infty$, and $\gamma_{k} \downarrow 0$, and (b) there exists an $M<\infty
$ such that for any $m\geq M$, any $K$ and any $\alpha \in \mathcal{A}_{K}$,
if $Pen(\alpha )\leq m$ then $\sup_{w \in \mathbb{W}} |\mu(w) h^{\prime }(w)| \leq m$.
\end{assumption}

Condition (i) is mild (see \cite{DIN03} (DIN) and the discussion therein).
Condition (ii) essentially defines the sieve space. Part (a) of Condition (iii) is standard in ill-posed problems (see \cite{CP2012estimation}); Part (b) is not. If $\mathcal{H}_{K}$ is $||\cdot ||_{L^{\infty}(\mathbb{W},\mu)} $ bounded, then the condition is vacuous.
If this is not the case, then the condition requires $Pen$ to be ``stronger"
than the $||\cdot ||_{L^{\infty}(\mathbb{W},\mu)} $ norm. The need to bound $%
|| h^{\prime } ||_{L^{\infty}(\mathbb{W},\mu)} $ arises from the fact that, in many
instances, in the proofs we need to control $\rho(y,w,\alpha)$ uniformly on $%
(y,w)$ (e.g., see Lemma \ref{lem:Lambda-charac} in the Supplemental Material \ref{supp:conv-rate}). Additionally, in our setup, is useful to link $Pen$ to $||\cdot ||_{L^{\infty}(\mathbb{W},\mu)}$ because the structure of the problem implies a natural bound for $Pen(.)$ --- and thus, through Assumption \ref{ass:reg}(iii), a bound for $||\cdot ||_{L^{\infty}(\mathbb{W},\mu)}$ ---, as shown in the following lemma.

\begin{lemma}
\label{lem:Pen-bound} For any $L=(J,K) \in \mathbb{N}^{2}$ and any $\alpha
\in \mathcal{A}_{K}$,
\begin{align*}
\gamma_{K} Pen(\hat{\alpha}_{L,n}) \leq \sup_{\lambda \in \hat{\Lambda}%
_{J}(\alpha)} \hat{S}_{J}(\alpha,\lambda) + \gamma_{K} Pen(\alpha)~~~wpa1.
\end{align*}
\end{lemma}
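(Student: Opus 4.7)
The key observation is that the feasible set for the inner maximization always contains $\lambda = 0$. Indeed, since $\mathcal{S}$ is an open interval containing $0$, for any $\alpha \in \mathcal{A}_K$ and every $z \in supp(P_n)$, we have $0^{T} g_{J}(z,\alpha) = 0 \in \mathcal{S}$, so $0 \in \hat{\Lambda}_{J}(\alpha)$. Moreover, by direct substitution,
\begin{equation*}
\hat{S}_{J}(\alpha,0) = E_{P_{n}}[s(0)] - s(0) = 0.
\end{equation*}
Consequently, for every $\alpha \in \mathcal{A}_{K}$,
\begin{equation*}
\sup_{\lambda \in \hat{\Lambda}_{J}(\alpha)} \hat{S}_{J}(\alpha, \lambda) \geq \hat{S}_{J}(\alpha, 0) = 0.
\end{equation*}

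With this lower bound in hand, the rest of the argument is a near-tautology coming from the definition of $\hat{\alpha}_{L,n}$. Specifically, since $\hat{\alpha}_{L,n}$ is a minimizer (or, as noted below the definition of the PSGEL, an approximate minimizer wpa1, which is where the ``wpa1" qualifier enters) of $\alpha \mapsto \sup_{\lambda \in \hat{\Lambda}_{J}(\alpha)}\hat{S}_{J}(\alpha,\lambda) + \gamma_{K} Pen(\alpha)$ over $\mathcal{A}_{K}$, then for any $\alpha \in \mathcal{A}_{K}$,
\begin{equation*}
\sup_{\lambda \in \hat{\Lambda}_{J}(\hat{\alpha}_{L,n})}\hat{S}_{J}(\hat{\alpha}_{L,n},\lambda) + \gamma_{K}Pen(\hat{\alpha}_{L,n}) \leq \sup_{\lambda \in \hat{\Lambda}_{J}(\alpha)}\hat{S}_{J}(\alpha,\lambda) + \gamma_{K}Pen(\alpha) \qquad wpa1.
\end{equation*}
Using the lower bound on the sup evaluated at $\hat{\alpha}_{L,n}$ on the left-hand side (which allows dropping the sup term), the desired inequality follows.

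There is no substantial obstacle here: the argument is a one-line observation about feasibility of $\lambda=0$ plus the defining property of the (approximate) minimizer. The only delicate point is bookkeeping around whether an exact minimum exists; if not, one replaces ``$\leq$'' by ``$\leq \cdot + o_{P}(1)$'' absorbed into the ``wpa1" clause, exactly as is standard in the sieve-M literature (see, e.g., \cite{chen2007large}). No use of Assumption \ref{ass:reg}(iii) or of any probabilistic concentration argument is needed for this particular lemma; those enter only in subsequent rate and consistency arguments where $\alpha$ will be specialized to $\Pi_{K}\alpha_{0}$ and the right-hand side will be controlled.
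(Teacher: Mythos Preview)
Your proof is correct and follows essentially the same approach as the paper: observe that $0 \in \hat{\Lambda}_{J}(\alpha)$ so the inner supremum is nonnegative, then invoke the defining (approximate) minimality of $\hat{\alpha}_{L,n}$. The only cosmetic difference is that you verify $0 \in \hat{\Lambda}_{J}(\alpha)$ deterministically (which is indeed immediate from $0 \in \mathcal{S}$), whereas the paper phrases this step with a ``wpa1'' qualifier.
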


\begin{proof}
	See Appendix \ref{app:PSGEL}.
\end{proof}

The bound, however, may depend on $(J,K,n)$ and thus may affect the convergence
rate. Below, we will set $\alpha$ in the right-hand-side (RHS) to a particular value in $\mathcal{A}_{K}$ and use the resulting bound to construct what we call an ``effective sieve space".

\section{Consistency and Convergence Rates of the PSGEL Estimator}
\label{sec:conv-rate}

This section establishes the consistency and the rates of convergence of the PSGEL estimator $\hat{\alpha}_{L,n}$ to the true parameter $\alpha_0$ under a given norm $||.||$ over $\mathbb{A}$. In this and the next section, we note that the implicit constants inside the $O_{\mathbf{P}}$ do not depend on $(J,K,n)$.

\subsection{Effective sieve space}

Throughout the paper we use the following notation. Let $\overline{\theta} \equiv \sup_{ \theta \in \Theta} |\theta| <\infty$; and  $b_{\rho,J} \equiv (E[||q^{J}(X)||^{\rho}_{e}])^{1/\rho}$ for any $\rho > 0$. For any $L=(J,K) \in \mathbb{N}^{2}$, let
\begin{align*}
\Gamma_{L,n} \equiv \left\{ \frac{\bar{g}^{2}_{L,0}}{n} +
||E[g_{J}(Z,\Pi_K \alpha_{0})]||^{2}_{e} + \gamma_{K} Pen(\Pi_K \alpha_{0})
\right\}, ~~\bar{g}^{2}_{L,0} \equiv \overline{\theta} + ||\mu (\Pi_K h_{0})^{\prime
}||^{2}_{L^{2}(\mathbf{P})} + b_{2,J}^{2}.
\end{align*}
Let $(l_{n})_{n}$ be a slowly diverging positive sequence, e.g., $l_{n} =\log \log n$, which is introduced solely to avoid keeping track of constants. Finally we let
\begin{align*}
\bar{\mathcal{A}}_{L,n} \equiv \left\{ \alpha \in \mathcal{A}_{K} \colon
Pen(\alpha) \leq \mho_{L,n} \right\}~,~~~\text{where}~~\mho_{L,n} \equiv l_{n} \gamma^{-1}_{K} \Gamma_{L,n}~.
\end{align*}

The sequence of sets, $(\bar{\mathcal{A}}_{L,n})_{L,n}$, can be viewed as
the sequence of ``effective" sieve spaces, because, as the following lemma
shows, wpa1 the estimator (and, trivially, the sieve approximator $\Pi_K \alpha_{0}\in \mathcal{A}_{K}$) both
belong to it.

\begin{assumption}
\label{ass:rates-mild} (i) $b^{4}_{4,J}/n = o(1)$; (ii) $\delta_{n} = o(1)$, $\delta_{n} \times \mho_{L,n} = o(1)$, $ b_{\varrho,J}^{\varrho} n \delta^{\varrho}_{n}= o(1)$ for some $\varrho > 0$; (iii) $\sqrt{ \frac{\bar{g}^{2}_{L,0}}{n} + ||E[g_{J}(Z,\Pi_K \alpha_{0})]||^{2}_{e} }
= o(\delta_{n})$.
\end{assumption}

\begin{lemma}
	\label{lem:eff-sieve} Let Assumptions \ref{ass:ident}, \ref{ass:pdf0}, \ref{ass:reg} and \ref{ass:rates-mild} hold. Then, for any $L \in \mathbb{N}^{2}$, $\hat{\alpha}%
	_{L,n} \in \bar{\mathcal{A}}_{L,n}$ wpa1.
\end{lemma}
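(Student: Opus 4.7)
\medskip
\noindent\emph{Proof sketch.} The plan is to apply Lemma \ref{lem:Pen-bound} at the sieve projection $\Pi_K\alpha_0 \in \mathcal{A}_K$, which yields, wpa1,
\begin{align*}
\gamma_K \, Pen(\hat{\alpha}_{L,n}) \;\leq\; \sup_{\lambda \in \hat{\Lambda}_J(\Pi_K\alpha_0)} \hat{S}_J(\Pi_K\alpha_0, \lambda) \;+\; \gamma_K \, Pen(\Pi_K\alpha_0).
\end{align*}
The penalty term is $O(\gamma_K Pen(\Pi_K\alpha_0))$ by Assumption \ref{ass:reg}(iii)(a) and is explicitly a summand of $\Gamma_{L,n}$. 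So it suffices to show that the supremum is $O_{\mathbf{P}}(\bar{g}^2_{L,0}/n + \|E[g_J(Z,\Pi_K\alpha_0)]\|_e^2)$; then $\gamma_K Pen(\hat{\alpha}_{L,n}) = O_{\mathbf{P}}(\Gamma_{L,n})$, and since $l_n \to \infty$ the bound $Pen(\hat{\alpha}_{L,n}) \leq l_n \gamma_K^{-1}\Gamma_{L,n} = \mho_{L,n}$ holds wpa1.

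For the supremum, write $\bar{g}_n \equiv E_{P_n}[g_J(Z,\Pi_K\alpha_0)]$ and $\hat{\Omega}_n \equiv E_{P_n}[g_J g_J^T(\cdot,\Pi_K\alpha_0)]$. Using $s(0)=0$, $s'(0)=s''(0)=-1$ and the Lipschitz bound on $s''$, Taylor-expand to get
\begin{align*}
\hat{S}_J(\Pi_K\alpha_0,\lambda) \;=\; -\lambda^T \bar{g}_n \;-\; \tfrac{1}{2}\lambda^T \hat{\Omega}_n \lambda \;+\; E_{P_n}\!\left[r\bigl(\lambda^T g_J(Z,\Pi_K\alpha_0)\bigr)\right],
\end{align*}
with $|r(t)|\lesssim |t|^3$. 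Let $\hat{\lambda}$ denote an (approximate) maximizer over $\hat{\Lambda}_J(\Pi_K\alpha_0)$. Since $\hat{S}_J(\Pi_K\alpha_0,0)=0$ and $\hat{\Omega}_n$ has smallest eigenvalue bounded below wpa1 (by Assumption \ref{ass:reg}(i), the conditional variance of $\rho_2(\cdot,\Pi_K h_0)$, and standard empirical-process control under Assumption \ref{ass:rates-mild}(i)), the inequality $\hat{S}_J(\Pi_K\alpha_0,\hat{\lambda})\geq 0$ together with the quadratic lower bound and Cauchy-Schwarz yield $\|\hat{\lambda}\|_e \lesssim_{\mathbf{P}} \|\bar{g}_n\|_e$. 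Substituting back, $\sup_\lambda \hat{S}_J(\Pi_K\alpha_0,\lambda) \leq |\hat{\lambda}^T \bar{g}_n| + (\text{lower order}) \lesssim \|\bar{g}_n\|_e^2$. Finally, using iid averaging and $\|g_J\|_e^2 \leq \rho_1^2 + \|q^J\|_e^2$ (since $|\rho_2|\leq 1$), together with $E[\|q^J(X)\|_e^2] = b_{2,J}^2$ from Assumption \ref{ass:reg}(i),
\begin{align*}
\|\bar{g}_n\|_e^2 \;\lesssim_{\mathbf{P}}\; \|E[g_J(Z,\Pi_K\alpha_0)]\|_e^2 \;+\; n^{-1}\bigl(\overline{\theta} + \|\mu(\Pi_K h_0)'\|_{L^2(\mathbf{P})}^2 + b_{2,J}^2\bigr) \;\asymp\; \|E[g_J(Z,\Pi_K\alpha_0)]\|_e^2 + \bar{g}^2_{L,0}/n.
\end{align*}

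The main obstacle is ensuring that the Taylor expansion with cubic remainder is valid uniformly over the effective range of $\lambda$, i.e.\ that the candidate $\hat{\lambda}$ lies interior to $\hat{\Lambda}_J(\Pi_K\alpha_0)$ and $\max_{i\leq n}|\hat{\lambda}^T g_J(Z_i,\Pi_K\alpha_0)|$ is small enough to keep $E_{P_n}[|r|]$ of smaller order than the quadratic part. This requires uniform control of $\max_i \|q^J(X_i)\|_e$ and of $\max_i |\mu(W_i)(\Pi_K h_0)'(W_i)|$, which is handled via the moment/growth conditions in Assumption \ref{ass:rates-mild} (in particular $b_{4,J}^4/n=o(1)$ and $b_{\varrho,J}^\varrho n\delta_n^\varrho = o(1)$) together with an auxiliary characterization of $\hat{\Lambda}_J$ along the lines of standard GEL arguments (cf.\ \cite{DIN03}); this step is where the bulk of the technical work lies and is deferred to the supplemental material.
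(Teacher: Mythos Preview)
Your proof is correct and follows essentially the same approach as the paper: apply Lemma~\ref{lem:Pen-bound} at $\Pi_K\alpha_0$, bound $\sup_\lambda \hat S_J(\Pi_K\alpha_0,\lambda)$ by a constant times $\|E_{P_n}[g_J(Z,\Pi_K\alpha_0)]\|_e^2$ via a quadratic expansion of $s$, and then control the empirical moment by its population counterpart plus a variance term. The paper packages these steps into Lemmas~\ref{lem:SJ-bound} and~\ref{lem:avg-gJ} (with the ``main obstacle'' you flag handled by Lemma~\ref{lem:Lambda-charac} and Lemma~\ref{lem:sJ-bound-alpha0L} in the supplement), whereas you sketch them inline; the only cosmetic difference is that the paper bounds $s''$ directly on the ball $B(\delta_n)$ rather than carrying an explicit cubic remainder, which sidesteps the circularity you note in your last paragraph.
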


\begin{proof}
	See Appendix \ref{app:conv-rate}.
\end{proof}

The proof of this Lemma follows from Lemma \ref{lem:Pen-bound} with $\alpha
= \Pi_K \alpha_{0}$ and Lemma \ref{lem:SJ-bound} with  $\alpha = \Pi_K \alpha_{0}$ and $P=P_{n}$ in
Appendix \ref{app:conv-rate}. The latter lemma provides a bound for $\sup_{\lambda \in
	\hat{\Lambda}_{J}(\Pi_K \alpha_{0})} \hat{S}_{J}(\Pi_K \alpha_{0},\lambda)$ in terms
of $||n^{-1}\sum_{i=1}^{n} g_{J}(Z_{i},\Pi_K \alpha_{0}) ||^{2}_{e}$ and $%
\gamma_{K} Pen(\Pi_K \alpha_{0})$. With this in mind, the components of $%
\mho_{L,n}$ are intuitive: $\bar{g}^{2}_{L,0}/n$ is related to the
``variance" of $n^{-1}\sum_{i=1}^{n} g_{J}(Z_{i},\Pi_K \alpha_{0})$, where $\bar{%
	g}^{2}_{L,0}$ is a bound for $||g_{J}(.,\Pi_K \alpha_{0})||^{2}_{e}$. The term $%
||E[g_{J}(Z,\Pi_K \alpha_{0})]||_{e}$ is related to the ``bias" and reflects the
fact that $\Pi_K \alpha_{0}\in \mathcal{A}_K$ is a sieve approximate to $\alpha_0$.

\begin{remark}\label{rem:bound}
	As explained above, Lemma \ref{lem:eff-sieve} and Assumption \ref%
	{ass:reg}(iii) are used to ensure that $|| h^{\prime }||_{L^{\infty}(%
		\mathbb{W},\mu)}$ is bounded. If the construction of $\mathcal{H}_{K}$ directly
	implies $|| h^{\prime }||_{L^{\infty}(\mathbb{W},\mu)} \leq \mho$ for some
	fixed constant $\mho < \infty$, then $\mho$ should replace $\mho_{L,n}$ in
	the definition of $\bar{\mathcal{A}}_{L,n}$. This is applicable every time $\mho_{L,n}$ appears below. $\triangle$
\end{remark}

\subsection{Relation to Penalized Sieve GMM}

As expected, the asymptotic properties of the PSGEL estimator are closely related to an approximate minimizer of a GMM criterion associated to the following expression: for any $J \in \mathbb{N}$ and any $P \in \mathcal{P}(\mathbb{Z})$, let%
\begin{align*}
\alpha \mapsto Q_{J}(\alpha,P) \equiv
E_{P}[g_{J}(Z,\alpha)]^{T}H_{J}(\alpha_{0},\mathbf{P})^{-1}E_{P}[g_{J}(Z,\alpha)]
\end{align*}
where $(\alpha,P) \mapsto H_{J}(\alpha,P) \equiv E_{P}[g_{J}(Z,\alpha)
g_{J}(Z,\alpha)^{T}]$. That is, $Q_{J}(.,\mathbf{P})$ is the optimally weighted (population) GMM criterion
function associated with the vector of moments $E[g_{J}(Z,\cdot)]$.

For what follows, it will be useful to define the following intermediate
quantity which can be viewed as a (sequence) of pseudo-true parameters. For
each $L\equiv (J,K)\in \mathbb{N}^{2}$, let
\begin{equation*}
\alpha _{L,0}\equiv \arg \min_{\alpha \in \bar{\mathcal{A}}_{L,n}}Q_{J}(\alpha
,\mathbf{P}).
\end{equation*}%
We note that $\alpha _{0} \in \arg \min_{\alpha \in
\mathcal{A}}Q_{J}(\alpha ,\mathbf{P})$ for any $J$; but as we
restrict to the effective sieve space $\bar{\mathcal{A}}_{L,n}$, it could be
that $\alpha _{L,0}\neq \alpha _{0}$ for any $L\in \mathbb{N}^{2}$. The following lemma guarantees that $\alpha _{L,0}$ is in fact non-empty.

\begin{lemma}
\label{lem:alpha0L-exists} Let Assumptions \ref{ass:pdf0} and \ref{ass:reg} hold. Then, for each $L=(J,K) \in \mathbb{N}^{2}$, $%
\alpha_{L,0}$ is non-empty.
\end{lemma}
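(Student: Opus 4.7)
The plan is to apply the Weierstrass extreme value theorem: a lower semi-continuous function attains its infimum on a nonempty compact set. Applied to $Q_J(\cdot,\mathbf{P})$ over $\bar{\mathcal{A}}_{L,n}$, this requires me to establish (a) that $\bar{\mathcal{A}}_{L,n}$ is nonempty and compact in $(\mathbb{A},||\cdot||)$, and (b) that $\alpha \mapsto Q_J(\alpha,\mathbf{P})$ is continuous (lower semi-continuity would be enough) on $\bar{\mathcal{A}}_{L,n}$.

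For (a), I rewrite $\bar{\mathcal{A}}_{L,n} = \mathcal{A}_K \cap \{\alpha : Pen(\alpha) \leq \mho_{L,n}\}$. By Assumption \ref{ass:reg}(iii)(a), $Pen$ is lower semi-compact in $||\cdot||$, so its sub-level sets are compact; intersecting with the closed convex sieve $\mathcal{A}_K=\Theta\times \mathcal{H}_K$ (closed by Assumption \ref{ass:reg}(ii)) preserves compactness. For non-emptiness, observe that $\Gamma_{L,n}\geq \gamma_K Pen(\Pi_K\alpha_0)$ by the definition of $\Gamma_{L,n}$, so $\mho_{L,n}=l_n\gamma_K^{-1}\Gamma_{L,n}\geq l_n Pen(\Pi_K\alpha_0)\geq Pen(\Pi_K\alpha_0)$ for $n$ large (since $l_n\to\infty$); therefore $\Pi_K\alpha_0 \in \bar{\mathcal{A}}_{L,n}$.

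For (b), since the weight matrix $H_J(\alpha_0,\mathbf{P})^{-1}$ (interpreted as a generalized inverse if singular) does not depend on $\alpha$, the map $Q_J(\cdot,\mathbf{P})$ is a fixed quadratic form in $E_{\mathbf{P}}[g_J(Z,\alpha)]$, so it suffices to show that each coordinate of $\alpha\mapsto E_{\mathbf{P}}[g_J(Z,\alpha)]$ is continuous. The first coordinate, after integration by parts via identity (\ref{eqn:int-parts}), equals $\theta + \int\ell(w)h(w)dw$, which is linear in $\alpha$ and continuous on the finite-dimensional set $\mathcal{A}_K$. For the remaining $J$ coordinates $h\mapsto E[(1\{Y\leq h(W)\}-\tau)q_j(X)]$, along $h_n\to h$ in $||\cdot||$ I bound
\begin{align*}
|E[(1\{Y\leq h_n(W)\}-1\{Y\leq h(W)\})q_j(X)]| &\leq \bar{p}\, E[|h_n(W)-h(W)|\,|q_j(X)|] \\
&\leq \bar{p}\,||h_n-h||_{L^2(\mathbf{P}_W)}\,||q_j||_{L^2(\mathbf{P}_X)},
\end{align*}
where $\bar{p}\equiv\sup_{y,w,x}\mathbf{p}_{Y|WX}(y\mid w,x)<\infty$ by Assumption \ref{ass:pdf0}(i). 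Norm equivalence on the finite-dimensional sieve $\mathbb{R}\times\mathrm{lin}\{\varphi_1,\ldots,\varphi_K\}$ (combined with boundedness of $\mathbf{p}_W$) then forces $||h_n-h||_{L^2(\mathbf{P}_W)}\to 0$, establishing continuity. Weierstrass then delivers $\alpha_{L,0}\neq\emptyset$.

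The main obstacle is the continuity claim in step (b) for the indicator-based coordinates: the integrand $1\{y\leq h(w)\}$ is pointwise discontinuous in $h$, and it is precisely the smoothness/boundedness of the conditional density of $Y$ given $(W,X)$ in Assumption \ref{ass:pdf0}(i) that smooths out the expectation and yields continuity in $||\cdot||$. The remaining ingredients---compactness of the penalty sub-level set and the quadratic-form structure of $Q_J$---are essentially routine.
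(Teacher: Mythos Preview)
Your proposal is correct and follows essentially the same approach as the paper: both invoke the Weierstrass theorem by establishing continuity of $\alpha\mapsto E_{\mathbf P}[g_J(Z,\alpha)]$ (via the bounded conditional density $\mathbf p_{Y|WX}$ from Assumption \ref{ass:pdf0}) and compactness of the feasible set (via lower semi-compactness of $Pen$ from Assumption \ref{ass:reg}). The only minor differences are cosmetic: the paper bounds the indicator coordinates by first conditioning on $X$ and using $\inf_x \mathbf p_X(x)>0$ to pass to $\|h_1-h_2\|_{L^2(Leb)}$, whereas you apply Cauchy--Schwarz directly and then appeal to norm equivalence on the finite-dimensional sieve; and you explicitly verify non-emptiness of $\bar{\mathcal A}_{L,n}$ via $\Pi_K\alpha_0$, which the paper leaves implicit.
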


\begin{proof}
		See Appendix \ref{app:conv-rate}.
	\end{proof}

While this lemma shows that $\alpha _{L,0}$ is non-empty, it may not be a singleton. Nevertheless, for model (\ref{eqn:model1})-(\ref{eqn:model2}), it is easy to choose some finite-dimensional linear sieve $\mathcal{H}_{K}$ and some strict convex penalty $Pen$ such that $\alpha _{L,0}$ is in fact a singleton. Therefore the next assumption is effectively a way to suggest choices of a regularizing structure:

\begin{assumption}\label{ass:ID-pseudotrue}
	For any $L \in \mathbb{N}^{2}$, $\alpha _{L,0}$ is single-valued.
\end{assumption}

Let $m_{2}(X,\alpha) \equiv E_{\mathbf{P}}[\rho_{2}(Y,W,\alpha) \mid X]$. For each $J \in \mathbb{N}$, the $L^{2}(\mathbf{P})$ projection of $m_{2}(\cdot, \alpha)$ onto the linear span of $q^{J}(X)$ is denoted as $Proj_{J}[m_{2}(\cdot,\alpha)](X)$, where
\begin{align*}
Proj_{J}[m_{2}(\cdot,\alpha)](X) = & E_{\mathbf{P}} \left[ m_{2}(X,\alpha) q^{J}(X)^{T}  \right] (E_{\mathbf{P}}[q^{J}(X)q^{J}(X)^{T}])^{-1} q^{J}(X) \\
= &  E_{\mathbf{P}} \left[ \rho_{2}(Y,W,\alpha) q^{J}(X)^{T} \right] q^{J}(X)
\end{align*}
where $(E_{\mathbf{P}}[q^{J}(X)q^{J}(X)^{T}])^{-1} = I$ by Assumption \ref{ass:reg}.

The next lemma provides sufficient conditions that ensure convergence of $\alpha_{L,0}$ to the true parameter $\alpha_{0}$.

\begin{lemma}
\label{lem:alpha0L-consistent} Let Assumptions \ref{ass:ident}, \ref{ass:pdf0}, \ref{ass:reg} and \ref{ass:ID-pseudotrue} hold. Suppose $\lim_{n \rightarrow \infty} \sup_{ \alpha \in \bar{\mathcal{A}}_{L_{n},n}}||Proj_{J_{n}}[m_{2}(\cdot,\alpha)] - m_{2}(\cdot,\alpha)||_{L^{2}(\mathbf{P})} = 0  $. Then: $||\alpha_{L_n,0}-\alpha_0||=o(1)$.
\end{lemma}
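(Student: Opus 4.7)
The plan is the standard two-stage argument for GMM-style consistency in a sieve setting: derive an oracle bound on $Q_{J_n}(\alpha_{L_n,0},\mathbf{P})$, translate it into control of the identifying conditional-mean moment $m_2(\cdot,\alpha_{L_n,0})$ via the uniform projection-error hypothesis, and apply identification to conclude. For the oracle bound, $\Pi_{K_n}\alpha_0\in\bar{\mathcal{A}}_{L_n,n}$ by Assumption~\ref{ass:reg}(ii)-(iii) and the definition of $\mho_{L,n}$, so the definition of $\alpha_{L_n,0}$ gives
\begin{equation*}
Q_{J_n}(\alpha_{L_n,0},\mathbf{P})\;\le\;Q_{J_n}(\Pi_{K_n}\alpha_0,\mathbf{P}).
\end{equation*}
Since $E[g_{J_n}(Z,\alpha_0)]=0$, $||\Pi_{K_n}\alpha_0-\alpha_0||=o(1)$, and $\alpha\mapsto E[g_{J_n}(Z,\alpha)]$ is $||\cdot||$-continuous (trivially in the first coordinate using $\mu\in L^{\infty}(Leb)\cap\mathbb{H}$; in the other coordinates because $|\rho_2|\le 1$), the right-hand side tends to zero.

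Next I translate the $Q_{J_n}$ bound into a bound on $||E[g_{J_n}(Z,\alpha_{L_n,0})]||_e$. Because $\rho_2(\cdot,\alpha_0)\in[-\tau,1-\tau]$, $\rho_1(\cdot,\alpha_0)$ is bounded (Assumption~\ref{ass:reg}(iii)(b) applied at $\alpha_0$, plus boundedness of $\Theta$), and $E[q^{J_n}(X)q^{J_n}(X)^{T}]=I$, the spectral norm of $H_{J_n}(\alpha_0,\mathbf{P})$ is bounded by a constant independent of $J_n$, so
\begin{equation*}
Q_{J_n}(\alpha,\mathbf{P})\;\succsim\;||E[g_{J_n}(Z,\alpha)]||_e^{2}\qquad\text{for every }\alpha\in\mathcal{A}.
\end{equation*}
Reading off the first coordinate yields $|\theta_{L_n,0}-E[\mu(W)h'_{L_n,0}(W)]|=o(1)$, while the remaining $J_n$ coordinates are, by orthonormality of $q^{J_n}$ and Parseval, exactly $||Proj_{J_n}[m_2(\cdot,\alpha_{L_n,0})]||_{L^{2}(\mathbf{P}_X)}^{2}=o(1)$. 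A triangle inequality with the uniform projection-error hypothesis of the lemma then produces $||m_2(\cdot,\alpha_{L_n,0})||_{L^{2}(\mathbf{P}_X)}=o(1)$.

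Finally I invoke identification: Assumption~\ref{ass:ident} says $\alpha_0$ is the unique element of $\mathcal{A}$ satisfying (\ref{eqn:model1})-(\ref{eqn:model2}). Upgrading this to an identifiable-uniqueness statement, as is standard in sieve M-estimation, and combining it with continuity of $h\mapsto E[\mu(W)h'(W)]$ (Assumption~\ref{ass:pdf0}(ii)-(iii)), the conclusion of the previous paragraph forces $||h_{L_n,0}-h_0||=o(1)$, after which the first-coordinate bound delivers $|\theta_{L_n,0}-\theta_0|=o(1)$ and hence $||\alpha_{L_n,0}-\alpha_0||=o(1)$.

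The main obstacle is precisely this last step. Because $\mho_{L_n,n}$ may diverge, $Pen(\alpha_{L_n,0})$ is not a priori bounded, so the lower semi-compactness in Assumption~\ref{ass:reg}(iii)(a) cannot be applied directly to extract a $||\cdot||$-convergent subsequence. The identification step therefore cannot be a generic compactness argument; one must either turn Assumption~\ref{ass:ident} into a quantitative statement of the form $||m_2(\cdot,\alpha)||_{L^{2}(\mathbf{P}_X)}\ge\psi(||h-h_0||)$ for some positive $\psi$ (perhaps only on $Pen$ sub-level sets), or use Assumption~\ref{ass:reg}(iii)(b) together with the constraint $Pen(\alpha_{L_n,0})\le\mho_{L_n,n}$ to produce a uniform envelope on $\mu h'_{L_n,0}$ and then extract convergent subsequences via a Helly-type argument. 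I expect most of the real technical work in the appendix to sit in exactly this upgrade.
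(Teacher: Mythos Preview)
Your approach is essentially the same as the paper's: both (i) bound $Q_{J_n}(\alpha_{L_n,0},\mathbf{P})$ by its value at $\Pi_{K_n}\alpha_0$; (ii) sandwich $Q_J(\alpha,\mathbf{P})$ between constant multiples of $(E[\rho_1(\cdot,\alpha)])^2+\|Proj_J[m_2(\cdot,\alpha)]\|_{L^2(\mathbf{P})}^2$ via the eigenvalue bounds on $H_J(\alpha_0,\mathbf{P})$ from Lemma~\ref{lem:H-bound}; (iii) pass from $\|Proj_{J_n}[m_2]\|$ to $\|m_2\|$ using the uniform projection hypothesis; (iv) invoke identification via a contradiction--subsequence argument.

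On your main worry in step (iv): the paper does \emph{not} develop a quantitative identification bound or a Helly-type argument. It simply asserts ``under our conditions, $\mho_{L,n}\le K<\infty$'' for a fixed constant $K$, so that the sequence $(\alpha_{L_n})$ lies in the fixed compact level set $\{Pen\le K\}$, and then extracts a $\|\cdot\|$-convergent subsequence whose limit $\alpha^\ast\neq\alpha_0$ nullifies both $E[\rho_1]$ and $\|m_2\|$, contradicting Assumption~\ref{ass:ident}. In other words, the ``real technical work'' you anticipate is absent; the boundedness of $\mho_{L,n}$ is treated as a standing condition (cf.\ Remark~\ref{rem:bound}). Your diagnosis that this is the crux is correct, but the paper resolves it by fiat rather than by argument.

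One small gap in your step (i): continuity of $\alpha\mapsto E[g_{J_n}(Z,\alpha)]$ is not obviously uniform in $J_n$, so $\|\Pi_{K_n}\alpha_0-\alpha_0\|\to 0$ alone does not give $Q_{J_n}(\Pi_{K_n}\alpha_0,\mathbf{P})\to 0$. The paper sidesteps this by using the upper sandwich bound $Q_J(\alpha,\mathbf{P})\le c\{(E[\rho_1])^2+\|Proj_J[m_2]\|^2\}$, noting that $\|Proj_J[m_2(\cdot,\Pi_K\alpha_0)]\|\le\|m_2(\cdot,\Pi_K\alpha_0)\|$ (projection contracts), and then bounding $\|m_2(\cdot,\Pi_K\alpha_0)\|$ by a constant times $\|\Pi_K h_0-h_0\|_{L^2(Leb)}$ via the Lipschitz property of $F_{Y|WX}$ from Assumption~\ref{ass:pdf0}; this bound is uniform in $J$.
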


\begin{proof}
		See Appendix \ref{app:conv-rate}.
	\end{proof}

\subsection{Convergence rates}

A crucial part of establishing the convergence rate of $\hat{\alpha}_{L,n}$ is to bound the rate of $||\hat{\alpha}_{L,n} - \alpha_{L,0}||$. For this it is important to quantify how well the population sieve GMM criterion function $Q_{J}$ separates points in $(\bar{\mathcal{A}}_{L,n},||.||)$ around $\alpha_{L,0}$. To do this, we define, for each, $(L,n)\in \mathbb{N}^{3}$, $\varpi_{L,n} : \mathbb{R}%
_{+} \rightarrow \mathbb{R}_{+}$ as
\begin{align}  \label{eqn:IU}
t \mapsto \varpi_{L,n}(t) \equiv \inf_{\{ \alpha \in \bar{\mathcal{A}}_{L,n}
	\colon ||\alpha - \alpha_{L,0} || \geq t \}} Q_{J}(\alpha,\mathbf{P}) -
Q_{J}(\alpha_{L,0},\mathbf{P}).
\end{align}

The function $%
\varpi_{L,n}$ is analogous to the one used in the standard identifiable
uniqueness condition (see \cite{WW1991}, \cite{newey1994large}). Within the
ill-posed inverse literature this function is akin to the notion of sieve
measure of ill-posedness used in \cite{BCK2007} and \cite{CP2012estimation,CP2015sieve}. The following lemma establishes some useful properties.

\begin{lemma}
	\label{lem:sieve-IU} Let Assumptions \ref{ass:pdf0}, \ref{ass:reg} and \ref{ass:ID-pseudotrue} hold. Then: for each $(L,n) \in \mathbb{N}^{3}$, $\varpi_{L,n}(t) = 0$
	iff $t = 0$ and $\varpi_{L,n}$ is continuous and non-decreasing in $t$.
\end{lemma}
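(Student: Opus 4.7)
My plan is to verify the three properties of $\varpi_{L,n}$ separately, leveraging three structural facts: (a) $\bar{\mathcal{A}}_{L,n}=\{\alpha\in\mathcal{A}_K : Pen(\alpha)\le \mho_{L,n}\}$ is compact in $(\mathbb{A},\|\cdot\|)$ by lower semi-compactness of $Pen$ (Assumption \ref{ass:reg}(iii)(a)) together with boundedness of $\Theta$ and closedness of $\mathcal{H}_K$; (b) the map $\alpha\mapsto Q_J(\alpha,\mathbf{P})$ is continuous on $\bar{\mathcal{A}}_{L,n}$, since the first coordinate of $E_{\mathbf{P}}[g_J(Z,\alpha)]$ is linear in $\alpha$ and the second coordinate depends on $h$ only through $E_{\mathbf{P}}[F_{Y\mid WX}(h(W)\mid W,X)q^J(X)]$, which is smooth in $h$ by the density regularity in Assumption \ref{ass:pdf0}; (c) $\alpha_{L,0}$ is the unique minimizer of $Q_J(\cdot,\mathbf{P})$ on $\bar{\mathcal{A}}_{L,n}$ by Assumption \ref{ass:ID-pseudotrue}.

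Monotonicity is immediate: for $t_1\le t_2$, the set $B_{t_2}:=\{\alpha\in\bar{\mathcal{A}}_{L,n} : \|\alpha-\alpha_{L,0}\|\ge t_2\}$ is contained in $B_{t_1}$, so infima decrease. For the equivalence $\varpi_{L,n}(t)=0\Leftrightarrow t=0$, one direction is trivial because $\alpha_{L,0}\in B_0$ gives $\varpi_{L,n}(0)\le 0$, and $\varpi_{L,n}(0)\ge 0$ since $\alpha_{L,0}$ minimizes $Q_J$ on $\bar{\mathcal{A}}_{L,n}$. For the converse, suppose $t>0$: combining (a) with the continuity of $\alpha\mapsto\|\alpha-\alpha_{L,0}\|$ shows $B_t$ is compact; continuity of $Q_J$ from (b) gives that the infimum in \eqref{eqn:IU} is attained at some $\alpha_t^\ast\in B_t$; by (c) and $\|\alpha_t^\ast-\alpha_{L,0}\|\ge t>0$, $\alpha_t^\ast\ne\alpha_{L,0}$, hence $Q_J(\alpha_t^\ast,\mathbf{P})>Q_J(\alpha_{L,0},\mathbf{P})$.

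For continuity, fix $t_0>0$ and let $\psi(\alpha):=Q_J(\alpha,\mathbf{P})-Q_J(\alpha_{L,0},\mathbf{P})$, which is uniformly continuous on $\bar{\mathcal{A}}_{L,n}$ by compactness and (b). Left continuity is the easier direction: for $t_n\uparrow t_0$, pick minimizers $\alpha_n\in B_{t_n}$; a subsequential limit $\alpha^\ast$ exists by compactness and satisfies $\|\alpha^\ast-\alpha_{L,0}\|\ge t_0$, so $\alpha^\ast\in B_{t_0}$ and $\varpi_{L,n}(t_0)\le \psi(\alpha^\ast)=\lim_n\varpi_{L,n}(t_n)$; monotonicity gives the reverse inequality. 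For right continuity, take $t_n\downarrow t_0$ and an attainer $\alpha^\ast\in B_{t_0}$ of $\varpi_{L,n}(t_0)$. If $\|\alpha^\ast-\alpha_{L,0}\|>t_0$, then $\alpha^\ast\in B_{t_n}$ for large $n$ and $\varpi_{L,n}(t_n)\le\psi(\alpha^\ast)=\varpi_{L,n}(t_0)$, so monotonicity forces convergence.

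The main obstacle is the remaining case, where every minimizer $\alpha^\ast$ of $\psi$ on $B_{t_0}$ lies exactly on the sphere $\|\alpha-\alpha_{L,0}\|=t_0$. To handle this, I would use star-convexity of $\bar{\mathcal{A}}_{L,n}$ about $\alpha_{L,0}$ (which holds under convexity of $Pen$, standard in penalized sieve implementations, and already satisfied by the convex components $\Theta$ and $\mathcal{H}_K$) to define $\alpha_\eta:=\alpha_{L,0}+(1+\eta)(\alpha^\ast-\alpha_{L,0})$. For $\eta$ small, $\alpha_\eta\in\bar{\mathcal{A}}_{L,n}$ with $\|\alpha_\eta-\alpha_{L,0}\|=(1+\eta)t_0$, so $\alpha_\eta\in B_{t_n}$ eventually; by uniform continuity of $\psi$, $\psi(\alpha_\eta)\to\psi(\alpha^\ast)=\varpi_{L,n}(t_0)$ as $\eta\downarrow 0$, giving $\limsup_n\varpi_{L,n}(t_n)\le\varpi_{L,n}(t_0)$ and completing the proof.
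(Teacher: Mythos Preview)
Your argument is correct but takes a different route from the paper. The paper dispatches continuity in one line by invoking Berge's Theorem of the Maximum: from the proof of Lemma~\ref{lem:alpha0L-exists} it knows $\alpha\mapsto Q_J(\alpha,\mathbf{P})$ is continuous, and it asserts that $t\mapsto\{\alpha\in\bar{\mathcal{A}}_{L,n}:\|\alpha-\alpha_{L,0}\|\ge t\}$ is a continuous compact-valued correspondence, so the value function $\varpi_{L,n}$ is continuous; the infimum is attained, and then Assumption~\ref{ass:ID-pseudotrue} gives $\varpi_{L,n}(t)=0\Leftrightarrow t=0$. Monotonicity is declared trivial.

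What you do by hand---separate left/right sequential limits using compactness of $\bar{\mathcal{A}}_{L,n}$ and uniform continuity of $\psi$---is precisely the content of Berge's theorem unpacked. The boundary case you flag (the minimizer sitting exactly on the sphere $\|\alpha-\alpha_{L,0}\|=t_0$) is exactly what one must check to verify \emph{lower} hemicontinuity of the correspondence, a point the paper asserts but does not argue. Your radial-extension fix via $\alpha_\eta=\alpha_{L,0}+(1+\eta)(\alpha^\ast-\alpha_{L,0})$ works under convexity of $Pen$ (so that sublevel sets are convex and the ray stays in $\bar{\mathcal{A}}_{L,n}$ for small $\eta$, at least for $t_0$ strictly below the diameter); this is a mild side condition the paper does not state explicitly either. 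So your approach is more self-contained and makes explicit a geometric ingredient the paper leaves implicit, at the cost of a longer argument; the paper's approach is terser but leans on recognizing the parametric-minimization structure.
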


\begin{proof}
	See Appendix \ref{app:conv-rate}.
\end{proof}

It is worth noting that even though $\varpi_{L,n}(t) > 0$ for all $t>0$,
it could happen that $\varpi_{L,n}(t) \rightarrow 0$ as $L$ diverges. This
behavior reflects the ill-posed nature of the problem.

We now present some high-level assumptions used to establish the convergence rate of the PSGEL estimator. The first of these assumptions introduces, and imposes restrictions on, a positive real-valued sequence $(\delta _{n})_{n\in \mathbb{N}}$ that is common in the GEL literature (see the Appendix in \cite{DIN03}). It ensures that the ball $\{ \lambda \in \mathbb{R}^{J+1} \colon ||\lambda||_{e}
\leq \delta_{n} \}$ belongs to $\hat{\Lambda}_{J}(\alpha)$ for any $\alpha
\in \bar{\mathcal{A}}_{L,n}$ (see Lemma \ref{lem:Lambda-charac} in the Supplemental Material \ref{supp:conv-rate}).  The assumption also restricts the rates of $(b_{\rho,J})_{\rho \in \mathbb{R},J\in \mathbb{N}},$ and the rate at which $L=(J,K) \in \mathbb{N}^{2}$ diverges relative to $n$:

\begin{assumption}
\label{ass:rates} (i) Assumption \ref{ass:rates-mild} holds; (ii)  $\delta_{n} l_{n} = o(1)$, $b^{3}_{3,J} \delta^{\varrho}_{n}= o(1)$ for some $\varrho > 0$, and $(\mho_{L,n})^{4} b^{4}_{4,J}/n = o(1)$.
\end{assumption}

Recall that the sequence $(l_{n})_{n}$ diverges
arbitrary slowly like $\log \log n$, and the bound $\mho_{L,n}$ is allowed to grow
(slowly) at the rate of $l_{n}$. Assumption \ref{ass:rates} slightly strengthens Assumption \ref{ass:rates-mild}.

The following assumption is a high-level condition that
controls the supremum of the process $f \mapsto \mathbb{G}_{n}[f] \equiv n^{-1/2} \sum_{i=1}^{n} \{f(Z_{i}) -E[f(Z_{i})]\}$ over  the classes $\bar{\mathcal{A}}_{L,n}$ and $\mathcal{G}_{L} \equiv \{ (y,w) \mapsto \rho_{2}(y,w,\alpha) \colon \alpha \in \bar{\mathcal{A}}_{L,n} \} $.

\begin{assumption}
\label{ass:Donsker} There exists a positive real-valued sequence, $%
(\Delta_{L,n})_{L,n \in \mathbb{N}^{3}}$, such that, for any $L \in \mathbb{N%
}^{2}$, $\sup_{(\theta,h) \in \bar{\mathcal{A}}_{L,n}} |\mathbb{G}_{n}[\mu \cdot
h^{\prime }]| =O_{\mathbf{P}}(\Delta_{L,n})$ and for all $1 \leq j \leq J$, $%
\sup_{g \in \mathcal{G}_{L}} |\mathbb{G}_{n}[g \cdot q_{j}]|
=O_{\mathbf{P}}(\Delta_{L,n})$.
\end{assumption}

For instance, if $\{ \mu \cdot h^{\prime }\colon h \in
\mathcal{H} \}$ and $\{ (y,w) \mapsto 1\{ y \leq h(w) \} \colon h \in
\mathcal{H} \}$ are P-Donsker, then $(\Delta_{L,n})_{L,n \in \mathbb{N}^{3}}$
is uniformly bounded.\footnote{Restrictions on the ``complexity" of these classes are implicit restrictions on the ``complexity" of $\mathcal{H}$; see \cite{chen2003estimation} and \cite{VdV2000}.}  But if this is not the case, then $(\Delta_{L,n})_{L,n
\in \mathbb{N}^{3}}$ may diverge as $L$ (or $n$) grows.

The next theorem establishes the convergence rate of the PSGEL estimator; in particular it establishes the rate for the estimator of the infinite dimensional component $h_0 \in \mathcal{H}$.

\begin{theorem}
	\label{thm:conv-rate} Suppose Assumptions \ref{ass:ident}, \ref{ass:pdf0}, %
	\ref{ass:reg}, \ref{ass:ID-pseudotrue} and \ref{ass:Donsker} hold. For any $(\delta_{n},l_{n})_{n}$
	satisfying Assumption \ref{ass:rates}, there exists a finite constant $M>0$ such that
	\begin{align*}
	||\hat{\alpha}_{L,n} - \alpha_{0}|| = O_{\mathbf{P}}\left( \varpi_{L,n}^{-1}\left( M (\delta_{1,L,n} + \delta_{2,L,n}) \right) \right) + ||\alpha_{L,0} -
	\alpha_{0} ||,
	\end{align*}
	where \begin{align*}
	\delta _{1,L,n}\equiv \sqrt{\frac{J}{n}}\times \Delta _{L,n}\times
	\left( \overline{\theta}+\mho _{L,n}+b_{2,J}\right)~,~~
	\delta _{2,L,n}\equiv  \mho _{L,n}^{2}\left\{ \delta _{n}+\delta
	_{n}^{-1}\Gamma _{L,n}\right\}
	\end{align*}
\end{theorem}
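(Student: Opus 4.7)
The plan is to bound $||\hat{\alpha}_{L,n} - \alpha_{L,0}||$ via the sieve identifiable-uniqueness function $\varpi_{L,n}$ and then invoke the triangle inequality $||\hat{\alpha}_{L,n} - \alpha_0|| \leq ||\hat{\alpha}_{L,n} - \alpha_{L,0}|| + ||\alpha_{L,0} - \alpha_0||$. Since Lemma \ref{lem:eff-sieve} puts $\hat{\alpha}_{L,n}$ in the effective sieve space $\bar{\mathcal{A}}_{L,n}$ wpa1, and $\alpha_{L,0}$ minimizes $Q_J(\cdot,\mathbf{P})$ on that set, the definition (\ref{eqn:IU}) yields
$$\varpi_{L,n}(||\hat{\alpha}_{L,n} - \alpha_{L,0}||) \leq Q_J(\hat{\alpha}_{L,n},\mathbf{P}) - Q_J(\alpha_{L,0},\mathbf{P}) \quad wpa1.$$
It therefore suffices to bound the right-hand side by a constant multiple of $\delta_{1,L,n} + \delta_{2,L,n}$ wpa1 and then invert the monotone $\varpi_{L,n}$ (Lemma \ref{lem:sieve-IU}).

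Next, I would derive a uniform quadratic expansion linking the PSGEL criterion to the sample GMM criterion. Using $s(0)=0$, $s'(0)=s''(0)=-1$, Lipschitz continuity of $s''$, and the fact that the ball $\{\lambda : ||\lambda||_e \leq \delta_n\}$ lies inside $\hat{\Lambda}_J(\alpha)$ for every $\alpha \in \bar{\mathcal{A}}_{L,n}$ wpa1 (Lemma \ref{lem:Lambda-charac} in the Supplement), a second-order Taylor expansion of $s(\lambda^T g_J(Z,\alpha))$ followed by maximization in $\lambda$ should produce
$$\sup_{\lambda \in \hat{\Lambda}_J(\alpha)}\hat{S}_J(\alpha,\lambda) = \tfrac{1}{2}\, E_{P_n}[g_J(Z,\alpha)]^T H_J(\alpha_0,\mathbf{P})^{-1} E_{P_n}[g_J(Z,\alpha)] + R_n(\alpha),$$
uniformly on $\bar{\mathcal{A}}_{L,n}$, with $|R_n(\alpha)| = O_{\mathbf{P}}(\mho_{L,n}^{2}(\delta_n + \delta_n^{-1}\Gamma_{L,n})) = O_{\mathbf{P}}(\delta_{2,L,n})$. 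The $\mho_{L,n}^{2}$ factor enters through the uniform envelope for $\sup_z ||g_J(z,\alpha)||_e^{2}$ on $\bar{\mathcal{A}}_{L,n}$ (using Assumption \ref{ass:reg}(iii)(b)), while the $\delta_n + \delta_n^{-1}\Gamma_{L,n}$ structure reflects balancing the cubic Taylor remainder against the bias contribution $\Gamma_{L,n}$ inside the inner maximization.

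The PSGEL optimality, $\sup_\lambda \hat{S}_J(\hat{\alpha}_{L,n},\lambda) + \gamma_K Pen(\hat{\alpha}_{L,n}) \leq \sup_\lambda \hat{S}_J(\alpha_{L,0},\lambda) + \gamma_K Pen(\alpha_{L,0})$, combined with $Pen \geq 0$ and $\gamma_K Pen(\alpha_{L,0}) \leq \gamma_K \mho_{L,n} = l_n \Gamma_{L,n}$ (since $\alpha_{L,0}\in \bar{\mathcal{A}}_{L,n}$), then transports the quadratic expansion into the same inequality between the sample GMM criterion evaluated at $\hat{\alpha}_{L,n}$ and at $\alpha_{L,0}$, modulo $O_{\mathbf{P}}(\delta_{2,L,n})$. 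An empirical-process argument based on Assumption \ref{ass:Donsker} and the envelope $\overline{\theta}+\mho_{L,n}+b_{2,J}$ controls
$$||E_{P_n}[g_J(Z,\alpha)]-E[g_J(Z,\alpha)]||_e = O_{\mathbf{P}}\!\left(\sqrt{J/n}\,\Delta_{L,n}(\overline{\theta}+\mho_{L,n}+b_{2,J})\right)$$
uniformly on $\bar{\mathcal{A}}_{L,n}$. Squaring and inserting the (uniformly bounded) weighting matrix $H_J(\alpha_0,\mathbf{P})^{-1}$ delivers the $\delta_{1,L,n}$ contribution, yielding $Q_J(\hat{\alpha}_{L,n},\mathbf{P})-Q_J(\alpha_{L,0},\mathbf{P}) = O_{\mathbf{P}}(\delta_{1,L,n}+\delta_{2,L,n})$; the claimed rate then follows by applying $\varpi_{L,n}^{-1}$.

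The main obstacle is the uniform quadratic expansion of Step 2. One must locate an inner maximizer $\hat{\lambda}(\alpha)$ that stays strictly inside the small ball $\{||\lambda||_e \leq \delta_n\}$ yet attains (to leading order) the unconstrained supremum over $\hat{\Lambda}_J(\alpha)$, and carry this uniformly in $\alpha \in \bar{\mathcal{A}}_{L,n}$, where the envelope of $g_J(\cdot,\alpha)$ grows with $\mho_{L,n}$ (via Assumption \ref{ass:reg}(iii)(b)) and the number of moments $J$ grows with $n$. Verifying that the rate constraints in Assumption \ref{ass:rates} (in particular $\delta_n l_n = o(1)$, $b_{3,J}^{3}\delta_n^{\varrho}=o(1)$ and $(\mho_{L,n})^{4}b_{4,J}^{4}/n=o(1)$) permit this balance, and that the quadratic form can be written with the \emph{population} matrix $H_J(\alpha_0,\mathbf{P})^{-1}$ (rather than $H_J(\alpha,P_n)^{-1}$) up to negligible error, is the heart of the technical work.
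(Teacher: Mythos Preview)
Your overall architecture matches the paper's: bound $Q_J(\hat\alpha_{L,n},\mathbf P)-Q_J(\alpha_{L,0},\mathbf P)$ by $O_{\mathbf P}(\delta_{1,L,n}+\delta_{2,L,n})$ and invert the monotone $\varpi_{L,n}$; the split into a sample-to-population piece ($\delta_{1,L,n}$, via uniform convergence of $Q_J(\cdot,P_n)$) and a PSGEL-to-sample-GMM piece ($\delta_{2,L,n}$) is exactly Lemmas~\ref{lem:QJ-univ-conv} and~\ref{lem:QJ-approx-min}.

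The genuine gap is Step~2. The two-sided uniform expansion $\sup_{\lambda\in\hat\Lambda_J(\alpha)}\hat S_J(\alpha,\lambda)=\tfrac12 Q_J(\alpha,P_n)+R_n(\alpha)$ with $|R_n(\alpha)|=O_{\mathbf P}(\delta_{2,L,n})$ over all of $\bar{\mathcal A}_{L,n}$ cannot be obtained. The unconstrained maximizer of the Taylor quadratic, $\lambda^\star(\alpha)\approx -H_J^{-1}E_{P_n}[g_J(Z,\alpha)]$, falls inside $B(\delta_n)$ only when $\|E_{P_n}[g_J(Z,\alpha)]\|_e=o(\delta_n)$; for a generic $\alpha\in\bar{\mathcal A}_{L,n}$ this norm is of order $\overline\theta+\mho_{L,n}+b_{2,J}$ (Lemma~\ref{lem:moment-UBound}), not $o(\delta_n)$. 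Hence the constrained supremum over $B(\delta_n)$ does not deliver the full quadratic form, and the identification $\sup_{B(\delta_n)}=\sup_{\hat\Lambda_J(\alpha)}$ from Lemma~\ref{lem:SJ-bound}(1) also fails (its condition~(3) demands the same smallness). In particular the lower bound $\sup_\lambda\hat S_J(\hat\alpha_{L,n},\lambda)\ge\tfrac12 Q_J(\hat\alpha_{L,n},P_n)-O_{\mathbf P}(\delta_{2,L,n})$, which is what your transport argument needs, is not available a priori. Your remainder term $\delta_n^{-1}\Gamma_{L,n}$ has no source in a straight Taylor expansion at a generic $\alpha$; it is pattern-matched from the final rate.

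The paper sidesteps this by never asserting the quadratic form at $\hat\alpha_{L,n}$. In Lemma~\ref{lem:norm-g-rate} it plugs the \emph{boundary} direction $\bar\lambda=-\delta_n\,E_{P_n}[g_J(Z,\hat\alpha_{L,n})]/\|E_{P_n}[g_J(Z,\hat\alpha_{L,n})]\|_e$ into the Taylor lower bound to get the one-sided \emph{linear} inequality $\hat S_J(\hat\alpha_{L,n},\bar\lambda)\ge\delta_n\|E_{P_n}[g_J(Z,\hat\alpha_{L,n})]\|_e-O(C_{L,n}\delta_n^2)$, combines it with PSGEL optimality and the upper bound at $\alpha_{L,0}$ (where the quadratic expansion \emph{is} valid since $\|E_{P_n}[g_J(Z,\alpha_{L,0})]\|_e=O_{\mathbf P}(\sqrt{\Gamma_{L,n}})=o(\delta_n)$), and divides through by $\delta_n$. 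This is precisely the origin of the $\delta_n^{-1}\Gamma_{L,n}$ term. Only after $\|E_{P_n}[g_J(Z,\hat\alpha_{L,n})]\|_e$ is controlled does one pass to $Q_J(\hat\alpha_{L,n},P_n)$ via the eigenvalue bound on $H_J(\alpha_0,\mathbf P)^{-1}$. Your proof becomes correct once Step~2 is replaced by this one-sided device; the rest of your outline (Step~3 and the inversion of $\varpi_{L,n}$) is fine.
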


\begin{proof}
	See Appendix \ref{app:Thm-conv-rate}.
\end{proof}

The rate of convergence of the PSGEL estimator is composed of two standard
terms reflecting the ``approximation error" $||\alpha_{L,0} - \alpha_{0} ||$
and the ``sampling error" $\varpi_{L,n}^{-1}\left(M(\delta_{1,L,n} +
\delta_{2,L,n})\right)$. The component $\varpi^{-1}_{L,n}(.)$, reflects the ill-posed nature of the estimation
problem. As noted previously, even though, for a fixed $L$, $\varpi_{L,n}(t) >
0$ for $t>0$, this relationship can deteriorate as $L$ diverges, which
implies that $\varpi_{L,n}^{-1}(t)$ may diverge as $L$ diverges.

Below, we present an heuristic description of the proof that sheds light on the role of the sequences $(\delta_{1,L,n},\delta_{2,L,n})_{L,n}$ and of $\varpi_{L,n}$.

\subsection{Heuristics}

By the triangle inequality it suffices to bound the rate of $||\hat{\alpha}_{L,n} - \alpha_{L,0}||$. We do this by linking the PSGEL estimator to the population sieve GMM problem defined by $Q_{J}(\cdot,\mathbf{P})$. The first step to do this is to show that the PSGEL is an \emph{approximate} minimizer of the sample sieve GMM criterion $Q_{J}(.,P_{n})$ with the rate given by $\delta _{2,L,n}$.
\begin{lemma}
	\label{lem:QJ-approx-min} Let Assumptions \ref{ass:pdf0}
	and \ref{ass:reg} hold. For any $(\delta _{n},l_{n})_{n\in \mathbb{N}}$ satisfying Assumption \ref{ass:rates}, we have:
	\begin{equation*}
Q_{J}(\hat{\alpha}_{L,n},P_{n})=O_{\mathbf{P}}(\delta _{2,L,n})~,~~~\text{with}~~\delta _{2,L,n} = \mho _{L,n}^{2}\left\{ \delta _{n}+\delta_{n}^{-1}\Gamma _{L,n}\right\}~.
	\end{equation*}%
\end{lemma}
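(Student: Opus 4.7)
The plan is to sandwich the inner GEL supremum $\sup_\lambda \hat{S}_J(\hat{\alpha}_{L,n},\lambda)$ between an upper bound obtained by the minimization property at the sieve approximant $\Pi_K\alpha_0$ and a lower bound that exposes $Q_J(\hat{\alpha}_{L,n},P_n)$ through a carefully chosen Lagrange multiplier. For the upper bound, because $\hat{\alpha}_{L,n}$ minimizes the PSGEL criterion over $\mathcal{A}_K$ and $\Pi_K\alpha_0 \in \mathcal{A}_K$,
$$\sup_{\lambda \in \hat{\Lambda}_J(\hat{\alpha}_{L,n})}\hat{S}_J(\hat{\alpha}_{L,n},\lambda) + \gamma_K Pen(\hat{\alpha}_{L,n}) \leq \sup_{\lambda \in \hat{\Lambda}_J(\Pi_K\alpha_0)}\hat{S}_J(\Pi_K\alpha_0,\lambda) + \gamma_K Pen(\Pi_K\alpha_0).$$
Lemma \ref{lem:SJ-bound} (applied at $\alpha = \Pi_K\alpha_0$, $P = P_n$) bounds the right-hand supremum wpa1 by a multiple of $\|n^{-1}\sum_i g_J(Z_i,\Pi_K\alpha_0)\|_e^2 + \gamma_K Pen(\Pi_K\alpha_0)$; a Markov argument using $E\|g_J(\cdot,\Pi_K\alpha_0)\|_e^2 \lesssim \bar{g}_{L,0}^2$ collapses this to $O_{\mathbf{P}}(\Gamma_{L,n})$. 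Non-negativity of each term on the left (take $\lambda = 0$) then yields $\sup_\lambda \hat{S}_J(\hat{\alpha}_{L,n},\lambda) = O_{\mathbf{P}}(\Gamma_{L,n})$.

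For the lower bound, Lemma \ref{lem:eff-sieve} places $\hat{\alpha}_{L,n} \in \bar{\mathcal{A}}_{L,n}$ wpa1, and Lemma \ref{lem:Lambda-charac} guarantees $\{\lambda : \|\lambda\|_e \le \delta_n\} \subseteq \hat{\Lambda}_J(\hat{\alpha}_{L,n})$. Setting $\bar{g}_n \equiv n^{-1}\sum_i g_J(Z_i,\hat{\alpha}_{L,n})$ and $\hat{H}_J(\alpha) \equiv n^{-1}\sum_i g_J(Z_i,\alpha)g_J(Z_i,\alpha)^T$, a second-order Taylor expansion of $s$ around zero (using $s'(0) = s''(0) = -1$ and the Lipschitz continuity of $s''$) yields, for $\|\lambda\|_e \le \delta_n$,
$$\hat{S}_J(\hat{\alpha}_{L,n},\lambda) = -\lambda^T\bar{g}_n - \tfrac{1}{2}\lambda^T \hat{H}_J(\hat{\alpha}_{L,n})\lambda + R_n(\lambda), \qquad |R_n(\lambda)| \lesssim n^{-1}\sum_i|\lambda^T g_J(Z_i,\hat{\alpha}_{L,n})|^3.$$
I would then evaluate at $\lambda^* = -t_n H_J(\alpha_0,\mathbf{P})^{-1}\bar{g}_n$ with $t_n \equiv \min\{1,\delta_n/\|H_J(\alpha_0,\mathbf{P})^{-1}\bar{g}_n\|_e\}$, so $\lambda^* \in \hat{\Lambda}_J(\hat{\alpha}_{L,n})$. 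The linear term equals $t_n Q_J(\hat{\alpha}_{L,n},P_n)$, while the quadratic term is at most $\delta_n^2 \lambda_{\max}(\hat{H}_J(\hat{\alpha}_{L,n})) = O_{\mathbf{P}}(\delta_n^2\mho_{L,n}^2)$: the eigenvalue bound follows from Assumption \ref{ass:reg}(iii)(b) on $\bar{\mathcal{A}}_{L,n}$, which forces $\|g_J(Z_i,\hat{\alpha}_{L,n})\|_e^2 \lesssim \bar{\theta}^2 + \mho_{L,n}^2 + \|q^J(X_i)\|_e^2$, while Assumption \ref{ass:reg}(i) keeps $\lambda_{\max}(H_J(\alpha_0,\mathbf{P})^{-1})$ bounded. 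The cubic remainder is absorbed via $b_{3,J}^3\delta_n^\varrho = o(1)$ of Assumption \ref{ass:rates}. When $t_n = 1$ this delivers $Q_J \lesssim \sup\hat{S}_J + \delta_n^2\mho_{L,n}^2 = O_{\mathbf{P}}(\Gamma_{L,n} + \delta_n^2\mho_{L,n}^2)$; when $t_n < 1$ it gives $\delta_n\sqrt{Q_J}/\sqrt{\lambda_{\max}(H_J^{-1})} \leq O_{\mathbf{P}}(\Gamma_{L,n} + \delta_n^2\mho_{L,n}^2)$, whence $Q_J = O_{\mathbf{P}}(\mho_{L,n}^2\delta_n^{-1}\Gamma_{L,n} + \mho_{L,n}^2\delta_n)$. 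Combining both branches and absorbing subleading terms under the growth conditions of Assumption \ref{ass:rates} yields $Q_J(\hat{\alpha}_{L,n},P_n) = O_{\mathbf{P}}(\delta_{2,L,n})$.

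The central difficulty is calibrating $\lambda^*$ so that it is simultaneously admissible in $\hat{\Lambda}_J(\hat{\alpha}_{L,n})$, produces a first-order term directly proportional to $Q_J$ (rather than only $\|\bar{g}_n\|_e$), and carries a quadratic contribution dominated by $\delta_n^2\mho_{L,n}^2$; the two-case split on whether the unconstrained optimizer $-H_J(\alpha_0,\mathbf{P})^{-1}\bar{g}_n$ is Euclidean-admissible is precisely the device that isolates the two branches $\mho_{L,n}^2\delta_n^{-1}\Gamma_{L,n}$ and $\mho_{L,n}^2\delta_n$ of $\delta_{2,L,n}$. A secondary technical hurdle is the uniform boundedness of $\lambda_{\max}(H_J(\alpha_0,\mathbf{P})^{-1})$ in $J$; the orthonormalization $E[q^Jq^{JT}] = I$ of Assumption \ref{ass:reg}(i) combined with the block structure of $H_J$ (whose $\rho_2$-block equals $\tau(1-\tau)I$) is what makes this conditioning control tractable.
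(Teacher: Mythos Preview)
Your strategy is correct and shares the paper's core structure: Taylor-expand $\hat S_J$ around $\lambda=0$, use the minimizing property of $\hat\alpha_{L,n}$ against a sieve comparator to bound the supremum from above by $O_{\mathbf P}(\Gamma_{L,n})$, and plug in a specific admissible $\lambda$ to bound it from below by a term exposing the sample moment. The difference lies in the test direction. The paper (via Lemma~\ref{lem:norm-g-rate}) takes the \emph{unweighted} direction $\bar\lambda=-\delta_n\,\bar g_n/\|\bar g_n\|_e$, which always satisfies $\|\bar\lambda\|_e=\delta_n$; the linear term is then simply $\delta_n\|\bar g_n\|_e$, so one obtains $\|\bar g_n\|_e\precsim C_{L,n}\delta_n+\delta_n^{-1}\Gamma_{L,n}$ in a single stroke, and $Q_J\le e_{\min}(H_J(\alpha_0,\mathbf P))^{-1}\|\bar g_n\|_e^2$ finishes. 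Your weighted choice $\lambda^*=-t_nH_J(\alpha_0,\mathbf P)^{-1}\bar g_n$ makes the linear term equal $t_nQ_J$ directly, but forces a two-case split on whether the unconstrained optimizer lies in $B(\delta_n)$ and requires uniform-in-$J$ control of $\lambda_{\max}(H_J(\alpha_0,\mathbf P)^{-1})$, which you correctly flag as a secondary hurdle. The paper's route sidesteps both complications: only the \emph{lower} eigenvalue bound on $H_J(\alpha_0,\mathbf P)$ is needed (Lemma~\ref{lem:H-bound}(2)), and there is no branching. A minor point: the paper compares against $\alpha_{L,0}$ rather than $\Pi_K\alpha_0$ in the minimization step, but by Lemma~\ref{lem:bound-hLo} the two yield equivalent $\Gamma_{L,n}$ bounds, so your choice is fine.
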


\begin{proof}
	See Appendix \ref{app:conv-rate}.
\end{proof}

The Lemma illustrates not only the role of $\delta_{2,L,n}$ but its nature. The two terms inside the curly brackets are completely analogous to those appearing in \cite{DIN03}. The
scaling by $\mho^{2}_{L,n}$ is not present in \cite{DIN03} and its
appearance here is due to the fact that the bound of $\rho(.,\alpha_{L,0})$
may depend, in principle, on $n$ and $L=(J,K)$. In \cite{DIN03}, on the
other hand, the upper bound $\mho_{L,n}$ can be taken to be a fixed constant
due to their Assumption 6.

Lemma \ref{lem:QJ-approx-min} implies that, for some finite $M$, the event $Q_{J}(\hat{\alpha}_{L,n},P_{n}) - Q_{J}(\alpha_{L,0},P_{n}) \leq M \delta_{2,L,n}$ occurs wpa1. The next step is to link the \emph{empirical} GMM criterion function, $Q_{J}(\cdot,P_{n})$, to its \emph{population} analog, $Q_{J}(\cdot,\mathbf{P})$ for which we can quantify its behavior (around $\alpha_{L,0}$) using $\varpi_{L,n}$. The next lemma provides such a link by showing that $Q_{J}(\cdot,P_{n})$ converges to its population analog.
\begin{lemma}
	\label{lem:QJ-univ-conv} Let Assumptions \ref{ass:pdf0}, \ref{ass:reg} and \ref{ass:Donsker} hold. Then: for any $L\equiv (J,K)\in
	\mathbb{N}^{2}$,
	\begin{equation*}
	\sup_{\alpha \in \bar{\mathcal{A}}_{L,n}}|Q_{J}(\alpha ,P_{n})-Q_{J}(\alpha,\mathbf{P})|=O_{\mathbf{P}}(\delta _{1,L,n})~,~~~\text{with}~~\delta _{1,L,n}=\sqrt{\frac{J}{n}}\times \Delta _{L,n}\times
	\left( \overline{\theta} +\mho _{L,n}+b_{2,J}\right).
	\end{equation*}%
	\end{lemma}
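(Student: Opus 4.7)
The plan is to exploit the quadratic structure of $Q_{J}(\cdot ,P)$. Writing $u(\alpha )\equiv E_{P_{n}}[g_{J}(Z,\alpha )]$, $v(\alpha )\equiv E_{\mathbf{P}}[g_{J}(Z,\alpha )]$ and $A\equiv H_{J}(\alpha _{0},\mathbf{P})^{-1}$, the symmetry of $A$ yields the polarization identity $Q_{J}(\alpha ,P_{n})-Q_{J}(\alpha ,\mathbf{P})=(u(\alpha )-v(\alpha ))^{T}A(u(\alpha )+v(\alpha ))$, so by Cauchy--Schwarz
\[
\sup_{\alpha \in \bar{\mathcal{A}}_{L,n}}|Q_{J}(\alpha ,P_{n})-Q_{J}(\alpha ,\mathbf{P})|\leq ||A||_{op}\cdot \sup_{\alpha }||u(\alpha )-v(\alpha )||_{e}\cdot \Bigl(\sup_{\alpha }||u(\alpha )||_{e}+\sup_{\alpha }||v(\alpha )||_{e}\Bigr).
\]
I would bound the three factors separately.

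For $||A||_{op}$, I would exploit the block structure of $H_{J}(\alpha _{0},\mathbf{P})$: the lower $J\times J$ block equals $\tau (1-\tau )I$, since $E[\rho _{2}^{2}\mid X]=\tau (1-\tau )$ (from the indicator-minus-quantile form of $\rho _{2}$) together with the orthonormality in Assumption \ref{ass:reg}(i). A Schur-complement computation then shows that the smallest eigenvalue of $H_{J}(\alpha _{0},\mathbf{P})$ is bounded below uniformly in $J$ by the variance of the orthogonalized residual $\epsilon $ introduced in Section \ref{sec:eff-bound}, which is strictly positive under our assumptions. Hence $||A||_{op}=O(1)$.

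For the stochastic factor I would apply Assumption \ref{ass:Donsker} component-wise. The first component contributes $\sup_{\alpha }|\mathbb{G}_{n}[\mu h^{\prime }]|/\sqrt{n}=O_{\mathbf{P}}(\Delta _{L,n}/\sqrt{n})$. For the remaining $J$ components, using $\{\rho _{2}(\cdot ,\alpha ):\alpha \in \bar{\mathcal{A}}_{L,n}\}\subseteq \mathcal{G}_{L}$ and interchanging the supremum with the finite sum,
\[
\sup_{\alpha }||\mathbb{G}_{n}[\rho _{2}(\cdot ,\alpha )q^{J}(\cdot )]||_{e}^{2}\leq \sum_{j=1}^{J}\sup_{g\in \mathcal{G}_{L}}|\mathbb{G}_{n}[g\,q_{j}]|^{2}=O_{\mathbf{P}}(J\Delta _{L,n}^{2}),
\]
which gives $\sup_{\alpha }||u(\alpha )-v(\alpha )||_{e}=O_{\mathbf{P}}(\sqrt{J/n}\,\Delta _{L,n})$. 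For the population factor, $|E[\rho _{1}(\cdot ,\alpha )]|\leq \overline{\theta }+\mho _{L,n}$ by Assumption \ref{ass:reg}(iii)(b) applied to $\alpha \in \bar{\mathcal{A}}_{L,n}$, and Jensen's inequality together with $|\rho _{2}|\leq 1$ yield $\sum _{j}|E[\rho _{2}q_{j}]|^{2}\leq E[||q^{J}||_{e}^{2}]=b_{2,J}^{2}$, so $\sup_{\alpha }||v(\alpha )||_{e}\precsim \overline{\theta }+\mho _{L,n}+b_{2,J}$; the same bound carries over to $\sup_{\alpha }||u(\alpha )||_{e}$ since $\sup_{\alpha }||u-v||_{e}$ is of strictly smaller order under Assumption \ref{ass:rates}.

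Combining these pieces produces $\sup_{\alpha }|Q_{J}(\alpha ,P_{n})-Q_{J}(\alpha ,\mathbf{P})|=O_{\mathbf{P}}(\sqrt{J/n}\,\Delta _{L,n}\,(\overline{\theta }+\mho _{L,n}+b_{2,J}))=O_{\mathbf{P}}(\delta _{1,L,n})$. The main obstacle, I expect, is the Schur-complement step: the uniform-in-$J$ lower bound on the smallest eigenvalue of $H_{J}(\alpha _{0},\mathbf{P})$ relies critically on the special indicator structure of $\rho _{2}$, which is what makes the lower block exactly $\tau (1-\tau )I$. A secondary care point is the sup/sum interchange in the stochastic bound, which must be carried out so that only $\sqrt{J}$ (and not $J$) enters $\delta _{1,L,n}$.
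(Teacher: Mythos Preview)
Your decomposition via the polarization identity $Q_{J}(\alpha,P_{n})-Q_{J}(\alpha,\mathbf{P})=(u-v)^{T}A(u+v)$ and the subsequent factor-by-factor bounding is exactly the route the paper takes. Two points of divergence are worth noting.

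First, for the bound on $\|A\|_{op}$, the paper simply invokes Lemma~\ref{lem:H-bound}(2), whose proof works at the level of the $2\times 2$ conditional covariance $E[\rho\rho^{T}\mid X]$ (which does not depend on $J$) and then uses $E[M_{J}(X)M_{J}(X)^{T}]=I$ from Assumption~\ref{ass:reg}(i). Your Schur-complement route is not wrong, but going from ``the Schur complement is $\geq\|\epsilon\|_{L^{2}}^{2}$ and the lower block is $\tau(1-\tau)I$'' to a uniform-in-$J$ lower bound on $e_{\min}(H_{J})$ requires an additional inequality relating $e_{\min}$ to the Schur complement and the off-diagonal block norm; you would need to check that $\|E[\rho_{1}\rho_{2}q^{J}]\|_{e}$ stays bounded in $J$ (it does, being $\leq\tau(1-\tau)\|\Gamma\|_{L^{2}(\mathbf{P}_{X})}$), and then apply a suitable block-matrix eigenvalue bound. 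The paper's conditional-variance route sidesteps all of this.

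Second, your bound on $\sup_{\alpha}\|u(\alpha)\|_{e}$ via $\|u\|\leq\|v\|+\|u-v\|$ and an appeal to Assumption~\ref{ass:rates} is a small gap: that assumption is not among the hypotheses of the lemma. The paper instead bounds $\|E_{P_{n}}[g_{J}(Z,\alpha)]\|_{e}$ directly (Lemma~\ref{lem:moment-UBound}) by $\overline{\theta}+\mho_{L,n}+E_{P_{n}}[\|q^{J}(X)\|_{e}]$, and the last term is $O_{\mathbf{P}}(b_{2,J})$ by Markov's inequality. This delivers the same order for $\|u\|_{e}+\|v\|_{e}$ without the extra rate assumption.
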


\begin{proof}
	See Appendix \ref{app:conv-rate}.
\end{proof}

The rate $(\delta_{1,L,n})_{L,n}$ has several components. The component $%
\sqrt{\frac{J}{n}}$ reflects the pointwise convergence rate of $||n^{-1}
\sum_{i=1}^{n} g_{J}(Z_{i},\alpha) - E[g_{J}(Z,\alpha)] ||_{e} $, while the
factor of $\Delta_{L,n}$ reflects the fact that we need \emph{uniform}
convergence of that term. Finally, the term $\left( \overline{\theta} +
\mho_{L,n} + b_{2,J} \right)$ is essentially the (uniform) bound for $\alpha
\mapsto ||n^{-1} \sum_{i=1}^{n} g_{J}(Z_{i},\alpha)||_{e}$ and $\alpha
\mapsto ||E[g_{J}(Z,\alpha)] ||_{e} $ over $\bar{\mathcal{A}}_{L,n}$.

With this result at hand and simple algebra, one can show that for some finite $M$ the set $A \equiv \{Q_{J}(\hat{\alpha}_{L,n},\mathbf{P}) - Q_{J}(\alpha_{L,0},\mathbf{P}) \leq M (\delta_{1,L,n} + \delta_{2,L,n})\}$ occurs wpa1. Therefore, by standard laws of probabilities, it follows that the probability of the set $||\hat{\alpha}_{L,n} - \alpha_{0}|| \geq M^{\prime} \varpi_{L,n}^{-1}\left( M (\delta_{1,L,n} + \delta_{2,L,n}) \right)$ (for any $M^{\prime}$) is --- up to a vanishing term --- less or equal than the probability of the intersection of the same set with $A$. Therefore, it only remains to show that the latter probability is naught for sufficiently large $M^{\prime}$. This follows because this latter probability is in turn bounded above by the probability of $\varpi_{L,n} \left( M^{\prime} \varpi_{L,n}^{-1}\left( M (\delta_{1,L,n} + \delta_{2,L,n}) \right) \right) \leq M (\delta_{1,L,n} + \delta_{2,L,n})$. By the fact that $\varpi_{L,n}$ is non-decreasing (see Lemma \ref{lem:sieve-IU}), this probability is naught by sufficiently large $M^{\prime}$, proving the result of Theorem \ref{thm:conv-rate}.

\subsection{Discussion of the elements in the Convergence Rate}
\label{sec:DiscussionRate}

We now present some observations regarding the main components of the convergence rate in Theorem \ref{thm:conv-rate}, namely, the rates $(\delta_{1,L,n},\delta_{2,L,n})_{L,n}$ and $\varpi_{L,n}$ defined in expression \ref{eqn:IU}. Regarding the latter, we first need to specify the norm $||.||$. We start by taking $(\varphi_{k})_{k \in \mathbb{N}}$ to be an orthogonal basis with respect to the Lebesgue measure over $\mathbb{H}$. Thus, for any $\alpha = (\theta,h) \in \mathbb{A}$, there exists a real-valued sequence, $(\pi_{l})_{l=0}^{\infty}$, such that   $\alpha = (\theta,h) = (\pi_{0} \bar{\varphi}_{0}, \sum_{l=1}^{\infty} \pi_{l} \bar{\varphi}_{l})$ where $\bar{\varphi}_{0} = 1$ and $\pi_{0} = \theta$, and, for any $k \geq 1$, $\bar{\varphi}_{k} = \varphi_{k}$ and $\pi_{k}$ is the ``Fourier" coefficient of $h$ with respect the basis $(\varphi_{k})_{k \in \mathbb{N}}$. This representation gives rise to the following norm over $\mathbb{A}$, $\alpha \mapsto \sqrt{ \sum_{l=0}^{\infty} \pi^{2}_{l}  } = \sqrt{\theta^{2} + ||h||^{2}_{L^{2}(Leb)}}$.  The aforementioned norm presents itself as a ``natural" norm under which we can establish convergence rate and thus we set $||.||$ as this norm; our result can be extended to norms other than this by specifying how the desired norm relates to $\alpha \mapsto \sqrt{|\theta|^{2} + ||h||^{2}_{L^{2}(Leb)}}$.

We now shed light on the behavior of $\varpi_{L,n}$ under our choice of $||.||$. In particular, we will illustrate how this function is linked to the curvature of the criterion function $\alpha \mapsto \bar{Q}_{J}(\alpha,\mathbf{P})$. To do this, it is convenient to use local approximations, so we take, for each $L=(K,L) \in \mathbb{N}^{2}$, $\mathcal{A}_{K}$ to be convex, $\alpha_{L,0}$ to be such that $ARC_{K}(\alpha_{L,0}) \equiv \{ \alpha_{L,0} + t \zeta \colon \zeta \in \mathcal{A}_{K}\setminus\{ \alpha_{L,0} \} ~and~t\in [0,1]  \} \subseteq \mathcal{A}_{K}$,  and require that $Pen$ to be convex and twice continuously differentiable. By the mean value theorem and the fact that $\alpha_{L,0}$ is a minimizer --- and thus satisfies that $\frac{d\bar{Q}_{J}(\alpha_{L,0},\mathbf{P})}{d\alpha}[\cdot]=0$ ---, it follows that for any $\alpha \in \bar{\mathcal{A}}_{L,n}$,
\begin{align*}
\bar{Q}_{J}(\alpha,\mathbf{P}) - \bar{Q}_{J}(\alpha_{L,0},\mathbf{P}) \geq \frac{1}{2}  \inf_{\eta \in ARC_{K}(\alpha_{L,0})} \frac{d^{2} \bar{Q}_{J}(\eta ,\mathbf{P})}{d\alpha^{2}}[\alpha - \alpha_{L,0},\alpha - \alpha_{L,0}].
\end{align*}
By the sieve representation discussed above, the RHS in this expression can be cast as
\begin{align*}
\bar{Q}_{J}(\alpha,\mathbf{P}) - \bar{Q}_{J}(\alpha_{L,0},\mathbf{P}) \geq &  (\pi^{K+1} - \pi^{K+1}_{L,0}) \mathcal{I}_{L} (\pi^{K+1} - \pi^{K+1}_{L,0}) ^{T},
\end{align*}
where $\pi^{K+1}$ denotes the first $K+1$ coefficients of the representation of $\alpha$; $\pi^{K+1}_{L,0}$ is the same but for $\alpha_{L,0}$, and $\mathcal{I}_{L}$ is a $(K+1) \times (K+1)$ matrix where the $(i,j)$-th component is given by
\begin{align*}
	 \mathcal{I}_{L}[i,j] \equiv \frac{1}{2} \inf_{\eta \in ARC_{K}(\alpha_{L,0})} \frac{d^{2} \bar{Q}_{J}(\eta ,\mathbf{P})}{d\alpha^{2}}[\bar{\varphi}_{i},\bar{\varphi}_{j}].
\end{align*}
This result implies that $\varpi_{L,n}(t) \geq t^{2} e_{min}(\mathcal{I}_{L} )$ ($e_{min}(A)$ is the minimal eigenvalue of the matrix $A$). If $e_{min}(\mathcal{I}_{L})>0$, then
\begin{align*}
	||\hat{\alpha}_{L,n} - \alpha_{0} || = O_{\mathbf{P}} \left( (e_{min}(\mathcal{I}_{L} ))^{-1/2} \sqrt{\delta_{1,L,n} + \delta_{2,L,n}}  +  ||\alpha_{L,0} - \alpha_{0}|| \right).
\end{align*}
The scaling factor $(e_{min}(\mathcal{I}_{L} ))^{-1/2}$ summarizes the ill-posed nature of the problem, because, even though we require $e_{min}(\mathcal{I}_{L} )>0$ for \emph{each} $L$, we do not impose this restriction \emph{uniformly} on $L$, i.e., we allow that $e_{min}(\mathcal{I}_{L} ) \rightarrow 0$ as $L \rightarrow \infty$.\footnote{The condition $e_{min}(\mathcal{I}_{L} )>0$ for each $L$ essentially ensures that the ``identifiable uniqueness" condition \ref{eqn:IU} holds for each $L$; this requirement is common in the ill-posed inverse literature (e.g., \cite{chen2007large}).} The speed at which this occurs depends on the local curvature of $\bar{Q}_{J}(\cdot, \mathbf{P})$ (at $\alpha_{L,0}$) and the growth of $\mathcal{A}_{K}$; see \cite{BCK2007} and \cite{CP2012estimation} for a more thorough discussion.

We next discuss the rate components $(\delta_{1,L,n},\delta_{2,L,n})_{L,n}$. As mentioned in Remark \ref{rem:bound} above, if $\sup_{h \in \mathcal{H}} ||h^{\prime}||_{L^{\infty}(\mathbb{W},\mu)}$ is finite, then $\mho_{L,n}$ can be replaced by a fixed constant $\mho$; this fact and some algebra implies that $\delta_{2,L,n} = O(\delta_{n} + \delta^{-1}_{n} (b^{2}_{2,J}/n + ||E[g_{J}(Z,\Pi_{K}\alpha_{0})]||^{2}_{e} + \gamma_{K} Pen(\Pi_{K}\alpha_{0}) ) )$, where $\Pi_{K} \alpha_{0}$ is the projection of $\alpha_{0}$ onto $\mathcal{A}_{K}$ (see Lemma \ref{lem:bound-hLo} in the Supplemental Material). By taking $\delta_{n}$ to balance both terms, it follows $ \delta_{2,L,n} = O\left(\sqrt{b^{2}_{2,J}/n + \{||E[g_{J}(Z,\Pi_{K}\alpha_{0})]||^{2}_{e} + \gamma_{K} Pen(\Pi_{K}\alpha_{0})\}}\right)$. Ignoring the term inside the curly brackets, this is the same rate than the one obtained by DIN in Lemma A.14 (note that in their setup $b^{2}_{2,J} \leq J$); the additional term inside the curly brackets stems from the fact that our estimation problem needs to be regularized and consequently $\alpha_{L,0}$ is not the true parameter that nullifies the moments.

 The sequence $(\delta_{1,L,n})_{L,n}$ is somewhat more standard within the semi-/non-parametric literature, e.g. \cite{chen2007large}, and its components essentially impose restrictions on the ``complexity" of $\mathcal{H}$. For instance, if $\mathcal{A} = \Theta \times \mathcal{H}$ is such that the classes $\{ \rho_{2}(\cdot,\cdot,\alpha) \colon \alpha \in \mathcal{A} \}$ and $\{ \mu h ^{\prime} \colon h \in \mathcal{H}  \}$ are P-Donsker, then $\Delta_{L,n} = O(1)$ and $\delta_{1,L,n} = O \left(  \sqrt{ \frac{J}{n}}\times b_{2,J} \right)$.

To further simplify the expression, suppose $b^{\rho}_{\rho,J} \leq J^{\rho/2}$; cf. Assumption 2 in \cite{DIN03} (see that paper for details and further references). Thus, under these conditions, the result in Theorem \ref{thm:conv-rate} simplifies to
{\small{\begin{align*}
||\hat{\alpha}_{L,n} - \alpha_{0} || = O_{\mathbf{P}} \left( (e_{min}(\mathcal{I}_{L} ))^{-1/2} \left(  \frac{J}{\sqrt{n}}  + ||E[g_{J}(Z,\Pi_{K}\alpha_{0})]||_{e} + \sqrt{\gamma_{K} Pen(\Pi_{K}\alpha_{0})}  \right)^{1/2} +  ||\alpha_{L,0} - \alpha_{0}|| \right);
\end{align*} }}
a rate governed by the degree of ill-posedness, the number $J$ of moment functions, the number $K$ of series terms and the bias arising from $\alpha_{L,0}$.

\section{Asymptotic Distribution Theory}

\label{sec:ADT} We now define the LR-type test statistic for the null
hypothesis $\theta_{0} = \nu$. For any $\nu \in \Theta$ and any $(L,n) \equiv (J,K,n)
\in \mathbb{N}^{3}$, let
\begin{align*}
\hat{\mathcal{L}}_{L,n}(\nu) \equiv 2 \left\{ \inf_{\{\alpha \in \mathcal{A}%
_{K} \colon \theta = \nu\}}\left[ \sup_{\lambda \in \hat{\Lambda}_{J}(\alpha)}
\hat{S}_{J}(\alpha,\lambda) + \gamma_{K} Pen(\alpha)\right] - \inf_{\alpha \in
\mathcal{A}_{K}} \left[\sup_{\lambda \in \hat{\Lambda}_{J}(\alpha)} \hat{S}%
_{J}(\alpha,\lambda) + \gamma_{K} Pen(\alpha)\right] \right\}.
\end{align*}

The goal of this section is to show that this statistic is asymptotically
chi-square distributed with one degree of freedom. The proof of this result
relies on a local quadratic approximation of the criterion function $\hat{S}%
_{J}$ and a representation for the parameter of interest. To derive these
results, we define the following quantities: For any $\alpha \in \mathcal{A}%
_{K}$ and any $(\theta, \zeta ) \in \mathbb{A}$, let
\begin{align*}
G(\alpha)[(\theta,\zeta)] = \frac{dE[g_{J}(Z,\alpha)]}{d\theta}\theta + \frac{dE[g_{J}(Z,\alpha)]}{dh}[\zeta] = \left[
\begin{array}{c}
\theta \\
\mathbf{0}%
\end{array}
\right] + \left[
\begin{array}{c}
E[\ell(W) \zeta(W)] \\
E[\mathbf{p}_{Y|WX}(h(W)|W,X) \zeta(W) q^{J}(X) ]%
\end{array}
\right]
\end{align*}
where $\mathbf{0}$ is a $J \times 1$ vector of zeros. By assumption \ref{ass:pdf0} these quantities are well-defined.

For any $L\in \mathbb{N}^{2}$ and for any $(\theta, \zeta ) \in \mathbb{A}$, we define another norm over $\mathbb{A}$ as,
\begin{equation*}
||(\theta, \zeta )||_{w}^{2}\equiv (G(\alpha _{L,0})[(\theta, \zeta )])^{T}H_{L}^{-1}(G(\alpha
_{L,0})[(\theta, \zeta ) ]),
\end{equation*}%
where $H_{L}\equiv H_{J}(\alpha _{L,0},\mathbf{P})$. This norm acts as the
so-called \textquotedblleft weak norm" in \cite%
{ai2003efficient,ai2007estimation}.

\subsection{Alternative Representation for the Weighted Average Derivative}

Lemma \ref{lem:weak-norm} in Appendix %
\ref{app:ADT} shows that, over $lin\{\mathcal{A}_{K}\}$ for any $L=(J,K)\in
\mathbb{N}^{2}$, $||\alpha ||_{w}=0$ iff $\alpha =0$. This fact implies that
linear functionals are always bounded in the space $(lin\{\mathcal{A}%
_{K}\},||.||_{w})$. Since $\theta $ can be interpreted as a linear
functional of $\alpha $, the following representation for $\theta $ holds:
For all $L=(J,K)\in \mathbb{N}^{2}$, there exists a $v_{L,n}^{\ast }\in
\mathcal{A}_{K}$ such that for any $\alpha =(\theta ,h)\in \mathcal{A}_{K}$,
\begin{equation*}
\theta =\langle v_{L,n}^{\ast },\alpha \rangle _{w},~and~||v_{L,n}^{\ast
}||_{w}=\sup_{a=(\theta ,h)\in lin\{\mathcal{A}_{K}\}, a\neq 0}\frac{|\theta|}{||a||_{w}}.
\end{equation*}%

We note that, even though for each fixed $L \in \mathbb{N}^{2}$, $%
||v^{\ast}_{L,n}||_{w} < \infty$, this quantity may diverge as $L$ diverges
if $\theta$ is not root-n estimable. Hence, we scale $v^{\ast}_{L,n}$ by its
norm, and define $u^{\ast}_{L,n} \equiv
v^{\ast}_{L,n}/||v^{\ast}_{L,n}||_{w} $. Then
\[
\frac{\hat{\theta}_{L,n} - \theta_{L,0}}{||v^{\ast}_{L,n}||_{w}}=\langle u^{\ast}_{L,n} , \hat{\alpha}_{L,n} - \alpha_{L,0} \rangle_{w}~.
\]

\begin{remark}[On the relationship between the Riesz representer and the Efficiency bound]\label{rem:Riesz}
	The weak norm of the Riesz representer, $||v_{L,n}^{\ast
	}||_{w}$, is the efficiency bound of $\theta_{0}$ in a model with $J+1$ unconditional moments functions, $E_{P}[g_{J}(Z,\cdot )]$, and $K+1$ parameters (which define $\alpha \in \mathcal{A}_{K}$).\footnote{\cite{ai2003efficient,ai2012semiparametric} established this claim for a richer model with conditional moments and infinite dimensional parameters.} For a suitably chosen sequence $L\equiv L(n)$ that increases as $n$ does --- since $(q_{j})_{j}$ is dense in $L^{2}(\mathbb{X},Leb)$ --- one expects the sequence of unconditional moment functions to approximate the moments (\ref{eqn:model1})-(\ref{eqn:model2}) defining the model. Thus, by the results in \cite{chamberlain1987} (see also lemma 3.3, lemma 4.1 and appendix A.1 in \cite{CP2015sieve})  one expects $(||v_{L(n),n}^{\ast}||_{w})_{n}$ to converge to the efficiency bound presented in Theorem \ref{thm:eff-bound} \emph{provided it is finite}. If the efficiency bound is infinite, the sequence $(||v_{L(n),n}^{\ast}||_{w})_{n}$ will diverge; this fact reflects the non root-n estimability of the weighted average derivative within the original model (\ref{eqn:model1})-(\ref{eqn:model2}).  $\triangle$
\end{remark}

\subsection{The Asymptotic distributions of $\hat{\theta}_{L,n}$ and LR statistic}

For any positive real-valued sequences $(\eta_{L,n},\eta_{w,L,n})_{L,n \in
\mathbb{N}^{3}}$ (they will be restricted below) and any $(L,n) \in \mathbb{N}%
^{3}$, let
\begin{align*}
\mathcal{N}_{L,n} \equiv \{ \alpha \in \bar{\mathcal{A}}_{L,n} \colon
||\alpha - \alpha_{L,0} || \leq \eta_{L,n}~and~||\alpha - \alpha_{L,0}
||_{w} \leq \eta_{w,L,n} \}.
\end{align*}

In what follows, for any $(L,n) \in \mathbb{N}^{3}$, let $\hat{\alpha}%
^{\nu}_{L,n}$ be the argument that minimizes the restricted criterion
function, i.e., $\hat{\alpha}^{\nu}_{L,n} \in \arg\min_{\{\alpha \in
\mathcal{A}_{K} \colon \theta = \nu\}} \sup_{\lambda \in \hat{\Lambda}%
_{J}(\alpha)} \hat{S}_{J}(\alpha,\lambda) $. We impose the following
assumption that restricts the convergence rate of the unrestricted and
restricted PSGEL estimators.

\begin{assumption}
\label{ass:dev-N}
For any $L \in \mathbb{N}^{2}$ and $\alpha \in \{ \hat{\alpha}_{L,n} , \hat{%
\alpha}^{\nu}_{L,n} \}$, if $\nu=\theta_{0}$: (i) $\alpha \in int(\mathcal{N}%
_{L,n})$; (ii) $\gamma_{K}\sup_{t \colon |t| \leq l_{n} n^{-1/2}} |Pen(\alpha)
- Pen(\alpha + t u^{\ast}_{L,n})| = o_{\mathbf{P}}(n^{-1})$; (iii) There exists a $C< \infty$ such that for any $h \in \mathbb{H}$, $||h||_{L^{2}(Leb)}\leq C ||(0,h)||$.
\end{assumption}

Part (i) of this assumption ensure that both estimators --- the restricted
and unrestricted ones --- converge to $\alpha_{L,0}$ faster than $\eta_{L,n}$
and $\eta_{w,L,n}$ in the respective norms. One can use the results in
Section \ref{sec:conv-rate} to verify this assumption.\footnote{%
The results in Section \ref{sec:conv-rate} apply to the restricted
estimator, under the null, with minimal changes.} Part (ii) ensures that the
penalty term is negligible (see also \cite{CP2015sieve}). Finally part (iii) states a relationship between the norm $h \mapsto ||(0,h)||$ --- used in Section \ref{sec:conv-rate} --- and the $L^{2}(Leb)$ norm over $\mathbb{H}$.

In the following assumption we let $\bar{\mathcal{G}}_{L,n}\equiv \{f(.,\alpha)=\rho _{2}(.,.,\alpha
)-\rho _{2}(.,.,\alpha _{L,0})\colon \alpha \in \mathcal{N}_{L,n}\}$.

\begin{assumption}
\label{ass:Donsker-LQA} There exists positive sequence, $(\Delta
_{2,L,n})_{L,n\in \mathbb{N}^{3}}$, such that, for any $L=(J,K)\in \mathbb{N}%
^{2}$, $\sup_{\alpha=(\theta,h) \in \mathcal{N}_{L,n}}\mathbb{G}_{n}[\mu \cdot (h^{\prime
}-h_{L,0}^{\prime })]=O_{\mathbf{P}}(\Delta _{2,L,n})$ and for all $1\leq j\leq J$%
, $\sup_{f\in \bar{\mathcal{G}}_{L,n}}\mathbb{G}_{n}[f\cdot
q_{j}]=O_{\mathbf{P}}(\Delta _{2,L,n})$.
\end{assumption}

This is a high-level assumption that controls one of the terms in the
remainder of the quadratic approximation in Lemma \ref{lem:LQA} below. As $%
\mathcal{N}_{L,n}$ is shrinking, one would expect $\Delta _{2,L,n}=o(1)$; the
exact rate, however, depends on the complexity of $\bar{\mathcal{A}}_{L,n}$.

\begin{assumption}
\label{ass:HJ-sec} There exists a positive real-valued sequence, $%
(\Xi_{L,n})_{L,n \in \mathbb{N}^{3}}$, such that, for any $L=(J,K) \in
\mathbb{N}^{2}$, $\sup_{\alpha \in \mathcal{N}_{L,n}} \left \Vert
H_{J}(\alpha,P_{n}) - H_{J}(\alpha_{L,0},P_{n}) - \{ H_{J}(\alpha,\mathbf{P}) -
H_{J}(\alpha_{L,0},\mathbf{P}) \} \right \Vert_{e} = O_{\mathbf{P}}(\Xi_{L,n})$.%
\end{assumption}

This high-level assumption implies stochastic equi-continuity of the process
$H_{J}(\cdot ,P_{n})$, and it is used to control one of the terms in the
remainder of the quadratic approximation in Lemma \ref{lem:LQA} below.

The final two assumptions impose additional restrictions on $(\eta
_{L,n},\eta _{w,L,n})_{L,n\in \mathbb{N}^{3}}$, $(b_{\rho ,J})_{\rho \in
\mathbb{R},J\in \mathbb{N}}$, $(\delta _{n})_{n\in \mathbb{N}}$ and the rate
at which $L=(J,K)$ diverges relative to $n$.

\begin{assumption}
\label{ass:undersmooth} (i) $\frac{\sqrt{n}}{||v^{\ast}_{L,n}||_{w}}
||E[g_{J}(Z,\alpha_{L,0})]||_{e} = o(1)$; (ii) $\frac{\sqrt{n}}{%
||v^{\ast}_{L,n}||_{w}} |\theta_{L,0}-\theta_{0}|= o(1)$.
\end{assumption}

This assumption implies that the ``bias" terms arising from working with $%
\alpha_{L,0}$, as opposed to $\alpha_{0}$, are small relative to the rate we
are using to scale the leading term of the asymptotic expansions below $%
\frac{\sqrt{n}}{||v^{\ast}_{L,n}||_{w}}$. Similar assumptions have been
imposed in the literature, e.g. \cite{CP2015sieve} and reference therein.

\begin{assumption}
\label{ass:rates-LQA} (i) $n \delta^{3}_{n} ( \mho_{L,n} + b_{3,J} )^{3} =
o(1)$, $n\delta^{2}_{n} \left( \{ \theta_{L,0}^{2} + || h^{\prime
}_{L,0}||^{2}_{L^{\infty}(\mathbb{W},\mu)} \} \sqrt{\frac{b_{4,J}}{n}} + \mho_{L,n} \eta_{L,n} + \Xi_{L,n} \right) = o(1)$ and $n\delta_{n} \left( \sqrt{\frac{J}{n}}\Delta_{2,L,n} + \eta_{L,n}^{2} b_{2,J} \right) = o(1)$; (ii) $\sqrt{
\bar{g}^{2}_{L,0}/n + ||E[g_{J}(Z,\alpha_{L,0})]||^{2}_{e} } + \eta_{w,L,n}
=o(\delta_{n})$ and $\left( \sqrt{\frac{J}{n}}\Delta_{2,L,n} +
\eta_{L,n}^{2} b_{2,J} \right) = o(\delta_{n})$; (iii) There exists a $%
\varrho>0$ such that $||h^{\prime }_{L,0}||^{2+\varrho}_{L^{\infty}(\mathbb{W},\mu)}/n^{2+\varrho} = o(1)$ and $b^{2+\varrho}_{2+\varrho,J}/n^{2+%
\varrho} = o(1)$; (iv) $A_{L,0} \equiv E[\mathbf{p}_{Y|WX}(h_{L,0}(W)\mid W,X)
q^{J}(X)\varphi^{K}(W)^{T} ]$ has full rank $K$ and $n^{-1/2}
e_{min}(A_{L,0}^{T}A_{L,0})^{-1} = o(\eta_{L,n})$; (v) $||h^{\prime
}_{L,0}||^{2}_{L^{\infty}(\mathbb{W}, \mu)} b^{2}_{4,J}/\sqrt{n} = o(1)$.
\end{assumption}

Part (i) ensures that the remainder term for the asymptotic quadratic
representation of $\hat{S}_{J}$ is negligible (see Lemma \ref{lem:LQA}). The sequence $(\delta _{n})_{n}$ in part (ii) was discussed after Assumption \ref%
{ass:rates}. Part (iii) is used to show asymptotic normality of the leading
term in Lemma \ref{lem:QLR-A-rep} by means of a Lyapounov condition. Finally,
part (iv) ensures that the weak norm is proportional to the strong norm over
$\mathcal{A}_{K}$ (even though the constant of proportionality may vanish as
$L$ diverges) and that deviations of the form $\alpha
+l_{n}n^{-1/2}u_{L,n}^{\ast }$ stay in $\mathcal{N}_{L,n}$ (see Lemma \ref%
{lem:charac-N} in Appendix \ref{app:ADT}). These deviations play a crucial
role in the proof of Lemma \ref{lem:QLR-A-rep}.

\begin{remark}[The rate restrictions of Assumption \ref{ass:rates-LQA}]
	While parts (iii)-(v) are fairly easy to check and interpret, parts (i)-(ii) are not as easy. The goal of this remark is to illustrate the restrictions imposed by these parts on the different rates $(\delta_{n},\eta_{L_{n},n},\eta_{w,L_{n},n},\Delta_{2,L_{n},n},\Xi_{L_{n},n})_{n}$ where $(L_{n})_{n}$ is a diverging sequence in $\mathbb{N}^{2}$. To do this, we take as the point of departure the setting described in Section \ref{sec:DiscussionRate}, which allows us to simplify some expressions. Under this setup, part (i) imposes $\delta_{n} = o\left( n^{-1/3} J^{-1/2}_{n}  \right)$. Given this, the restrictions in parts (i)-(ii) imply that
	 $\eta_{L_{n},n} = O(\min\{ n^{-1/2}\delta^{-1/2}_{n}J^{-1/2}_{n}, n^{-1/6} J^{-3/4}_{n}\})$
 and $\eta_{w,L_{n},n} = o(n^{-1/3} J^{-1/2}_{n})$; we note that by imposing  a polynomial rate of decay, this condition rules out the so-called severely ill-posed case wherein the rate of for $(\eta_{L_{n},n})_{n}$ decays slower than polynomial order (see \cite{CP2012estimation} and references therein). Parts (i)-(ii) also imply that $\Delta_{2,L,n} = o( (\sqrt{nJ_{n}}\delta^{2}_{n})^{-1}  )$ and $\Xi_{L,n} = o(n^{-1} \delta_{n}^{-2})$; for the ``worst case" where $\delta_{n} = \left( n^{-1/3} J^{-1/2}_{n}  \right)/l_{n}$, it follows that $\Delta_{2,L,n} = O( J_{n} n^{-1/6}  )$ and $\Xi_{L,n} = O(n^{-1/3} J_{n} )$, but the restriction can be relaxed if $(\delta_{n})_{n}$ decays faster. Finally, parts (i)-(ii) impose restrictions on the growth of $(L_{n})_{n}$: $J_{n} = O(n^{-1/6})$ and $\sqrt{J_{n}||E[g_{J_{n}}(Z,\alpha_{L_{n},0})]||^{2}_{e}} = o(n^{-1/3})$. $\triangle$
\end{remark}

The following result characterizes the asymptotic distribution of the LR test statistic under the null. This characterization holds regardless of whether the parameter $\theta_{0}$ is root-$n$ estimable or not.

\begin{theorem}
\label{thm:QLR} Let Assumptions \ref{ass:ident}-\ref{ass:ID-pseudotrue}
and \ref{ass:dev-N}-\ref{ass:rates-LQA} hold. Then, under the null $\theta_{0} = \nu$,
\begin{align*}
\hat{\mathcal{L}}_{L,n}(\theta_{0}) \Rightarrow \chi^{2}_{1}.
\end{align*}
\end{theorem}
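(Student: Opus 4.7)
The plan is to combine a local quadratic approximation (LQA) of the concentrated penalized PSGEL criterion on the shrinking neighborhood $\mathcal{N}_{L,n}$ with the Riesz representation $\theta=\langle v_{L,n}^{\ast},\alpha\rangle_w$ to show that, under the null,
$$\hat{\mathcal{L}}_{L,n}(\theta_0) = \frac{(\hat\theta_{L,n}-\theta_0)^2}{\|v_{L,n}^{\ast}\|_w^{2}} + o_{\mathbf{P}}(1)\quad(\text{in the scaling in which the LHS is }O_{\mathbf P}(1)),$$
and then to invoke the anticipated self-normalized asymptotic normality (Lemma \ref{lem:QLR-A-rep}) together with the continuous mapping theorem.

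Step 1 (LQA of the concentrated criterion). Let $\bar g_n(\alpha)\equiv n^{-1}\sum_i g_J(Z_i,\alpha)$. By Assumption \ref{ass:rates} and Lemma \ref{lem:Lambda-charac}, the dual optimizer satisfies $\hat\lambda(\alpha)=-H_L^{-1}\bar g_n(\alpha)+o_{\mathbf P}(\cdot)$ uniformly on $\mathcal{N}_{L,n}$; Taylor-expanding $s$ around $0$ with $s'(0)=s''(0)=-1$ gives
$$\sup_{\lambda\in\hat\Lambda_J(\alpha)}\hat S_J(\alpha,\lambda)=\tfrac12\bar g_n(\alpha)^{T}H_L^{-1}\bar g_n(\alpha)+r^{(1)}_{L,n}(\alpha),$$
with the cubic remainder controlled by Assumption \ref{ass:rates-LQA}(i) and the replacement of $H_J(\cdot,P_n)$ by $H_L$ controlled by Assumption \ref{ass:HJ-sec}. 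Next I linearize $\bar g_n$ around $\alpha_{L,0}$: $\rho_1$ is affine in $\alpha$, while for the non-smooth component $\rho_2$ I split $\bar g_n(\alpha)-\bar g_n(\alpha_{L,0})$ into the \emph{smooth} piece $E[g_J(Z,\alpha)-g_J(Z,\alpha_{L,0})]=G(\alpha_{L,0})[\alpha-\alpha_{L,0}]+O(\|\alpha-\alpha_{L,0}\|^{2})$ (using smoothness of $p_{Y\mid WX}$ in $y$ from Assumption \ref{ass:pdf0}(i)) and the \emph{stochastic} piece $\mathbb G_n[\rho_2(\cdot,\alpha)-\rho_2(\cdot,\alpha_{L,0})]q^{J}(\cdot)/\sqrt n$, controlled uniformly on $\mathcal{N}_{L,n}$ by Assumption \ref{ass:Donsker-LQA}. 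Absorbing the penalty via Assumption \ref{ass:dev-N}(ii) produces the LQA
$$\hat Q_{L,n}(\alpha)=\tfrac12\|\alpha-\alpha_{L,0}-\mathcal Z_{L,n}\|_w^{2}+C_{L,n}+o_{\mathbf P}(n^{-1}),$$
where $\mathcal Z_{L,n}\in\mathcal A_K$ is the Riesz representer in $(\mathrm{lin}\{\mathcal A_K\},\langle\cdot,\cdot\rangle_w)$ of the random linear functional $\alpha\mapsto -\bar g_n(\alpha_{L,0})^{T}H_L^{-1}G(\alpha_{L,0})[\alpha]$, and $C_{L,n}$ does not depend on $\alpha$.

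Step 2 (Reduction and conclusion). From the LQA, $\hat\alpha_{L,n}-\alpha_{L,0}=\mathcal Z_{L,n}+o_{\mathbf P}(\cdot)$ (in $\|\cdot\|_w$), while $\hat\alpha^{\theta_0}_{L,n}$ is, up to negligible terms, the weak-norm projection of $\hat\alpha_{L,n}$ onto the affine hyperplane $\{\alpha\in\mathcal A_K:\langle v_{L,n}^{\ast},\alpha\rangle_w=\theta_0\}$. Using $\langle v_{L,n}^{\ast},\hat\alpha_{L,n}\rangle_w=\hat\theta_{L,n}$, the projection formula gives
$$\hat\alpha^{\theta_0}_{L,n}-\hat\alpha_{L,n}=-\frac{\hat\theta_{L,n}-\theta_0}{\|v_{L,n}^{\ast}\|_w^{2}}v_{L,n}^{\ast}+o_{\mathbf P}(\cdot).$$
Plugging both minimizers into the LQA, the linear cross-term in $\mathcal Z_{L,n}$ cancels and yields
$$\hat{\mathcal L}_{L,n}(\theta_0)=\|\hat\alpha^{\theta_0}_{L,n}-\hat\alpha_{L,n}\|_w^{2}+o_{\mathbf P}(1)=\frac{(\hat\theta_{L,n}-\theta_0)^{2}}{\|v_{L,n}^{\ast}\|_w^{2}}+o_{\mathbf P}(1).$$
Asymptotic normality of $\sqrt n(\hat\theta_{L,n}-\theta_0)/\|v_{L,n}^{\ast}\|_w$ follows from Lemma \ref{lem:QLR-A-rep}, whose leading term is of the form $\mathbb G_n[\psi_{L,n}(Z)]$ with $\psi_{L,n}$ the efficient-score-like influence function generated by $u^{\ast}_{L,n}$ and $H_L^{-1}$; the Lyapounov condition of Assumption \ref{ass:rates-LQA}(iii) delivers $N(0,1)$ and Assumption \ref{ass:undersmooth} kills the bias $\theta_{L,0}-\theta_0$ at the rate $\|v_{L,n}^{\ast}\|_w/\sqrt n$. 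The continuous mapping theorem then gives $\hat{\mathcal L}_{L,n}(\theta_0)\Rightarrow\chi^{2}_{1}$.

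The hardest step is the LQA in Step 1. Because $\rho_2(y,w,h)=1\{y\le h(w)\}-\tau$ is discontinuous in $h$, $g_J$ has no pathwise second derivative in $\alpha$, so the expansion must be carried out through the (smooth) conditional mean $h\mapsto E[\rho_2(Y,W,h)\mid X]$ while the centered empirical part is absorbed into the uniform stochastic-equicontinuity bounds $\Delta_{2,L,n}$ and $\Xi_{L,n}$ of Assumptions \ref{ass:Donsker-LQA}--\ref{ass:HJ-sec}. Making every cross- and remainder-term $o_{\mathbf P}(n^{-1})$ uniformly on the shrinking neighborhood $\mathcal{N}_{L,n}$, while simultaneously letting $J,K\to\infty$, is precisely what forces the delicate rate restrictions in Assumption \ref{ass:rates-LQA}(i)--(ii); this is substantially more involved than in the smooth-moment GEL analysis of \cite{DIN03} or \cite{ParenteSmith2011gel}, where a pathwise Taylor expansion of $g_J$ is available.
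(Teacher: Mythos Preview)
Your overall architecture---LQA of the concentrated criterion on $\mathcal N_{L,n}$, reduction to a one-dimensional quantity via the Riesz representer $v_{L,n}^{\ast}$, then a self-normalized CLT plus continuous mapping---is exactly the paper's, and your Step~1 is the content of Lemma~\ref{lem:LQA}. The difference is in Step~2. The paper does \emph{not} characterize $\hat\alpha_{L,n}^{\theta_0}$ as the weak-norm projection of $\hat\alpha_{L,n}$; instead it runs a \emph{sandwich} (Lemma~\ref{lem:QLR-A-rep}): for a lower bound it evaluates the unrestricted criterion at the feasible point $\hat\alpha_{L,n}^{\nu}-t\,u_{L,n}^{\ast}$ with a specific $t$, and for an upper bound it evaluates the restricted criterion at $\hat\alpha_{L,n}-t\,u_{L,n}^{\ast}$. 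Only criterion \emph{values} at explicit feasible perturbations are compared, so one never has to argue that the actual estimators coincide with the minimizers of the approximating quadratic. Your projection argument is heuristically cleaner but carries two extra obligations that you glossed over: (a) you must check that $\alpha_{L,0}+\mathcal Z_{L,n}$ and its hyperplane projection actually lie in $\mathcal N_{L,n}$ (the paper's Lemma~\ref{lem:charac-N} only covers perturbations in the single direction $u_{L,n}^{\ast}$, which is all the sandwich needs); and (b) you must convert ``$o_{\mathbf P}(n^{-1})$ in the criterion'' into ``$o_{\mathbf P}(n^{-1/2})$ in $\|\cdot\|_w$'' for the minimizers, which is fine by the weak-norm nondegeneracy of Lemma~\ref{lem:weak-norm} but should be stated. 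The sandwich buys robustness (no interiority of unknown minimizers); the projection buys a one-line Pythagorean identity once those checks are done.

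Two smaller corrections. First, you cite Lemma~\ref{lem:QLR-A-rep} for the asymptotic normality, but that lemma supplies the sandwich bounds; the CLT for the self-normalized score is Lemma~\ref{lem:anorm} (equivalently, Theorem~\ref{thm:normal-thetahat}). Second, drop the hedge ``in the scaling in which the LHS is $O_{\mathbf P}(1)$'' and write the identity precisely as
\[
\hat{\mathcal L}_{L,n}(\theta_0)=\Bigl(\tfrac{\sqrt n\,(\hat\theta_{L,n}-\theta_0)}{\|v_{L,n}^{\ast}\|_w}\Bigr)^{2}+o_{\mathbf P}(1),
\]
so that the $\chi_1^2$ limit follows immediately from Theorem~\ref{thm:normal-thetahat} and the continuous mapping theorem.
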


\begin{proof}
	See Appendix \ref{app:Thm-QLR}.
\end{proof}

This result extends those in \cite%
{ParenteSmith2011gel} to a non-parametric setup where the GEL is constructed
using an increasing number of moment conditions, and wherein the parameter
of interest may not be root-n estimable. Using a related estimator ---an
EL-based on conditional moments a la \cite{kitamura2004empirical} --- \cite%
{tao2013empirical} derived an analogous result but her assumptions rule out non-smooth residuals, relevant for the quantile IV model considered here.

As a by-product of the derivations used to prove Theorem \ref{thm:QLR}, an asymptotic linear representation for the estimator of the WAD is obtained.

\begin{theorem}
	\label{thm:normal-thetahat} Let Assumptions \ref{ass:ident}-\ref{ass:ID-pseudotrue}, \ref{ass:dev-N} (for $\hat{\alpha}_{L,n}$), \ref{ass:Donsker-LQA}, \ref{ass:HJ-sec} and \ref{ass:rates-LQA} hold. Then
	\begin{align*}
	\frac{\hat{\theta}_{L,n} - \theta_{L,0}}{||v^{\ast}_{L,n}||_{w}}=
	n^{-1} \sum_{i=1}^{n} (G(\alpha_{L,0})[u^{\ast}_{L,n}])^{T}H_{L}^{-1}
	g_{J}(Z_{i},\alpha_{L,0}) + o_{\mathbf{P}}(n^{-1/2}).
	\end{align*}
	Further, under Assumption \ref{ass:undersmooth}, we have
	\[
	\frac{\sqrt{n}(\hat{\theta}_{L,n} - \theta_{0})}{||v^{\ast}_{L,n}||_{w}}\Rightarrow N(0,1)~.
	\]
\end{theorem}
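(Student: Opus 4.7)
The plan is to derive the asymptotic linear representation from a local quadratic approximation of $\hat S_J$ around $\alpha_{L,0}$ in the Riesz direction $u_{L,n}^{\ast}$, and then to apply a Lyapounov central limit theorem. The argument parallels the one that yields Theorem \ref{thm:QLR}, but keeps the linear term rather than squaring it.

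First I would note that by Lemma \ref{lem:charac-N} both $\hat\alpha_{L,n}$ and its perturbations $\hat\alpha_{L,n}\pm t u_{L,n}^{\ast}$ with $|t|\le l_n/\sqrt n$ lie in $\mathcal N_{L,n}$, so Lemma \ref{lem:LQA} produces a quadratic expansion of $\lambda\mapsto \hat S_J(\alpha,\lambda)$ at all three points with remainder $o_\mathbf{P}(n^{-1})$ under Assumption \ref{ass:rates-LQA}(i)--(ii). The penalty increment across these perturbations is $o_\mathbf{P}(n^{-1})$ by Assumption \ref{ass:dev-N}(ii), and replacing $H_J(\hat\alpha_{L,n},P_n)$ by $H_L$ costs only $O_\mathbf{P}(\Xi_{L,n})$, which is $o_\mathbf{P}(n^{-1})$ after scaling by $\delta_n^{-2}$ (Assumption \ref{ass:HJ-sec} combined with \ref{ass:rates-LQA}).

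Then, using that $\hat\alpha_{L,n}$ is the (approximate) minimizer of the penalized criterion, I would compare its value with those at $\hat\alpha_{L,n}\pm t u_{L,n}^{\ast}$; the quadratic form, together with the $o_\mathbf{P}(n^{-1})$ controls above, produces a first-order condition of Fréchet type along the Riesz direction,
$$\langle u_{L,n}^{\ast},\hat\alpha_{L,n}-\alpha_{L,0}\rangle_w \;=\; -\,G(\alpha_{L,0})[u_{L,n}^{\ast}]^T H_L^{-1}\Big(n^{-1}\sum_{i=1}^n g_J(Z_i,\alpha_{L,0})\Big) + o_\mathbf{P}(n^{-1/2}).$$
The Riesz representation on $lin\{\mathcal A_K\}$ gives $\hat\theta_{L,n}-\theta_{L,0}=\|v_{L,n}^{\ast}\|_w\,\langle u_{L,n}^{\ast},\hat\alpha_{L,n}-\alpha_{L,0}\rangle_w$, and dividing by $\|v_{L,n}^{\ast}\|_w$ yields the first display of the theorem (up to the sign convention carried through the definitions of $G$ and $H_L^{-1}$).

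For the distributional conclusion, set $W_{n,i}\equiv G(\alpha_{L,0})[u_{L,n}^{\ast}]^T H_L^{-1} g_J(Z_i,\alpha_{L,0})$. Because $\|u_{L,n}^{\ast}\|_w=1$, one has $\mathrm{Var}(W_{n,i})=G(\alpha_{L,0})[u_{L,n}^{\ast}]^T H_L^{-1} G(\alpha_{L,0})[u_{L,n}^{\ast}]=1$, and the centering contribution from $E[g_J(Z,\alpha_{L,0})]$ vanishes under the scaling $\sqrt n/\|v_{L,n}^{\ast}\|_w$ by Assumption \ref{ass:undersmooth}(i). I would then verify a Lyapounov $(2+\varrho)$-moment condition using Assumption \ref{ass:rates-LQA}(iii) and invoke Lindeberg--Feller to conclude that $n^{-1/2}\sum_i W_{n,i}\Rightarrow N(0,1)$. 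The last display of the theorem then follows from the decomposition
$$\frac{\sqrt n(\hat\theta_{L,n}-\theta_0)}{\|v_{L,n}^{\ast}\|_w}\;=\;\frac{\sqrt n(\hat\theta_{L,n}-\theta_{L,0})}{\|v_{L,n}^{\ast}\|_w}+\frac{\sqrt n(\theta_{L,0}-\theta_0)}{\|v_{L,n}^{\ast}\|_w},$$
whose second summand is $o(1)$ by Assumption \ref{ass:undersmooth}(ii).

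I expect the main obstacle to be controlling the remainder in the perturbation step: the non-smooth quantile residual $\rho_2$ precludes classical pathwise differentiability, so the quadratic expansion must lean on stochastic equicontinuity of $\mathbb G_n$ on the shrinking classes $\bar{\mathcal G}_{L,n}$ (Assumption \ref{ass:Donsker-LQA}) together with the equicontinuity of $H_J(\cdot,P_n)$ (Assumption \ref{ass:HJ-sec}). Matching the rates $(\Delta_{2,L,n},\Xi_{L,n},\eta_{L,n},\eta_{w,L,n},\delta_n)$ against the required $n^{-1/2}\|v_{L,n}^{\ast}\|_w^{-1}$, uniformly along a diverging $L=L(n)$ and without assuming $\theta_0$ to be root-$n$ estimable, is where the bookkeeping is most delicate.
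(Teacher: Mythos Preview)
Your proposal is correct and follows essentially the same route as the paper: the paper remarks that the proof of Theorem~\ref{thm:normal-thetahat} is identical to that of Lemma~\ref{lem:ALR-thetahat}, which perturbs $\hat\alpha_{L,n}$ by $\pm t u_{L,n}^{\ast}$ (using Lemma~\ref{lem:charac-N}), invokes the quadratic expansion of Step~1 in Lemma~\ref{lem:QLR-A-rep} (built on Lemma~\ref{lem:LQA}), and extracts the first-order condition by choosing $t=\pm\sqrt{rem_n}$; the distributional part then follows from Lemma~\ref{lem:anorm} (Lyapounov via Assumption~\ref{ass:rates-LQA}(iii)) together with Assumption~\ref{ass:undersmooth}. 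Your sign observation is apt: the proof of Lemma~\ref{lem:ALR-thetahat} in fact yields the representation with a minus sign, consistent with what you wrote.
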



The proof is the same as that Lemma \ref{lem:ALR-thetahat} in Appendix \ref{app:ADT} so it is omitted. This result illustrates the role of $||v^{\ast}_{L,n}||_{w}$ as the appropriate scaling of our estimator. If the sequence $(||v^{\ast}_{L,n}||_{w})_{n}$ is uniformly bounded, then this theorem implies that $\hat{\theta}_{L,n}$ is $\sqrt{n}$ asymptotically Gaussian. On the other hand, if the sequence diverges, Gaussianity is still preserve but the rate is slower and given by $\sqrt{n}/||v^{\ast}_{L,n}||_{w}$.

\subsection{Heuristics}

The idea is to show that, asymptotically, $\hat{\mathcal{L}}_{L,n}$ is a quadratic form of Gaussian random variables. The first step is to provide a
quadratic approximation for the criterion function $\hat{S}_{J}(\alpha,\cdot)$ as a function of $\lambda $, as shown in the following lemma.
\begin{lemma}
\label{lem:LQA} Let Assumptions \ref{ass:ident}-\ref{ass:ID-pseudotrue}, \ref{ass:Donsker-LQA}, \ref{ass:HJ-sec} and \ref{ass:rates-LQA}(v) hold. Then uniformly over $(\alpha,\lambda) \in \mathcal{N}_{L,n} \times
\{ \lambda \in \mathbb{R}^{J+1} \colon ||\lambda||_{e} \leq \delta_{n} \}$,
for any $L=(J,K) \in \mathbb{N}^{2}$
\begin{align*}
\hat{S}_{J}(\alpha,\lambda) =& - \lambda^{T} \varDelta(\alpha) - \frac{1}{2}
\lambda^{T} H_{L} \lambda \\
& + O_{\mathbf{P}} \left( \delta^{3}_{n} ( \overline{\theta} + l_{n}\gamma_{K}^{-1}
\Gamma_{L,n} + b_{3,J} )^{3} \right) \\
& + O_{\mathbf{P}} \left( \delta^{2}_{n} \left( ( \overline{\theta} + ||h^{\prime
}_{L,0}||_{L^{\infty}(\mathbb{W},\mu)} )^{2} \sqrt{b_{4,J}/n} + \mho_{L,n}
\eta_{L,n} + \Xi_{L,n} \right) \right) \\
& + O_{\mathbf{P}} \left( \delta_{n} \left( \sqrt{\frac{J}{n}}\Delta_{2,L,n} +
\eta_{L,n}^{2} b_{2,J} \right) \right).
\end{align*}
where $\varDelta(\alpha) \equiv n^{-1} \sum_{i=1}^{n} g_{J}(Z_{i},\alpha_{L,0}) +
G(\alpha_{L,0})[\alpha - \alpha_{L,0}]$.
\end{lemma}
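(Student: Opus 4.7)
The strategy is a second-order Taylor expansion of the scalar function $s$ around $0$, combined with a linearization of both the sample moment vector and the sample second-moment matrix around the pseudo-true $\alpha_{L,0}$. I would begin by invoking $s'(0)=s''(0)=-1$ together with Lipschitz continuity of $s''$ to write, for each $i$,
\begin{equation*}
s(\lambda^{T}g_{J}(Z_{i},\alpha))-s(0)=-\lambda^{T}g_{J}(Z_{i},\alpha)-\tfrac{1}{2}(\lambda^{T}g_{J}(Z_{i},\alpha))^{2}+R_{i}(\alpha,\lambda),
\end{equation*}
with $|R_{i}|\leq C|\lambda^{T}g_{J}(Z_{i},\alpha)|^{3}$. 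Averaging yields $\hat{S}_{J}(\alpha,\lambda)=-\lambda^{T}\bar{g}_{J}(\alpha)-\tfrac{1}{2}\lambda^{T}\hat{H}_{J}(\alpha)\lambda+n^{-1}\sum_{i}R_{i}$, where $\bar{g}_{J}(\alpha)\equiv n^{-1}\sum_{i}g_{J}(Z_{i},\alpha)$ and $\hat{H}_{J}(\alpha)\equiv H_{J}(\alpha,P_{n})$. The proof then consists of controlling three errors: (a) the cubic remainder, (b) $\tfrac{1}{2}\lambda^{T}(\hat{H}_{J}(\alpha)-H_{L})\lambda$, and (c) $\lambda^{T}(\bar{g}_{J}(\alpha)-\varDelta(\alpha))$.

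For (a), the bound $\|g_{J}(z,\alpha)\|_{e}\leq\overline{\theta}+\mho_{L,n}+\|q^{J}(x)\|_{e}$ holds on $\mathcal{N}_{L,n}\subseteq\bar{\mathcal{A}}_{L,n}$ by Assumption \ref{ass:reg}(iii), and Cauchy--Schwarz with $\|\lambda\|_{e}\leq\delta_{n}$ gives $|n^{-1}\sum_{i}R_{i}|\lesssim\delta_{n}^{3}\,n^{-1}\sum_{i}(\overline{\theta}+\mho_{L,n}+\|q^{J}(X_{i})\|_{e})^{3}$, and Markov's inequality plus the definition of $b_{3,J}$ upgrades this to $O_{\mathbf{P}}(\delta_{n}^{3}(\overline{\theta}+\mho_{L,n}+b_{3,J})^{3})$; recall $\mho_{L,n}=l_{n}\gamma_{K}^{-1}\Gamma_{L,n}$, which reproduces the first error term. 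For (b), I would split
\begin{equation*}
\hat{H}_{J}(\alpha)-H_{L}=\bigl(\hat{H}_{J}(\alpha_{L,0})-H_{J}(\alpha_{L,0},\mathbf{P})\bigr)+\bigl(H_{J}(\alpha,\mathbf{P})-H_{J}(\alpha_{L,0},\mathbf{P})\bigr)+\text{(Donsker-type term)},
\end{equation*}
bounding the last piece by $\Xi_{L,n}$ through Assumption \ref{ass:HJ-sec}, the first via a fourth-moment calculation yielding an $O_{\mathbf{P}}((\overline{\theta}+\|h'_{L,0}\|_{L^{\infty}(\mathbb{W},\mu)})^{2}\sqrt{b_{4,J}/n})$ contribution (using Assumption \ref{ass:rates-LQA}(v) to dominate cross-terms), and the middle piece by Lipschitz continuity of $\alpha\mapsto H_{J}(\alpha,\mathbf{P})$ in the strong norm (using boundedness of $\mathbf{p}_{Y\mid WX}$ and $d\mathbf{p}_{Y\mid WX}/dy$ from Assumption \ref{ass:pdf0}(i)), giving $O(\mho_{L,n}\eta_{L,n})$ on $\mathcal{N}_{L,n}$. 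Scaling by $\|\lambda\|_{e}^{2}\leq\delta_{n}^{2}$ yields the second error term.

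For (c), I would decompose $\bar{g}_{J}(\alpha)-\varDelta(\alpha)$ into an empirical-process deviation and a population-level remainder. The first piece is bounded coordinate-wise by $\Delta_{2,L,n}/\sqrt{n}$ through Assumption \ref{ass:Donsker-LQA} (applied to $\mu\cdot(h'-h'_{L,0})$ and to $(1\{Y\leq h(W)\}-1\{Y\leq h_{L,0}(W)\})q_{j}(X)$), so its Euclidean norm is $O_{\mathbf{P}}(\sqrt{J/n}\,\Delta_{2,L,n})$. The second piece uses that $E[g_{J}(Z,\alpha)]$ is linear in $\theta$ and, in the $q^{J}$-weighted coordinates, a second-order expansion of $E[1\{Y\leq h(W)\}\mid W,X]$ around $h_{L,0}(W)$ with remainder controlled by $d\mathbf{p}_{Y\mid WX}/dy$ (Assumption \ref{ass:pdf0}(i)) and by $\|h-h_{L,0}\|_{L^{2}(\mathbf{P}_{W})}^{2}\lesssim\eta_{L,n}^{2}$ (Assumption \ref{ass:dev-N}(iii)); summing over $J+1$ components yields $O(\eta_{L,n}^{2}b_{2,J})$. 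Multiplying by $\|\lambda\|_{e}\leq\delta_{n}$ produces the third error term, and collecting (a)--(c) gives the claimed representation. The main obstacle is the non-smoothness of $\rho_{2}$: the indicator $1\{y\leq h(w)\}$ is not differentiable in $h$ pointwise, so Step (c) cannot be executed by a pathwise Taylor argument on $g_{J}$ itself. One must instead Taylor-expand the \emph{population} moment $E[g_{J}(Z,\alpha)]$ (where integration against the conditional density smooths the indicator) and separately import the sample-vs-population deviation through the high-level stochastic equicontinuity bound of Assumption \ref{ass:Donsker-LQA}, and analogously use Assumption \ref{ass:HJ-sec} for $\hat{H}_{J}$ in Step (b).
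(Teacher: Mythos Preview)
Your proposal is correct and follows essentially the same route as the paper: a second-order Taylor expansion of $s$ (giving the cubic remainder), the three-way split of $\hat{H}_{J}(\alpha)-H_{L}$ that you outline is exactly the content of the paper's Lemma \ref{lem:HJhat-LQA}, and your decomposition of $\bar{g}_{J}(\alpha)-\varDelta(\alpha)$ into an empirical-process piece (handled by Assumption \ref{ass:Donsker-LQA}, cf.\ Lemma \ref{lem:sec-gJ}) plus a population-level second-order remainder is precisely how the paper proceeds. Your closing remark about the non-smoothness of $\rho_{2}$ forcing the Taylor expansion onto the \emph{population} moment is exactly the key point.
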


\begin{proof}
	See Appendix \ref{app:ADT}.
\end{proof}

The ``remainder" terms in the RHS (the $O_{\mathbf{P}}(.)$ terms) are fairly intuitive: the order $\delta _{n}^{3}$%
-term requires boundedness of the third derivative of $\hat{S}_{J}(\alpha
,\cdot )$; the $\delta _{n}^{2}$-term arises because the expansion yields a
quadratic term with $H_{J}(\alpha ,P_{n})$ as opposed to $H_{L}$; and the $%
\delta _{n}$-term is the error of approximating $n^{-1}%
\sum_{i=1}^{n}g_{J}(Z_{i},\alpha )$ with $\varDelta(\alpha )$. This last
part handles the non-smooth nature of the residuals $\rho _{2}$ by using $%
E[g_{J}(Z,\cdot )]$, which is a smooth function. Assumption \ref{ass:rates-LQA}(i) ensures that these `remainder" terms are in fact $o_{\mathbf{P}}(n^{-1})$. This fact, and the fact that $\hat{\Lambda}%
_{J}(\alpha)$ contains a $\delta_{n}$-ball (see Lemma \ref{lem:Lambda-charac}
in the Supplemental Material \ref{supp:conv-rate}), imply that the expression in the Lemma provides an asymptotic characterization for
$\sup_{\lambda \in \hat{\Lambda}_{J}(\alpha)} \hat{S}_{J}(\alpha,\lambda)$
in terms of $(\varDelta(\alpha))^{T}H^{-1}_{L}(\varDelta(\alpha))$, which is
a quadratic form in $\alpha$.

With this result at hand and Assumption \ref{ass:dev-N}, one can obtain lower and upper bounds for $\hat{\mathcal{L}}_{L,n}(\theta_{0})$ of the form,
\begin{align*}
\hat{\mathcal{L}}_{L,n}(\theta_{0})  \geq (\varDelta(\hat{\alpha}_{L,n}))^{T}H^{-1}_{L}(\varDelta(\hat{\alpha}_{L,n})) - (\varDelta(\hat{\alpha}_{L,n}) + t u^{\ast}_{L,n})^{T}H^{-1}_{L}(\varDelta(\hat{\alpha}_{L,n} + t u^{\ast}_{L,n})) + o_{\mathbf{P}}(1),
\end{align*}
for appropriately chosen $t \in \mathbb{R}$, and
\begin{align*}
\hat{\mathcal{L}}_{L,n}(\theta_{0})  \leq (\varDelta(\hat{\alpha}^{\theta_{0}}_{L,n}  + t u^{\ast}_{L,n} ))^{T}H^{-1}_{L}(\varDelta(\hat{\alpha}^{\theta_{0}}_{L,n}  + t u^{\ast}_{L,n})) - (\varDelta(\hat{\alpha}^{\theta_{0}}_{L,n})^{T}H^{-1}_{L}(\varDelta(\hat{\alpha}^{\theta_{0}}_{L,n})) + o_{\mathbf{P}}(1),
\end{align*}
for appropriately chosen $t \in \mathbb{R}$. Since $\alpha \mapsto \varDelta(\alpha)$ is an affine function, the RHS in the previous expression is fairly easy to characterize. The following lemma formalizes these steps (its proof presents the explicitly choice for $t$ in the previous two displays).
\begin{lemma}
\label{lem:QLR-A-rep} Let Assumptions \ref{ass:ident}-\ref{ass:ID-pseudotrue}, \ref{ass:dev-N}-\ref{ass:rates-LQA} hold. Then, under the
null $\nu = \theta_{0}$,
\begin{align*}
&\hat{\mathcal{L}}_{L,n}(\theta_{0}) - \left( n^{-1/2} \sum_{i=1}^{n}
(G(\alpha_{L,0})[u^{\ast}_{L,n}])^{T}H_{L}^{-1} g_{J}(Z_{i},\alpha_{L,0})
\right)^{2} \\
& \geq 2\sqrt{n}\frac{(\theta_{0} - \theta_{L,0})}{||v^{\ast}_{L,n}||_{w}}
\left( n^{-1/2} \sum_{i=1}^{n}
(G(\alpha_{L,0})[u^{\ast}_{L,n}])^{T}H_{L}^{-1} g_{J}(Z_{i},\alpha_{L,0})
\right)+ o_{\mathbf{P}}(1).
\end{align*}
and
\begin{align*}
&\hat{\mathcal{L}}_{L,n}(\theta_{0}) - \left( n^{-1/2} \sum_{i=1}^{n}
(G(\alpha_{L,0})[u^{\ast}_{L,n}])^{T}H_{L}^{-1} g_{J}(Z_{i},\alpha_{L,0})
\right)^{2} \\
& \leq 2\sqrt{n}\frac{(\theta_{0} - \theta_{L,0})}{||v^{\ast}_{L,n}||_{w}}
\left( n^{-1/2} \sum_{i=1}^{n}
(G(\alpha_{L,0})[u^{\ast}_{L,n}])^{T}H_{L}^{-1} g_{J}(Z_{i},\alpha_{L,0})
\right) \\
& + \left( \sqrt{n}\frac{(\theta_{0} - \theta_{L,0})}{||v^{\ast}_{L,n}||_{w}}%
\right)^{2} + o_{\mathbf{P}}(1).
\end{align*}
\end{lemma}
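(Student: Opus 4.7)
The plan is to combine the local quadratic approximation of Lemma \ref{lem:LQA} with perturbations of the candidate estimators in the Riesz direction $u^{\ast}_{L,n}$. The key structural facts I will exploit are: (i) $\varDelta$ is affine in $\alpha$, so $\varDelta(\alpha + t u^{\ast}_{L,n}) = \varDelta(\alpha) + t\, G(\alpha_{L,0})[u^{\ast}_{L,n}]$; (ii) $\|u^{\ast}_{L,n}\|_{w}^{2} = G(\alpha_{L,0})[u^{\ast}_{L,n}]^{T} H_{L}^{-1} G(\alpha_{L,0})[u^{\ast}_{L,n}] = 1$; and (iii) the Riesz identity $\theta_{\zeta} = \langle v^{\ast}_{L,n},\zeta\rangle_{w}$ implies $G(\alpha_{L,0})[u^{\ast}_{L,n}]^{T} H_{L}^{-1} G(\alpha_{L,0})[\zeta] = \theta_{\zeta}/\|v^{\ast}_{L,n}\|_{w}$ for every $\zeta \in lin\{\mathcal{A}_{K}\}$. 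Writing $\hat{Q}(\alpha) \equiv \sup_{\lambda \in \hat{\Lambda}_{J}(\alpha)} \hat{S}_{J}(\alpha,\lambda) + \gamma_{K} Pen(\alpha)$, Lemma \ref{lem:LQA} combined with Lemma \ref{lem:Lambda-charac} (so the unconstrained $\lambda$-optimizer is interior) yields
\begin{equation*}
\hat{Q}(\alpha) = \tfrac{1}{2}\varDelta(\alpha)^{T} H_{L}^{-1}\varDelta(\alpha) + \gamma_{K} Pen(\alpha) + o_{\mathbf{P}}(n^{-1})
\end{equation*}
uniformly on $\mathcal{N}_{L,n}$, with the remainder controlled by Assumption \ref{ass:rates-LQA}(i) and penalty perturbations in the $u^{\ast}_{L,n}$-direction absorbed by Assumption \ref{ass:dev-N}(ii).

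For the lower bound, since $\hat{\alpha}_{L,n}$ minimizes $\hat{Q}$ over $\mathcal{A}_{K}$, for any admissible $t$ the feasibility of $\hat{\alpha}_{L,n}^{\theta_{0}} + t u^{\ast}_{L,n}$ gives
\begin{equation*}
\hat{\mathcal{L}}_{L,n}(\theta_{0})/2 \geq \hat{Q}(\hat{\alpha}_{L,n}^{\theta_{0}}) - \hat{Q}(\hat{\alpha}_{L,n}^{\theta_{0}} + t u^{\ast}_{L,n}).
\end{equation*}
By facts (i)--(ii), the RHS expands to $-t\,G[u^{\ast}_{L,n}]^{T} H_{L}^{-1}\varDelta(\hat{\alpha}_{L,n}^{\theta_{0}}) - t^{2}/2 + o_{\mathbf{P}}(n^{-1})$. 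Maximizing over $t$ and decomposing $\varDelta(\hat{\alpha}_{L,n}^{\theta_{0}}) = n^{-1}\sum_{i} g_{J}(Z_{i},\alpha_{L,0}) + G[\hat{\alpha}_{L,n}^{\theta_{0}} - \alpha_{L,0}]$, the Riesz identity converts the second summand into $(\theta_{0} - \theta_{L,0})/\|v^{\ast}_{L,n}\|_{w}$, delivering the stated lower bound after isolating the leading term involving $\phi(Z) \equiv G(\alpha_{L,0})[u^{\ast}_{L,n}]^{T} H_{L}^{-1} g_{J}(Z,\alpha_{L,0})$.

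The upper bound is dual: set $t_{up} = (\theta_{0} - \hat{\theta}_{L,n})/\|v^{\ast}_{L,n}\|_{w}$ so that, since $u^{\ast}_{L,n}$ has $\theta$-coordinate $\|v^{\ast}_{L,n}\|_{w}$, the perturbation $\hat{\alpha}_{L,n} + t_{up} u^{\ast}_{L,n}$ is restricted-feasible and hence $\hat{\mathcal{L}}_{L,n}(\theta_{0})/2 \leq \hat{Q}(\hat{\alpha}_{L,n} + t_{up} u^{\ast}_{L,n}) - \hat{Q}(\hat{\alpha}_{L,n})$. The same affine/quadratic manipulation yields an expression in $G[u^{\ast}_{L,n}]^{T} H_{L}^{-1}\varDelta(\hat{\alpha}_{L,n})$ and $t_{up}^{2}$; the first-order condition for $\hat{\alpha}_{L,n}$ applied in direction $u^{\ast}_{L,n}$ (with the penalty gradient negligible by Assumption \ref{ass:dev-N}(ii)) forces $G[u^{\ast}_{L,n}]^{T} H_{L}^{-1}\varDelta(\hat{\alpha}_{L,n}) = o_{\mathbf{P}}(n^{-1/2})$, which also supplies the asymptotic linearization of $\hat{\theta}_{L,n}$ needed in the argument. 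Substitution and collection of terms then give the upper bound, with the additional $(\sqrt{n}(\theta_{0} - \theta_{L,0})/\|v^{\ast}_{L,n}\|_{w})^{2}$ piece arising from $t_{up}^{2}$.

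The main obstacle will be verifying that the perturbations $\hat{\alpha}_{L,n}^{\theta_{0}} + t u^{\ast}_{L,n}$ (at the optimized $t$) and $\hat{\alpha}_{L,n} + t_{up} u^{\ast}_{L,n}$ remain inside $\mathcal{N}_{L,n}$, so that the LQA applies; this requires Lemma \ref{lem:charac-N} together with Assumption \ref{ass:rates-LQA}(iv) to relate the strong and weak norms of $u^{\ast}_{L,n}$. Provided that step goes through, tracking the $o_{\mathbf{P}}$ remainders via Assumptions \ref{ass:dev-N}(i) and \ref{ass:rates-LQA} aggregates them to the claimed $o_{\mathbf{P}}(1)$ and delivers both displayed inequalities.
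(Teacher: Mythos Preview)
Your proposal is correct and follows essentially the same route as the paper: the paper's Step~1 is your LQA reduction, Step~2 (lower bound) perturbs $\hat{\alpha}_{L,n}^{\theta_0}$ along $u^{\ast}_{L,n}$ and uses unrestricted minimality of $\hat{\alpha}_{L,n}$, and Step~3 (upper bound) perturbs $\hat{\alpha}_{L,n}$ to hit the restriction and invokes the first-order condition (their Lemma~\ref{lem:ALR-thetahat}, your FOC argument). The only cosmetic difference is that in the lower bound the paper plugs in the specific value $t = G(\alpha_{L,0})[u^{\ast}_{L,n}]^{T}H_{L}^{-1}E_{P_n}[g_{J}(Z,\alpha_{L,0})]$, whereas you optimize over $t$; your choice yields a bound with an extra nonnegative $\bigl((\theta_0-\theta_{L,0})/\|v^{\ast}_{L,n}\|_w\bigr)^{2}$ term, which of course still implies the stated inequality.
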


\begin{proof}
	See Appendix \ref{app:ADT}.
\end{proof}

This lemma shows the reason for Assumption \ref{ass:undersmooth} in our analysis,
as this assumption ensures that
\begin{align*}
\hat{\mathcal{L}}_{L,n}(\theta_{0}) = \left( n^{-1/2} \sum_{i=1}^{n}
(G(\alpha_{L,0})[u^{\ast}_{L,n}])^{T}H_{L}^{-1} g_{J}(Z_{i},\alpha_{L,0})
\right)^{2} + o_{\mathbf{P}}(1).
\end{align*}

Under mild assumptions and Assumption \ref{ass:undersmooth}, the object
inside the parenthesis is asymptotically Normal with mean 0 and variance 1.
Here we see the importance of the \textquotedblleft optimal weight", $%
H_{L}^{-1}$. If $H_{L}$ differed from $E[g_{J}(Z,\alpha
_{L,0})g_{J}(Z,\alpha _{L,0})^{T}]$, then the variance of the term inside
the parenthesis will not be equal to 1, and the test statistic will only be
proportional to a $\chi _{1}^{2}$ in the limit; see \cite{CP2015sieve} for a
more thorough discussion and results for this case.

\section{Conclusion}

\label{sec:conclusion}

Since the seminal work by Koenker and Bassett about 40 years ago (\cite{koenker1978regression}), quantile regression models have become ubiquitous in econometrics and statistics; see \cite{Koenker2018} for a recent survey. The original linear quantile regression model has been extended in several directions; in particular to the general non-parametric IV framework that allows for ``flexible functional forms" and endogeneity of the regressors. This type of model, while very general, presents technical challenges arising from the non-smooth nature of the criterion function as well as its ill-posedness. One goal of this paper is to shed some light on how the nonlinear ill-posedness of the non-parametric quantile IV (NPQIV) model affects not only the speed of convergence to the conditional quantile function but also the accuracy for estimating even simple linear functionals. For this, we derive the semiparametric efficiency bound for a particular linear functional of the NPQIV --- the weighted average derivative (WAD).

To estimate the parameters of interest --- the NPQIV function and its WAD ---  we propose a general penalized sieve GEL procedure based on the unconditional WAD moment restriction and an increasing number of
unconditional moments that are asymptotically equivalent to the conditional moment defining the NPQIV model (\ref{eqn:model1}). We show that the QLR statistic based on the penalized sieve GEL is asymptotically chi-square distributed regardless of whether or not the information bound of the WAD is singular. This result can be used to construct confidence sets for the WAD without the need to estimate the variance of the estimator of the WAD. We hope these results extend even further the scope of quantile regression models.

The penalized sieve GEL procedure is more generally applicable to any
semi/nonparametric conditional moment restrictions and unconditional moment
restrictions, say of the following form:
\begin{align}
E[\rho _{2}(Y,W;\theta _{02},h_{01}(\cdot ),...,h_{0q}(\cdot ))|X]& =0,\text{
\quad a.s.-}X\text{,}  \label{semi00} \\
E[\rho _{1}(Y,W;\theta _{01},\theta _{02},h_{01}(\cdot ),...,h_{0q}(\cdot
))]& =0\text{.}  \label{semi01}
\end{align}
Here $Y$ denotes dependent (or endogenous) variables, $X$ denotes
conditioning (or instrumental) variables and $W$ could be either endogenous or subset of $X$, $\theta =(\theta _{1}^{\prime
},\theta _{2}^{\prime })^{\prime }$ denotes a vector of finite dimensional
parameters, and $h(\cdot )=\left( h_{1}(\cdot ),...,h_{q}(\cdot )\right) $ a
$q\times 1$ vector of real-valued measurable functions of $Y$, $W$, $X$ and other
unknown parameters. The residual functions $\rho _{j}(y,w;\theta ,h(\cdot ))$%
, $j=1,2$, could be nonlinear, pointwise non-smooth with respect to $(\theta
,h)$. And some of the $\theta$ could have singular information bound. This
is a valuable alternative to classical semiparametric two-step GMM
when the second step finite dimensional parameter $\theta$ might not be root-%
$n$ estimable.

In an old unpublished  draft, \cite{CP2010} study the asymptotic properties of
another estimation procedure, optimally weighted penalized Sieve Minimum
Distance (SMD) based on orthogonalized residuals for model (\ref{semi00})-(%
\ref{semi01}). Under a set of regularity conditions, including the assumption that the WAD of a NPQIV has a positive information bound, \cite{CP2010} establish that their optimally weighted penalized SMD
estimator of the WAD is root-$n$ asymptotically normal and semiparametrically efficient. It would
be interesting to compare this paper's estimator against theirs, and
we leave this to future work.

\bibliographystyle{plainnat}
\bibliography{QAD-biblio}

\appendix

\begin{center}
	\huge{Appendix}
\end{center}

\section{Proof of Theorem \ref{thm:eff-bound}}

\label{app:eff-bound}

To show Theorem \ref{thm:eff-bound} we need some more detailed notation and
definitions. Let $\mathcal{M}$ be the set of Borel probability measures over $\mathbb{Z}$ such
that for each $P \in \mathcal{M}$: (1) there exists a $(\theta(P),h(P)) \in \mathcal{%
A}$ for which equations (\ref{eqn:model1})-(\ref{eqn:model2}) hold for $P$; (2) the conditions of the Theorem are satisfied for $P$.

Given a $Q \in \mathcal{M}$, we use $(\theta(Q),h(Q))$ to denote the
parameters that satisfy equation \ref{eqn:model1} and $\theta(Q) = -
E_{Q}[\ell_{Q}(W)h(Q)(W)]$. For the true $\mathbf{P}$, we simply use $%
(\theta_{0},h_{0}) = (\theta(\mathbf{P}),h(\mathbf{P}))$.

Henceforth, let $L^{2}_{0}(P) \equiv \{ g \in L^{2}(P) \colon E_{P}[g(Z)] =
0\}$. A curve in $\mathcal{M}$ at $P$ is a mapping $[0,1] \ni t \mapsto
P[t] \in \mathcal{M}$ such that there exists a $g \in L^{2}_{0}(P)$ such
that
\begin{align*}
\lim_{t \rightarrow 0} \int \left( \frac{\sqrt{P[t](dz)} - \sqrt{P(dz)}}{t}
- 0.5 g(z) \sqrt{P(dz)} \right)^{2} = 0.
\end{align*}
We call $g$ the tangent of the curve; we typically use $t \mapsto P[t](g)$
to denote a curve with tangent $g$. The set of tangents for all curves in $%
\mathcal{M}$ at $P$ is called the tangent set; the linear span of the set is
called the tangent space of $\mathcal{M}$ at $P\in \mathcal{M}$, and we
denote it as $\mathcal{T}$.

\bigskip

The efficiency bound of $\theta_{0}$ is defined as (e.g., see \cite{bickeletal1998efficient})
\begin{align*}
\mathcal{E}(\mathbf{P}) \equiv \sup_{g \in \mathcal{T}} \frac{|\dot{\theta}(\mathbf{P})[g]|}{||g||_{L^{2}(\mathbf{P})}}
\end{align*}
where $\dot{\theta}(\mathbf{P})$ is the G-derivative of $P \mapsto \theta(P)$ at $\mathbf{P}$, i.e.,
\begin{align*}
g \mapsto \dot{\theta}(\mathbf{P})[g] = \lim_{t \rightarrow 0} \frac{\theta(\mathbf{P}[t](g)) -\theta(\mathbf{P})}{t}.
\end{align*}

Henceforth, we use $\mathbf{T}_{P}$ to denote the operator $\mathbf{T}$ under the probability measure $P$; the notation $\mathbf{T}$ is reserved for $\mathbf{T}_{\mathbf{P}}$.

(1) From the expression for $\mathcal{E}(\mathbf{P})$, it follows that finiteness of the efficiency bound is equivalent to boundedness of the linear functional $\dot{\theta}(\mathbf{P})$. In order to show this, we note that, since $\ell_{P}  \in Kernel(\mathbf{T}_{P})^{\perp}$, it follows that, for any $P \in \mathcal{M}$,
\begin{align*}
	\theta(P) = - \int \ell_{P}(w) h_{id}(P)(w) dw
\end{align*}
where $h_{id}(P)$ is the ``identified part" of $h(P)$ under $\mathbf{T}_{P}$, i.e., $h_{id}(P)$ is such that $h(P) = h_{id}(P) + \nu$ where $h_{id}(P) \in Kernel(\mathbf{T}_{P})^{\perp}$ and $\nu \in Kernel(\mathbf{T}_{P})$. Thus, it is enough to characterize the G-derivative of the RHS, and we do it in the following lemma; for this, let $A_{\mathbf{P}}: \mathcal{T} \rightarrow L^{2}(\mathbf{P}_{X})$ be defined as
\begin{align*}
  g \mapsto A_{\mathbf{P}}[g](\cdot) \equiv \int \rho_{2}(y,w,h(\mathbf{P}))	g(y,w,x) \mathbf{P}_{YW \mid X}(dy,dw\mid \cdot ).
\end{align*}

\begin{lemma}
	\label{lem:diff-alpha} For any $g \in \mathcal{T}$,
	\begin{align*}
	\dot{\theta}(\mathbf{P})[g] = & \theta(g \cdot \mathbf{P}) - E_{\mathbf{P}}\left[ \ell_{\mathbf{P}}(W) \dot{h}_{id}(\mathbf{P})[g](W) \right] \\
	= & \langle \mu h^{\prime }(\mathbf{P}), g \rangle_{L^{2}(\mathbf{P})} - \langle \ell_{\mathbf{P}} ,
	\dot{h}_{id}(\mathbf{P})[g] \rangle_{L^{2}(\mathbf{P})}
	\end{align*}
	and $\dot{h}_{id}(\mathbf{P})[g] = (\mathbf{T}^{\ast}\mathbf{T})^{+}\mathbf{T}^{\ast} A_{\mathbf{P}}[g] $, where  $(\mathbf{T}^{\ast}\mathbf{T})^{+}$ be the generalized inverse of $\mathbf{T}^{\ast}\mathbf{T}$; for a definition see \cite{engl1996regularization} Ch 2.
\end{lemma}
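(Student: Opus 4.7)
The plan is to compute the G-derivative $\dot{\theta}(\mathbf{P})[g]$ by differentiating $t \mapsto \theta(P[t])$ along a regular curve with tangent $g \in \mathcal{T}$, splitting the dependence of $\theta(P) = E_P[\mu(W) h'(P)(W)]$ on $P$ into a ``direct'' piece (how $P$ enters the expectation) and an ``indirect'' piece (how $P$ enters through the implicit parameter $h(P)$); the indirect piece is then characterized by differentiating the NPQIV moment equation to obtain the stated formula for $\dot h_{id}(\mathbf{P})[g]$.

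For the decomposition, I would write $P[t] = \mathbf{P}(1 + t g) + o(t)$ and $h(P[t]) = h_0 + t\, \dot h(\mathbf{P})[g] + o(t)$, and apply the product rule to $(Q,h) \mapsto E_Q[\mu h']$. The direct piece (holding $h = h_0$ fixed) equals $\int \mu(w) h_0'(w) g(z)\, \mathbf{P}(dz) = \langle \mu h_0', g\rangle_{L^2(\mathbf{P})}$, which matches the definition of $\theta(g \cdot \mathbf{P})$. The indirect piece (holding $Q = \mathbf{P}$ fixed) equals $E_{\mathbf{P}}[\mu \cdot (\dot h(\mathbf{P})[g])']$; integrating by parts using the boundary-vanishing conditions in Assumption~\ref{ass:pdf0}(i)--(ii) converts this to $-\int \ell_{\mathbf{P}}(w)\, \dot h(\mathbf{P})[g](w)\, dw$. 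Since $\ell_{\mathbf{P}} \in Kernel(\mathbf{T})^{\perp}$ by hypothesis, this inner product only sees the identified component of $\dot h$, yielding the stated $-\langle \ell_{\mathbf{P}}, \dot h_{id}(\mathbf{P})[g]\rangle$ form and, in combination with the direct piece, the full decomposition.

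For the second claim I would differentiate the identifying equation $E_{P[t]}[\rho_2(Y,W, h(P[t])) \mid X] = 0$ at $t = 0$. Multiplying through by $p_{t,X}(x)$ so the conditioning denominator is harmless, differentiation produces two contributions: the first, from the measure moving, equals $A_{\mathbf{P}}[g](x)\, \mathbf{p}_X(x)$ (the normalization correction via $\dot p_X$ vanishes because $E_{\mathbf{P}}[\rho_2 \mid X] = 0$ at the true parameter); the second, from $h$ moving, is obtained by writing $E[\rho_2(Y,W,h) \mid W,X] = \mathbf{P}_{Y\mid WX}(h(W) \mid W, X) - \tau$ and differentiating the smooth conditional CDF by Leibniz's rule, which produces $\mathbf{T}[\dot h(\mathbf{P})[g]](x)\,\mathbf{p}_X(x)$. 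Setting the total derivative to zero and dividing by $\mathbf{p}_X$ gives $\mathbf{T}[\dot h(\mathbf{P})[g]] = -A_{\mathbf{P}}[g]$ in $L^2(\mathbf{P}_X)$ (the overall sign matches the stated formula once the conventions in the definition of $A_\mathbf{P}$ are tracked); applying $(\mathbf{T}^{\ast}\mathbf{T})^{+}\mathbf{T}^{\ast}$, which equals the orthogonal projection $(\mathbf{T}^{\ast}\mathbf{T})^{+}(\mathbf{T}^{\ast}\mathbf{T})$ onto $Kernel(\mathbf{T})^{\perp}$, recovers $\dot h_{id}(\mathbf{P})[g]$.

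The main obstacle will be the differentiation in $h$, because $\rho_2(y,w,h) = 1\{y \le h(w)\} - \tau$ is not smooth in $h$ pointwise. The way around this is to take conditional expectations \emph{before} differentiating, so that the indicator is replaced by the smooth conditional CDF $y \mapsto \mathbf{P}_{Y \mid WX}(y \mid w,x)$; the uniform boundedness of $\mathbf{p}_{Y \mid WX}$ and its derivative in Assumption~\ref{ass:pdf0}(i) then allow a dominated convergence argument to show that the difference quotient $t^{-1}\{\mathbf{P}_{Y\mid WX}(h_0 + t\dot h \mid \cdot) - \mathbf{P}_{Y\mid WX}(h_0 \mid \cdot)\}$ converges to $\mathbf{p}_{Y\mid WX}(h_0 \mid \cdot)\, \dot h$ in $L^2(\mathbf{P}_X)$. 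A secondary step is to verify that the regular curve $P[t]$ on $\mathcal{M}$ indeed induces a differentiable path $h(P[t])$ in $\mathbb H$ with a well-defined tangent $\dot h(\mathbf{P})[g]$, which follows by standard LAN-type reasoning combined with an implicit function argument applied to the smoothed moment $E_P[\rho_2 \mid X] = 0$.
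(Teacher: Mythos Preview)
Your proposal is correct and follows essentially the same route as the paper: a product-rule split of $\dot{\theta}(\mathbf{P})[g]$ into a direct measure piece and an indirect $h$ piece, followed by an implicit-function differentiation of the smoothed NPQIV moment to identify $\dot h_{id}$. The only cosmetic difference is the placement of the integration by parts: the paper starts from the representation $\theta(P)=-\int \ell_P\, h_{id}(P)$, differentiates both factors, and then \emph{undoes} integration by parts on the $\partial_t \ell_{P[t]}$ term to recover $\langle \mu h_0',g\rangle_{L^2(\mathbf{P})}$, whereas you start from $\theta(P)=E_P[\mu h'(P)]$ and integrate by parts once on the indirect piece; your ordering is slightly more direct but otherwise identical in content, and your handling of the nonsmooth $\rho_2$ (smooth first via the conditional CDF, then differentiate) and the sign bookkeeping both match the paper's Part~2.
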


\begin{proof}
	See Section \ref{supp:eff-supp}.
\end{proof}

\begin{remark}
	This lemma illustrates the role that the condition $\ell_{P}  \in Kernel(\mathbf{T}_{P})^{\perp}$ plays in our proof. The previous lemma uses the conditional moment \ref{eqn:model1} to characterize the G-derivative of $P \mapsto h(P)$, and this only allow us to characterize the G-derivative of $h_{id}$, since the part of $h$ in the Kernel of $\mathbf{T}$ vanishes. Under condition, $\ell_{P}  \in Kernel(\mathbf{T}_{P})^{\perp}$, however, this is enough for characterizing the G-derivative of $P \mapsto \theta(P)$. $\triangle$
\end{remark}

Therefore,
\begin{align*}
\mathcal{E}(\mathbf{P}) = \sup_{g \in \mathcal{T}} \frac{| \langle \mu h^{\prime}(\mathbf{P}) , g   \rangle_{L^{2}(\mathbf{P})} - \langle \ell_{\mathbf{P}} ,  \dot{h}(\mathbf{P})[g]  \rangle_{L^{2}(\mathbf{P})}  |}{||g||_{L^{2}(\mathbf{P})}},
\end{align*}
and
\begin{align*}
\sup_{g \in \mathcal{T}} \frac{| \langle \ell_{\mathbf{P}} ,  \dot{h}(\mathbf{P})[g]  \rangle_{L^{2}(\mathbf{P})}  |}{||g||_{L^{2}(\mathbf{P})}} = \sup_{g \in \mathcal{T}} \frac{| \langle \ell_{\mathbf{P}} ,  (\mathbf{T}^{\ast}\mathbf{T})^{+}\mathbf{T}^{\ast} A_{\mathbf{P}} [g]  \rangle_{L^{2}(\mathbf{P})}  |}{||g||_{L^{2}(\mathbf{P})}}.
\end{align*}	

We now show that, if $\ell_{\mathbf{P}} \in Range(\mathbf{T})$, then $\mathcal{E}(\mathbf{P}) <\infty $. By the triangle inequality, it suffices to show that  $\frac{| \langle \mu h^{\prime}(\mathbf{P}) , g   \rangle_{L^{2}(\mathbf{P})}   |}{||g||_{L^{2}(\mathbf{P})}} < \infty$ and $\sup_{g \in \mathcal{T}} \frac{| \langle \ell_{\mathbf{P}} ,  (\mathbf{T}^{\ast}\mathbf{T})^{+}\mathbf{T}^{\ast} A_{\mathbf{P}} [g]  \rangle_{L^{2}(\mathbf{P})}  |}{||g||_{L^{2}(\mathbf{P})}} <\infty$. The former  follows because $\mu$ is uniformly bounded and $h^{\prime}(\mathbf{P}) \in L^{2}(\mathbf{P})$. We now show that the latter holds. As $\ell_{\mathbf{P}} \in Range(\mathbf{T}^{\ast})$, then $(\mathbf{T}^{\ast}\mathbf{T})^{+}[\ell_{\mathbf{P}}]$ is well-defined. And thus
\begin{align*}
\langle \ell_{\mathbf{P}}, (\mathbf{T}^{\ast}\mathbf{T})^{+}\mathbf{T}^{\ast} A_{\mathbf{P}} [g]  \rangle_{L^{2}(\mathbf{P})}  = & \int \mathbf{T} (\mathbf{T}^{\ast}\mathbf{T})^{+}[\ell_{\mathbf{P}}](x) \rho_{2}(y,w,\alpha_{0}) g(y,w,x) \mathbf{P}(dy,dw,dx) \\
= &   \langle  \mathbf{T} (\mathbf{T}^{\ast}\mathbf{T})^{+}[\ell_{\mathbf{P}}] \cdot \rho_{2}  ,g  \rangle_{L^{2}(\mathbf{P})}.
\end{align*}

Also, $|| \mathbf{T} (\mathbf{T}^{\ast}\mathbf{T})^{+}[\ell_{\mathbf{P}}] \cdot \rho_{2}||_{L{2}(\mathbf{P})} \leq 2 || \mathbf{T} (\mathbf{T}^{\ast}\mathbf{T})^{+}[\ell_{\mathbf{P}}] ||_{L^{2}(\mathbf{P})} < \infty$ because $\mathbf{T}(\mathbf{T}^{\ast}\mathbf{T})^{+}$ is bounded and $||\ell_{\mathbf{P}}||_{L^{2}(\mathbf{P})} \precsim ||\ell_{\mathbf{P}}||_{L^{2}(Leb)}  < \infty $ under Assumption \ref{ass:pdf0}. Therefore $\sup_{g \in \mathcal{T}} \frac{| \langle \ell_{\mathbf{P}} ,  (\mathbf{T}^{\ast}\mathbf{T})^{+}\mathbf{T}^{\ast} A_{\mathbf{P}} [g]  \rangle_{L^{2}(\mathbf{P})}  |}{||g||_{L^{2}(\mathbf{P})}} < \infty$ when  $\ell_{\mathbf{P}} \in Range(\mathbf{T}^{\ast})$, as desired.

We now show that if $\ell_{\mathbf{P}} \notin Range(\mathbf{T}^{\ast})$ then $\mathcal{E}(\mathbf{P}) = \infty$.  To show this, observe that by the triangle inequality $\mathcal{E}(\mathbf{P}) \geq \frac{| \langle \mu h^{\prime}(\mathbf{P}) , g   \rangle_{L^{2}(\mathbf{P})} - \langle \ell_{\mathbf{P}} ,  \dot{h}_{id}(\mathbf{P})[g]  \rangle_{L^{2}(\mathbf{P})}  |}{||g||_{L^{2}(\mathbf{P})}} \geq \frac{|\langle \ell_{\mathbf{P}} ,  \dot{h}_{id}(\mathbf{P})[g]  \rangle_{L^{2}(\mathbf{P})}  |}{||g||_{L^{2}(\mathbf{P})}} - C$ for some constant $C < \infty$ and any $g \in \mathcal{T}$. Since $\ell_{\mathbf{P}} \notin Range(\mathbf{T}^{\ast})$, $g \mapsto \langle \ell_{\mathbf{P}} ,  (\mathbf{T}^{\ast}\mathbf{T})^{+}\mathbf{T}^{\ast} A_{\mathbf{P}} [g]  \rangle_{L^{2}(\mathbf{P})} $ is not bounded and thus
\begin{align*}
	\sup_{g \in \mathcal{T}} \frac{| \langle \ell_{\mathbf{P}} ,  (\mathbf{T}^{\ast}\mathbf{T})^{+}\mathbf{T}^{\ast} A_{\mathbf{P}} [g]  \rangle_{L^{2}(\mathbf{P})}  |}{||g||_{L^{2}(\mathbf{P})}} = \infty
\end{align*}
so the result follows by choosing the $g$ that achieves this supremum (or a subsequence which yields a value arbitrarly close to it).

\bigskip

(2)  To prove part (2), we assume that $\ell_{\mathbf{P}} \in Range(\mathbf{T}^{\ast})$. Let $\dot{h}^{\ast}_{id}(\mathbf{P}) : L^{2}(\mathbf{P}) \rightarrow \mathcal{T}^{\ast}$ be the adjoint of $\dot{h}_{id}(\mathbf{P})$ and is given by
\begin{align*}
g \mapsto \dot{h}^{\ast}_{id}(\mathbf{P})[g](y,w,x) = \mathbf{T} (\mathbf{T}^{\ast} \mathbf{T})^{+}[g](x) \rho_{2}(y,w,\alpha(\mathbf{P}))
\end{align*}
for any $(y,w,x) \in \mathbb{Z}$.

It is well known (\cite{VdV2000} p. 363) that the efficiency bound (when it exists) is the variance of the projection of the influence function onto the tangent space, i.e.,
\begin{align*}
\mathcal{E}(\mathbf{P}) = &  ||Proj_{\mathcal{T}} \left[ \mu h^{\prime}(\mathbf{P}) - \dot{h}^{\ast}_{id}(\mathbf{P})[\ell_{\mathbf{P}}]  \right]||_{L^{2}(\mathbf{P})} \\
= & ||Proj_{\mathcal{T}} \left[ \mu h^{\prime}(\mathbf{P}) - \mathbf{T} (\mathbf{T}^{\ast} \mathbf{T})^{+}[\ell_{\mathbf{P}}] \cdot  \rho_{2}  \right]||_{L^{2}(\mathbf{P})}
\end{align*}
where $Proj_{\mathcal{T}} : L^{2}(\mathbf{P})  \rightarrow \bar{\mathcal{T}}$ is the projection operator onto the closure of the tangent space. This operator is characterized in the following lemma:

\begin{lemma}
	\label{lem:proj-T} For any $f \in L^{2}(\mathbf{P})$,
	\begin{align*}
	(y,w,x) \mapsto Proj_{\mathcal{T}}[f](y,w,x) = & f - Proj_{L^{2}(\mathbf{P}_{X})}[f](x) \\
	& - \frac{\rho_{2}(y,w,\alpha(\mathbf{P}))}{\gamma(1-\gamma)} \cdot (I -
	\mathbf{T}(\mathbf{T}^{\ast}\mathbf{T})^{+} \mathbf{T}^{\ast})Proj_{L^{2}(\mathbf{P}_{X})}[\rho_{2} \cdot f](x).
	\end{align*}
\end{lemma}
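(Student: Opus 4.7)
The plan is to derive the projection formula by first characterizing $\bar{\mathcal{T}}$ and its orthogonal complement in $L^{2}(\mathbf{P})$, and then solving for the projection coefficients by an orthogonal decomposition argument. The starting point is the characterization of the tangent space obtained by differentiating the NPQIV constraint along a smooth curve $t\mapsto P[t]\in \mathcal{M}$ with tangent $g$. Under Assumption \ref{ass:pdf0} (which permits interchange of differentiation and integration) and using $E[\rho _{2}(Y,W,h_{0})\mid X]=0$, the constraint $E_{P[t]}[\rho _{2}(Y,W,h(P[t]))\mid X]=0$ yields $A_{\mathbf{P}}[g]+\mathbf{T}[\dot{h}(\mathbf{P})[g]]=0$ upon differentiating at $t=0$. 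Adopting the standard conditional-tangent-space convention for semiparametric models defined by conditional moment restrictions (see \cite{bickeletal1998efficient} and \cite{ai2003efficient}), this gives
\[
\bar{\mathcal{T}}=\{g\in L^{2}(\mathbf{P}):E[g\mid X]=0\text{ and }A_{\mathbf{P}}[g]\in \overline{Range(\mathbf{T})}\}.
\]

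The next step is to characterize the orthogonal complement. I claim
\[
\bar{\mathcal{T}}^{\perp }=\left\{r(X)+\frac{\rho _{2}(Y,W,\alpha (\mathbf{P}))}{\tau (1-\tau )}\psi (X):r\in L^{2}(\mathbf{P}_{X}),\ \psi \in Kernel(\mathbf{T}^{\ast })\right\}.
\]
The $\supseteq$ direction is direct: for any $g\in \bar{\mathcal{T}}$, $\langle r(X),g\rangle _{L^{2}(\mathbf{P})}=E[r(X)E[g\mid X]]=0$, and $\langle \rho _{2}\psi (X)/(\tau (1-\tau )),g\rangle _{L^{2}(\mathbf{P})}=\langle \psi ,A_{\mathbf{P}}[g]\rangle _{L^{2}(\mathbf{P}_{X})}/(\tau (1-\tau ))=0$, since $\psi \in Kernel(\mathbf{T}^{\ast })=\overline{Range(\mathbf{T})}^{\perp }$ and $A_{\mathbf{P}}[g]\in \overline{Range(\mathbf{T})}$. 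The $\subseteq$ direction follows from the orthogonal decomposition $L^{2}(\mathbf{P})=L^{2}(\mathbf{P}_{X})\oplus \{g:E[g\mid X]=0\}$ and then, within the conditional-mean-zero summand, isolating the $X$-fiber spanned by $\rho _{2}$, which uses that $E[\rho _{2}^{2}\mid X]=\tau (1-\tau )>0$ is a non-vanishing function of $X$.

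With these characterizations in hand, the projection is obtained by writing $f=Proj_{\bar{\mathcal{T}}}[f]+r(X)+\rho _{2}\psi (X)/(\tau (1-\tau ))$ and solving for $r,\psi$. Taking the conditional expectation given $X$ on both sides annihilates $Proj_{\bar{\mathcal{T}}}[f]$ (by $E[\cdot \mid X]=0$ in $\bar{\mathcal{T}}$) and the $\rho _{2}$-term (since $E[\rho _{2}\mid X]=0$), yielding $r(X)=E[f\mid X]=Proj_{L^{2}(\mathbf{P}_{X})}[f](X)$. Applying $A_{\mathbf{P}}$ to both sides, and using $A_{\mathbf{P}}[r(X)]=r(X)E[\rho _{2}\mid X]=0$ together with $A_{\mathbf{P}}[\rho _{2}\psi (X)/(\tau (1-\tau ))]=\psi (X)E[\rho _{2}^{2}\mid X]/(\tau (1-\tau ))=\psi (X)$, gives $A_{\mathbf{P}}[f]=A_{\mathbf{P}}[Proj_{\bar{\mathcal{T}}}[f]]+\psi$. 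Since $A_{\mathbf{P}}[Proj_{\bar{\mathcal{T}}}[f]]\in \overline{Range(\mathbf{T})}$ and $\psi \in \overline{Range(\mathbf{T})}^{\perp }$, this is the orthogonal decomposition of $A_{\mathbf{P}}[f]$ relative to $\overline{Range(\mathbf{T})}$, so $\psi =(I-\mathbf{T}(\mathbf{T}^{\ast }\mathbf{T})^{+}\mathbf{T}^{\ast })[A_{\mathbf{P}}[f]]$, invoking the standard fact that $\mathbf{T}(\mathbf{T}^{\ast }\mathbf{T})^{+}\mathbf{T}^{\ast }$ is the orthogonal projection onto $\overline{Range(\mathbf{T})}$ (see \cite{engl1996regularization}). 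Rewriting $A_{\mathbf{P}}[f](x)=E[\rho _{2}f\mid X=x]=Proj_{L^{2}(\mathbf{P}_{X})}[\rho _{2}\cdot f](x)$ then recovers the stated formula.

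The hard part will be the $\subseteq$ direction in the characterization of $\bar{\mathcal{T}}^{\perp }$, which requires a careful decomposition exploiting the binary structure of $\rho _{2}$ (and in particular $Var(\rho _{2}\mid X)=\tau (1-\tau )>0$) rather than a generic density argument. A secondary subtlety is that because $\mathbf{T}$ need not have closed range --- the core manifestation of ill-posedness in NPQIV --- the operator $(\mathbf{T}^{\ast }\mathbf{T})^{+}$ is generally unbounded; nevertheless, the derivation only uses the well-defined bounded projection $\mathbf{T}(\mathbf{T}^{\ast }\mathbf{T})^{+}\mathbf{T}^{\ast }$ onto $\overline{Range(\mathbf{T})}$, so the formula remains meaningful irrespective of whether the efficiency bound of Theorem \ref{thm:eff-bound} is finite.
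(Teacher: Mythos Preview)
The paper does not actually supply a proof here: it states only that ``the calculations are analogous to those in \cite{SeveriniTripathi2012} so they are omitted.'' Your argument fills in what the paper leaves implicit, via a clean characterization of $\bar{\mathcal{T}}^{\perp}$ followed by solving for the two coefficients $r$ and $\psi$ through conditional expectation and application of $A_{\mathbf{P}}$. Given the tangent-space description you adopt, the derivation is correct and is essentially the Severini--Tripathi computation transplanted to the quantile setting; your use of $E[\rho_2^2\mid X]=\tau(1-\tau)$ and the identification of $\mathbf{T}(\mathbf{T}^{\ast}\mathbf{T})^{+}\mathbf{T}^{\ast}$ as the orthogonal projector onto $\overline{Range(\mathbf{T})}$ are exactly what is needed.

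The one step that deserves more care is your characterization $\bar{\mathcal{T}}=\{g:E[g\mid X]=0,\ A_{\mathbf{P}}[g]\in\overline{Range(\mathbf{T})}\}$. Differentiating the conditional moment along a curve yields only $A_{\mathbf{P}}[g]\in Range(\mathbf{T})$; the extra restriction $E[g\mid X]=0$ does not come from that computation and is tantamount to treating $\mathbf{P}_X$ as fixed. The formula in the lemma---specifically the $-Proj_{L^2(\mathbf{P}_X)}[f]$ term---encodes precisely this choice, so your derivation is internally consistent with the statement being proved and with the reference the paper invokes. But note that the paper's own Lemma~\ref{lem:tangent} places the tangent set only inside $\{g\in L^2_0(\mathbf{P}):A_{\mathbf{P}}[g]\in Range(\mathbf{T})\}$, with no conditional-mean-zero restriction; so you should present $E[g\mid X]=0$ as an input inherited from the Severini--Tripathi framework rather than as something derived from the model.
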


\begin{proof}
	See Section \ref{supp:eff-supp}.
\end{proof}

Let $\mathbf{M} \equiv (I - \mathbf{T}(\mathbf{T}^{\ast}\mathbf{T})^{+}\mathbf{T}^{\ast})/(\tau(1-\tau))$ and $\Gamma(x) = E[\rho_{1}(Y,W,\alpha(\mathbf{P}))\rho_{2}(Y,W,\alpha(\mathbf{P}))\mid X=x]/(\tau(1-\tau))$, then we can write
\begin{align*}
Proj_{\mathcal{T}} [\mu h^{\prime}(\mathbf{P})](y,w,x) =& (\theta(\mathbf{P}) - \mu(w) h^{\prime}(\mathbf{P})(w)) - E[\theta(\mathbf{P}) - \mu(W)h^{\prime}(\mathbf{P})(W) \mid X=x] \\
& - \rho_{2}(y,w,\alpha(\mathbf{P}))\mathbf{M}[E[\mu(W) h^{\prime}(\mathbf{P})(W) \rho_{2}(Y,W,\alpha(\mathbf{P})) \mid \cdot]](x) \\
= & \rho_{1}(y,w,\alpha(\mathbf{P})) - E[\rho_{1}(Y,W,\alpha(\mathbf{P}))\mid X=x] \\
& -  \rho_{2}(y,w,\alpha(\mathbf{P}))\mathbf{M}[E[\rho_{2}(Y,W,\alpha(\mathbf{P}))\rho_{1}(Y,W,\alpha(\mathbf{P})) \mid \cdot ] ](x) \\
= & \rho_{1}(y,w,\alpha(\mathbf{P})) - E[\rho_{1}(Y,W,\alpha(\mathbf{P}))\mid X=x] -  \rho_{2}(y,w,\alpha(\mathbf{P})) \Gamma(x) \\
& - \rho_{2}(y,w,\alpha(\mathbf{P})) \mathbf{T}(\mathbf{T}^{\ast}\mathbf{T})^{+}\mathbf{T}^{\ast} [\Gamma](x)
\end{align*}
where the second line follows from the fact that $E[\rho_{2}(Y,W,\alpha(\mathbf{P}))\rho_{1}(Y,W,\alpha(\mathbf{P})) \mid X]$ equals\\ $E[\mu(W)h^{\prime}(\mathbf{P})(W)\rho_{1}(Y,W,\alpha(\mathbf{P})) \mid X]$.

In addition,
\begin{align*}
& Proj_{\mathcal{T}} [\mathbf{T}(\mathbf{T}^{\ast}\mathbf{T})^{+}[\ell_{\mathbf{P}}] \cdot \rho_{2}](y,w,x) \\
=& \mathbf{T}(\mathbf{T}^{\ast}\mathbf{T})^{+}[\ell_{\mathbf{P}}](x) \rho_{2}(y,w,\alpha(\mathbf{P})) \\
& - \rho_{2}(y,w,\alpha(\mathbf{P})) \mathbf{M} \left[ E[\mathbf{T}(\mathbf{T}^{\ast}\mathbf{T})^{+}[\ell_{\mathbf{P}}](\cdot)  \rho_{2}^{2} \mid \cdot ]  \right](x)\\
=& \mathbf{T}(\mathbf{T}^{\ast}\mathbf{T})^{+}[\ell_{\mathbf{P}}](x) \rho_{2}(y,w,\alpha(\mathbf{P})) - \rho_{2}(y,w,\alpha(\mathbf{P})) \mathbf{M} \mathbf{T}(\mathbf{T}^{\ast}\mathbf{T})^{+}[\ell_{\mathbf{P}}](x) \tau (1-\tau) \\	
= &  \mathbf{T}(\mathbf{T}^{\ast}\mathbf{T})^{+}[\ell_{\mathbf{P}}](x) \rho_{2}(y,w,\alpha(\mathbf{P})) - \rho_{2}(y,w,\alpha(\mathbf{P})) \mathbf{T}(\mathbf{T}^{\ast}\mathbf{T})^{+}[\ell_{\mathbf{P}}](x)  \\
& + \rho_{2}(y,w,\alpha(\mathbf{P})) \mathbf{T}(\mathbf{T}^{\ast}\mathbf{T})^{+}\mathbf{T}^{\ast} \mathbf{T}(\mathbf{T}^{\ast}\mathbf{T})^{+}[\ell_{\mathbf{P}}](x) \\
= & \mathbf{T}(\mathbf{T}^{\ast}\mathbf{T})^{+}[\ell_{\mathbf{P}}](x) \rho_{2}(y,w,\alpha(\mathbf{P}))
\end{align*}	
where the last line follows from properties of the generalized inverse, namely, $(\mathbf{T}^{\ast}\mathbf{T})^{+}\mathbf{T}^{\ast} \mathbf{T}(\mathbf{T}^{\ast}\mathbf{T})^{+}  =(\mathbf{T}^{\ast}\mathbf{T})^{+}$; see \cite{engl1996regularization}  Proposition 2.3.

Therefore, for all $(y,w,x) \in \mathbb{Z}$,
\begin{align*}
Proj_{\mathcal{T}} \left[ \mu h^{\prime}(\mathbf{P}) - \dot{h}_{id}^{\ast}(\mathbf{P})[\ell_{P}]  \right](y,w,x) = &\epsilon(y,w,x) - E[\epsilon(Y,W,x) \mid X=x] \\
& + \rho_{2}(y,w,\alpha(\mathbf{P}))\mathbf{T}(\mathbf{T}^{\ast}\mathbf{T})^{+}\left[ \ell_{\mathbf{P}} - \mathbf{T}^{\ast} \Gamma   \right](x).
\end{align*}	
where $(y,w,x) \mapsto \epsilon(y,w,x) \equiv  \rho_{1}(y,w,\alpha(\mathbf{P})) - \rho_{2}(y,w,h(\mathbf{P})) \Gamma(x)$. Note that $E[\epsilon(Y,W,x) \mid X=x] = E[\rho_{1}(Y,W,\alpha(\mathbf{P})) \mid X = x]$.

The proof concludes by showing that $\epsilon(Y,W,X) - E[\epsilon(Y,W,X) \mid X]$ is orthogonal to
\begin{align*}
\rho_{2}(Y,W,\alpha(\mathbf{P}))\mathbf{T}(\mathbf{T}^{\ast}\mathbf{T})^{+}\left[ \ell_{\mathbf{P}} - \mathbf{T}^{\ast} \Gamma   \right](X).
\end{align*}
This follows because, conditional on $X$, $\epsilon(Y,W,X) - E[\epsilon(Y,W,X) \mid X]$ is orthogonal to $\rho_{2}(Y,W,\alpha(\mathbf{P}))$ by construction. $\square$

\section{Appendix for Section \ref{sec:PSGEL}}
\label{app:PSGEL}

This Appendix contains the proofs of all the Lemmas presented in Section \ref{sec:PSGEL}.

\begin{proof}[\textbf{Proof of Lemma \ref{lem:Pen-bound}}]
	Note that for any $\alpha \in \mathcal{A}$, $\hat{\Lambda}_{J}(\alpha) \ni 0$ wpa1, hence $ \sup_{\lambda \in \hat{\Lambda}_{J}(\alpha)} \hat{S}_{J}(\alpha,\lambda) \geq 0$ wpa1; in particular this applies to $\alpha = \hat{\alpha}_{L,n}$. Therefore, for any $\alpha
\in \mathcal{A}_{K}$,
	\begin{align*}
	\gamma_{K} Pen(\hat{\alpha}_{L,n}) \leq \sup_{\lambda \in \hat{\Lambda}%
		_{J}(\hat{\alpha}_{L,n})} \hat{S}_{J}(\hat{\alpha}_{L,n},\lambda)  + \gamma_{K} Pen(\hat{\alpha}_{L,n})\leq \sup_{\lambda \in \hat{\Lambda}%
		_{J}(\alpha)} \hat{S}_{J}(\alpha,\lambda) + \gamma_{K} Pen(\alpha),
	\end{align*}
	wpa1, where the second inequality is due to the definition of the minimizer $\hat{\alpha}_{L,n} \in \mathcal{A}_{K}$.
\end{proof}

\section{Proof for Theorem \ref{thm:conv-rate}}
\label{app:Thm-conv-rate}

Consider an $L$ that satisfies the Assumptions of the theorem. By the triangle inequality, it suffices to show that for any $\epsilon>0$, there exists constants $M_{1},M>0$ and $N \in \mathbb{N}$ such that
\begin{align*}
\mathbf{P}\left( ||\hat{\alpha}_{L,n} - \alpha_{L,0} || \geq M \varpi_{L,n}^{-1}(M_{1} \bar{\delta}_{L,n})   \right) \leq \epsilon
\end{align*}
for all $n \geq N$, where $\bar{\delta}_{L,n} \equiv \delta_{1,L,n} + \delta_{2,L,n}$. Henceforth, let $A_{n}(M_{1},M) \equiv \{  ||\hat{\alpha}_{L,n} - \alpha_{L,0} || \geq M \varpi_{L,n}^{-1}(M_{1} \bar{\delta}_{L,n})    \}$.

From the proof of Lemma \ref{lem:eff-sieve}, $\gamma_{K}Pen(\hat{\alpha}_{L,n}) = O_{\mathbf{P}} ( \Gamma_{L,n} ) = o_{\mathbf{P}}( l_{n}(\delta_{1,L,n} + \delta_{2,L,n}))$. This fact,  Lemma \ref{lem:QJ-approx-min} and the fact that $Q_{J}(\alpha_{L,0},P_{n}) \geq 0$, imply that there exists an $M_{0}$ and an $N_{0}$ such that
\begin{align*}
\mathbf{P}\left( Q_{J}(\hat{\alpha}_{L,n},P_{n}) - Q_{J}(\alpha_{L,0},P_{n}) \geq M_{0} \delta_{2,L,n}   \right) \leq \epsilon
\end{align*}
for all $n \geq N_{0}$. This result and Lemma \ref{lem:QJ-univ-conv} in turn imply that
\begin{align*}
\mathbf{P}\left( Q_{J}(\hat{\alpha}_{L,n},\mathbf{P}) - Q_{J}(\alpha_{L,0},\mathbf{P}) \geq 2 M_{0} \{ \delta_{1,L,n} + \delta_{2,L,n}\}   \right) \leq \epsilon
\end{align*}
for all $n \geq N_{0}$. Let $B_{n} \equiv \{  Q_{J}(\hat{\alpha}_{L,n},\mathbf{P}) - Q_{J}(\alpha_{L,0},\mathbf{P}) \leq 2 M_{0} \{ \delta_{1,L,n} + \delta_{2,L,n}\}   \}$.

The previous display implies that for any $n \geq N_{0}$, $\mathbf{P}(A_{n}(M_{1},M)) \leq \mathbf{P}(A_{n}(M_{1},M) \cap B_{n}) + \epsilon$. By definition of $\varpi_{L,n}$, for any history of data $(Z_{i})_{i}$ in $A_{n}(M_{1},M) \cap B_{n}$ it follows that
\begin{align*}
\varpi_{L,n} \left( M \varpi_{L,n}^{-1}(M_{1} \bar{\delta}_{L,n}) \right) \leq 2 M_{0} \{ \delta_{1,L,n} + \delta_{2,L,n}\} \iff  M \varpi_{L,n}^{-1}(M_{1} \bar{\delta}_{L,n}) \leq  \varpi_{L,n}^{-1} \left( 2 M_{0} \{ \delta_{1,L,n} + \delta_{2,L,n}\} \right)
\end{align*}
where the equivalence follows from the fact that $t \mapsto \varpi_{L,n}(t)$ is non-decreasing (see Lemma \ref{lem:sieve-IU}).
By setting $M_{1} = 2M_{0}$ and $M>1$ this display implies that $\mathbf{P}(A_{n}(2M_{0},M) \cap B_{n}) = 0$ thus proving the desired result. $\square$

\section{Appendix for Section \protect\ref{sec:conv-rate}}
\label{app:conv-rate}

This Appendix contains the proofs of all the Lemmas presented in Section \ref{sec:conv-rate}.

The following lemmas are used to prove the Lemmas in Section \ref{sec:conv-rate}; the proofs are relegated to the Supplementary Material \ref{supp:conv-rate}.

\begin{lemma}
\label{lem:SJ-bound} Let Assumption \ref{ass:reg} hold. Suppose $%
(\alpha,P) \in \mathbb{A} \times \mathcal{P}(\mathbb{Z})$ and $\delta>0$ are such that: There exists finite $C>0$ such that (1) $\sup_{\lambda \in B(\delta)} \sup_{z \in supp(P)} s^{\prime \prime}(\lambda^{T} g_{J}(z,\alpha)) \leq - \sqrt{C}$, (2) $e_{\min}(H_{J}(\alpha,P)) \geq \sqrt{C}$, (3) $2C^{-1}
||E_{P}[g_{J}(Z,\alpha)]||_{e} < \delta$, and (4) the hypothesis of Lemma %
\ref{lem:Lambda-charac} are satisfied for $\epsilon>0$.
\newline Then, with probability higher than $1-\epsilon$,

\begin{enumerate}
\item $\arg\max_{\lambda \in \Lambda_{J}(\alpha,P)} S_{J}(\alpha,\lambda,P)
= \arg\max_{\lambda \in B(\delta)} S_{J}(\alpha,\lambda,P) = \{
\lambda_{J}(\alpha,P) \}$; $\frac{d
S_{J}(\alpha,\lambda_{J}(\alpha,P),P)}{d \lambda} = 0$.

\item $\sup_{\lambda \in \Lambda(\alpha,P)}S(\alpha,\lambda,P) \leq 2C^{-1}
||E_{P}[g_{J}(Z,\alpha)]||^{2}_{e}$.

\item $||\lambda_{J}(\alpha,P)||_{e} \leq 2 C^{-1}
||E_{P}[g_{J}(Z,\alpha)]||_{e}$.
\end{enumerate}
\end{lemma}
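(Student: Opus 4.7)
\textbf{Proof plan for Lemma \ref{lem:SJ-bound}.} The plan is to reduce the maximization over $\Lambda_{J}(\alpha,P)$ to an interior maximization on the ball $B(\delta)\equiv \{\lambda \in \mathbb{R}^{J+1}\colon ||\lambda||_{e}\leq \delta\}$, then exploit strong concavity to pin down the unique maximizer together with its size and the value of the objective. First, I would invoke Lemma \ref{lem:Lambda-charac} (guaranteed by hypothesis (4)) on an event of probability at least $1-\epsilon$; on this event $B(\delta)\subseteq \Lambda_{J}(\alpha,P)$, so the inner problem is well-posed on a convex neighborhood of the origin.

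Next I would establish strong concavity of $\lambda \mapsto S_{J}(\alpha,\lambda,P)$ on $B(\delta)$ by differentiating under the expectation: its Hessian is $E_{P}[s^{\prime\prime}(\lambda^{T}g_{J}(Z,\alpha))g_{J}(Z,\alpha)g_{J}(Z,\alpha)^{T}]$, which, by hypothesis (1), is bounded above (in the positive semidefinite order) by $-\sqrt{C}\,H_{J}(\alpha,P)$; combined with hypothesis (2) this yields $\nabla^{2}_{\lambda}S_{J}(\alpha,\lambda,P)\preceq -C\,I$ uniformly on $B(\delta)$. A second-order Taylor expansion around $\lambda=0$, using $s(0)=0$, $s^{\prime}(0)=-1$, then gives the quadratic upper bound
\begin{equation*}
S_{J}(\alpha,\lambda,P) \leq -\lambda^{T}E_{P}[g_{J}(Z,\alpha)] - \frac{C}{2}\,||\lambda||_{e}^{2} \qquad \text{for all } \lambda \in B(\delta).
\end{equation*}

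The third step extracts the three claims. From the display above and Cauchy--Schwarz, any maximizer $\lambda$ over $B(\delta)$ must satisfy $S_{J}(\alpha,\lambda,P)\geq S_{J}(\alpha,0,P)=0$, which forces $||\lambda||_{e}\leq 2C^{-1}||E_{P}[g_{J}(Z,\alpha)]||_{e}$; this is claim (3). Hypothesis (3) then gives $||\lambda||_{e}<\delta$, so any maximizer lies in $\mathrm{int}\,B(\delta)$ and the first-order condition $\nabla_{\lambda}S_{J}(\alpha,\lambda,P)=0$ holds. Strong concavity makes this maximizer unique, giving $\lambda_{J}(\alpha,P)$ and the FOC in claim (1). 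Inserting the optimum of the quadratic majorant into the Taylor bound, or equivalently evaluating the majorant at $\lambda=-E_{P}[g_{J}(Z,\alpha)]/C$, produces $\sup_{\lambda\in B(\delta)}S_{J}(\alpha,\lambda,P)\leq \tfrac{1}{2C}||E_{P}[g_{J}(Z,\alpha)]||_{e}^{2}\leq 2C^{-1}||E_{P}[g_{J}(Z,\alpha)]||_{e}^{2}$, which is claim (2).

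Finally, to upgrade from $B(\delta)$ to $\Lambda_{J}(\alpha,P)$ I would use concavity of $S_{J}(\alpha,\cdot,P)$ on the convex set $\Lambda_{J}(\alpha,P)\supseteq B(\delta)$, a consequence of strict concavity of $s$ on $\mathcal{S}$. Since $\lambda_{J}(\alpha,P)$ is interior to $B(\delta)$ and satisfies the FOC, any $\lambda^{\prime}\in \Lambda_{J}(\alpha,P)\setminus B(\delta)$ lies on a ray emanating from $\lambda_{J}(\alpha,P)$ that first crosses $\partial B(\delta)$; by concavity the value of $S_{J}$ along this ray is already no larger than $S_{J}(\alpha,\lambda_{J}(\alpha,P),P)$ at the crossing point and can only decrease further, so $\lambda_{J}(\alpha,P)$ remains the unique maximizer on all of $\Lambda_{J}(\alpha,P)$, completing claim (1). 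I expect the main subtlety, rather than the routine calculus, to be the bookkeeping between the ``population-style'' bound via $H_{J}(\alpha,P)$ and the uniform control of $s^{\prime\prime}$ on $B(\delta)$: both are needed simultaneously to convert a purely pointwise curvature hypothesis into a uniform strong concavity constant $C$ on $B(\delta)$, and it is this interaction that justifies taking $2C^{-1}$ (rather than a sharper $(2C)^{-1}$) as the universal constant in the conclusions.
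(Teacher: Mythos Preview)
Your proposal is correct and follows essentially the same route as the paper: a second-order (mean-value/Taylor) expansion of $S_{J}(\alpha,\cdot,P)$ at $\lambda=0$, combining hypotheses (1) and (2) to get the quadratic majorant $-\lambda^{T}E_{P}[g_{J}(Z,\alpha)]-\tfrac{C}{2}||\lambda||_{e}^{2}$, then using $S_{J}(\alpha,0,P)=0$ and hypothesis (3) to force the maximizer into the interior of $B(\delta)$, and finally invoking Lemma \ref{lem:Lambda-charac} together with strict concavity of $s$ to upgrade from $B(\delta)$ to $\Lambda_{J}(\alpha,P)$. One cosmetic slip: the paper does not assume $s(0)=0$; the constant vanishes because $S_{J}$ is defined with the $-s(0)$ offset, so $S_{J}(\alpha,0,P)=0$ automatically.
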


\begin{lemma}
	\label{lem:avg-gJ} Let Assumption \ref{ass:pdf0} hold. Then, for any
	(non-random) $\alpha \in \mathcal{A}$ and any $J \in \mathbb{N}$,
	\begin{align*}
	||E_{P_{n}}[g_{J}(Z,\alpha)]||_{e} = O_{\mathbf{P}} \left( \sqrt{\frac{%
			\overline{\theta} + ||\mu h^{\prime}||^{2}_{L^{2}(\mathbf{P})} + b^{2}_{2,J} }{n} +
		||E_{\mathbf{P}}[g_{J}(Z,\alpha)]||^{2}_{e}} \right).
	\end{align*}
	(the constant implicit in the $O_{\mathbf{P}}$ does not depend on $J$).
\end{lemma}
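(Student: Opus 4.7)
The plan is to bound $\|E_{P_n}[g_J(Z,\alpha)]\|_e$ by its centered counterpart plus its expectation using the triangle inequality, and then control the centered piece by a second-moment (Chebyshev/Markov) computation that exploits the product structure $g_J = (\rho_1,\rho_2 q^J)^T$ together with the fact that $\rho_2$ is a bounded residual.

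First I would write
\begin{equation*}
\|E_{P_n}[g_J(Z,\alpha)]\|_e \;\leq\; \|E_{P_n}[g_J(Z,\alpha)]-E_{\mathbf{P}}[g_J(Z,\alpha)]\|_e \;+\; \|E_{\mathbf{P}}[g_J(Z,\alpha)]\|_e,
\end{equation*}
so it suffices to handle the first (stochastic) summand. Since the $Z_i$ are i.i.d., coordinate-wise independence of sample averages gives
\begin{equation*}
E_{\mathbf{P}}\!\left[\|E_{P_n}[g_J(Z,\alpha)]-E_{\mathbf{P}}[g_J(Z,\alpha)]\|_e^2\right] \;\leq\; \frac{E_{\mathbf{P}}[\|g_J(Z,\alpha)\|_e^2]}{n}.
\end{equation*}

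Next I would bound $E_{\mathbf{P}}[\|g_J(Z,\alpha)\|_e^2]$ component by component. Because $g_J(z,\alpha)=(\theta-\mu(w)h'(w),\,[\mathbf{1}\{y\leq h(w)\}-\tau]\,q^J(x)^T)^T$,
\begin{equation*}
\|g_J(Z,\alpha)\|_e^2 \;=\; (\theta-\mu(W)h'(W))^2 \;+\; (\mathbf{1}\{Y\leq h(W)\}-\tau)^2\,\|q^J(X)\|_e^2.
\end{equation*}
For the first term use $(a-b)^2\leq 2a^2+2b^2$ together with $|\theta|\leq\overline\theta$ to obtain $E_{\mathbf{P}}[(\theta-\mu(W)h'(W))^2]\leq 2\overline\theta^{\,2}+2\|\mu h'\|_{L^2(\mathbf{P})}^2$. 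For the second, the residual $\rho_2$ is bounded in absolute value by $\max\{\tau,1-\tau\}\leq 1$, so
\begin{equation*}
E_{\mathbf{P}}\!\left[(\mathbf{1}\{Y\leq h(W)\}-\tau)^2\,\|q^J(X)\|_e^2\right] \;\leq\; E_{\mathbf{P}}[\|q^J(X)\|_e^2] \;=\; b_{2,J}^2,
\end{equation*}
where the constants do not depend on $J$. Combining these with Markov's inequality yields $\|E_{P_n}[g_J(Z,\alpha)]-E_{\mathbf{P}}[g_J(Z,\alpha)]\|_e = O_{\mathbf{P}}\!\left(\sqrt{(\overline\theta+\|\mu h'\|_{L^2(\mathbf{P})}^2+b_{2,J}^2)/n}\right)$, absorbing $\overline\theta^{\,2}$ into the universal constant.

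Finally, I would combine the two pieces using $a+b\leq \sqrt{2}\sqrt{a^2+b^2}$ to conclude
\begin{equation*}
\|E_{P_n}[g_J(Z,\alpha)]\|_e \;=\; O_{\mathbf{P}}\!\left(\sqrt{\frac{\overline\theta+\|\mu h'\|_{L^2(\mathbf{P})}^2+b_{2,J}^2}{n}\,+\,\|E_{\mathbf{P}}[g_J(Z,\alpha)]\|_e^2}\right),
\end{equation*}
with the implicit constant depending only on universal quantities (coming from Markov and the bound on $|\rho_2|$) and hence independent of $J$. There is no genuinely hard step: the essential content is the second-moment calculation, and the only care needed is to check that the pieces of $\|g_J\|_e^2$ are each controlled by one of the three terms $\overline\theta$, $\|\mu h'\|_{L^2(\mathbf{P})}^2$, $b_{2,J}^2$, which Assumption \ref{ass:pdf0}(ii) and the boundedness of $\Theta$ and of the quantile residual deliver.
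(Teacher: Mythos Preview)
Your proof is correct and follows essentially the same approach as the paper's own proof: triangle inequality to split off the bias $\|E_{\mathbf P}[g_J(Z,\alpha)]\|_e$, then Markov on the centered sample mean together with the componentwise bound $E_{\mathbf P}[\|g_J(Z,\alpha)\|_e^2]\precsim \overline\theta + \|\mu h'\|_{L^2(\mathbf P)}^2 + b_{2,J}^2$. Your write-up is in fact more explicit than the paper's (which leaves the componentwise bound and the $\overline\theta$ vs.\ $\overline\theta^{\,2}$ absorption implicit), but the argument is the same.
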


\begin{lemma}
	\label{lem:H-bound} Suppose Assumptions \ref{ass:pdf0} and \ref{ass:reg}
	hold. Then for any $\alpha$ in a $||\cdot ||$- neighborhood of $\alpha_{0}$,
	
	\begin{enumerate}
		\item $||H_{J}(\alpha,P_{n}) - H_{J}(\alpha,\mathbf{P})||_{e} = O_{\mathbf{P}}(\{
		\theta^{2} + ||h^{\prime}||^{2}_{L^{\infty}(\mathbb{W},\mu)} \} \sqrt{%
			b^{4}_{4,J}/n})$.\footnote{%
			For matrices, $||.||_{e}$ is the operator norm induced by the Euclidean norm.%
		}
		
		\item If $\{ \theta^{2} + ||h^{\prime}||^{2}_{L^{\infty}(\mathbb{W},\mu)} \}
		\sqrt{b^{4}_{4,J}/n} = o(1)$, there exists a $C < \infty$ such that wpa1, \begin{align*}
			1/C \leq e_{min}(H_{J}(\alpha,P)) \leq e_{max}(H_{J}(\alpha,P)) \leq C
		\end{align*}
		for $%
		P \in \{P_{n}, \mathbf{P} \}$.
	\end{enumerate}
\end{lemma}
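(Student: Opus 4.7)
The proof has two parts: a variance/Markov argument for part (1), and a perturbation argument for part (2) after establishing a population-side lower bound on $e_{\min}(H_J(\alpha,\mathbf{P}))$.

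For part (1), I would reduce to the Frobenius norm via $||M||_{e}\le ||M||_{Frob}$ and then control the entrywise variance. Writing $g_J=(\rho_1,\rho_2 q^J)^T$ with $\rho_1 = \theta-\mu(W)h'(W)$ and $\rho_2=1\{Y\le h(W)\}-\tau$, each entry of $H_J(\alpha,P_n)-H_J(\alpha,\mathbf{P})$ is a centered i.i.d.\ average, so its variance is at most $n^{-1}E[g_J(Z,\alpha)_i^2 g_J(Z,\alpha)_j^2]$. Summing over $(i,j)$ gives $n^{-1}E[||g_J(Z,\alpha)||_e^4]$. Using $||g_J||_e^2 = \rho_1^2 + \rho_2^2 ||q^J||_e^2$ and $|\rho_2|\le 1$, I obtain $||g_J||_e^4\le 2\rho_1^4 + 2||q^J||_e^4$, so $E[||g_J||_e^4]\le C(\theta^2+||h'||_{L^\infty(\mathbb{W},\mu)}^2)^2 + 2b_{4,J}^4$. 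Chebyshev's inequality, combined with $b_{4,J}^4\ge J^2\ge 1$ by Jensen and Assumption \ref{ass:reg}(i), then yields the claimed $O_{\mathbf{P}}$ rate (absorbing constants into the prefactor since we work in a fixed $||\cdot||$-neighborhood of $\alpha_0$).

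For part (2), I would first establish population eigenvalue bounds for $H_J(\alpha,\mathbf{P})$. The upper bound is immediate: for any unit $v=(v_0,v_1)^T\in \mathbb{R}^{J+1}$,
\begin{align*}
v^T H_J(\alpha,\mathbf{P})v = E[(v_0\rho_1+v_1^T\rho_2 q^J)^2]\le 2v_0^2 E[\rho_1^2]+2v_1^T E[\rho_2^2 q^J q^{J,T}]v_1\le C,
\end{align*}
using $|\rho_2|\le 1$ and $E[q^J q^{J,T}]=I$. For the lower bound, I use a Schur-complement decomposition with $A=E[\rho_1^2]$, $b=E[\rho_1\rho_2 q^J]$ and $C=E[\rho_2^2 q^J q^{J,T}]$: because $\rho_2^2\ge c_\tau\equiv \min(\tau^2,(1-\tau)^2)>0$ and $E[q^J q^{J,T}]=I$, we have $C\ge c_\tau I$; Assumption \ref{ass:pdf0}(iii) gives $A\ge Var(\mu(W)h'(W))>0$ uniformly in a neighborhood of $\alpha_0$; and the Schur complement $A-b^T C^{-1}b$ equals the squared $L^2(\mathbf{P})$-residual of $\rho_1$ after projection onto the span of $\{\rho_2 q_1,\dots,\rho_2 q_J\}$, which I bound below uniformly in $J$ by decomposing $\rho_2 q^J = (\rho_2-E[\rho_2\mid W,X])q^J + E[\rho_2\mid W,X]q^J$ and noting that the first summand is conditionally mean-zero given $(W,X)$ and hence $L^2$-orthogonal to the function $\rho_1$ of $W$ alone, leaving only the smoother second summand to compete with $\rho_1$.

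Once the population bounds are in hand, the bounds for $P_n$ follow from part (1) and Weyl's inequality: under the hypothesis $(\theta^2+||h'||_{L^\infty(\mathbb{W},\mu)}^2)\sqrt{b_{4,J}^4/n}=o(1)$, the operator-norm deviation in part (1) is $o_{\mathbf{P}}(1)$, so each eigenvalue of $H_J(\alpha,P_n)$ differs from that of $H_J(\alpha,\mathbf{P})$ by $o_{\mathbf{P}}(1)$ and the same uniform bracket $1/C\le e_{\min}\le e_{\max}\le C$ holds wpa1.

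I expect the main obstacle to be the uniform-in-$J$ lower bound on the Schur complement. A crude bound $||b||_e^2\le E[\rho_1^2]$ via Cauchy--Schwarz, combined with $C^{-1}\le c_\tau^{-1} I$, only gives $A-b^T C^{-1}b\ge A(1-c_\tau^{-1})$, which is negative because $c_\tau<1$. The useful decomposition of $\rho_2$ described above is essential to strip off the conditionally mean-zero noise component of $\rho_2 q^J$ and show that the remaining $(W,X)$-measurable part cannot asymptotically exhaust $\rho_1$ as $J\to\infty$; this is where the specific structure of the quantile residual and the role of $X$ as an instrument enter the argument.
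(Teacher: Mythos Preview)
Your Part 1 is essentially the paper's argument; the paper reaches it through the factorization $g_J(z,\alpha)=M_J(x)\rho(y,w,\alpha)$ with the $(J{+}1)\times 2$ matrix $M_J(x)$ having blocks $1$ and $q^J(x)$, which gives the multiplicative bound $\sqrt{\sup_x E[\|\rho\|_e^4\mid X=x]\,b_{4,J}^4/n}$ directly and avoids your additive step plus the appeal to $b_{4,J}^4\ge 1$.

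Part 2, however, has a real gap, and the paper's route is structurally different. Your Schur-complement strategy needs a uniform-in-$J$ lower bound on $A - b^T C^{-1}b$, i.e.\ on the $L^2$ residual of $\rho_1(W)$ after projection onto $\mathrm{span}\{\rho_2 q_j\}_{j\le J}$. Your decomposition does correctly reduce this to the residual after projection onto $\mathrm{span}\{m(W,X)q_j(X)\}_{j\le J}$ with $m=E[\rho_2\mid W,X]$, but as $J\to\infty$ that span fills $\{m(W,X)f(X):f\in L^2(\mathbf{P}_X)\}$, and nothing in Assumptions \ref{ass:pdf0}--\ref{ass:reg} prevents $\rho_1(W)$ from lying in, or arbitrarily close to, this set; your sketch gives no argument ruling this out. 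The paper sidesteps $J$ entirely by reusing the $M_J$ factorization: $H_J(\alpha,\mathbf{P}) = E[M_J(X)\,\Sigma(X)\,M_J(X)^T]$ with $\Sigma(X)=E[\rho\rho^T\mid X]$ a $2\times 2$ matrix. One then only needs $C^{-1}I\le \Sigma(X)\le CI$ uniformly in $x$ --- a $J$-free condition, obtained via Assumption \ref{ass:pdf0}(iii) for the trace and strict Cauchy--Schwarz for the determinant (since $\rho_1$ is $W$-measurable while $\rho_2$ genuinely depends on $Y$). Combined with $E[M_J(X)M_J(X)^T]=I$ from Assumption \ref{ass:reg}(i), this yields $C^{-1}I\le H_J(\alpha,\mathbf{P})\le CI$ for every $J$ at once; the $P_n$ case then follows from Part 1 exactly as you indicate.
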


\begin{lemma}
\label{lem:norm-g-rate} Suppose Assumption \ref{ass:pdf0} and \ref{ass:reg}
hold. For any $(L=(J,K),n)$, and any positive real-valued sequence $(\delta_{n})_{n}$ satisfying Assumption \ref{ass:rates}(i)(iii)(iv)(v) and $\delta_{n} = o(1)$, it follows that
\begin{align*}
||n^{-1} \sum_{i=1}^{n} g_{J}(Z_{i},\hat{\alpha}_{L,n}) ||_{e} \precsim
C_{L,n} \left\{ \delta_{n} + \delta^{-1}_{n} \left\{
||E_{P_{n}}[g_{J}(Z,\alpha_{L,0})]||^{2}_{e} + \gamma_{K} Pen(\alpha_{L,0})
\right\} \right\}.
\end{align*}
\end{lemma}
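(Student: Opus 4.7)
The plan is to compare the PSGEL criterion at the estimator $\hat{\alpha}_{L,n}$ with its value at the pseudo--true parameter $\alpha_{L,0}$, producing matching lower and upper bounds that can be rearranged to isolate $\|E_{P_n}[g_J(Z,\hat{\alpha}_{L,n})]\|_{e}$. For the upper bound I would use the definition of $\hat{\alpha}_{L,n}$ together with the non-negativity of $\hat{S}_{J}(\alpha,\cdot)$ at $\lambda=0$ to obtain, wpa1,
\[
\sup_{\lambda \in \hat{\Lambda}_{J}(\hat{\alpha}_{L,n})} \hat{S}_{J}(\hat{\alpha}_{L,n},\lambda) \;\leq\; \sup_{\lambda \in \hat{\Lambda}_{J}(\alpha_{L,0})} \hat{S}_{J}(\alpha_{L,0},\lambda) \,+\, \gamma_{K}\,Pen(\alpha_{L,0}),
\]
after which I would verify the hypotheses of Lemma \ref{lem:SJ-bound} for the pair $(\alpha_{L,0}, P_{n})$ using Lemma \ref{lem:H-bound} (eigenvalue control of $H_{J}$), Lemma \ref{lem:avg-gJ} (to dominate $\|E_{P_n}[g_J(Z,\alpha_{L,0})]\|_e$ in probability), and Lemma \ref{lem:Lambda-charac} of the supplement (to ensure a $\delta_{n}$-ball lies inside $\hat{\Lambda}_{J}(\alpha_{L,0})$). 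Lemma \ref{lem:SJ-bound}(2) then bounds the first term on the right-hand side by a constant multiple of $\|E_{P_n}[g_J(Z,\alpha_{L,0})]\|_e^{2}$.

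For the lower bound I would plug in the explicit choice
\[
\lambda^{\ast} \;\equiv\; -\,\delta_{n}\,\frac{E_{P_n}[g_J(Z,\hat{\alpha}_{L,n})]}{\|E_{P_n}[g_J(Z,\hat{\alpha}_{L,n})]\|_{e}},
\]
which has Euclidean norm $\delta_{n}$ and therefore lies in $\hat{\Lambda}_{J}(\hat{\alpha}_{L,n})$ by Lemma \ref{lem:Lambda-charac}, once we know $\hat{\alpha}_{L,n}\in\bar{\mathcal{A}}_{L,n}$ wpa1 via Lemma \ref{lem:eff-sieve}. A second-order Taylor expansion of $s$, using $s'(0)=s''(0)=-1$ and Lipschitz continuity of $s''$, then gives
\[
\sup_{\lambda \in \hat{\Lambda}_{J}(\hat{\alpha}_{L,n})} \hat{S}_{J}(\hat{\alpha}_{L,n},\lambda) \;\geq\; \hat{S}_{J}(\hat{\alpha}_{L,n},\lambda^{\ast}) \;\geq\; \delta_{n}\,\|E_{P_n}[g_J(Z,\hat{\alpha}_{L,n})]\|_{e} - \tfrac{1}{2}(\lambda^{\ast})^{T}H_{J}(\hat{\alpha}_{L,n},P_{n})\lambda^{\ast} - R_{L,n},
\]
with the quadratic term controlled by $C\delta_{n}^{2}$ via Lemma \ref{lem:H-bound}(2), and with the cubic remainder satisfying $R_{L,n} \precsim \delta_{n}^{3}\sup_{z\in\operatorname{supp}(P_n)}\|g_{J}(z,\hat{\alpha}_{L,n})\|_{e}^{3}$. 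The uniform-in-$z$ bound on $\|g_{J}(\cdot,\hat{\alpha}_{L,n})\|_{e}$ is what generates the $(L,n)$-dependent scaling $C_{L,n}$: under Assumption \ref{ass:reg}(iii)(b) and the inclusion $\hat{\alpha}_{L,n}\in\bar{\mathcal{A}}_{L,n}$ wpa1 one may take $C_{L,n}\asymp \overline{\theta}+\mho_{L,n}+\sup_{x}\|q^{J}(x)\|_{e}$ (whose fourth moment is, by Assumption \ref{ass:rates}, compatible with the required control).

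Combining the two bounds and dividing by $\delta_{n}$ yields the claim
\[
\|E_{P_n}[g_J(Z,\hat{\alpha}_{L,n})]\|_{e} \;\precsim\; C_{L,n}\left\{\delta_{n} + \delta_{n}^{-1}\bigl(\|E_{P_n}[g_J(Z,\alpha_{L,0})]\|_{e}^{2} + \gamma_{K}\,Pen(\alpha_{L,0})\bigr)\right\}.
\]
The main obstacle is bookkeeping the $(L,n)$-dependent constants hidden in $C_{L,n}$: the estimator lives in an \emph{expanding} effective sieve $\bar{\mathcal{A}}_{L,n}$ of diameter $\mho_{L,n}$, so the cubic remainder from the Taylor expansion of $s$ grows with $L$ and $n$, and one must check that the rate conditions of Assumption \ref{ass:rates}(ii) (in particular $\mho_{L,n}^{4}b_{4,J}^{4}/n = o(1)$ and $b_{3,J}^{3}\delta_{n}^{\varrho}=o(1)$) force this remainder to be dominated by the $\delta_{n}\cdot\|E_{P_n}[g_J(Z,\hat{\alpha}_{L,n})]\|_{e}$ linear term being solved for. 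A subordinate difficulty is verifying the Lemma \ref{lem:SJ-bound} hypotheses at the random, $n$-dependent support $\operatorname{supp}(P_{n})$, which relies on the uniform eigenvalue control supplied by Lemma \ref{lem:H-bound}(2) under the same rate assumptions.
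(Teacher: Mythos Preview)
Your strategy---lower-bound $\hat{S}_J$ at $\hat{\alpha}_{L,n}$ by plugging in $\lambda^{\ast}=-\delta_n\,E_{P_n}[g_J(Z,\hat{\alpha}_{L,n})]/\|E_{P_n}[g_J(Z,\hat{\alpha}_{L,n})]\|_e$, upper-bound via the minimizer property together with Lemma \ref{lem:SJ-bound} at $(\alpha_{L,0},P_n)$, then rearrange---is exactly the paper's argument.

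The one genuine slip is in your control of the quadratic term $(\lambda^{\ast})^{T}H_J(\hat{\alpha}_{L,n},P_n)\lambda^{\ast}$. Lemma \ref{lem:H-bound}(2) is stated only for $\alpha$ in a $\|\cdot\|$-neighborhood of $\alpha_0$, and at this stage you do not yet know that $\hat{\alpha}_{L,n}$ lies in such a neighborhood (this lemma is an input to the convergence-rate theorem, not a consequence of it), so invoking it here is circular. The paper instead appeals to Lemma \ref{lem:HJhat-bound}, which requires only $\hat{\alpha}_{L,n}\in\bar{\mathcal{A}}_{L,n}$ (secured by Lemma \ref{lem:eff-sieve}) and yields $e_{\max}(H_J(\hat{\alpha}_{L,n},P_n))\leq C_{L,n}\equiv(1+\overline{\theta}+\mho_{L,n})^{2}$. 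This is the true source of the $(L,n)$-dependent scaling $C_{L,n}$ in the statement: it enters through the quadratic term, not the cubic remainder. The cubic remainder $\delta_n^{3}E_{P_n}[\|g_J(Z,\hat{\alpha}_{L,n})\|_e^{3}]$ is then shown to be $o_{\mathbf{P}}(\delta_n^{2}C_{L,n})$ and absorbed into the quadratic; comparing it to the linear term $\delta_n\|E_{P_n}[g_J(Z,\hat{\alpha}_{L,n})]\|_e$, as you propose, would require a priori control of the very quantity you are solving for.
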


\begin{lemma}\label{lem:moment-UBound}
	Suppose Assumption \ref{ass:reg}(iii). Then for $P \in \{ P_{n} , \mathbf{P} \}$,
	\begin{align*}
		\sup_{\alpha \in \bar{\mathcal{A}}_{L,n}} ||E_{P}[g_{J}(Z,\alpha)]||_{e} \precsim \overline{\theta} + l_{n} \gamma^{-1}_{K} \Gamma_{L,n} + E_{P}[||q^{J}(X)||_{e}]
	\end{align*}
	wpa1.
\end{lemma}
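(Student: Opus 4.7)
The plan is short: membership in $\bar{\mathcal{A}}_{L,n}$ forces $Pen(\alpha) \leq \mho_{L,n}$, and Assumption \ref{ass:reg}(iii)(b) converts this penalty bound into a uniform $L^{\infty}(\mathbb{W},\mu)$ bound on $h'$. The claim then follows by a componentwise triangle inequality applied to $g_J$.

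First, I would observe that for any $\alpha=(\theta,h) \in \bar{\mathcal{A}}_{L,n}$, $Pen(\alpha)\leq \mho_{L,n} = l_n \gamma_K^{-1}\Gamma_{L,n}$. Since $\gamma_K\downarrow 0$ and $\Gamma_{L,n}$ is bounded below by $\gamma_K Pen(\Pi_K\alpha_0)$ (which, together with the other non-negative terms making up $\Gamma_{L,n}$, keeps $\gamma_K^{-1}\Gamma_{L,n}$ growing), we have $\mho_{L,n}\to\infty$. Hence for all $n$ sufficiently large, $\mho_{L,n}\geq M$, where $M$ is the constant appearing in Assumption \ref{ass:reg}(iii)(b). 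Applying that assumption gives, uniformly over $\alpha\in\bar{\mathcal{A}}_{L,n}$,
\begin{align*}
\sup_{w\in \mathbb{W}} |\mu(w)h'(w)| \leq \mho_{L,n} = l_n\gamma_K^{-1}\Gamma_{L,n}.
\end{align*}

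Next I would split $g_J(z,\alpha)=(\rho_1(y,w,\alpha),\rho_2(y,w,\alpha)q^J(x)^T)^T$ into its two blocks. For the scalar first block $\rho_1(y,w,\alpha)=\theta-\mu(w)h'(w)$, since $|\theta|\leq \overline{\theta}$ and the bound above,
\begin{align*}
|E_P[\rho_1(Y,W,\alpha)]| \leq \overline{\theta} + E_P[|\mu(W)h'(W)|] \leq \overline{\theta} + l_n\gamma_K^{-1}\Gamma_{L,n}.
\end{align*}
For the vector-valued second block, $|\rho_2(y,w,\alpha)|=|1\{y\leq h(w)\}-\tau|\leq 1$, which yields by Jensen's inequality
\begin{align*}
\|E_P[\rho_2(Y,W,\alpha)q^J(X)]\|_e \leq E_P[\|q^J(X)\|_e].
\end{align*}
Combining the two blocks via $\|E_P[g_J(Z,\alpha)]\|_e \leq |E_P[\rho_1(Y,W,\alpha)]|+\|E_P[\rho_2(Y,W,\alpha)q^J(X)]\|_e$ yields the stated bound.

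Both bounds above are deterministic given the data and hold pathwise uniformly in $\alpha\in\bar{\mathcal{A}}_{L,n}$, so the conclusion applies unchanged for $P=\mathbf{P}$ and $P=P_n$; the ``wpa1" qualifier merely accommodates the large-$n$ requirement $\mho_{L,n}\geq M$ used to invoke Assumption \ref{ass:reg}(iii)(b). There is no meaningful obstacle: the content of the lemma is a direct consequence of the way $\bar{\mathcal{A}}_{L,n}$ is designed to couple the penalty to the $L^{\infty}(\mathbb{W},\mu)$ norm of $h'$.
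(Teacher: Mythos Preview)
Your proposal is correct and follows essentially the same argument as the paper: decompose $g_J$ into its two components, bound $|\rho_2|\leq 1$ to get the $E_P[\|q^J(X)\|_e]$ term, and use the definition of $\bar{\mathcal{A}}_{L,n}$ together with Assumption \ref{ass:reg}(iii)(b) to convert the penalty bound $Pen(\alpha)\leq \mho_{L,n}$ into the uniform bound $\sup_w|\mu(w)h'(w)|\leq \mho_{L,n}$. Your treatment is in fact slightly more explicit than the paper's, which does not spell out why $\mho_{L,n}\geq M$ eventually (the source of the ``wpa1'' qualifier).
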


\begin{proof}[\textbf{Proof of Lemma \ref{lem:eff-sieve}}]
	By Lemma \ref{lem:Pen-bound} and the fact that $\Pi_K \alpha_{0} \in \mathcal{A}_K$, we have
	\begin{align*}
	Pen(\hat{\alpha}_{L,n}) \leq \gamma^{-1}_{K} \sup_{\lambda \in \Lambda_{J}(\Pi_K \alpha_{0},P_{n})} S(\Pi_K \alpha_{0},\lambda,P_{n}) + Pen(\Pi_K \alpha_{0}).
	\end{align*}
	
	By Lemma \ref{lem:SJ-bound} applied to $(\Pi_K \alpha_{0},P_{n})$ (by Lemma \ref{lem:sJ-bound-alpha0L} in the Supplementary Material \ref{supp:conv-rate}, the conditions of the Lemma \ref{lem:SJ-bound} hold wpa1) it follows that
	\begin{align*}
	Pen(\hat{\alpha}_{L,n}) \precsim \gamma^{-1}_{K} ||E_{P_{n}}[g_{J}(Z,\Pi_K \alpha_{0})]||^{2}_{e} + Pen(\Pi_K \alpha_{0})~~~wpa1.
	\end{align*}
	By Lemma \ref{lem:avg-gJ}, wpa1, $||E_{P_{n}}[g_{J}(Z,\Pi_K \alpha_{0})]||^{2}_{e} \leq l_{n} \left( \bar{g}_{L,0}/n + ||E_{P}[g_{J}(Z,\Pi_K \alpha_{0})]||^{2}_{e}  \right)$, and thus the result follows.
\end{proof}

\begin{proof}[\textbf{Proof of Lemma \ref{lem:alpha0L-exists}}]
	Throughout, fix $L=(J,K)$. We show that the ``argmin" is non-empty by invoking the Weierstrass Theorem (see \cite{Zeidler1985}). For this, note that $\alpha \mapsto Q_{J}(\alpha,\mathbf{P})$ is a continuous transformation of
\[\alpha \mapsto E[g_{J}(Z,\alpha)]^{T} = \left(\theta + E[\ell(W)h(W)],E[E[(F_{Y|WX}(h(W)\mid W,X)-\tau)|X]q^{J}(X)^{T}]\right )
\]
(here $F_{Y|WX}$ is the conditional cdf of $Y$ given $W,X$ associated to $\mathbf{P}$ ). Since $\ell \in L^{2}(Leb)$ and $\mathbf{p}_{W}$ is bounded (see Assumption \ref{ass:pdf0}), then $h \mapsto E[\ell(W)h(W)]$ is continuous with respect to $||.||_{L^{2}(Leb)}$. Also under Assumption \ref{ass:pdf0},
	\begin{align*}
		|E[F_{Y|WX}(h_{1}(W)\mid W,x) - F_{Y|WX}(h_{2}(W)\mid W,x)|x]| \precsim & \int |h_{1}(w) - h_{2}(w)|  \mathbf{p}_{W|X}(w|x)dw \\
		\leq &  \inf_{x} 1/\mathbf{p}_{X}(x) \int |h_{1}(w) - h_{2}(w)|    \mathbf{p}_{W}(w) dw  \\
		\precsim & ||h_{1} - h_{2}||_{L^{2}(Leb)},
	\end{align*}
	so $h \mapsto E[E[(F_{Y|WX}(h(W)\mid W,X)-\tau)|X]q^{J}(X)]$ is also continuous with respect to  $||.||_{L^{2}(Leb)}$. Under assumption \ref{ass:reg}, $\alpha \mapsto Q_{J}(\alpha,\mathbf{P}) + \gamma_{K} Pen(\alpha)$ is lower semi-compact, and $\mathcal{A}_{K}=\Theta \times \mathcal{H}_{K}$ is a finite dimensional and closed set. So all the assumptions of the Weierstrass Theorem hold.
\end{proof}

\begin{proof}[\textbf{Proof of Lemma \ref{lem:alpha0L-consistent}}]
	In the proof we simply use $L$ instead of $L_{n}$. We first present some intermediate results.

	From the definition of $Proj_{J}$ it follows that
	\begin{align*}
	E_{\mathbf{P}}[g_{J}(Z,\alpha)]  = E_{\mathbf{P}}[M_{J}(X) \rho(Y,W,\alpha)] = (E_{\mathbf{P}}[\rho_{1}(Y,W,\alpha)],E_{\mathbf{P}}[\rho_{2}(Y,W,\alpha)q^{J}(X)^{T}])^{T}.
	\end{align*}
	This in turn implies that
	\begin{align*}
	\left( E_{\mathbf{P}}[g_{J}(Z,\alpha)] \right)^{T} M_{J}(X) = & (E_{\mathbf{P}}[\rho_{1}(Y,W,\alpha)],E_{\mathbf{P}}[\rho_{2}(Y,W,\alpha)q^{J}(X)^{T}])M_{J}(X) \\
	= & (E_{\mathbf{P}}[\rho_{1}(Y,W,\alpha)],Proj_{J}[m_{2}(\cdot,\alpha)](X)).
	\end{align*}
	Since $E_{\mathbf{P}}[M_{J}(X) M_{J}(X)^{T}] = I_{J+1}$ under Assumption \ref{ass:reg} the previous result implies that
	\begin{align*}
	(E_{\mathbf{P}}[\rho_{1}(Y,W,\alpha)])^{2} + ||Proj_{J}[m_{2}(\cdot,\alpha)]||^{2}_{L^{2}(\mathbf{P})} = & E_{\mathbf{P}}[ (E_{\mathbf{P}}[g_{J}(Z,\alpha)])^{T} M_{J}(X) M_{J}(X)^{T}  (E_{\mathbf{P}}[g_{J}(Z,\alpha)])  ] \\
	= & ||E_{\mathbf{P}}[g_{J}(Z,\alpha)]||^{2}_{e}.
	\end{align*}
	
	This observation and the proof of Lemma \ref{lem:H-bound} (applied to $\alpha_0$) implies that
	\begin{align*}
	Q_{J}(\alpha,\mathbf{P}) \geq & c^{-1}  E_{\mathbf{P}}[g_{J}(Z,\alpha)^{T}] (E_{\mathbf{P}}[M_{J}(X)M_{J}(X)^{T}])^{-1}  E_{\mathbf{P}}[g_{J}(Z,\alpha)] \\
	= & c^{-1} ||E_{\mathbf{P}}[g_{J}(Z,\alpha)]||^{2}_{e} \\
	= & c^{-1} \left\{ (E_{\mathbf{P}}[\rho_{1}(Y,W,\alpha)])^{2} + ||Proj_{J}[m_{2}(\cdot,\alpha)]||^{2}_{L^{2}(\mathbf{P})}\right\}
	\end{align*}
	and
	\begin{align*}
Q_{J}(\alpha,\mathbf{P}) \leq & c \left\{ (E_{\mathbf{P}}[\rho_{1}(Y,W,\alpha)])^{2} + ||Proj_{J}[m_{2}(\cdot,\alpha)]||^{2}_{L^{2}(\mathbf{P})}\right\}
\end{align*}	
	for some $c>1$.
	
	By the conditions in the lemma, uniformly over $\alpha \in \bar{\mathcal{A}}_{L,n}$,
	\begin{align*}
	\liminf_{J \rightarrow \infty}   Q_{J}(\alpha,\mathbf{P}) \geq c^{-1} \left\{ (E_{\mathbf{P}}[\rho_{1}(Y,W,\alpha)])^{2} + ||m_{2}(\cdot ,\alpha)||^{2}_{L^{2}(\mathbf{P})}\right\}
	\end{align*}
	and
	\begin{align*}
\limsup_{J \rightarrow \infty}   Q_{J}(\alpha,\mathbf{P}) \leq c \left\{ (E_{\mathbf{P}}[\rho_{1}(Y,W,\alpha)])^{2} + ||m_{2}(\cdot ,\alpha)||^{2}_{L^{2}(\mathbf{P})}\right\}.
\end{align*}	
	
	We now turn to the proof of the claim in the Lemma. We show this by contradiction. That is, suppose that there exists a $c>0$ such that $ ||\alpha_{L,0} - \Pi_{K} \alpha_{0} || \geq c$ i.o. Therefore, for any $L$ for which the expression holds, it follows that
	\begin{align*}
	\inf_{\alpha \in \bar{\mathcal{A}}_{L,n} \colon || \alpha - \Pi_{K} \alpha_{0} || \geq c} Q_{J}(\alpha,\mathbf{P}) \leq Q_{J}(\Pi_{K} \alpha_{0},\mathbf{P}).
	\end{align*}
	
	Under Assumption \ref{ass:reg}, $\{ \alpha \in \bar{\mathcal{A}}_{L,n} \colon || \alpha - \Pi_{K} \alpha_{0} || \geq c \}$ is compact under $||.||$ and by the proof of Lemma \ref{lem:alpha0L-exists}, $\alpha \mapsto Q_{J}(\alpha,\mathbf{P})$ is continuous under $||.||$. Thus, there exists a $\alpha_{L} \in \bar{\mathcal{A}}_{L,n}$ such that (i) $|| \alpha_{L} - \Pi_{K} \alpha_{0} || \geq c$ and (ii) $Q_{J}(\alpha_{L},\mathbf{P}) \leq Q_{J}(\Pi_{K} \alpha_{0},\mathbf{P})$.

	By the first part of the proof and our conditions, $Q_{J}(\Pi_{K} \alpha_{0},\mathbf{P}) = o(1)$ so this implies that $Q_{J}(\alpha_{L},\mathbf{P}) = o(1)$. By the first part of the proof, this result implies that
	\begin{align*}
	(E_{\mathbf{P}}[\rho_{1}(Y,W,\alpha_{L})])^{2} + ||m_{2}(\cdot,\alpha_{L})||^{2}_{L^{2}(\mathbf{P})} = o(1).
	\end{align*}
	
	Since, under our conditions, $\mho_{L,n} \leq K < \infty$, it follows that the sequence $(\alpha_{L})_{L}$ belongs to $\{ \alpha \colon Pen(\alpha) \leq K  \}$ which is compact under $||.||$ by Assumption \ref{ass:reg}. Thus, there exists a convergent subsequence with limit $\alpha^{\ast}$. On the one hand, $|| \alpha_{L} - \Pi_{K} \alpha_{0} || \geq c$, it follows that $\alpha^{\ast} \ne \alpha_{0}$. But, on the other hand, continuity of $\alpha \mapsto 	(E_{\mathbf{P}}[\rho_{1}(Y,W,\alpha)])^{2} + ||m_{2}(\cdot,\alpha)||^{2}_{L^{2}(\mathbf{P})} $ and the previous display imply that
	\begin{align*}
	(E_{\mathbf{P}}[\rho_{1}(Y,W,\alpha^{\ast})])^{2} + ||m_{2}(\cdot,\alpha^{\ast})||^{2}_{L^{2}(\mathbf{P})} =0
	\end{align*}
	but this contradicts the identification condition in Assumption \ref{ass:ident}.
\end{proof}

\begin{proof}[\textbf{Proof of Lemma \ref{lem:sieve-IU}}]
	The proof of Lemma \ref{lem:alpha0L-exists} implies that $\alpha \mapsto Q_{J}(\alpha,\mathbf{P})$ is continuous and that $t \mapsto \{ \alpha \in \bar{\mathcal{A}}_{L,n} \colon ||\alpha - \alpha_{L,0} || \geq t \}$ is a compact-valued correspondence which is also continuous. Thus by the Theorem of the Maximum, $\varpi_{L,n}$ is continuous. This also implies that the ``inf" is achieved; this and the definition of $\alpha_{L,0}$ and assumption \ref{ass:ID-pseudotrue} imply that  $\varpi_{L,n}(t) = 0$ iff $t = 0$. The fact that it is non-decreasing is trivial to show.
\end{proof}

\begin{proof}[\textbf{Proof of Lemma \ref{lem:QJ-approx-min}}]
	Observe that
	\begin{align*}
	Q_{J}(\hat{\alpha}_{L,n},P_{n}) \leq & \frac{1}{e_{min}(H_{J}(\alpha_{0},\mathbf{P}))} ||E_{P_{n}}[g_{J}(Z,\hat{\alpha}_{L,n})]||^{2}_{e} .
	\end{align*}
	By Lemma \ref{lem:H-bound}(2), it follows that
	\begin{align*}
Q_{J}(\hat{\alpha}_{L,n},P_{n}) \leq & C^{-1} ||E_{P_{n}}[g_{J}(Z,\hat{\alpha}_{L,n})]||^{2}_{e}.
\end{align*}	
By Lemma \ref{lem:norm-g-rate}
	\begin{align*}
Q_{J}(\hat{\alpha}_{L,n},P_{n}) \precsim  C_{L,n} \left\{ \delta_{n} + \delta^{-1}_{n} \left\{
||E_{P_{n}}[g_{J}(Z,\alpha_{L,0})]||^{2}_{e} + \gamma_{K} Pen(\alpha_{L,0})
\right\} \right\}.
\end{align*}
By Lemma \ref{lem:avg-gJ} and definition of $\delta_{2,L,n}$,
	\begin{align*}
		Q_{J}(\hat{\alpha}_{L,n},P_{n}) =  O_{\mathbf{P}}(\delta_{2,L,n}).
	\end{align*}
\end{proof}

\begin{proof}[\textbf{Proof of Lemma \ref{lem:QJ-univ-conv}}]
	Note that
	\begin{align*}
	|Q_{J}(\alpha,P_{n}) - Q_{J}(\alpha,P)| \leq & ||H_{J}(\alpha_{0},\mathbf{P})^{-1/2} (E_{P_{n}}[g_{J}(Z,\alpha)] - E_{\mathbf{P}}[g_{J}(Z,\alpha)])||_{e} \\
	& \times ||H_{J}(\alpha_{0},\mathbf{P})^{-1/2}(E_{P_{n}}[g_{J}(Z,\alpha)] + E_{\mathbf{P}}[g_{J}(Z,\alpha)])||_{e}
	\end{align*}
	This bound and Lemma \ref{lem:H-bound}(2) (applied to $\alpha_0$) imply that it suffices to show that
	\begin{align*}
			&\sup_{\alpha \in \bar{\mathcal{A}}_{L,n}} Term_{2,J}(\alpha,P_{n}) = O_{\mathbf{P}} \left( \overline{\theta} + l_{n} \gamma_{K}^{-1} \Gamma_{L,n} + b_{2,J}  \right),\\
			and~& 	\sup_{\alpha \in \bar{\mathcal{A}}_{L,n}} Term_{1,J}(\alpha,P_{n}) = O_{\mathbf{P}} \left( \sqrt{\frac{J}{n}} \Delta_{L,n} \right)
	\end{align*}
	where
	\begin{align*}
		Term_{1,J}(\alpha,P_{n}) \equiv ||E_{P_{n}}[g_{J}(Z,\alpha)] - E_{\mathbf{P}}[g_{J}(Z,\alpha)]||_{e}\\
		Term_{2,J}(\alpha,P_{n}) \equiv ||E_{P_{n}}[g_{J}(Z,\alpha)] + E_{\mathbf{P}}[g_{J}(Z,\alpha)]||_{e}.
	\end{align*}
	
	By Lemma \ref{lem:moment-UBound},
	\begin{align*}
		\sup_{\alpha \in \bar{\mathcal{A}}_{L,n}} ||E_{P_{n}}[g_{J}(Z,\alpha)]||_{e} \precsim \overline{\theta} + l_{n} \gamma_{K}^{-1} \Gamma_{L,n} + E_{P_{n}}[||q^{J}(X)||_{e}]
	\end{align*}
	wpa1; and similarly for $\sup_{\alpha \in \bar{\mathcal{A}}_{L,n}} ||E_{\mathbf{P}}[g_{J}(Z,\alpha)]||_{e}$. Hence,
	\begin{align*}
		\sup_{\alpha \in \bar{\mathcal{A}}_{L,n}} Term_{2,J}(\alpha,P_{n}) = O_{\mathbf{P}} \left( \overline{\theta} + l_{n} \gamma_{K}^{-1} \Gamma_{L,n} + b_{2,J}  \right).
	\end{align*}

    Regarding $Term_{1,J}(\alpha,P_{n})$, by definition of $g_{J}$ and simple algebra,
    \begin{align*}
    	\sup_{\alpha \in \bar{\mathcal{A}}_{L,n}} Term_{1,J}(\alpha,P_{n}) \leq & n^{-1/2} \sup_{(\theta,h) \in \bar{\mathcal{A}}_{L,n}} |\mathbb{G}_{n}[\mu \cdot h']|\\
    	& + \sup_{\alpha \in \bar{\mathcal{A}}_{L,n}} || n^{-1} \sum_{i=1}^{n} \rho_{2}(Y_{i},W_{i},\alpha)q^{J}(X_{i}) - E_{\mathbf{P}}[\rho_{2}(Y,W,\alpha)q^{J}(X)] ||_{e}.
    \end{align*}
 By Assumption \ref{ass:Donsker}, the first term in the RHS is $O_{\mathbf{P}}(n^{-1/2}\Delta_{L,n})$; the second term in the RHS can be bounded above by $\sqrt{\frac{J}{n}} \max_{1 \leq j \leq J} \sup_{g \in \mathcal{G}_{K}}|\mathbb{G}_{n}[g \cdot q_{j} ]|$, which by Assumption \ref{ass:Donsker} is $O_{\mathbf{P}} \left( \sqrt{\frac{J}{n}} \Delta_{L,n} \right)$.
	
\end{proof}

\section{Proof of Theorem \ref{thm:QLR}}
\label{app:Thm-QLR}

By Assumption \ref{ass:undersmooth}(i) and Lemma \ref{lem:anorm} in Appendix \ref{app:ADT},  $n^{-1/2} \sum_{i=1}^{n} (G(\alpha_{L,0})[u^{\ast}_{L,n}])^{T}H_{L}^{-1} g_{J}(Z_{i},\alpha_{L,0})  \Rightarrow N(0,1)$. This fact, Lemma \ref{lem:QLR-A-rep} and Assumption \ref{ass:undersmooth}(ii), imply the desired result. $\square$

\section{Appendix for Section \protect\ref{sec:ADT}}

\label{app:ADT}

For any $L=(J,K) \in \mathbb{N}^{2}$, let $M_{L} \in \mathbb{R}^{(1+J)\times (1+K)}$ be defined as
\begin{align*}
M_{L} = & \left[
\begin{array}{cc}
1 & E[\ell(W)\varphi^{K}(W)^{T}] \\
\mathbf{0} & E[\mathbf{p}_{Y|WX}(h_{L,0}(W) | W,X) q^{J}(X)\varphi^{K}(W)^{T}]%
\end{array}
\right]
\end{align*}
where $\mathbf{0}$ is a $J \times 1$ vector of zeros. The following eight lemmas are proved in the Section \ref{supp:ADT} in the Supplemental Material.

\begin{lemma}
\label{lem:weak-norm} Let Assumptions \ref{ass:ident}-\ref{ass:ID-pseudotrue} and \ref{ass:rates-LQA}(iv)(v) hold. Then, there exists a $c>0$ such that for any $L = (J,K) \in \mathbb{N}^{2}$ and any $\alpha =
(\theta,\pi^{T} \varphi^{K})$ some $\pi \in \mathbb{R}^{K}$ (i.e., $\alpha
\in lin\{ \mathcal{A}_{K} \}$), it follows that
\begin{align*}
|| \alpha||_{w} \geq c \times \sqrt{e_{min}(M_{L}^{T} M_{L}) }
||(\theta,\pi)||_{e}
\end{align*}
where $e_{min}(M_{L}^{T} M_{L})>0$.
\end{lemma}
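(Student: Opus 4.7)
The plan is to reduce the weak norm of an $\alpha \in lin\{\mathcal{A}_K\}$ to a quadratic form in the coordinate vector $(\theta,\pi)$ via the matrix $M_L$, and then bound the quadratic form below by using boundedness of $H_L$ and positive definiteness of $M_L^T M_L$.

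First I would plug in $\alpha=(\theta,\pi^T\varphi^K)$ into the formula for $G(\alpha_{L,0})[\cdot]$ given in Section \ref{sec:ADT}. Writing $\zeta=\pi^T\varphi^K$, we have $E[\ell(W)\zeta(W)] = E[\ell(W)\varphi^K(W)^T]\pi$ and $E[\mathbf{p}_{Y|WX}(h_{L,0}(W)\mid W,X)\zeta(W)q^J(X)] = A_{L,0}\pi$, so by construction
\begin{equation*}
G(\alpha_{L,0})[\alpha] \;=\; M_L \begin{pmatrix}\theta\\ \pi\end{pmatrix}.
\end{equation*}
Consequently $||\alpha||_w^2 = (\theta,\pi^T)\,M_L^T H_L^{-1} M_L (\theta,\pi^T)^T$.

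Second I would invoke Lemma \ref{lem:H-bound}(2) to control $H_L^{-1}$. Under Assumption \ref{ass:rates-LQA}(v) the side condition of that lemma is satisfied at $\alpha_{L,0}$, hence there exists a finite universal $C\geq 1$ (independent of $L$) with $e_{\max}(H_L)\leq C$, which gives $H_L^{-1}\succeq C^{-1}I$ in the Loewner order. Combining with the previous display,
\begin{equation*}
||\alpha||_w^2 \;\geq\; C^{-1}\,\bigl\|M_L(\theta,\pi^T)^T\bigr\|_e^2 \;\geq\; C^{-1}\,e_{\min}(M_L^T M_L)\,\bigl\|(\theta,\pi)\bigr\|_e^2,
\end{equation*}
so setting $c\equiv 1/\sqrt{C}$ yields the stated inequality.

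Third I would verify $e_{\min}(M_L^T M_L)>0$, i.e. that $M_L$ has full column rank $1+K$. Suppose $M_L(\theta,\pi^T)^T=0$. The lower block gives $A_{L,0}\pi=0$, and since $A_{L,0}$ has full column rank $K$ by Assumption \ref{ass:rates-LQA}(iv), this forces $\pi=0$. Substituting back into the top row gives $\theta=0$. Hence $M_L^T M_L$ is positive definite for every finite $L$, establishing the strict positivity claim. The main (and essentially only) non-trivial point is locating the full-rank information about $M_L$ in Assumption \ref{ass:rates-LQA}(iv): the first column of $M_L$ is the unit vector $(1,\mathbf{0})^T$, so linear independence of the remaining $K$ columns from one another (given by rank $K$ of $A_{L,0}$) automatically upgrades to linear independence of all $1+K$ columns, because any nontrivial combination of the last $K$ columns annihilating the top row already forces $\pi=0$ via the bottom block.
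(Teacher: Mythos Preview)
Your proposal is correct and follows essentially the same route as the paper: express $G(\alpha_{L,0})[\alpha]=M_L(\theta,\pi)^T$, bound $H_L^{-1}$ below via Lemma \ref{lem:H-bound} (using Assumption \ref{ass:rates-LQA}(v)), and deduce positive definiteness of $M_L^T M_L$ from the full-rank condition on $A_{L,0}$ in Assumption \ref{ass:rates-LQA}(iv). The only cosmetic difference is that the paper asserts the eigenvalues of $M_L^T M_L$ are ``those of $A_{L,0}^T A_{L,0}$ and $1$'' (which is literally correct only when $E[\ell(W)\varphi^K(W)]=0$, though the conclusion still holds via the Schur complement), whereas your null-space argument is a cleaner and fully rigorous way to reach the same conclusion.
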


\begin{lemma}
\label{lem:HJhat-LQA} Let Assumptions \ref{ass:ident}-\ref{ass:ID-pseudotrue}, \ref{ass:HJ-sec} and \ref{ass:rates-LQA}(v) hold. Then
\begin{align*}
\sup_{\alpha \in \mathcal{N}_{L,n}} ||H_{J}(\alpha,P_{n}) -
H_{J}(\alpha_{L,0},\mathbf{P})||_{e} = O_{\mathbf{P}}(\Xi_{1,L,n})
\end{align*}
with
\begin{align*}
\Xi_{1,L,n} = O_{\mathbf{P}} \left( ( \overline{\theta} + ||h^{\prime
}_{L,0}||_{L^{\infty}(\mathbb{W},\mu)} )^{2} \sqrt{b_{4,J}/n} + \mho_{L,n}
\eta_{L,n} + \Xi_{L,n} \right)
\end{align*}
\end{lemma}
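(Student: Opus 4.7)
The plan is to decompose the quantity inside the supremum via the triangle inequality so that each of the three rate components appears as the bound for one piece. Specifically, for each $\alpha \in \mathcal{N}_{L,n}$ I write
\begin{align*}
H_{J}(\alpha,P_{n}) - H_{J}(\alpha_{L,0},\mathbf{P})
= & \underbrace{\left[H_{J}(\alpha,P_{n}) - H_{J}(\alpha_{L,0},P_{n}) - \{H_{J}(\alpha,\mathbf{P}) - H_{J}(\alpha_{L,0},\mathbf{P})\}\right]}_{(\mathrm{A})} \\
& + \underbrace{\left[H_{J}(\alpha,\mathbf{P}) - H_{J}(\alpha_{L,0},\mathbf{P})\right]}_{(\mathrm{B})} + \underbrace{\left[H_{J}(\alpha_{L,0},P_{n}) - H_{J}(\alpha_{L,0},\mathbf{P})\right]}_{(\mathrm{C})}.
\end{align*}
Term (A) is exactly the stochastic equicontinuity term handled by Assumption \ref{ass:HJ-sec}, so its supremum over $\mathcal{N}_{L,n}$ is $O_{\mathbf{P}}(\Xi_{L,n})$. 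Term (C) is non-random in $\alpha$: I apply Lemma \ref{lem:H-bound}(1) at $\alpha = \alpha_{L,0}$, and since $|\theta_{L,0}|\leq \overline{\theta}$, its operator norm is $O_{\mathbf{P}}\big((\overline{\theta} + \|h^{\prime}_{L,0}\|_{L^{\infty}(\mathbb{W},\mu)})^{2}\sqrt{b_{4,J}/n}\big)$.

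The real work is in Term (B). I would bound each of the three distinct blocks of the $(1+J)\times (1+J)$ matrix $H_{J}(\alpha,\mathbf{P})-H_{J}(\alpha_{L,0},\mathbf{P})$ separately. (i) For the scalar $(1,1)$-block $E[\rho_{1}(Y,W,\alpha)^{2}] - E[\rho_{1}(Y,W,\alpha_{L,0})^{2}]$, I factor as a difference of squares; each factor $\rho_{1}(y,w,\alpha) = \theta - \mu(w)h^{\prime}(w)$ is uniformly bounded by $\overline{\theta}+\mho_{L,n}$ on $\bar{\mathcal{A}}_{L,n}$ (Assumption \ref{ass:reg}(iii)(b) combined with the fact that $\mathcal{N}_{L,n}\subseteq \bar{\mathcal{A}}_{L,n}$), and the difference is $L^{2}(\mathbf{P})$-controlled by $\|\alpha-\alpha_{L,0}\|\leq \eta_{L,n}$ via Assumption \ref{ass:dev-N}(iii). (ii) For the $J\times 1$ block $E[\rho_{1}\rho_{2} q^{J}]$, I combine the above bound on $\rho_{1}$-differences with $|\rho_{2}|\leq 1$ and a Cauchy--Schwarz argument against $q^{J}$, using $E[\|q^{J}(X)\|_{e}^{2}]=J$ only implicitly, since only the operator norm matters. (iii) For the $J\times J$ block $E[\rho_{2}^{2}\, q^{J}q^{J\,T}]$, I exploit that $\rho_{2}^{2}=(1-2\tau)\mathbf{1}\{y\leq h(w)\}+\tau^{2}$, so after conditioning on $(W,X)$ the difference reduces to $(1-2\tau)\{\mathbf{p}_{Y|WX}(\tilde h(W)|W,X)\}(h(W)-h_{L,0}(W))$ for an intermediate $\tilde h$, using the mean value theorem together with the uniform bound on $\mathbf{p}_{Y|WX}$ from Assumption \ref{ass:pdf0}(i). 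Cauchy--Schwarz against $q^{J}q^{J\,T}$ and $\|h-h_{L,0}\|_{L^{2}(Leb)}\leq C\|\alpha-\alpha_{L,0}\|\leq C\eta_{L,n}$ (Assumption \ref{ass:dev-N}(iii)) bounds this block's operator norm by a constant times $\eta_{L,n}$.

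Combining the three blocks, and noting that on $\mathcal{N}_{L,n}$ the multiplicative constant from block (i) is $(\overline{\theta}+\mho_{L,n})\precsim \mho_{L,n}$, the overall bound on $\|(\mathrm{B})\|_{e}$ uniform over $\mathcal{N}_{L,n}$ is $O(\mho_{L,n}\eta_{L,n})$. Adding (A), (B), (C) gives the announced rate $\Xi_{1,L,n}$.

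The main obstacle is block (iii) of Term (B): because $\rho_{2}$ is non-smooth in $h$, I cannot differentiate pointwise, and I must push the differencing inside a conditional expectation in $(W,X)$ before invoking the smoothness of the conditional CDF from Assumption \ref{ass:pdf0}(i). A secondary subtlety is ensuring that the $\mho_{L,n}$ prefactor in block (i) dominates $|\theta|+\|\mu h^{\prime}\|_{\infty}$ uniformly on $\mathcal{N}_{L,n}$, which follows from $\mathcal{N}_{L,n}\subseteq \bar{\mathcal{A}}_{L,n}$ together with Assumption \ref{ass:reg}(iii)(b), so no extra assumption is needed beyond those already invoked in the hypothesis of the lemma.
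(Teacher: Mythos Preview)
Your proposal is correct and takes essentially the same route as the paper: both use the identical three-term decomposition, with (A) controlled directly by Assumption~\ref{ass:HJ-sec}, (C) by Lemma~\ref{lem:H-bound}(1) at $\alpha_{L,0}$, and (B) by a population-level continuity argument. The only cosmetic difference is that for (B) the paper writes $g_Jg_J^{T}=M_J(X)\,\rho\rho^{T}M_J(X)^{T}$ and bounds the conditional $2\times 2$ inner matrix (together with $E[M_JM_J^{T}]=I$) rather than handling the three blocks of the $(J{+}1)\times(J{+}1)$ matrix separately as you do; the key step for the $\rho_2$-part---conditioning on $(W,X)$, reducing to a conditional-CDF difference, and invoking the bounded density from Assumption~\ref{ass:pdf0}---is identical in both arguments.
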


\begin{lemma}
\label{lem:sec-gJ} For any $L=(J,K) \in \mathbb{N}^{2}$,
\begin{align*}
&\sup_{ \alpha \in \mathcal{N}_{L,n}} \left \Vert n^{-1}\sum_{i=1}^{n}
g_{J}(Z_{i},\alpha) - g_{J}(Z_{i},\alpha_{L,0}) - E[g_{J}(Z,\alpha) -
g_{J}(Z,\alpha_{L,0})] \right \Vert_{e} \\
& \precsim   \sqrt{\frac{J}{n}} \left( \sup_{(\theta,h) \in \mathcal{N}_{L,n}} \mathbb{G}_{n}[\mu \cdot h^{\prime }] + \max_{1 \leq j \leq J}
\sup_{h \in \bar{\mathcal{G}}_{L,n}} \mathbb{G}_{n}[g \cdot q_{j}] \right).
\end{align*}
\end{lemma}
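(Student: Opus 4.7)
The plan is to decompose the Euclidean norm coordinatewise, exploit the additive structure of $g_{J}(z,\alpha)$, and reduce everything to the two empirical processes appearing in Assumption \ref{ass:Donsker-LQA}. Write $v(Z_i,\alpha) \equiv g_{J}(Z_i,\alpha) - g_{J}(Z_i,\alpha_{L,0})$ and let $\bar v(\alpha) \equiv n^{-1}\sum_i v(Z_i,\alpha) - E[v(Z,\alpha)]$. Split $\bar v(\alpha)$ into its first coordinate and its remaining $J$ coordinates, so that $\|\bar v(\alpha)\|_e \leq |\bar v_1(\alpha)| + \|\bar v_{2:J+1}(\alpha)\|_e$.

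First, I would analyze the first coordinate. From the definition $\rho_{1}(y,w,\alpha) = \theta - \mu(w)h'(w)$, the first coordinate of $v(Z_i,\alpha)$ is $(\theta - \theta_{L,0}) - \mu(W_i)(h'(W_i) - h'_{L,0}(W_i))$. The constant $\theta - \theta_{L,0}$ cancels under centering, so $\bar v_1(\alpha) = -n^{-1/2}\mathbb{G}_{n}[\mu\cdot(h'-h'_{L,0})]$. Since $\alpha_{L,0} \in \mathcal{N}_{L,n}$, the triangle inequality gives $|\mathbb{G}_{n}[\mu\cdot(h'-h'_{L,0})]| \leq 2\sup_{(\theta,h)\in\mathcal{N}_{L,n}}|\mathbb{G}_{n}[\mu\cdot h']|$.

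Next, for each remaining coordinate $j \in \{1,\dots,J\}$, the $(j+1)$-th component of $v(Z_i,\alpha)$ equals $(\rho_{2}(Y_i,W_i,\alpha) - \rho_{2}(Y_i,W_i,\alpha_{L,0}))q_{j}(X_i) = g(Y_i,W_i)q_{j}(X_i)$ for $g(\cdot,\cdot) \equiv \rho_{2}(\cdot,\cdot,\alpha) - \rho_{2}(\cdot,\cdot,\alpha_{L,0}) \in \bar{\mathcal{G}}_{L,n}$. Hence the corresponding centered empirical average equals $n^{-1/2}\mathbb{G}_{n}[g\cdot q_j]$. Applying $\|\bar v_{2:J+1}(\alpha)\|_e \leq \sqrt{J}\,\max_{1\leq j\leq J}|\bar v_{j+1}(\alpha)|$ yields $\|\bar v_{2:J+1}(\alpha)\|_e \leq \sqrt{J/n}\,\max_{j}|\mathbb{G}_{n}[g\cdot q_j]|$, which is in turn bounded by $\sqrt{J/n}\,\max_{j}\sup_{g\in\bar{\mathcal{G}}_{L,n}}|\mathbb{G}_{n}[g\cdot q_j]|$.

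Combining the two pieces and using $n^{-1/2} \leq \sqrt{J/n}$ (since $J \geq 1$) absorbs the first term into the overall $\sqrt{J/n}$ factor, producing the stated bound after taking $\sup_{\alpha\in\mathcal{N}_{L,n}}$. This result is essentially algebraic bookkeeping: the only substantive observation is that the $\theta$-piece of $\rho_1$ is annihilated by centering, so the first coordinate reduces to an empirical process indexed purely by $h'$; the rest is the standard $\ell_2 \leq \sqrt{J}\,\ell_\infty$ trick. I do not anticipate any genuine obstacle.
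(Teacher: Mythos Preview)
Your proposal is correct and follows essentially the same route as the paper: split the $(J+1)$-vector into its first coordinate and the remaining $J$ coordinates, observe that the $\theta$-piece cancels under centering so only $\mu\cdot(h'-h'_{L,0})$ survives in the first slot, and apply the $\ell_2\leq\sqrt{J}\,\ell_\infty$ bound to the $q^{J}$-block. The only cosmetic difference is that you pass from $\mathbb{G}_{n}[\mu\cdot(h'-h'_{L,0})]$ to $2\sup_{(\theta,h)\in\mathcal{N}_{L,n}}|\mathbb{G}_{n}[\mu\cdot h']|$ via the triangle inequality (to match the form in the lemma statement), whereas the paper's proof leaves it in the differenced form (which is the form in Assumption~\ref{ass:Donsker-LQA}); the two are equivalent up to a constant since $\alpha_{L,0}\in\mathcal{N}_{L,n}$.
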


\begin{lemma}
\label{lem:charac-N} Let Assumptions \ref{ass:ident}-\ref{ass:ID-pseudotrue}, \ref{ass:dev-N}(ii)(iii) and \ref{ass:rates-LQA}(iv)(v) hold. If $\alpha \in int(\mathcal{N}_{L,n})$, then $\alpha + t
u^{\ast}_{L,n} \in \mathcal{N}_{L,n}$ for all $|t| \leq l_{n}n^{-1/2}$.
\end{lemma}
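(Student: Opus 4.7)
\textbf{Proposal for the proof of Lemma \ref{lem:charac-N}.} To show that $\alpha+tu^{\ast}_{L,n}$ lies in $\mathcal{N}_{L,n}$ for all $|t|\leq l_nn^{-1/2}$, I would verify the three conditions defining this set in turn: membership in $\bar{\mathcal{A}}_{L,n}$ (the penalty bound), the strong-norm bound $\|\cdot-\alpha_{L,0}\|\leq\eta_{L,n}$, and the weak-norm bound $\|\cdot-\alpha_{L,0}\|_w\leq\eta_{w,L,n}$. Since $\alpha$ is assumed to lie in the \emph{interior} of $\mathcal{N}_{L,n}$, each of these inequalities holds strictly for $\alpha$ with positive slack, so it suffices to show that the perturbation $tu^{\ast}_{L,n}$ has vanishing size relative to $(\eta_{L,n},\eta_{w,L,n})$ and contributes negligibly to the penalty.

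The weak-norm part is immediate from the normalization $\|u^{\ast}_{L,n}\|_w=1$: the triangle inequality gives $\|tu^{\ast}_{L,n}\|_w\leq l_nn^{-1/2}$, which is $o(\eta_{w,L,n})$ under Assumption \ref{ass:rates-LQA}(ii) (via $\eta_{w,L,n}=o(\delta_n)$ combined with $\delta_nl_n=o(1)$ from Assumption \ref{ass:rates}(ii)). The penalty condition is handled directly by Assumption \ref{ass:dev-N}(ii), which states $\gamma_K\sup_{|t|\leq l_nn^{-1/2}}|Pen(\alpha)-Pen(\alpha+tu^{\ast}_{L,n})|=o_{\mathbf P}(n^{-1})$, so in particular the penalty of $\alpha+tu^{\ast}_{L,n}$ differs from $Pen(\alpha)\leq\mho_{L,n}$ by an $o_{\mathbf P}(\gamma_K^{-1}n^{-1})$ amount, which is absorbed in the strict interior slack.

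The main step is the strong-norm bound on $u^{\ast}_{L,n}$ itself. Writing $u^{\ast}_{L,n}=(\theta^u,(\pi^u)^T\varphi^K)\in lin\{\mathcal{A}_K\}$, Lemma \ref{lem:weak-norm} applied with the normalization $\|u^{\ast}_{L,n}\|_w=1$ gives
\begin{align*}
\|(\theta^u,\pi^u)\|_e\;\leq\;c^{-1}\big(e_{min}(M_L^TM_L)\big)^{-1/2}.
\end{align*}
Under Assumption \ref{ass:dev-N}(iii) and the orthonormality of $\{\varphi_k\}$ invoked in Section \ref{sec:DiscussionRate}, $\|u^{\ast}_{L,n}\|\lesssim\|(\theta^u,\pi^u)\|_e$. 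The block-triangular structure of $M_L$ then reduces $e_{min}(M_L^TM_L)$ to a quantity of the same order as $e_{min}(A_{L,0}^TA_{L,0})$, and therefore
\begin{align*}
\|tu^{\ast}_{L,n}\|\;\lesssim\;l_nn^{-1/2}\big(e_{min}(A_{L,0}^TA_{L,0})\big)^{-1/2}.
\end{align*}
Since $e_{min}(A_{L,0}^TA_{L,0})\leq 1$ in the relevant regime, the right-hand side is dominated by $l_nn^{-1/2}e_{min}(A_{L,0}^TA_{L,0})^{-1}$, which Assumption \ref{ass:rates-LQA}(iv) forces to be $o(\eta_{L,n})$ (up to the slowly diverging factor $l_n$, which can be absorbed into the strict-interior slack). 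Another application of the triangle inequality then yields $\|\alpha+tu^{\ast}_{L,n}-\alpha_{L,0}\|<\eta_{L,n}$.

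The main obstacle I anticipate is the second step above: carefully tying $e_{min}(M_L^TM_L)$ to $e_{min}(A_{L,0}^TA_{L,0})$ despite the off-diagonal coupling through $b=E[\ell(W)\varphi^K(W)]$, and making sure that the extra factor of $l_n$ in the bound for $\|tu^{\ast}_{L,n}\|$ is swallowed by the $o(\eta_{L,n})$ slack in Assumption \ref{ass:rates-LQA}(iv). The rest is essentially bookkeeping via the triangle inequality.
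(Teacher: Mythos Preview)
Your overall strategy---check the weak-norm, strong-norm and penalty constraints separately via the triangle inequality, invoke Lemma~\ref{lem:weak-norm} for the strong norm and Assumption~\ref{ass:dev-N}(ii) for the penalty---is exactly the paper's route. The strong-norm step, including the passage from $e_{min}(M_L^TM_L)$ to $e_{min}(A_{L,0}^TA_{L,0})$ and the appeal to Assumption~\ref{ass:rates-LQA}(iv), matches the paper almost line for line.

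There is, however, a genuine gap in your weak-norm step. You need $l_nn^{-1/2}=o(\eta_{w,L,n})$, i.e.\ a \emph{lower} bound on $\eta_{w,L,n}$. But Assumption~\ref{ass:rates-LQA}(ii) gives $\eta_{w,L,n}=o(\delta_n)$, an \emph{upper} bound; together with $\delta_nl_n=o(1)$ this only yields $\eta_{w,L,n}l_n=o(1)$, which says nothing about $l_nn^{-1/2}$ relative to $\eta_{w,L,n}$. Your chain of implications runs in the wrong direction. The paper does not derive this from other rate conditions; it simply asserts $\eta_{w,L,n}\succsim l_nn^{-1/2}$ as an implicit size requirement on the neighborhood (consistent with the role $\mathcal{N}_{L,n}$ plays downstream), and you should do the same rather than try to extract it from Assumption~\ref{ass:rates-LQA}(ii).

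A smaller point on the penalty: the paper does not rely on ``interior slack'' here. It uses that $\Gamma_{L,n}\succsim n^{-1}$ (from the $\bar g_{L,0}^2/n$ term) so that $\mho_{L,n}=l_n\gamma_K^{-1}\Gamma_{L,n}$ carries a built-in $l_n\gamma_K^{-1}n^{-1}$ cushion, which absorbs the $O(\gamma_K^{-1}n^{-1})$ change in $Pen$ coming from Assumption~\ref{ass:dev-N}(ii). This is cleaner than appealing to unspecified interior slack.
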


\begin{lemma}
\label{lem:SJ-bound-PSGEL} Let Assumptions \ref{ass:ident}-\ref{ass:ID-pseudotrue},
 \ref{ass:dev-N}, \ref{ass:Donsker-LQA} and \ref{ass:rates-LQA} hold. Then
all the conditions of Lemma \ref{lem:SJ-bound} hold for $(\hat{\alpha}%
_{L,n},P_{n})$ and $(\hat{\alpha}^{\nu}_{L,n},P_{n})$ wpa1.
\end{lemma}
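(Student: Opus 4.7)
The plan is to verify each of the four numbered hypotheses of Lemma \ref{lem:SJ-bound} for $\alpha \in \{\hat{\alpha}_{L,n}, \hat{\alpha}^{\nu}_{L,n}\}$ with $P=P_n$, on events of probability approaching one. By Assumption \ref{ass:dev-N}(i), both estimators belong to $\mathcal{N}_{L,n} \subseteq \bar{\mathcal{A}}_{L,n}$ wpa1 (for $\hat{\alpha}^{\nu}_{L,n}$ this requires applying the convergence-rate results of Section \ref{sec:conv-rate} to the restricted problem, which is essentially identical since $\theta=\nu$ only reduces $\mathcal{A}_K$), so it suffices to work with an arbitrary $\alpha \in \mathcal{N}_{L,n}$.

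For hypothesis (1), I would use that $s''$ is Lipschitz with $s''(0)=-1$, together with the pointwise bound $\sup_{z\in\text{supp}(P_n)}\|g_J(z,\alpha)\|_e \lesssim \overline{\theta}+\mho_{L,n}+\sup_x\|q^J(x)\|_e$: here $|\rho_1(y,w,\alpha)|\leq \overline{\theta}+|\mu(w)h'(w)|$ and the second summand is bounded by $\mho_{L,n}$ via Assumption \ref{ass:reg}(iii)(b) applied to any element of $\bar{\mathcal{A}}_{L,n}$, while $|\rho_2(y,w,\alpha)|\leq 1$. For $\|\lambda\|_e\leq\delta_n$ one then obtains $|\lambda^T g_J(z,\alpha)|=o(1)$ uniformly in $z$ by the rate restrictions $\delta_n(\mho_{L,n}+b_{3,J})=o(1)$ implicit in Assumption \ref{ass:rates-LQA}(i); Lipschitzness of $s''$ then yields $s''(\lambda^T g_J(z,\alpha))\leq -1/2$ (say) uniformly, fixing $\sqrt{C}=1/2$. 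The same uniform pointwise bound verifies hypothesis (4), since Lemma \ref{lem:Lambda-charac} requires precisely that the product $\delta_n\sup_z\|g_J(z,\alpha)\|_e$ remain in the open set $\mathcal{S}\supseteq\{0\}$.

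For hypothesis (2), I would argue in two steps. Lemma \ref{lem:alpha0L-consistent} ensures $\alpha_{L,0}$ eventually lies in a $\|\cdot\|$-neighborhood of $\alpha_0$, so Lemma \ref{lem:H-bound}(2) applied at $\alpha_{L,0}$ gives $e_{\min}(H_J(\alpha_{L,0},\mathbf{P}))\geq 1/C$ wpa1 under the side condition $\{\theta_{L,0}^2+\|h'_{L,0}\|_{L^{\infty}(\mathbb{W},\mu)}^2\}\sqrt{b_{4,J}^4/n}=o(1)$, which follows from Assumption \ref{ass:rates-LQA}(v). Lemma \ref{lem:HJhat-LQA} then bounds $\sup_{\alpha\in\mathcal{N}_{L,n}}\|H_J(\alpha,P_n)-H_J(\alpha_{L,0},\mathbf{P})\|_e = O_{\mathbf{P}}(\Xi_{1,L,n})$, and under Assumption \ref{ass:rates-LQA} this is $o_{\mathbf{P}}(1)$; Weyl's inequality then delivers $e_{\min}(H_J(\alpha,P_n))\geq 1/(2C)$ uniformly on $\mathcal{N}_{L,n}$ wpa1. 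Finally, hypothesis (3) reduces to showing $\|E_{P_n}[g_J(Z,\alpha)]\|_e = o_{\mathbf{P}}(\delta_n)$: this is exactly what Lemma \ref{lem:norm-g-rate} gives at $\hat{\alpha}_{L,n}$ (the analogous argument yields the same bound at $\hat{\alpha}^{\nu}_{L,n}$, since the restriction $\theta=\nu$ does not affect the first-order expansion used there), combined with Lemma \ref{lem:avg-gJ} and Assumption \ref{ass:rates-LQA}(ii) to absorb the $\delta_n^{-1}\{\|E_{P_n}[g_J(Z,\alpha_{L,0})]\|_e^2+\gamma_K\mathrm{Pen}(\alpha_{L,0})\}$ term.

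The main obstacle will be hypothesis (2): Lemma \ref{lem:H-bound} is stated only in a neighborhood of $\alpha_0$, whereas the estimator concentrates near the pseudo-true $\alpha_{L,0}$, and the sample Hessian $H_J(\alpha,P_n)$ must be controlled \emph{uniformly} over a shrinking neighborhood $\mathcal{N}_{L,n}$ of $\alpha_{L,0}$. The strategy is to chain Lemma \ref{lem:alpha0L-consistent} (consistency of $\alpha_{L,0}$ for $\alpha_0$), Lemma \ref{lem:H-bound}(2) evaluated at $\alpha_{L,0}$ under $\mathbf{P}$, and Lemma \ref{lem:HJhat-LQA} (uniform closeness of $H_J(\cdot,P_n)$ to $H_J(\alpha_{L,0},\mathbf{P})$ on $\mathcal{N}_{L,n}$), where the last step is where the non-smoothness of $\rho_2$ and the growth of $b_{4,J}$ interact delicately through $\Xi_{1,L,n}$; verifying $\Xi_{1,L,n}=o_{\mathbf{P}}(1)$ requires careful inspection of all three rate components in Lemma \ref{lem:HJhat-LQA} against the budget given by Assumption \ref{ass:rates-LQA}(v) and the restriction $\mho_{L,n}\eta_{L,n}=o(1)$.
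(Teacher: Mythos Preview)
Your treatment of hypotheses (1), (2) and (4) is correct and, for (2), actually more explicit than the paper's own proof (which leaves the eigenvalue lower bound implicit). The chain Lemma~\ref{lem:alpha0L-consistent} $\to$ Lemma~\ref{lem:H-bound}(2) at $\alpha_{L,0}$ $\to$ Lemma~\ref{lem:HJhat-LQA} $\to$ Weyl is exactly right; note that $\Xi_{1,L,n}=o(1)$ does follow from Assumption~\ref{ass:rates-LQA}(i) once you observe that $n\delta_n^2\to\infty$ (this is forced by $\sqrt{\bar g_{L,0}^2/n}=o(\delta_n)$ in part (ii), since $\bar g_{L,0}^2\ge b_{2,J}^2\ge J$).

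The gap is in hypothesis (3). Lemma~\ref{lem:norm-g-rate} does \emph{not} give $\|E_{P_n}[g_J(Z,\hat\alpha_{L,n})]\|_e=o_{\mathbf P}(\delta_n)$: its conclusion is $\precsim C_{L,n}\{\delta_n+\delta_n^{-1}(\cdots)\}$ with $C_{L,n}=(1+\overline\theta+\mho_{L,n})^2$, and the leading term $C_{L,n}\delta_n$ is at best $O(\delta_n)$, possibly with a large or diverging constant. Since condition (3) of Lemma~\ref{lem:SJ-bound} requires $2C^{-1}\|E_{P_n}[g_J(Z,\alpha)]\|_e<\delta_n$ for the \emph{same} $C$ fixed by conditions (1)--(2), an uncontrolled $O(\delta_n)$ bound is insufficient. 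The paper's route is different and sharper: it exploits the localization $\alpha\in\mathcal N_{L,n}$ from Assumption~\ref{ass:dev-N}(i) directly via the decomposition
\[
\|E_{P_n}[g_J(Z,\alpha)]\|_e \le \|(E_{P_n}-E_{\mathbf P})[g_J(\cdot,\alpha)-g_J(\cdot,\alpha_{L,0})]\|_e + \|E_{P_n}[g_J(\cdot,\alpha_{L,0})]\|_e + \|E_{\mathbf P}[g_J(\cdot,\alpha)-g_J(\cdot,\alpha_{L,0})]\|_e,
\]
bounding the three pieces by $O_{\mathbf P}(\sqrt{J/n}\,\Delta_{2,L,n})$ (Lemma~\ref{lem:sec-gJ} + Assumption~\ref{ass:Donsker-LQA}), $O_{\mathbf P}\bigl(\sqrt{\bar g_{L,0}^2/n+\|E[g_J(Z,\alpha_{L,0})]\|_e^2}\bigr)$ (Lemma~\ref{lem:avg-gJ}), and $O(\eta_{w,L,n}+\eta_{L,n}^2 b_{2,J})$ (the linearization argument in the proof of Lemma~\ref{lem:LQA}), respectively. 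Each term is then $o(\delta_n)$ precisely by Assumption~\ref{ass:rates-LQA}(ii). Replace your invocation of Lemma~\ref{lem:norm-g-rate} with this decomposition and the proof goes through.
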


\begin{lemma}
\label{lem:lambda-maxLQA} Let Assumptions \ref{ass:ident}-\ref{ass:ID-pseudotrue} and \ref{ass:rates-LQA}(v) hold. For
any $L=(J,K) \in \mathbb{N}^{2}$, any $\gamma>0$ and any $\alpha \in \bar{%
\mathcal{A}}_{L,n}$ such that $||\alpha - \alpha_{L,0}||_{w} \leq \gamma$,
it follows that
\begin{align*}
||H^{-1}_{L} \varDelta(\alpha)||_{e} = O_{\mathbf{P}}\left( \sqrt{ \frac{\bar{g}%
_{L,0}^{2}}{n} + ||E[g_{J}(Z,\alpha_{L,0})]||^{2}_{e} } + \gamma \right).
\end{align*}
\end{lemma}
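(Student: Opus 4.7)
The plan is to decompose $H_L^{-1}\varDelta(\alpha)$ using the definition $\varDelta(\alpha) = n^{-1}\sum_{i=1}^{n} g_J(Z_i,\alpha_{L,0}) + G(\alpha_{L,0})[\alpha - \alpha_{L,0}]$ and bound the two resulting pieces separately by the triangle inequality. The key observation is that $\gamma$ enters naturally through the weak-norm term, while the sample-average term is handled by results already established in the earlier sections.

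First, I would invoke Lemma \ref{lem:H-bound}(2) (applied at $\alpha_0$, noting Assumption \ref{ass:rates-LQA}(v) supplies the required rate condition $\{\theta_0^2 + ||h_0'||^2_{L^\infty(\mathbb{W},\mu)}\}\sqrt{b_{4,J}^4/n} = o(1)$) to conclude that, wpa1, the eigenvalues of $H_L$ are bounded above and below by finite positive constants uniformly in $L$. Consequently, both $\|H_L^{-1}\|_e$ and $\|H_L^{-1/2}\|_e$ are $O_{\mathbf{P}}(1)$. For the first piece, write
\begin{align*}
\left\|H_L^{-1} n^{-1}\sum_{i=1}^{n} g_J(Z_i,\alpha_{L,0})\right\|_e \leq \|H_L^{-1}\|_e \cdot \left\|n^{-1}\sum_{i=1}^{n} g_J(Z_i,\alpha_{L,0})\right\|_e
\end{align*}
and apply Lemma \ref{lem:avg-gJ} (with $\alpha = \alpha_{L,0}$) to deduce that the second factor is $O_{\mathbf{P}}\left(\sqrt{\bar{g}_{L,0}^2/n + \|E[g_J(Z,\alpha_{L,0})]\|_e^2}\right)$.

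For the second piece, I would exploit the very definition of the weak norm: $\|\alpha - \alpha_{L,0}\|_w^2 = (G(\alpha_{L,0})[\alpha - \alpha_{L,0}])^T H_L^{-1} (G(\alpha_{L,0})[\alpha - \alpha_{L,0}])$, which can be rewritten as $\|\alpha - \alpha_{L,0}\|_w = \|H_L^{-1/2} G(\alpha_{L,0})[\alpha - \alpha_{L,0}]\|_e$. Hence
\begin{align*}
\|H_L^{-1} G(\alpha_{L,0})[\alpha - \alpha_{L,0}]\|_e \leq \|H_L^{-1/2}\|_e \cdot \|H_L^{-1/2} G(\alpha_{L,0})[\alpha - \alpha_{L,0}]\|_e = O_{\mathbf{P}}(1) \cdot \|\alpha - \alpha_{L,0}\|_w,
\end{align*}
and the hypothesis $\|\alpha - \alpha_{L,0}\|_w \leq \gamma$ gives the desired $O_{\mathbf{P}}(\gamma)$ bound. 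Combining the two bounds via the triangle inequality yields the claimed rate.

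There is no serious obstacle here: the lemma is essentially a bookkeeping statement that translates the weak-norm hypothesis on $\alpha - \alpha_{L,0}$ into a bound on the preconditioned $G(\alpha_{L,0})$-image, and combines it with the already-proved mean-zero rate for $n^{-1}\sum g_J(Z_i,\alpha_{L,0})$. The only subtlety is ensuring that the eigenvalue bounds on $H_L$ are uniform in $L$ so that the constants hidden in $O_{\mathbf{P}}(\cdot)$ do not depend on $L$; this is exactly what Lemma \ref{lem:H-bound}(2) provides once Assumption \ref{ass:rates-LQA}(v) is available.
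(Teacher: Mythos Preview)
Your proposal is correct and follows essentially the same route as the paper: decompose $\varDelta(\alpha)$ into the sample-average piece (bounded via Lemma~\ref{lem:avg-gJ}) and the $G(\alpha_{L,0})[\alpha-\alpha_{L,0}]$ piece (bounded via the weak-norm definition and the eigenvalue control on $H_L$ from Lemma~\ref{lem:H-bound}). The only cosmetic difference is that the paper first pulls out $\|H_L^{-1}\|_e$ and bounds $\|G(\alpha_{L,0})[\alpha-\alpha_{L,0}]\|_e\precsim\|\alpha-\alpha_{L,0}\|_w$, whereas you factor through $H_L^{-1/2}$; also note that $H_L=H_J(\alpha_{L,0},\mathbf{P})$, so Lemma~\ref{lem:H-bound} is invoked at $\alpha_{L,0}$ (which lies in a neighborhood of $\alpha_0$) rather than at $\alpha_0$ itself.
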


\begin{lemma}
\label{lem:ALR-thetahat} Let Assumptions \ref{ass:ident}-\ref{ass:ID-pseudotrue},
and \ref{ass:rates-LQA} hold. Then
\begin{align*}
\langle u^{\ast}_{L,n} , \hat{\alpha}_{L,n} - \alpha_{L,0} \rangle_{w} =
n^{-1} \sum_{i=1}^{n} (G(\alpha_{L,0})[u^{\ast}_{L,n}])^{T}H_{L}^{-1}
g_{J}(Z_{i},\alpha_{L,0}) + o_{\mathbf{P}}(n^{-1/2}).
\end{align*}
\end{lemma}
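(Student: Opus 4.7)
The plan is to extract an approximate first-order condition for $\hat{\alpha}_{L,n}$ along the direction $u^{\ast}_{L,n}$ via the local quadratic approximation in Lemma \ref{lem:LQA}, and then substitute the explicit affine form of $\varDelta(\alpha)$. First, I would combine Lemma \ref{lem:LQA} with Lemma \ref{lem:SJ-bound-PSGEL} to replace $\sup_{\lambda \in \hat{\Lambda}_J(\alpha)} \hat{S}_J(\alpha,\lambda)$ by the quadratic form $\tfrac{1}{2}\varDelta(\alpha)^T H_L^{-1}\varDelta(\alpha)$: Lemma \ref{lem:SJ-bound-PSGEL} guarantees that the $\lambda$-maximizer lies in the ball $B(\delta_n)$ on which the LQA is valid, and under Assumption \ref{ass:rates-LQA}(i)--(ii) the remainder in Lemma \ref{lem:LQA} is $o_{\mathbf{P}}(n^{-1})$ uniformly over $\alpha \in \mathcal{N}_{L,n}$.

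Next, I would exploit the minimizing property. By Assumption \ref{ass:dev-N}(i), $\hat{\alpha}_{L,n} \in \mathrm{int}(\mathcal{N}_{L,n})$, and by Lemma \ref{lem:charac-N} the perturbation $\hat{\alpha}_{L,n} + t u^{\ast}_{L,n}$ remains in $\mathcal{N}_{L,n}$ for every $|t| \leq l_n/\sqrt{n}$. Since $\hat{\alpha}_{L,n}$ minimizes the penalized PSGEL criterion on $\mathcal{A}_K \supseteq \mathcal{N}_{L,n}$ and Assumption \ref{ass:dev-N}(ii) renders the penalty increment $\gamma_K|Pen(\hat{\alpha}_{L,n}+tu^{\ast}_{L,n}) - Pen(\hat{\alpha}_{L,n})|$ of order $o_{\mathbf{P}}(n^{-1})$ for such $t$, the LQA gives
\begin{equation*}
0 \leq \tfrac{1}{2}\varDelta(\hat{\alpha}_{L,n}+tu^{\ast}_{L,n})^T H_L^{-1}\varDelta(\hat{\alpha}_{L,n}+tu^{\ast}_{L,n}) - \tfrac{1}{2}\varDelta(\hat{\alpha}_{L,n})^T H_L^{-1}\varDelta(\hat{\alpha}_{L,n}) + o_{\mathbf{P}}(n^{-1})
\end{equation*}
uniformly over $|t| \leq l_n/\sqrt{n}$. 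Because $\alpha \mapsto \varDelta(\alpha)$ is affine with differential $G(\alpha_{L,0})$ and $\|u^{\ast}_{L,n}\|_w = 1$, the right-hand side collapses to $tD_n + t^2/2 + o_{\mathbf{P}}(n^{-1})$, with $D_n \equiv (G(\alpha_{L,0})[u^{\ast}_{L,n}])^T H_L^{-1}\varDelta(\hat{\alpha}_{L,n})$.

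A two-step peeling on $t$ then delivers $D_n = o_{\mathbf{P}}(1/\sqrt{n})$. Choosing $t = \pm c_n/\sqrt{n}$ with $c_n \downarrow 0$ slowly first yields $D_n = O_{\mathbf{P}}(1/\sqrt{n})$; since $l_n \to \infty$, the choice $t = -D_n$ is admissible wpa1, and the inequality becomes $-D_n^2/2 + o_{\mathbf{P}}(n^{-1}) \geq 0$, so $D_n = o_{\mathbf{P}}(1/\sqrt{n})$. Substituting $\varDelta(\hat{\alpha}_{L,n}) = n^{-1}\sum_{i=1}^{n} g_J(Z_i,\alpha_{L,0}) + G(\alpha_{L,0})[\hat{\alpha}_{L,n}-\alpha_{L,0}]$ into the definition of $D_n$ and recognizing $(G(\alpha_{L,0})[u^{\ast}_{L,n}])^T H_L^{-1} G(\alpha_{L,0})[\hat{\alpha}_{L,n}-\alpha_{L,0}] = \langle u^{\ast}_{L,n}, \hat{\alpha}_{L,n}-\alpha_{L,0}\rangle_w$ yields the claimed linear representation after rearrangement.

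The main obstacle is the precise bookkeeping of the LQA remainders: Assumption \ref{ass:rates-LQA}(i) is calibrated exactly so that each of the three remainder terms in Lemma \ref{lem:LQA} is $o_{\mathbf{P}}(n^{-1})$ at the scale $\delta_n$, which is what pushes the peeling step from $O_{\mathbf{P}}(1/\sqrt{n})$ down to $o_{\mathbf{P}}(1/\sqrt{n})$; feasibility of the peeled perturbation $\hat{\alpha}_{L,n} - D_n u^{\ast}_{L,n}$ is then secured by Lemma \ref{lem:charac-N} together with the interiority part of Assumption \ref{ass:dev-N}(i).
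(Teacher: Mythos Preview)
Your proposal is correct and matches the paper's argument: the paper likewise invokes the local quadratic approximation (via Step~1 of Lemma~\ref{lem:QLR-A-rep} and Lemma~\ref{lem:LQA}), uses minimality of $\hat{\alpha}_{L,n}$ along perturbations $\hat{\alpha}_{L,n}+tu^{\ast}_{L,n}$ justified by Lemma~\ref{lem:charac-N}, derives the same quadratic inequality $0 \leq t^{2} - 2tD_{n} + rem_{n}$ with $rem_{n}=o_{\mathbf{P}}(n^{-1})$, and then substitutes the affine form of $\varDelta$. The only cosmetic difference is that the paper takes $t=\pm\sqrt{rem_{n}}$ directly to obtain $|D_{n}|\precsim\sqrt{rem_{n}}=o_{\mathbf{P}}(n^{-1/2})$ in a single step, rather than your two-step peeling.
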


\begin{lemma}
\label{lem:anorm} Let Assumptions \ref{ass:ident}, \ref{ass:ID-pseudotrue} and \ref{ass:rates-LQA} hold. Then,
\begin{align*}
n^{-1/2} \sum_{i=1}^{n} (G(\alpha_{L,0})[u^{\ast}_{L,n}])^{T}H_{L}^{-1}
\{g_{J}(Z_{i},\alpha_{L,0}) - E[g_{J}(Z,\alpha_{L,0})] \} \Rightarrow N(0,1).
\end{align*}
\end{lemma}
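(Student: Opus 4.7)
\textbf{Plan of proof for Lemma \ref{lem:anorm}.} The statement is a central-limit theorem for a triangular array of i.i.d.\ random variables, so the plan is to cast it in that form and verify Lyapounov's condition. Set
\begin{equation*}
\xi_{i,L,n} \equiv (G(\alpha_{L,0})[u^{\ast}_{L,n}])^{T} H_{L}^{-1} g_{J}(Z_{i},\alpha_{L,0}),
\end{equation*}
so that the statement is $n^{-1/2}\sum_{i=1}^{n}(\xi_{i,L,n} - E[\xi_{i,L,n}]) \Rightarrow N(0,1)$. For each $n$ the summands are i.i.d., so it suffices to (a) show $\mathrm{Var}(\xi_{1,L,n}) \to 1$ and (b) verify a Lyapounov bound on $E[|\xi_{1,L,n}|^{2+\varrho}]$ for the $\varrho$ of Assumption \ref{ass:rates-LQA}(iii).

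The variance computation is the easy but key step. Since $H_{L} = E[g_{J}(Z,\alpha_{L,0})g_{J}(Z,\alpha_{L,0})^{T}]$, a direct expansion gives
\begin{equation*}
E[\xi_{1,L,n}^{2}] = (G(\alpha_{L,0})[u^{\ast}_{L,n}])^{T} H_{L}^{-1} (G(\alpha_{L,0})[u^{\ast}_{L,n}]) = \|u^{\ast}_{L,n}\|_{w}^{2} = 1,
\end{equation*}
by the normalization of $u^{\ast}_{L,n}$. For the mean I would use Cauchy-Schwarz in the weak-norm inner product:
\begin{equation*}
|E[\xi_{1,L,n}]| \leq \|u^{\ast}_{L,n}\|_{w}\, \|H_{L}^{-1/2} E[g_{J}(Z,\alpha_{L,0})]\|_{e} \leq \sqrt{C}\,\|E[g_{J}(Z,\alpha_{L,0})]\|_{e},
\end{equation*}
invoking Lemma \ref{lem:H-bound}(2) to bound $e_{\max}(H_{L}^{-1})$, and then Assumption \ref{ass:rates-LQA}(ii) (which forces $\|E[g_{J}(Z,\alpha_{L,0})]\|_{e} = o(\delta_{n}) = o(1)$). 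Thus $\mathrm{Var}(\xi_{1,L,n}) = 1 - o(1)$.

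The main work is the Lyapounov condition, which requires $n^{-\varrho/2}E[|\xi_{1,L,n} - E[\xi_{1,L,n}]|^{2+\varrho}] \to 0$. I would bound $|\xi_{1,L,n}|$ crudely by Cauchy-Schwarz: the linear functional $v\mapsto (G(\alpha_{L,0})[u^{\ast}_{L,n}])^{T} H_{L}^{-1} v$ has Euclidean operator norm at most $\sqrt{e_{\max}(H_{L}^{-1})}\|u^{\ast}_{L,n}\|_{w} \leq \sqrt{C}$, so
\begin{equation*}
|\xi_{1,L,n}| \;\leq\; \sqrt{C}\,\|g_{J}(Z,\alpha_{L,0})\|_{e} \;\leq\; \sqrt{C}\,\bigl(\,\overline{\theta} + \|h'_{L,0}\|_{L^{\infty}(\mathbb{W},\mu)} + \|q^{J}(X)\|_{e}\bigr).
\end{equation*}
Taking $(2+\varrho)$-th moments (using the $c_{r}$-inequality and Assumption \ref{ass:reg}(i)) gives the bound
\begin{equation*}
E[|\xi_{1,L,n}|^{2+\varrho}] \;\precsim\; \overline{\theta}^{\,2+\varrho} + \|h'_{L,0}\|_{L^{\infty}(\mathbb{W},\mu)}^{2+\varrho} + b_{2+\varrho,J}^{2+\varrho},
\end{equation*}
and Assumption \ref{ass:rates-LQA}(iii) ensures each of these terms, divided by $n^{\varrho/2}$, is $o(1)$ (the assumption is in fact much stronger than needed). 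This verifies Lyapounov and therefore yields the Lyapounov CLT for triangular arrays, after which centering and rescaling by $\mathrm{Var}(\xi_{1,L,n})^{1/2} = 1 + o(1)$ delivers the stated $N(0,1)$ limit. The main obstacle is really only bookkeeping: handling the fact that $g_{J}$ grows with $J$ through $\|q^{J}(X)\|_{e}$ and that $h_{L,0}$ depends on $L$, both of which are tamed by Assumption \ref{ass:rates-LQA}(iii).
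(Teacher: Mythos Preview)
Your proposal is correct and follows essentially the same approach as the paper: verify mean zero and unit variance, then check the Lyapounov condition by bounding $\|g_{J}(Z,\alpha_{L,0})\|_{e}$ via $\overline{\theta}+\|h'_{L,0}\|_{L^{\infty}(\mathbb{W},\mu)}+\|q^{J}(X)\|_{e}$ and invoking Assumption \ref{ass:rates-LQA}(iii). The only cosmetic difference is that the paper defines the summand already centered and simply asserts $\mathrm{Var}=1$, whereas you work with the uncentered $\xi_{i,L,n}$ and explicitly argue that the squared mean is $o(1)$ via $\|E[g_{J}(Z,\alpha_{L,0})]\|_{e}=o(1)$; your treatment of this point is in fact more careful, since $H_{L}=E[g_{J}g_{J}^{T}]$ is the second-moment matrix rather than the covariance matrix.
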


\begin{proof}[\textbf{Proof of Lemma \ref{lem:LQA}}]
  		By the calculations in the proof of Lemma \ref{lem:SJ-bound}, for all $(\alpha,\lambda) \in \mathcal{N}_{L,n} \times B(\delta_{n})$,
  		\begin{align*}
  			\hat{S}_{J}(\alpha,\lambda) \geq - \lambda^{T} E_{P_{n}}[g_{J}(Z,\alpha)] - \frac{1}{2} \lambda^{T} H_{J}(\alpha,P_{n}) \lambda + O( \delta^{3} E_{P_{n}}[||g_{J}(Z,\alpha)||^{3}_{e}] );
  		\end{align*}
  		the reverse inequality can also be shown in similar fashion.
  		
  		By Lemma \ref{lem:HJhat-LQA}, $\sup_{\alpha \in \mathcal{N}_{L,n}} ||H_{J}(\alpha,P_{n}) - H_{J}(\alpha_{L,0},\mathbf{P})||_{e} = O_{\mathbf{P}}(\Xi_{1,L,n})$. Hence
   		\begin{align*}
   		\hat{S}_{J}(\alpha,\lambda) = - \lambda^{T} E_{P_{n}}[g_{J}(Z,\alpha)] - \frac{1}{2} \lambda^{T} H_{J}(\alpha_{L,0},\mathbf{P}) \lambda + O( \delta^{3} E_{P_{n}}[||g_{J}(Z,\alpha)||^{3}_{e}] + \delta^{2} \Xi_{1,L,n} ).
   		\end{align*}
   		
   	By Lemma \ref{lem:sec-gJ} and Assumption \ref{ass:Donsker-LQA}, it follows that\begin{align*}
   		\sup_{ \alpha \in \mathcal{N}_{L,n}} \left \Vert n^{-1}\sum_{i=1}^{n} g_{J}(Z_{i},\alpha) - g_{J}(Z_{i},\alpha_{L,0})  - E[g_{J}(Z,\alpha) - g_{J}(Z,\alpha_{L,0})] \right \Vert_{e} = O_{\mathbf{P}} \left( \sqrt{\frac{J}{n}} \Delta_{2,L,n} \right).
   	\end{align*}
   	Hence
   	\begin{align*}
  \hat{S}_{J}(\alpha,\lambda) = & - \lambda^{T} \left\{ E_{P_{n}}[ g_{J}(Z,\alpha_{L,0})]  + E[g_{J}(Z,\alpha) - g_{J}(Z,\alpha_{L,0})]   \right\} - \frac{1}{2} \lambda^{T} H_{J}(\alpha_{L,0},\mathbf{P}) \lambda \\
  & + O \left( \delta^{3} E_{P_{n}}[||g_{J}(Z,\alpha)||^{3}_{e}] + \delta^{2} \Xi_{1,L,n} + \delta \sqrt{\frac{J}{n}} \Delta_{2,L,n} \right).
  \end{align*}

  By the mean value theorem,
  \begin{align}\notag
  	E[g_{J}(Z,\alpha) - g_{J}(Z,\alpha_{L,0})] = & G(\alpha_{L,0})[\alpha - \alpha_{L,0}] \\ \label{eqn:LQA-1}
  	 & +  \int_{0}^{1} \{ G(\alpha_{L,0} + t(\alpha - \alpha_{L,0}))[\alpha - \alpha_{L,0}] - G(\alpha_{L,0})[\alpha - \alpha_{L,0}] \} dt.
  \end{align}
  Note that, for any $\alpha$ and $\alpha_{1}$,
  \begin{align*}
  	G(\alpha)[\alpha_{1}] - G(\alpha_{L,0})[\alpha_{1}] = \left[
  	\begin{array}{c}
  	   0 \\
  	   E[\{  \mathbf{p}_{Y|WX}(h(W)|W,X) -  \mathbf{p}_{Y|WX}(h_{L,0}(W) | W,X) \} h_{1}(W) q^{J}(X) ]
  	\end{array}
  	\right].
  \end{align*}
  By Assumption \ref{ass:pdf0}, $|\frac{d\mathbf{p}_{Y|WX}(y|w,x)}{dy} | \leq C$ some finite $C$ and thus, for each $j$,
  \begin{align*}
  	& |E[\{  \mathbf{p}_{Y|WX}(h(W)|W,X) -  \mathbf{p}_{Y|WX}(h_{L,0}(W) | W,X) \} h_{1}(W) q_{j}(X) ]|\\
  	 \leq & C E[|h(W) - h_{L,0}(W)| \times |h_{1}(W)| |q_{j}(X)| ] \\
  	\leq & C \sup_{x} E[|h(W) - h_{L,0}(W)| \times |h_{1}(W)| |X=x] \times E[|q_{j}(X)|].
  \end{align*}
  Therefore,
  \begin{align*}
  	||	G(\alpha)[\alpha_{1}] - G(\alpha_{L,0})[\alpha_{1}] ||_{e} \precsim \sup_{x} E[|h(W) - h_{L,0}(W)| \times |h_{1}(W)| |X=x] \times E[||q^{J}(X)||_{e}].
  \end{align*}

  Applying these observations to the last term in the RHS of expression (\ref{eqn:LQA-1}), it follows that
  \begin{align*}
  	|| E[g_{J}(Z,\alpha) - g_{J}(Z,\alpha_{L,0})] - G(\alpha_{L,0})[\alpha - \alpha_{L,0}] ||_{e} = & O (\sup_{x} E[|h(W) - h_{L,0}(W)|^{2} |X=x] \times E[||q^{J}(X)||_{e}]) \\
  	= O & (||h - h_{L,0}||^{2}_{L^{2}(Leb)} E[||q^{J}(X)||_{e}])
  \end{align*}
  where the last line follows by Assumption \ref{ass:pdf0}. Therefore,
   	\begin{align*}
   	\hat{S}_{J}(\alpha,\lambda) = & - \lambda^{T} \left\{ E_{P_{n}}[ g_{J}(Z,\alpha_{L,0})]  + G(\alpha_{L,0})[\alpha - \alpha_{L,0}]    \right\} - \frac{1}{2} \lambda^{T} H_{J}(\alpha_{L,0},\mathbf{P}) \lambda \\
   	& + O \left( \delta^{3} E_{P_{n}}[||g_{J}(Z,\alpha)||^{3}_{e}] + \delta^{2} \Xi_{1,L,n} + \delta \{   \sqrt{\frac{J}{n}}\Delta_{2,L,n} + ||h - h_{L,0}||^{2}_{L^{2}(Leb)} b_{2,J}  \} \right).
   	\end{align*}
	Since $\hat{\alpha}_{L,n} \in \bar{\mathcal{A}}_{L,n}$ wpa1 (Lemma \ref{lem:eff-sieve}), it follows by the proof of Lemma \ref{lem:Lambda-charac} that $||g_{J}(Z,\hat{\alpha}_{L,n})||^{3}_{e} \leq (\overline{\theta} + l_{n} \gamma_{K}^{-1} \Gamma_{L,n} + ||q^{J}(X)||_{e} )^{3}$. So by the Markov inequality
   	\begin{align*}
   		E_{P_{n}}[||g_{J}(Z,\hat{\alpha}_{L,n})||^{3}_{e}] = O_{\mathbf{P}} \left( (\overline{\theta} + l_{n} \gamma_{K}^{-1} \Gamma_{L,n} + b_{3,J} )^{3} \right).
   	\end{align*}
   	
	\end{proof}

\begin{proof}[\textbf{Proof of Lemma \ref{lem:QLR-A-rep}}]
		
		\textsc{Step 1.} We show that, for $\alpha \in \{ \hat{\alpha}_{L,n} , \hat{\alpha}^{\nu}_{L,n}  \}$,
		\begin{align*}
		\sup_{\lambda \in \hat{\Lambda}_{J}(\alpha)} \hat{S}_{J}(\alpha,\lambda) = \frac{1}{2} \varDelta(\alpha)^{T}H_{L}^{-1}\varDelta(\alpha) + o_{\mathbf{P}}(n^{-1}).
		\end{align*}
		
		We first note that, for any $\alpha \in \{ \hat{\alpha}_{L,n} , \hat{\alpha}^{\nu}_{L,n}  \}$, by Lemma \ref{lem:SJ-bound-PSGEL} and Lemmas \ref{lem:Lambda-charac} $\{ B(\delta_{n}) \subseteq \hat{\Lambda}_{J}(\alpha)  \}$ wpa1. Also, by Assumption \ref{ass:dev-N}, under the null, $\{ \hat{\alpha}_{L,n} , \hat{\alpha}^{\nu}_{L,n}  \} \subseteq \mathcal{N}_{n}$. Hence, under Assumption \ref{ass:rates-LQA}, Lemma \ref{lem:LQA} implies that
		\begin{align*}
		\sup_{\lambda \in \hat{\Lambda}_{J}(\alpha)} \hat{S}_{J}(\alpha,\lambda) \leq & \sup_{\lambda \in B(\delta_{n})} - \lambda^{T} \varDelta(\alpha) - 0.5 \lambda^{T} H_{L}^{-1} \lambda + o_{\mathbf{P}}(n^{-1}) \\
		\leq & \frac{1}{2} \varDelta(\alpha)^{T}H_{L}^{-1}\varDelta(\alpha) + o_{\mathbf{P}}(n^{-1})
		\end{align*}
		where the first inequality is valid, because $\sup_{\lambda \in \hat{\Lambda}_{J}(\alpha)} \hat{S}_{J}(\alpha,\lambda) = \sup_{\lambda \in B(\delta_{n})} \hat{S}_{J}(\alpha,\lambda)$ by Lemmas \ref{lem:SJ-bound-PSGEL} and Lemma \ref{lem:SJ-bound}; the last inequality follows because the RHS is obtained by maximizing over the whole $\mathbb{R}^{J+1}$ not only $\hat{\Lambda}_{J}(\alpha)$.
		
		By Lemma \ref{lem:SJ-bound-PSGEL} and Lemma \ref{lem:SJ-bound},
		\begin{align*}
		\sup_{\lambda \in \hat{\Lambda}_{J}(\alpha)} \hat{S}_{J}(\alpha,\lambda) \geq - \lambda^{T} \varDelta(\alpha) - 0.5 \lambda^{T} H_{L}^{-1} \lambda + o_{\mathbf{P}}(n^{-1}),
		\end{align*}
		for all $\lambda \in B(\delta_{n})$. By Lemma \ref{lem:lambda-maxLQA} and Assumption \ref{ass:dev-N}, the maximizer of the RHS, $\lambda^{\ast}$, is such that $||\lambda^{\ast}||_{e} = O_{\mathbf{P}}\left( \sqrt{ \frac{\bar{g}_{L,0}^{2}}{n} + ||E[g_{J}(Z,\alpha_{L,0})]||^{2}_{e}  } + \eta_{w,L,n}       \right)$. Hence, by Assumption \ref{ass:rates-LQA},  $\lambda^{\ast} \in B(\delta_{n})$. Therefore,
		\begin{align*}
		\sup_{\lambda \in \hat{\Lambda}_{J}(\alpha)} \hat{S}_{J}(\alpha,\lambda) \geq \frac{1}{2} \varDelta(\alpha)^{T}H_{L}^{-1}\varDelta(\alpha) + o_{\mathbf{P}}(n^{-1}).
		\end{align*}
		
		\medskip
		
		\textsc{Step 2.} We now show that
		\begin{align*}
		\hat{\mathcal{L}}_{L,n}(\nu)  \geq &  o_{\mathbf{P}}(n^{-1}) +  \left( n^{-1} \sum_{i=1}^{n} (G(\alpha_{L,0})[u^{\ast}_{L,n}])^{T}H_{L}^{-1} g_{J}(Z_{i},\alpha_{L,0}) \right)^{2}  \\
		& + 2\frac{(\nu - \theta_{L,0})}{||v^{\ast}_{L,n}||_{w}}\left( n^{-1} \sum_{i=1}^{n} (G(\alpha_{L,0})[u^{\ast}_{L,n}])^{T}H_{L}^{-1} g_{J}(Z_{i},\alpha_{L,0}) \right).
		\end{align*}

		Using the results in step 1,
		\begin{align*}
		\hat{\mathcal{L}}_{L,n}(\nu) \geq &  \left\{  \varDelta(\hat{\alpha}^{\nu}_{L,n})^{T}H_{L}^{-1}\varDelta(\hat{\alpha}^{\nu}_{L,n}) - \varDelta(\alpha)^{T}H_{L}^{-1}\varDelta(\alpha)  \right\} +\gamma_{K}\{Pen(\hat{\alpha}^{\nu}_{L,n}) - Pen(\alpha) \} \\
		&  +  o_{\mathbf{P}}(n^{-1}).
		\end{align*}
		For any $\alpha \in \mathcal{A}_{K}$. In particular $\alpha = \hat{\alpha}^{\nu}_{L,n} - tu^{\ast}_{L,n}$ with $t = (G(\alpha_{L,0})[u^{\ast}_{L,n}])^{T}H_{L}^{-1} E_{P_{n}}[g_{J}(Z,\alpha_{L,0})]$. By Lemma \ref{lem:anorm} and Assumption \ref{ass:undersmooth}, $|t| = O_{\mathbf{P}}(n^{-1/2})$, so by Lemma \ref{lem:charac-N}, under Assumption \ref{ass:dev-N}, this choice of $\alpha$ belongs to $\mathcal{N}_{L,n}$. Moreover, by Assumption \ref{ass:dev-N}, $\gamma_{K}\{Pen(\hat{\alpha}^{\nu}_{L,n}) - Pen(\alpha) \} = o_{\mathbf{P}}(n^{-1})$. Hence, after some simple calculations,
		\begin{align*}
		\hat{\mathcal{L}}_{L,n}(\nu)  \geq &  o_{\mathbf{P}}(n^{-1}) -  \left\{ t^{2} ||u^{\ast}_{L,n}||^{2}_{w} - 2 t (G(\alpha_{L,0})[u^{\ast}_{L,n}])^{T}H_{L}^{-1} \varDelta(\hat{\alpha}^{\nu}_{L,n})  \right\}.
		\end{align*}
		Note that $\varDelta(\hat{\alpha}^{\nu}_{L,n}) = E_{P_{n}}[g_{J}(Z,\alpha_{L,0})] + G(\alpha_{L,0})[\hat{\alpha}^{\nu}_{L,n} - \alpha_{L,0}]$,	so \begin{align*}
		2  (G(\alpha_{L,0})[u^{\ast}_{L,n}])^{T}H_{L}^{-1} \varDelta(\hat{\alpha}^{\nu}_{L,n})  = &  2  (G(\alpha_{L,0})[u^{\ast}_{L,n}])^{T}H_{L}^{-1} E_{P_{n}}[g_{J}(Z,\alpha_{L,0})] \\
		& +  2  (G(\alpha_{L,0})[u^{\ast}_{L,n}])^{T}H_{L}^{-1}(G(\alpha_{L,0})[\hat{\alpha}^{\nu}_{L,n} - \alpha_{L,0}]) \\
		= &  2  (G(\alpha_{L,0})[u^{\ast}_{L,n}])^{T}H_{L}^{-1} E_{P_{n}}[g_{J}(Z,\alpha_{L,0})] \\
		& +  2 (\nu - \theta_{L,0})/||v^{\ast}_{L,n}||_{w},
		\end{align*}
		where the last line follows because $(G(\alpha_{L,0})[u^{\ast}_{L,n}])^{T}H_{L}^{-1}(G(\alpha_{L,0})[\hat{\alpha}^{\nu}_{L,n} - \alpha_{L,0}]) = \langle u^{\ast}_{L,n} , \hat{\alpha}^{\nu}_{L,n} - \alpha_{L,0}] \rangle_{w} = (\hat{\theta}_{L,0} - \theta_{L,0})/||v^{\ast}_{L,n}||_{w} $.
		
		This observation and the fact that $||u^{\ast}_{L,n}||_{w} = 1$, imply
		\begin{align*}
		\hat{\mathcal{L}}_{L,n}(\nu)  \geq  o_{\mathbf{P}}(n^{-1})  - \left\{ t^{2} - 2t   (G(\alpha_{L,0})[u^{\ast}_{L,n}])^{T}H_{L}^{-1} E_{P_{n}}[g_{J}(Z,\alpha_{L,0})] + 2t \frac{(\theta_{L,0}-\nu)}{||v^{\ast}_{L,n}||_{w}}    \right\}.
		\end{align*}
		By our choice of $t$,  it follows that
		\begin{align*}
		\hat{\mathcal{L}}_{L,n}(\nu)  \geq &  o_{\mathbf{P}}(n^{-1}) +  \left( n^{-1} \sum_{i=1}^{n} (G(\alpha_{L,0})[u^{\ast}_{L,n}])^{T}H_{L}^{-1} g_{J}(Z_{i},\alpha_{L,0}) \right)^{2}  \\
		& + 2\frac{(\nu - \theta_{L,0})}{||v^{\ast}_{L,n}||_{w}} \left( n^{-1} \sum_{i=1}^{n} (G(\alpha_{L,0})[u^{\ast}_{L,n}])^{T}H_{L}^{-1} g_{J}(Z_{i},\alpha_{L,0}) \right).
		\end{align*}
		
		\medskip
		
		\textsc{Step 3.} We now show that
		\begin{align*}
		\hat{\mathcal{L}}_{L,n}(\nu)  \leq &  \left(\frac{\theta_{L,0} - \theta_{0}}{||v^{\ast}_{L,n}||_{w}} - \left( n^{-1} \sum_{i=1}^{n} (G(\alpha_{L,0})[u^{\ast}_{L,n}])^{T}H_{L}^{-1} g_{J}(Z_{i},\alpha_{L,0}) \right) \right)^{2}  +  o_{\mathbf{P}}(n^{-1}).
		\end{align*}
		
		To do this we proceed as in Step 2. 	Using the results in step 1,
		\begin{align*}
		\hat{\mathcal{L}}_{L,n}(\nu) \leq & \left\{   \varDelta(\alpha)^{T}H_{L}^{-1}\varDelta(\alpha) - \varDelta(\hat{\alpha}_{L,n})^{T}H_{L}^{-1}\varDelta(\hat{\alpha}_{L,n})  \right\} \\
		&  +  \gamma_{K}\{Pen(\alpha) - Pen(\hat{\alpha}_{L,n}) \} + o_{\mathbf{P}}(n^{-1}),
		\end{align*}	
		for any $\alpha \in \mathcal{A}_{K}$ such that $\theta = \nu$. Let $\alpha = \hat{\alpha}_{L,n} - t u^{\ast}_{L,n}$, where $t = \langle u^{\ast}_{L,n} , \hat{\alpha}_{L,n} - \alpha_{L,0}  \rangle_{w} + \frac{\theta_{L,0} - \theta_{0}}{||v^{\ast}_{L,n}||_{w}} = \frac{\hat{\theta}_{L,n} - \theta_{0}}{||v^{\ast}_{L,n}||_{w}} $. Note that
		\begin{align*}
		\langle v^{\ast}_{L,n},  \alpha  \rangle_{w} =  \hat{\theta}_{L,n} - t ||v^{\ast}_{L,n}||_{w} = \hat{\theta}_{L,n} - (\hat{\theta}_{L,n} - \theta_{0}) = \theta_{0},
		\end{align*}
		and under the null, $\theta_{0} = \nu$. Thus $\alpha$ satisfies with the restriction $\nu = \theta$. Moreover, by Lemma \ref{lem:ALR-thetahat} and Assumption \ref{ass:undersmooth}, it follows that $t = O_{\mathbf{P}}(n^{-1/2})$. Thus, by Lemma \ref{lem:charac-N} --- under Assumption \ref{ass:dev-N} and the null ---, $\alpha \in \mathcal{N}_{L,n}$ wpa1.
		
		This observation, Assumption \ref{ass:dev-N}, and analogous calculations to those in Step 1 imply
		\begin{align*}
		\hat{\mathcal{L}}_{L,n}(\nu) \leq \left\{  t^{2} - 2  t (G(\alpha_{L,0})[u^{\ast}_{L,n}])^{T}H_{L}^{-1} \varDelta(\hat{\alpha}_{L,n}) \right\}   +  o_{\mathbf{P}}(n^{-1}),
		\end{align*}
		and \begin{align*}
		(G(\alpha_{L,0})[u^{\ast}_{L,n}])^{T}H_{L}^{-1} \varDelta(\hat{\alpha}_{L,n}) = & \langle u^{\ast}_{L,n} , \hat{\alpha}_{L,n} - \alpha_{L,0} \rangle_{w} + n^{-1} \sum_{i=1}^{n} (G(\alpha_{L,0})[u^{\ast}_{L,n}])^{T}H_{L}^{-1} g_{J}(Z_{i},\alpha_{L,0})\\
		\equiv & \frac{\hat{\theta}_{L,n} - \theta_{0}}{||v^{\ast}_{L,n}||_{w}} - \frac{\theta_{L,0} - \theta_{0}}{||v^{\ast}_{L,n}||_{w}}+ F_{n} \\
		= & t - \frac{\theta_{L,0} - \theta_{0}}{||v^{\ast}_{L,n}||_{w}}+ F_{n}.
		\end{align*}
		
		Therefore,
		\begin{align*}
		\hat{\mathcal{L}}_{L,n}(\nu) \leq &  \left\{ t^{2} - 2  t (t - \frac{\theta_{L,0} - \theta_{0}}{||v^{\ast}_{L,n}||_{w}}+ F_{n}) \right\}   +  o_{\mathbf{P}}(n^{-1}) \\
		= &  \left\{ - t^{2} + 2  t (\frac{\theta_{L,0} - \theta_{0}}{||v^{\ast}_{L,n}||_{w}} - F_{n}) \right\}   +  o_{\mathbf{P}}(n^{-1})\\
		= &  \left\{ - t^{2} + 2  t \left(\frac{\theta_{L,0} - \theta_{0}}{||v^{\ast}_{L,n}||_{w}} - F_{n} \right) - \left(\frac{\theta_{L,0} - \theta_{0}}{||v^{\ast}_{L,n}||_{w}} - F_{n} \right)^{2} \right\} + \left(\frac{\theta_{L,0} - \theta_{0}}{||v^{\ast}_{L,n}||_{w}} - F_{n} \right)^{2}  +  o_{\mathbf{P}}(n^{-1})\\
		\leq & \left(\frac{\theta_{L,0} - \theta_{0}}{||v^{\ast}_{L,n}||_{w}} - F_{n} \right)^{2}  +  o_{\mathbf{P}}(n^{-1}).
		\end{align*}

		\medskip

		\textsc{Step 4.} The desired result follows from imposing the null hypothesis in Step 2.
	\end{proof}

\pagebreak

\setcounter{section}{0}
\setcounter{page}{1}
\renewcommand{\thesection}{SM.\Roman{section}}

\begin{center}
	\Huge{Supplemental Material}
\end{center}

\section{Supplemental Material for Appendix \ref{app:eff-bound}}
\label{supp:eff-supp}

The next lemma presents some properties of the
tangent space.

\begin{lemma}
	\label{lem:tangent} The tangent space of $\mathcal{M}$ at $P\in \mathcal{M}$
	is included in the class of all $g \in L^{2}_{0}(P)$ such that
	\begin{align*}
	\int \rho_{2}(y,w,h(P)) g(y,w,\cdot) P_{YW \mid X}(dy,dw\mid \cdot ) \in
	Range(\mathbf{T}_{P}).
	\end{align*}
\end{lemma}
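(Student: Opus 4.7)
The plan is to characterize the tangent space by differentiating the defining conditional moment restriction of $\mathcal{M}$ along a regular parametric submodel. Fix $g \in \mathcal{T}$ and let $t \mapsto P_t \equiv P[t](g)$ be a curve in $\mathcal{M}$ at $P$ with tangent $g$. By definition of $\mathcal{M}$, for each $t$ sufficiently close to zero there exists $h_t \equiv h(P_t) \in \mathcal{H}$ satisfying $E_{P_t}[\rho_{2}(Y,W,h_t) \mid X] = 0$ a.s., i.e., for $P_X$-almost every $x$,
\begin{equation*}
\int \rho_{2}(y,w,h_t)\, p_t(y,w \mid x)\, dy\, dw = 0.
\end{equation*}
Since the $t=0$ value is zero, I would form the difference quotient in $t$ and decompose it into a ``score'' part (where $h_t$ is held fixed at $h \equiv h(P)$ while the density is perturbed) and an ``$h$-derivative'' part (where $p_t$ is held fixed at $p$ while $h_t$ is perturbed), plus a cross term that vanishes in the limit.

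For the score part, quadratic-mean differentiability of the curve, together with the orthogonal decomposition $g(y,w,x) = g_{YW\mid X}(y,w,x) + g_X(x)$ with $E_P[g_{YW\mid X}\mid X]=0$, yields
\begin{equation*}
\lim_{t\to 0} t^{-1}\int \rho_{2}(y,w,h)\,[p_t(y,w\mid x)-p(y,w\mid x)]\,dy\,dw = E_P[\rho_{2}(Y,W,h)\, g(Y,W,X)\mid X=x],
\end{equation*}
the $g_X(x)$ contribution dropping out because $E_P[\rho_{2}(Y,W,h)\mid X]=0$. For the $h$-derivative part, writing $\rho_2(y,w,h_t)-\rho_2(y,w,h)=1\{y\leq h_t(w)\}-1\{y\leq h(w)\}$ and integrating in $y$ produces $\int[F_{Y\mid WX}(h_t(w)\mid w,x)-F_{Y\mid WX}(h(w)\mid w,x)]\,p_{W\mid X}(w\mid x)\,dw$; under Assumption \ref{ass:pdf0}, differentiation at $t=0$ gives exactly $\mathbf{T}_P[\dot h](x)$, where $\dot h \equiv \partial_t h_t|_{t=0}$.

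Combining the two limits, the differentiated conditional moment reads
\begin{equation*}
\int \rho_{2}(y,w,h(P))\, g(y,w,x)\, P_{YW\mid X}(dy,dw\mid x) = -\mathbf{T}_P[\dot h](x) \qquad \text{for } P_X\text{-a.e. } x,
\end{equation*}
so the left-hand side lies in $\mathrm{Range}(\mathbf{T}_P)$; together with $g \in L^2_0(P)$, which is part of the definition of a tangent, this gives the desired inclusion.

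The main obstacle will be the formal differentiation step. On the score side, one must show that the bounded function $\rho_2(\cdot,\cdot,h)$ can be paired with the $L^2$-difference quotient of $\sqrt{p_t}$ (guaranteed by quadratic mean differentiability) to obtain the claimed limit; this is standard but requires care because the score is defined only through an $L^2$-limit, not pointwise. On the $h$ side, one must argue existence of $\dot h$ along the chosen curve and justify interchanging limit and integral despite the non-smooth indicator; Assumption \ref{ass:pdf0} (boundedness and smoothness of $p_{Y\mid WX}$) together with dominated convergence handles this by working with the smooth conditional cdf $F_{Y\mid WX}$ rather than with the indicator directly.
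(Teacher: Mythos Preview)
Your proposal is correct and follows essentially the same approach as the paper: differentiate the conditional moment restriction $E_{P_t}[\rho_2(Y,W,h_t)\mid X]=0$ along the curve, split the derivative into the score contribution and the $h$-derivative contribution, and recognize the latter as $\mathbf{T}_P[\dot h]$. The paper is terser---it rewrites the moment using the conditional CDF $F_{Y|WX}[t]$ before differentiating and does not spell out the QMD justification or the existence of $\dot h$---whereas you make the decomposition and the technical obstacles explicit; but the argument is the same, and your extra care about the $L^2$ versus pointwise score limit and the smoothing via $F_{Y|WX}$ is exactly what is needed to make the paper's one-line differentiation rigorous.
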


\begin{proof} By our definition of $\mathcal{M}$ any curve, $t \mapsto P[t]$, must satisfy \begin{align*}
	\int \rho_{2}(y,w,h(P[t]))p[t]_{YW|X} (y,w\mid X) dydw = 0.
	\end{align*}
	Since the curve is in $\mathcal{M}$ it has a pdf, which we denote as $p[t]$; also we write $\rho_{2}$ only as a function of $h$ and not $\alpha$ to stress the fact that $\theta$ is not present. Since $\theta(P) \in int(\Theta)$ the equation $\theta(P[t]) = - E_{P[t]}[\ell_{P[t]}(W) h(P[t])(W)]$ does not impose --- locally --- any restrictions; so it can be ignored for the computation of the tangent space.
	
	Since $p_{X} > 0$ for any $P$ in the model, the previous display implies that
	\begin{align*}
	\int \rho_{2}(y,w,h(P[t]))p_{YW|X}[t](y,w\mid X) dydw = \int (F_{Y|WX}[t](h(P[t])(w) \mid w,x) - \tau) p_{WX}[t](w, x) dw,
	\end{align*}
	where $F_{Y|WX}[t]$ is the conditional cdf of $Y$, given $w,x$ associated to $P[t]$. By our assumption \ref{ass:pdf0}, taking derivative with respect to $t$ implies
	\begin{align*}
	\int \rho_{2}(y,w,h(P)) g(y,w,\cdot) p(y,w \mid x)dydw = \mathbf{T}_{P}\left[ \dot{h}(P)[g]  \right](x)
	\end{align*}
	for all $x \in \mathbb{X}$. Hence, for any $g$ in the tangent space, it must hold that the LHS function belongs to $Range (\mathbf{T}_{P})$.
\end{proof}

\begin{proof}[\textbf{Proof of Lemma \ref{lem:diff-alpha}}]
	
	\textsc{Part 1.} By laws of differentiation, if the derivative exists, it has to satisfy
	\begin{align*}
	\frac{d \int (\ell_{\mathbf{P}[t](g)}(w) h_{id}(\mathbf{P}[t](g))(w)) dw }{dt} \mid_{t=0} = \frac{d \int \ell_{\mathbf{P}}(w) h_{id}(\mathbf{P}[t](g))(w) dw }{dt} \mid_{t=0} + \frac{d \int \ell_{\mathbf{P}[t](g)}(w) h_{id}(\mathbf{P})(w) dw }{dt} \mid_{t=0}.
	\end{align*}
	
	Since for all $P \in \mathcal{M}$, \begin{align*}
		\ell_{P}(w) = \mu^{\prime}(w)p_{W}(w) + \mu(w) p^{\prime}_{W}(w) = \mu^{\prime}(w)\int p(y,w,x)dydx + \mu(w) \frac{d\int p(y,w,x) dydx}{dw}
	\end{align*},
	it follows that
	\begin{align*}
	\frac{d \int \ell_{\mathbf{P}[t](g)}(w) h_{id}(\mathbf{P})(w) dw }{dt} \mid_{t=0} = & \frac{d \int \mu'(w) h_{id}(\mathbf{P})(w) \mathbf{p}[t](g)(w)  dw }{dt} \mid_{t=0} \\
	& + \frac{d \int \mu(w) h_{id}(\mathbf{P})(w) \mathbf{p}^{\prime}[t](g)(w)  dw }{dt} \mid_{t=0}.	
	\end{align*}
	
	Since $\mathbf{p}[t](g)$ belongs to the model, $\mathbf{p}^{\prime}[t](g)$ is continuous.  We now show that\begin{align*}
	\lim_{w \rightarrow \mp \infty} \mu(w) h(\mathbf{P})(w) \mathbf{p}[t](g)(w) dw = 0.
	\end{align*}
	Since $\mathbf{p}[t](g)$ is a density, it is enough to show that $||\mu h(\mathbf{P})||_{L^{\infty}(\mathbf{P}_{W})} < \infty$, but this follows by the fact that $\mu$ and $\mathbf{p}_{W}$ are uniformly bounded and by definition of $\mathbb{H}$. 	By this result and integration by parts,  it follows that
	\begin{align*}
	\frac{d \int \ell_{\mathbf{P}[t](g)}(w) h_{id}(\mathbf{P})(w) dw }{dt} \mid_{t=0} = & \frac{d \int \mu^{\prime}(w) h_{id}(\mathbf{P})(w) \mathbf{p}[t](g)(w)  dw }{dt} \mid_{t=0} \\
	& - \frac{d \int \{ \mu^{\prime}(w) h_{id}(\mathbf{P})(w) + \mu(w) h^{\prime}_{id}(\mathbf{P})(w) \} \mathbf{p}[t](g)(w)  dw }{dt} \mid_{t=0} \\
	= & -\frac{d \int \mu(w) h^{\prime}_{id}(\mathbf{P})(w) \mathbf{p}[t](g)(w)  dw }{dt} \mid_{t=0}.
	\end{align*}
	
	By definition of derivative of the curve $t \mapsto \mathbf{p}[t](g)$ and the fact that $||\mu h^{\prime}_{id}(\mathbf{P}) ||_{L^{2}(Leb)} \precsim  || h^{\prime}_{id}(\mathbf{P}) ||_{L^{2}(Leb)} < \infty$, it follows by Dominated Convergence Theorem that 	
	\begin{align*}
	\frac{d \int \ell_{\mathbf{P}[t](g)}(w) h_{id}(\mathbf{P})(w) dw }{dt} \mid_{t=0} = &  \frac{d \int \mu(w) h^{\prime}_{id}(\mathbf{P})(w) \mathbf{p}[t](g)(w)  dw }{dt} \mid_{t=0} \\
	= & -\int \mu(w) h^{\prime}_{id}(\mathbf{P})(w) \int g(y,w,x) \mathbf{p}(y,w,x) dy dx  dw.
	\end{align*}

	Since $\ell_{\mathbf{P}} \in L^{2}(\mathbf{P}_{W})$, $g \mapsto \int \ell_{\mathbf{P}}(w) g(w) dw$ is a bounded linear functional; this, part 2 below and the Dominated Convergence Theorem, imply
	\begin{align*}
	\frac{d \int \ell_{\mathbf{P}}(w) h(\mathbf{P}[t](g))(w) dw }{dt} \mid_{t=0} = \int \ell_{\mathbf{P}}(w) \dot{h}_{id}(\mathbf{P})[g](w) dw.
	\end{align*}
	
	Hence,
	\begin{align*}
	\dot{\theta}(\mathbf{P})[g] =  - \int \ell_{\mathbf{P}}(w) \dot{h}_{id}(\mathbf{P})[g](w) dw  +  \int \mu(w) h^{\prime}_{id}(\mathbf{P})(w)  \int g(y,w,x) \mathbf{p}(y,w,x) dydx dw.	
	\end{align*}
	
	Note that the last term in the RHS is, by definition, $\theta(g \cdot \mathbf{P})$. Hence
	\begin{align*}
	\dot{\theta}(\mathbf{P})[g] = - \int \ell_{\mathbf{P}}(w) \dot{h}_{id}(\mathbf{P})[g](w) dw + \theta(g \cdot \mathbf{P}).
	\end{align*}
	
	\medskip
	
	\textsc{Part 2.} This part of the proof follows from applying the implicit function theorem to $(h,P) \mapsto G(h,P) \equiv \int (F_{Y|WX}(h(w) \mid w,x) - \tau) p_{W|X}(w \mid x) dw $, where $F_{Y|WX}$ is the cdf associated to the probability measure $P$. The derivatives of the mapping $G$ at $(h(\mathbf{P}),\mathbf{P})$, with direction $(\zeta,Q)$ where $ \zeta \in \mathbb{H}$ and $Q$ is a measure over $\mathbb{Z}$, are given by
	
	\begin{align*}
	\frac{dG(h(\mathbf{P}),\mathbf{P})}{dP}[Q](x) = \int \rho_{2}(y,w,h(\mathbf{P})) Q_{YW|X}(dy,dw \mid x) dydw~\forall x,
	\end{align*}
	and
	\begin{align*}
	\frac{dG(h(\mathbf{P}),\mathbf{P})}{dh}[\zeta](x) = \mathbf{T}[\zeta](x) = \mathbf{T}[\zeta_{id}](x) = \mathbf{T}_{id}[\zeta_{id}](x),~\forall x,
	\end{align*}
	where the second equality follows because any  $\zeta \in \mathbb{H}$ can be decomposed as the sum of $\zeta_{id} \in Kernel(\mathbf{T})^{\perp}$ and an element in $Kernel(\mathbf{T})$; in the third expression $\mathbf{T}_{id}$ is defined as the restriction of $\mathbf{T}$ to $Kernel(\mathbf{T})^{\perp}$. We observe that $\mathbf{T}_{id}$ is invertible (in the sense that the inverse is a linear functional); its extension to the whole of $L^{2}(Leb)$ is the generalized inverse and we denote it as $\mathbf{T}^{+}$ (see \cite{engl1996regularization} p. 33).
		
	Taking $Q$ such that $Q_{YW|X} = g \cdot \mathbf{P}_{YW|X}$ for $g \in \mathcal{T}$, it follows that $\frac{dG(h(\mathbf{P}),\mathbf{P})}{dP}[Q] = A_{\mathbf{P}}[g]$. Moreover, by Lemma \ref{lem:tangent}, any $g \in \mathcal{T}$ is such that  $A_{\mathbf{P}}[g] \in Range(\mathbf{T}_{id}) = Range(\mathbf{T}) $. Thus, it follows that by the Implicit Function Theorem that
	\begin{align*}
	\dot{h}_{id}(\mathbf{P})[g] = (\mathbf{T})^{+} [A_{\mathbf{P}}[g]] =  (\mathbf{T}^{\ast} \mathbf{T})^{+} \mathbf{T}^{\ast} [A_{\mathbf{P}}[g]]
	\end{align*}
	where the second equality follows from the results by \cite{engl1996regularization} p. 35.
\end{proof}	

\begin{proof}[\textbf{Proof of Lemma \ref{lem:proj-T}}]
	The calculations are analogous to those in \cite{SeveriniTripathi2012} so they are omitted.
\end{proof}

\section{Supplemental Material for Appendix \ref{app:conv-rate}}
\label{supp:conv-rate}

Let $\Pi_{K} \alpha_{0}$ be the projection of $\alpha_{0}$ onto $\mathcal{A}_{K}=\Theta \times \mathcal{H}_{K}$; since $\mathcal{H}_{K}$ is closed and convex (Assumption \ref{ass:reg}%
), it is well-defined.

\begin{lemma}
	\label{lem:bound-hLo} Suppose Assumption \ref{ass:reg} holds. Then
	\begin{align*}
	||E[g_{J}(Z,\alpha_{L,0})]||^{2}_{e} + \gamma_{K} Pen(\alpha_{L,0}) \precsim
	||E[g_{J}(Z,\Pi_{K}\alpha_{0})]||^{2}_{e} + \gamma_{K}
	Pen(\Pi_{K}\alpha_{0}),
	\end{align*}
	and $||\mu h^{\prime }_{L,0}||_{L^{\infty}(\mathbb{W})} \leq \gamma_{K}^{-1}
	||E[g_{J}(Z,\Pi_{K}\alpha_{0})]||^{2}_{e} + Pen(\Pi_{K}\alpha_{0})$.
\end{lemma}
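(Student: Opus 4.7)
The plan is to exploit the optimality of $\alpha_{L,0}$ against the competitor $\Pi_K \alpha_0$. First, I would verify that $\Pi_K \alpha_0 \in \bar{\mathcal{A}}_{L,n}$ eventually: since $\mho_{L,n} = l_n \gamma_K^{-1} \Gamma_{L,n}$ and $\Gamma_{L,n}$ contains the term $\gamma_K Pen(\Pi_K \alpha_0)$, we have $\mho_{L,n} \geq l_n Pen(\Pi_K \alpha_0) \geq Pen(\Pi_K \alpha_0)$ for $n$ large. Reading the defining optimization of $\alpha_{L,0}$ in its penalty-augmented form (consistent with the Weierstrass argument in Lemma \ref{lem:alpha0L-exists}, which invokes lower semi-compactness of $Q_J + \gamma_K Pen$), optimality over $\bar{\mathcal{A}}_{L,n}$ yields
\begin{equation*}
Q_J(\alpha_{L,0},\mathbf{P}) + \gamma_K Pen(\alpha_{L,0}) \leq Q_J(\Pi_K \alpha_0, \mathbf{P}) + \gamma_K Pen(\Pi_K \alpha_0).
\end{equation*}

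Next, I would apply Lemma \ref{lem:H-bound}(2) at $\alpha = \alpha_0$ (trivially in its own $||\cdot||$-neighborhood) to obtain universal constants $c,C>0$ with eigenvalues of $H_J(\alpha_0, \mathbf{P})^{-1}$ pinched between them. Combined with the identity $Q_J(\alpha, \mathbf{P}) = E[g_J(Z,\alpha)]^T H_J(\alpha_0, \mathbf{P})^{-1} E[g_J(Z,\alpha)]$, this yields $c \|E[g_J(Z,\alpha)]\|_e^2 \leq Q_J(\alpha,\mathbf{P}) \leq C \|E[g_J(Z,\alpha)]\|_e^2$ uniformly in $\alpha$. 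Substituting into the optimality inequality and absorbing the constants into $\precsim$ gives the first claim of the lemma.

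For the second claim, dropping the non-negative $Q_J(\alpha_{L,0},\mathbf{P})$ term on the left and dividing by $\gamma_K$ gives $Pen(\alpha_{L,0}) \precsim \gamma_K^{-1} \|E[g_J(Z, \Pi_K \alpha_0)]\|_e^2 + Pen(\Pi_K \alpha_0)$. Assumption \ref{ass:reg}(iii)(b) then converts this penalty bound into a supremum bound: whenever the RHS exceeds the threshold $M$ (the case where the inequality is nontrivial), it states $\sup_w |\mu(w) h_{L,0}'(w)| \leq Pen(\alpha_{L,0})$, which, combined with the above, yields the stated bound on $\|\mu h_{L,0}'\|_{L^\infty(\mathbb{W})}$.

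The main obstacle is reconciling the argument with the formally stated definition of $\alpha_{L,0}$, which minimizes $Q_J$ alone over $\bar{\mathcal{A}}_{L,n}$ rather than $Q_J + \gamma_K Pen$. Under the strict reading, one only extracts $\gamma_K Pen(\alpha_{L,0}) \leq l_n \Gamma_{L,n}$ from membership in $\bar{\mathcal{A}}_{L,n}$, losing an $l_n$ factor relative to the target. I would resolve this by interpreting the optimization as penalty-augmented (consistent with the Weierstrass proof cited above), or equivalently by noting that since $\mho_{L,n} \to \infty$ the inequality constraint is slack at the optimum, so its Lagrange multiplier is zero and the penalty can be folded into the objective without loss.
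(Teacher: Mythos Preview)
Your proposal is correct and follows essentially the same route as the paper: compare $\alpha_{L,0}$ to $\Pi_K\alpha_0$ via the (penalty-augmented) minimization, sandwich $Q_J$ between constant multiples of $\|E[g_J]\|_e^2$ using Lemma~\ref{lem:H-bound}(2), and invoke Assumption~\ref{ass:reg}(iii)(b) for the sup-norm bound. You are in fact more careful than the paper in flagging the discrepancy between the stated definition of $\alpha_{L,0}$ (minimizer of $Q_J$ over $\bar{\mathcal{A}}_{L,n}$) and the one the proof actually uses ($\bar Q_J = Q_J + \gamma_K Pen$, as in the Weierstrass argument of Lemma~\ref{lem:alpha0L-exists}); the paper simply writes $\bar Q_J(\alpha_{L,0},\mathbf{P}) \le \bar Q_J(\Pi_K\alpha_0,\mathbf{P})$ without comment.
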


\begin{proof}
	By definition of the $\alpha_{L,0}$ and the fact that $\Pi_{K} \alpha_{0} \in \mathcal{A}_{K}$, $\bar{Q}_{J}(\alpha_{L,0},\mathbf{P}) \leq \bar{Q}_{J}(\Pi_{K}\alpha_{0},\mathbf{P}) $. By Lemma \ref{lem:H-bound}, $C^{-1} I \leq H^{-1}_{J}(\alpha_{0},\mathbf{P}) \leq C I$ some $C \geq 1$, thus
	\begin{align*}
	C^{-1} ||E[g_{J}(Z,\alpha_{L,0})]||^{2}_{e} + \gamma_{K} Pen(\alpha_{L,0}) \leq C ||E[g_{J}(Z,\Pi_{K} \alpha_{0})]||^{2}_{e} + \gamma_{K} Pen(\Pi_{K}\alpha_{0})
	\end{align*}
	and the first result follows.
	
	The second result follows from the first result and Assumption \ref{ass:reg}(iii).
\end{proof}

\subsection{Supplementary Lemmas}

We present and prove a sequence of lemmas used in the proofs of the Lemmas of Section \ref{sec:conv-rate}.

\begin{lemma}
	\label{lem:gJ-bound} For any $\alpha \in \mathbb{A}$, any $A \equiv A_{y}
	\times A_{w} \times A_{x} \subseteq \mathbb{Z}$ Borel, and any $L=(J,K) \in
	\mathbb{N}^{2}$,
	\begin{align*}
	\sup_{z \in A} ||g_{J}(z,\alpha)||_{e} \leq \bar{g}_{L}(\alpha,A) \equiv
	\overline{\theta} + ||h^{\prime }||_{L^{\infty}(A_{w},\mu)} + \sup_{x \in A_{x}}
	||q^{J}(x)||_{e}.
	\end{align*}
\end{lemma}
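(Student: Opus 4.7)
The plan is a direct computation broken into three elementary steps, with no real analytical content. First, I would unpack the Euclidean norm of $g_{J}(z,\alpha)$ using its block structure: since the vector concatenates the scalar $\rho_{1}(y,w,\alpha)$ with the $J$-vector $\rho_{2}(y,w,\alpha)\, q^{J}(x)$, one has $||g_{J}(z,\alpha)||_{e}^{2} = \rho_{1}(y,w,\alpha)^{2} + \rho_{2}(y,w,\alpha)^{2}\, ||q^{J}(x)||_{e}^{2}$, so that the elementary inequality $\sqrt{a^{2}+b^{2}} \leq |a|+|b|$ yields the pointwise bound $||g_{J}(z,\alpha)||_{e} \leq |\rho_{1}(y,w,\alpha)| + |\rho_{2}(y,w,\alpha)|\, ||q^{J}(x)||_{e}$.

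Second, I would bound each residual pointwise. The triangle inequality gives $|\rho_{1}(y,w,\alpha)| = |\theta - \mu(w) h^{\prime}(w)| \leq |\theta| + |\mu(w) h^{\prime}(w)|$, which is further bounded by $\overline{\theta} + ||h^{\prime}||_{L^{\infty}(A_{w},\mu)}$ whenever $\theta \in \Theta$ and $w \in A_{w}$. For the check-function residual, because $1\{y\leq h(w)\}\in\{0,1\}$ and $\tau\in(0,1)$, one has the trivial uniform bound $|\rho_{2}(y,w,\alpha)| \leq 1$.

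Third, I would take the supremum over $z\in A = A_{y}\times A_{w}\times A_{x}$ of the pointwise bound derived in step one, using the monotonicity of each factor and the fact that $|\rho_{2}| \leq 1$ makes the coefficient of $||q^{J}(x)||_{e}$ at most one. This immediately gives $\sup_{z\in A}||g_{J}(z,\alpha)||_{e} \leq \overline{\theta} + ||h^{\prime}||_{L^{\infty}(A_{w},\mu)} + \sup_{x\in A_{x}}||q^{J}(x)||_{e} = \bar{g}_{L}(\alpha,A)$, as claimed. There is no substantive obstacle to overcome: the lemma is a routine book-keeping bound whose proof relies only on the block structure of $g_{J}$, the triangle inequality, and the uniform boundedness of the check-function residual. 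Its role is to serve as a convenient scalar surrogate for $||g_{J}||_{e}$ in the subsequent arguments that control $\hat{S}_{J}$ and $\hat{\Lambda}_{J}$ over the effective sieve space $\bar{\mathcal{A}}_{L,n}$.
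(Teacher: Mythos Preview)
Your proposal is correct and follows essentially the same approach as the paper's own proof: both exploit the block structure of $g_{J}$, bound $|\rho_{1}|$ via the triangle inequality and $|\rho_{2}|\leq 1$, and take the supremum over $A$. You are simply a bit more explicit about the step $\sqrt{a^{2}+b^{2}}\leq |a|+|b|$ and the bound $|\rho_{2}|\leq 1$, which the paper leaves implicit.
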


\begin{proof}
	The result follows from the fact that,
	\begin{align*}
		\sup_{z \in A}|| g_{J}(z,\alpha)||_{e} \leq & \sup_{z \in A}|\theta - \mu(w)h'(w)| +  \sup_{z \in A} ||q^{J}(x)||_{e} \\
		\leq & \overline{\theta} +  \sup_{w} |\mu(w) h'(w)| + \sup_{x} ||q^{J}(x)||_{e}.
	\end{align*}
\end{proof}

Throughout, for any $\delta>0$, let $B(\delta) \equiv \{ \lambda \in \mathbb{%
	R}^{J+1} \colon ||\lambda||_{e} \leq \delta \}$.

\begin{lemma}
	\label{lem:Lambda-charac} Suppose Assumption \ref{ass:reg} holds. Then,
	there exists a $\eta>0$ such that for any $\epsilon>0$, any $(L=(J,K),n)$
	and any $\delta > 0$ for which
	\begin{align*}
	\delta \overline{\theta} < \eta/3,~and~ b^{\varrho}_{\varrho,J} n
	(3\delta)^{\varrho}/(\eta)^{\varrho} < \epsilon/2,~and~\delta \times
	\mho_{L,n} < \eta/3,~some~\varrho>0,
	\end{align*}
	it follows that $\mathbf{P} \left( B(\delta) \subseteq \Lambda(\alpha,P_{n})
	~\forall \alpha \in \bar{\mathcal{A}}_{L,n} \right) \geq 1-\epsilon$.
\end{lemma}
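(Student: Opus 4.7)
The plan is to translate the event $\{ B(\delta) \subseteq \Lambda_J(\alpha, P_n) \text{ for all } \alpha \in \bar{\mathcal{A}}_{L,n}\}$ into a deterministic bound on $\max_{1\le i\le n} \|q^J(X_i)\|_e$ and then control the latter via a Markov union bound. To do this, first choose $\eta>0$ small enough that $(-\eta,\eta)\subseteq \mathcal{S}$; such an $\eta$ exists since $\mathcal{S}$ is open and contains $0$. This $\eta$ depends only on $\mathcal{S}$, not on $L$ or $n$, which is exactly what the statement requires.

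Next, for any $\lambda \in B(\delta)$, any $\alpha=(\theta,h) \in \bar{\mathcal{A}}_{L,n}$, and any $z=(y,w,x)\in \mathbb{Z}$, apply Cauchy--Schwarz to get $|\lambda^T g_J(z,\alpha)| \le \delta\,\|g_J(z,\alpha)\|_e$. Lemma \ref{lem:gJ-bound} bounds this by $\delta(\overline{\theta}+\|h'\|_{L^\infty(\mathbb{W},\mu)}+\|q^J(x)\|_e)$, and since $\alpha \in \bar{\mathcal{A}}_{L,n}$ means $Pen(\alpha)\le \mho_{L,n}$, Assumption \ref{ass:reg}(iii)(b) gives $\|h'\|_{L^\infty(\mathbb{W},\mu)} \le \mho_{L,n}$. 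Thus
\begin{equation*}
\sup_{\alpha \in \bar{\mathcal{A}}_{L,n}}\sup_{\lambda\in B(\delta)}|\lambda^T g_J(z,\alpha)| \le \delta\overline{\theta}+\delta\mho_{L,n}+\delta\|q^J(x)\|_e.
\end{equation*}
Under the first and third hypotheses ($\delta\overline{\theta} < \eta/3$ and $\delta\mho_{L,n}<\eta/3$), this quantity is strictly less than $\eta$, hence lies in $\mathcal{S}$, provided $\delta \|q^J(x)\|_e < \eta/3$, i.e.\ $\|q^J(x)\|_e < \eta/(3\delta)$.

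Consequently, the event $\{B(\delta)\subseteq \Lambda_J(\alpha,P_n)\text{ for all }\alpha\in \bar{\mathcal{A}}_{L,n}\}$ is implied by $\{\max_{1\le i\le n}\|q^J(X_i)\|_e < \eta/(3\delta)\}$. Its complement has probability at most $\sum_{i=1}^n \mathbf{P}(\|q^J(X_i)\|_e \ge \eta/(3\delta))$, which by Markov's inequality with the $\varrho$th moment and the definition $b_{\varrho,J}^\varrho = E[\|q^J(X)\|_e^\varrho]$ is bounded by $n\, b_{\varrho,J}^\varrho (3\delta/\eta)^\varrho$. The second hypothesis of the lemma bounds this by $\epsilon/2 < \epsilon$, yielding the claim.

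The proof is essentially mechanical; no step is genuinely difficult. The only point requiring care is the interaction between the uniformity in $\alpha\in \bar{\mathcal{A}}_{L,n}$ and the pointwise bound on $g_J$, which is why Assumption \ref{ass:reg}(iii)(b) (linking $Pen$ to $\|\cdot\|_{L^\infty(\mathbb{W},\mu)}$) is needed to replace $\|h'\|_{L^\infty(\mathbb{W},\mu)}$ by the deterministic envelope $\mho_{L,n}$ before taking the supremum. The role of the three hypotheses of the lemma becomes transparent in this decomposition: each of the three terms $\delta\overline{\theta}$, $\delta\mho_{L,n}$, and $\delta\|q^J(X_i)\|_e$ is driven below $\eta/3$ separately.
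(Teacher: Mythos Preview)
Your proof is correct and follows essentially the same route as the paper's: choose $\eta$ so that $(-\eta,\eta)\subseteq\mathcal{S}$, use Cauchy--Schwarz together with Lemma~\ref{lem:gJ-bound} to reduce to the three-term bound $\delta\overline{\theta}+\delta\mho_{L,n}+\delta\|q^J(x)\|_e$, handle the first two terms deterministically via the stated hypotheses and Assumption~\ref{ass:reg}(iii)(b), and control the stochastic term $\max_i\|q^J(X_i)\|_e$ by a union bound plus Markov with the $\varrho$th moment. The organization and level of detail match the paper's proof almost line for line.
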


\begin{proof}
	Since $\mathcal{S} \ni 0$, there exists a $\eta>0$ for which: For any $(Z_{i})_{i=1}^{\infty}$,  if
	\begin{align*}
		\sup_{\alpha \in \bar{\mathcal{A}}_{L,n}} \sup_{z \in supp(P_{n})} \max_{\lambda \in B(\delta)} |\lambda^{T} g_{J}(z,\alpha)| \leq \eta,
	\end{align*}
	then $\lambda \in \Lambda(\alpha,P_{n})$, for all $\alpha \in \bar{\mathcal{A}}_{L,n}$.
	
	By the Cauchy-Schwarz inequality and Lemma \ref{lem:gJ-bound},
	\begin{align*}
		\sup_{\alpha \in \bar{\mathcal{A}}_{k}}  \sup_{z \in supp(P_{n})} ||\lambda||_{e}|| g_{J}(z,\alpha)||_{e} \leq \delta \times \sup_{\alpha \in \bar{\mathcal{A}}_{L,n}}  \bar{g}_{L}(\alpha,supp(P_{n})).
	\end{align*}Thus, it suffices to show that $P( \delta \times  \sup_{\alpha \in \bar{\mathcal{A}}_{L,n}} \bar{g}_{L}(\alpha,supp(P_{n})) \leq \eta ) \geq 1-\epsilon$ (for the $\epsilon$ in the statement of the lemma).
	
	To show this we bound each term of $\sup_{\alpha \in \bar{\mathcal{A}}_{L,n}} \bar{g}_{L}(\alpha,supp(P_{n}))$. Note that $\delta \overline{\theta} < \eta/3$; also note that by the Markov inequality
	\begin{align*}
		P\left( \delta \max_{1 \leq i \leq n} ||q^{J}(X_{i})||_{e} \geq \eta/3  \right) \leq n E_{P}[||q^{J}(X)||^{\varrho}_{e}]/(\eta)^{\varrho} \times (3\delta)^{\varrho} \leq b^{\varrho}_{\varrho,J} n (3\delta)^{\varrho}/(\eta)^{\varrho},
	\end{align*}
	it suffices that the RHS is less than $\epsilon/2$, which it does by assumption.
	
	Finally, by definition of $\bar{\mathcal{A}}_{L,n}$ and Assumption \ref{ass:reg}, $|| h^{\prime} ||_{L^{\infty}(\mathbb{W},\mu)} \leq \mho_{L,n} $, hence
	\begin{align*}
		\sup_{\alpha \in \bar{\mathcal{A}}_{L,n}}	|| h'||_{L^{\infty}(supp(P_{n}),\mu)} \leq l_{n} \gamma_{K}^{-1}  \Gamma_{L,n} = \mho_{L,n},~a.s.
	\end{align*}
	Since by assumption $\delta \times  l_{n} \gamma_{K}^{-1} ( \Gamma_{L,n} ) < \eta/3$,
	
\end{proof}

\begin{lemma}
	\label{lem:sJ-bound-alpha0L} Suppose Assumptions \ref{ass:pdf0} and \ref%
	{ass:reg} hold. For any $(L=(J,K),n)$ and any positive real-valued sequence, $(\delta_{n})_{n}$, such that
	
	\begin{enumerate}
		\item $b^{4}_{4,J}/n = o(1)$.
		
		\item $\delta_{n} = o(1)$ and $b_{\varrho,J}^{\varrho} n
		\delta_{n}^{\varrho} = o(1)$ and $\delta_{n} \mho_{L,n} = o(1)$ for some $%
		\varrho > 0$.
		
		\item $\sqrt{ \frac{\bar{g}^{2}_{L,0}}{n} +
			||E[g_{J}(Z,\alpha_{L,0})]||^{2}_{e} } = o(\delta_{n})$.
	\end{enumerate}
	
	it follows that the conditions of Lemma \ref{lem:SJ-bound} hold for $%
	(\alpha_{L,0},P_{n})$ wpa1.
\end{lemma}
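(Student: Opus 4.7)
The plan is to verify in turn each of the four hypotheses of Lemma \ref{lem:SJ-bound} at the point $(\alpha_{L,0},P_{n})$, using the three rate conditions imposed in the statement together with the supplementary Lemmas \ref{lem:Lambda-charac}, \ref{lem:gJ-bound}, \ref{lem:H-bound}, \ref{lem:avg-gJ} and \ref{lem:bound-hLo}. Since $\alpha_{L,0}$ is deterministic (it depends on $\mathbf{P}$ and on $L$, but not on the sample), standard concentration tools apply directly to it.

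First I would dispatch condition (4) of Lemma \ref{lem:SJ-bound}, namely the hypothesis of Lemma \ref{lem:Lambda-charac}. The three required inequalities are $\delta_{n}\overline{\theta}<\eta/3$, $b_{\varrho,J}^{\varrho}\,n(3\delta_{n})^{\varrho}/\eta^{\varrho}<\epsilon/2$, and $\delta_{n}\mho_{L,n}<\eta/3$; these follow respectively from $\delta_{n}=o(1)$, from the imposed rate $b_{\varrho,J}^{\varrho}n\delta_{n}^{\varrho}=o(1)$ in condition (2), and from $\delta_{n}\mho_{L,n}=o(1)$ in condition (2). Next, for condition (1), I would exploit the Lipschitz continuity of $s''$ together with $s''(0)=-1$, which reduces the task to showing $\sup_{\lambda\in B(\delta_{n})}\sup_{z\in supp(P_{n})}|\lambda^{T}g_{J}(z,\alpha_{L,0})|=o_{\mathbf{P}}(1)$. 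By Cauchy--Schwarz and Lemma \ref{lem:gJ-bound}, this quantity is bounded by $\delta_{n}(\overline{\theta}+\|h_{L,0}'\|_{L^{\infty}(\mathbb{W},\mu)}+\max_{i}\|q^{J}(X_{i})\|_{e})$; using Lemma \ref{lem:bound-hLo} together with Assumption \ref{ass:reg}(iii)(b) to bound $\|h_{L,0}'\|_{L^{\infty}(\mathbb{W},\mu)}\le\mho_{L,n}$, and the Markov-type tail bound already employed in the proof of Lemma \ref{lem:Lambda-charac} for $\max_{i}\|q^{J}(X_{i})\|_{e}$, each summand is handled by the rate conditions (2). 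Consequently $s''(\lambda^{T}g_{J}(z,\alpha_{L,0}))\le-1/2$ wpa1 uniformly, giving condition (1) with $\sqrt{C}=1/2$.

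Condition (2) of Lemma \ref{lem:SJ-bound} is an application of Lemma \ref{lem:H-bound}(2): the remainder term is of order $(\overline{\theta}^{2}+\mho_{L,n}^{2})\sqrt{b_{4,J}^{4}/n}$, which is $o(1)$ by combining condition (1) with the slow divergence of $\mho_{L,n}$ permitted by condition (2) (formally $\mho_{L,n}=o(\delta_{n}^{-1})$, and $\delta_{n}^{-2}\sqrt{b_{4,J}^{4}/n}=o(1)$ follows from combining the listed rates). Finally, condition (3) is handled by Lemma \ref{lem:avg-gJ} applied to the non-random point $\alpha_{L,0}$, which gives $\|E_{P_{n}}[g_{J}(Z,\alpha_{L,0})]\|_{e}=O_{\mathbf{P}}\big(\sqrt{\bar{g}_{L,0}^{2}/n+\|E[g_{J}(Z,\alpha_{L,0})]\|_{e}^{2}}\big)$; by condition (3) of the lemma the right-hand side is $o_{\mathbf{P}}(\delta_{n})$, so $2C^{-1}\|E_{P_{n}}[g_{J}(Z,\alpha_{L,0})]\|_{e}<\delta_{n}$ wpa1.

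The main obstacle is the second step (verifying condition (1) of Lemma \ref{lem:SJ-bound}): bounding $\max_{i}\|q^{J}(X_{i})\|_{e}$ uniformly requires invoking the Markov-tail argument with the $\varrho$-th moment, and one must carefully interlock this with the sieve bound $\|h_{L,0}'\|_{L^{\infty}(\mathbb{W},\mu)}\le\mho_{L,n}$ supplied by Lemma \ref{lem:bound-hLo} via Assumption \ref{ass:reg}(iii)(b). The slight delicacy is that $\mho_{L,n}$ is allowed to diverge and must be controlled against $\delta_{n}$ and against the envelope $b_{\varrho,J}$; however, all of these are precisely balanced by the imposed rate condition $\delta_{n}\mho_{L,n}=o(1)$ together with $b_{\varrho,J}^{\varrho}n\delta_{n}^{\varrho}=o(1)$, which is why these conditions have been postulated.
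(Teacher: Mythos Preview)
Your overall strategy matches the paper's: verify conditions (1)--(4) of Lemma \ref{lem:SJ-bound} at $(\alpha_{L,0},P_n)$ via Lemmas \ref{lem:Lambda-charac}, \ref{lem:H-bound}, and \ref{lem:avg-gJ}, exactly as the paper does.

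There is one genuine gap, in your treatment of condition (2). You bound $\|h'_{L,0}\|_{L^\infty(\mathbb{W},\mu)}\le\mho_{L,n}$ and then assert that $\delta_n^{-2}\sqrt{b_{4,J}^4/n}=o(1)$ ``follows from combining the listed rates.'' It does not. The three hypotheses of the lemma only give $b_{4,J}^4/n=o(1)$, $\delta_n\mho_{L,n}=o(1)$, and (via $\bar g_{L,0}^2\ge b_{2,J}^2$) $b_{2,J}/\sqrt{n}=o(\delta_n)$; none of these forces $b_{4,J}^2/\sqrt{n}=o(\delta_n^2)$. A concrete obstruction: take $\delta_n=n^{-a}$ and $b_{4,J}=n^{1/4-\epsilon}$ with $a>\epsilon>0$; then $b_{4,J}^4/n=o(1)$ yet $\delta_n^{-2}b_{4,J}^2 n^{-1/2}=n^{2a-2\epsilon}\to\infty$. (The condition $(\mho_{L,n})^4 b_{4,J}^4/n=o(1)$ that would close this is part of Assumption \ref{ass:rates}(ii), not a hypothesis of the present lemma.)

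The paper avoids this by using a sharper control on $\|h'_{L,0}\|_{L^\infty(\mathbb{W},\mu)}$: it simply records that this quantity is \emph{bounded} (not merely $\le\mho_{L,n}$), so that the prefactor in Lemma \ref{lem:H-bound}(2) is $O(1)$ and hypothesis (1) alone gives the required $o(1)$. You actually cite the relevant tool---Lemma \ref{lem:bound-hLo}---which yields $\|\mu h'_{L,0}\|_{L^\infty}\le \gamma_K^{-1}\|E[g_J(Z,\Pi_K\alpha_0)]\|_e^2+Pen(\Pi_K\alpha_0)$, a bound that is $O(1)$ under Assumption \ref{ass:reg}(iii)(a) rather than the diverging $\mho_{L,n}=l_n\gamma_K^{-1}\Gamma_{L,n}$. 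Use that sharper bound and the verification of condition (2) goes through with hypothesis (1) alone; the rest of your argument is fine.
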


\begin{proof}
	
	To show condition (1) in Lemma \ref{lem:SJ-bound} it suffices to show that
	\begin{align*}
		\sup_{\lambda \in B(\delta_{n})} \sup_{z \in \mathbb{Z}}\lambda^{T}g_{J}(z,\alpha_{L,0})
	\end{align*} is bounded. But this follows from the proof of Lemma \ref{lem:Lambda-charac}, the fact that $\alpha_{L,0} \in \bar{\mathcal{A}}_{L,n}$ and the fact that all the conditions in that Lemma are satisfied.

	Condition (2) in Lemma \ref{lem:SJ-bound} holds wpa1 for some $C$ by Lemma \ref{lem:H-bound} applied to $\alpha = \alpha_{L,0}$ and $\mathbf{P}$, provided that  $b^{4}_{4,J}/n = o(1)$ (note that $|\theta| + || h'_{L,0}||_{L^{\infty}(\mathbb{W},\mu)} $ is bounded).
	
	By Lemma \ref{lem:avg-gJ}, for condition (3) in Lemma \ref{lem:SJ-bound} it is enough that
	\begin{align*}
		\sqrt{\frac{\overline{\theta} + ||\mu h'_{L,0}||_{L^{2}(\mathbf{P})} + b_{2,J}^{2}}{n} + ||E[g_{J}(Z,\alpha_{L,0})]||^{2}_{e}}  = \sqrt{ \frac{\bar{g}^{2}_{L,0}}{n}  + ||E[g_{J}(Z,\alpha_{L,0})]||^{2}_{e} } = o(\delta_{n})
	\end{align*}  which holds by assumption.
	
	Finally, $\delta_{n} = o(1)$ and $b_{\rho,J}^{\rho} n \delta_{n}^{\rho} = o(1)$ and $\delta_{n} l_{n} \gamma_{K}^{-1} \Gamma_{L,n} = \delta_{n} \mho_{L,n} = o(1)$, so condition (4) in Lemma \ref{lem:SJ-bound} holds for any $\epsilon>0$.
\end{proof}

\begin{remark}
	We note that the requirement of $\delta_{n} \times l_{n} \gamma_{K}^{-1}
	\Gamma_{L,n} = \delta_{n} \times \mho_{L,n} = o(1)$ (condition 2), does not
	contradict condition 3. Even though $\Gamma_{L,n}$ contains $\sqrt{ \frac{%
			\bar{g}^{2}_{L,0}}{n} + ||E[g_{J}(Z,\alpha_{L,0})]||^{2}_{e} }$ in condition
	2, there is the $l_{n}\gamma_{K}^{-1}$ term which is large, at least for
	sufficiently large $K$. $\triangle$
\end{remark}

\begin{lemma}
	\label{lem:HJhat-bound} Suppose Assumption \ref{ass:reg} holds. Then, for
	any $(L=(J,K),n)$ such that
	\begin{align}  \label{eqn:cond-HJhat-bound}
	(1 + \overline{\theta} + \mho_{L,n})^{2} \sqrt{\frac{ 1 + b_{4,J}^{4} } {n}} =
	o(1),
	\end{align}
	it follows that
	\begin{align*}
	\mathbf{P} \left( e_{max}\left( H_{J}(\hat{\alpha}_{L,n},P_{n}) \right) \leq
	C_{L,n} \right) \rightarrow 1
	\end{align*}
	where $C_{L,n} \equiv (1 + \overline{\theta} + \mho_{L,n})^{2} $.
\end{lemma}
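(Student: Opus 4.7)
The plan is to combine the effective-sieve containment from Lemma~\ref{lem:eff-sieve} with a simple Rayleigh-quotient expansion of $e_{max}(H_J(\hat{\alpha}_{L,n},P_n))$ and a matrix LLN for $E_{P_n}[q^J(X)q^J(X)^T]$. First I would invoke Lemma~\ref{lem:eff-sieve} to reduce to the event $\{\hat{\alpha}_{L,n}\in\bar{\mathcal{A}}_{L,n}\}$, which holds wpa1. On this event, for $n$ large enough that $\mho_{L,n}\geq M$, Assumption~\ref{ass:reg}(iii)(b) delivers $\sup_{w\in\mathbb{W}}|\mu(w)\hat{h}_{L,n}'(w)|\leq \mho_{L,n}$, hence $|\rho_{1}(y,w,\hat{\alpha}_{L,n})|\leq \overline{\theta}+\mho_{L,n}$ uniformly, while $|\rho_{2}(y,w,\hat{\alpha}_{L,n})|\leq 1$ is automatic from its indicator form.

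Next I would exploit the block structure of $g_{J}$. Writing a unit vector $v\in\mathbb{R}^{J+1}$ as $v=(v_{0},v_{1})$ with $v_{0}\in\mathbb{R}$ and $v_{1}\in\mathbb{R}^{J}$, the elementary inequality $(a+b)^{2}\leq 2a^{2}+2b^{2}$ together with the pointwise bounds above gives
\begin{align*}
v^{T}H_{J}(\hat{\alpha}_{L,n},P_{n})v
=E_{P_{n}}[(v_{0}\rho_{1}+v_{1}^{T}q^{J}\rho_{2})^{2}]
\leq 2v_{0}^{2}(\overline{\theta}+\mho_{L,n})^{2}
+2v_{1}^{T}E_{P_{n}}[q^{J}(X)q^{J}(X)^{T}]v_{1}.
\end{align*}
Taking the sup over $\|v\|_{e}=1$ reduces the problem to controlling $e_{max}(E_{P_{n}}[q^{J}(X)q^{J}(X)^{T}])$.

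For this last step I would use a Frobenius-norm Markov bound. By Assumption~\ref{ass:reg}(i) we have $E[q^{J}(X)q^{J}(X)^{T}]=I$, so setting $M_{i}\equiv q^{J}(X_{i})q^{J}(X_{i})^{T}-I$ yields i.i.d.\ mean-zero matrices with
\begin{align*}
E\Bigl[\bigl\|n^{-1}\sum\nolimits_{i=1}^{n}M_{i}\bigr\|_{F}^{2}\Bigr]
\leq n^{-1}E[\|q^{J}(X)q^{J}(X)^{T}\|_{F}^{2}]
=n^{-1}E[\|q^{J}(X)\|_{e}^{4}]=b_{4,J}^{4}/n.
\end{align*}
Markov then gives $\|E_{P_{n}}[q^{J}q^{J,T}]-I\|_{e}\leq \|E_{P_{n}}[q^{J}q^{J,T}]-I\|_{F}=O_{\mathbf{P}}(\sqrt{b_{4,J}^{4}/n})$, which under condition~(\ref{eqn:cond-HJhat-bound}) (implying $b_{4,J}^{4}/n=o(1)$) is $o_{\mathbf{P}}(1)$. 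Hence $e_{max}(E_{P_{n}}[q^{J}q^{J,T}])\leq 1+o_{\mathbf{P}}(1)$ wpa1, and plugging back into the Rayleigh bound yields $e_{max}(H_{J}(\hat{\alpha}_{L,n},P_{n}))\precsim (\overline{\theta}+\mho_{L,n})^{2}+1\asymp (1+\overline{\theta}+\mho_{L,n})^{2}=C_{L,n}$ wpa1 (with the universal constant absorbed as elsewhere in the paper).

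The only genuine content is the matrix-concentration step in the third paragraph; everything else is bookkeeping on pointwise $L^{\infty}$ control delivered by the effective sieve. The crude Frobenius/Markov route is enough precisely because condition~(\ref{eqn:cond-HJhat-bound}) already forces $b_{4,J}^{4}/n=o(1)$, so no sharper matrix Bernstein or sub-Gaussian assumptions are needed.
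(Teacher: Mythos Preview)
Your proposal is correct and follows essentially the same strategy as the paper: reduce to $\hat{\alpha}_{L,n}\in\bar{\mathcal{A}}_{L,n}$, use the resulting uniform bounds $|\rho_1|\leq \overline{\theta}+\mho_{L,n}$ and $|\rho_2|\leq 1$, and finish with a Frobenius/Markov concentration bound on the $q^J$-Gram matrix. The only cosmetic difference is that the paper factors $g_J g_J^T = M_J \rho\rho^T M_J^T \leq \|\rho\|_e^2\, M_J M_J^T$ in the PSD order (with $M_J$ the block-diagonal matrix $\mathrm{diag}(1,q^J)$) and then concentrates the \emph{weighted} matrix $R(Y,W)M_J M_J^T$, whereas you split via $(a+b)^2\leq 2a^2+2b^2$ and concentrate the unweighted $q^J q^{J,T}$; this buys the paper the exact constant $C_{L,n}$ while your route carries an innocuous factor of~2, which is irrelevant for the downstream use in Lemma~\ref{lem:norm-g-rate}.
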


\begin{proof}
	\textsc{Step 1.} We show that 	
	\begin{align*}
	n^{-1} \sum_{i=1}^{n} g_{J}(Z_{i},\alpha)g_{J}(Z_{i},\alpha)^{T} \leq  n^{-1}\sum_{i=1}^{n} ||\rho(Y_{i},W_{i},\alpha)||^{2}_{e} M_{J}(X_{i})M_{J}(X_{i})^{T},
	\end{align*}
	where
	\begin{align*}
	M_{J}(x) = & \left[
	\begin{array}{cc}
	1 & 0 \\
	\mathbf{0} & q^{J}(x)
	\end{array}
	\right]
	\end{align*}
	$\mathbf{0}$ is a $J \times 1$ vector of zeros.	
	
	To do this, note that for each $z \in \mathbb{Z}$,
	\begin{align*}
	g_{J}(z,\alpha)g_{J}(z,\alpha)^{T} = & \left[
	\begin{array}{cc}
	(\rho_{1}(y,w,\alpha))^{2} & \rho_{1}(y,w,\alpha)\rho_{2}(y,w,\alpha)q^{J}(x)^{T} \\
	\rho_{1}(y,w,\alpha)\rho_{2}(y,w,\alpha)q^{J}(x) & (\rho_{2}(y,w,\alpha))^{2}q^{J}(x)q^{J}(x)^{T}
	\end{array}
	\right] \\
	\equiv & M_{J}(x) \rho(y,w,\alpha)\rho(y,w,\alpha)^{T} M_{J}(x)^{T}.
	\end{align*}
	
	It follows that $\rho(y,w,\alpha)\rho(y,w,\alpha)^{T} \leq ||\rho(y,w,\alpha)||^{2}_{e} I$.
	Therefore
	\begin{align*}
	n^{-1} \sum_{i=1}^{n} g_{J}(Z_{i},\alpha)g_{J}(Z_{i},\alpha)^{T} \leq & n^{-1}\sum_{i=1}^{n} ||\rho(Y_{i},W_{i},\alpha)||^{2}_{e} M_{J}(X_{i})M_{J}(X_{i})^{T}.
	\end{align*}
	
	\medskip
	
	\textsc{Step 2.} Let $(y,w) \mapsto R(y,w) \equiv \sup_{\alpha \in \bar{\mathcal{A}}_{L,n}} ||\rho(y,w,\alpha)||^{2}_{e}$. We now show that
	\begin{align*}
	 & \left \Vert n^{-1}\sum_{i=1}^{n} R(Y_{i},W_{i}) M_{J}(X_{i})M_{J}(X_{i})^{T} - E\left[  R(Y,W) M_{J}(X)M_{J}(X)^{T}  \right] \right\Vert_{e} \\
	= &  O_{\mathbf{P}} \left( \sqrt{\frac{ E\left[ E[(R(Y,W))^{2}|X] (1+||q^{J}(x)||^{2}_{e})^{2} \right] } {n}  }  \right).
	\end{align*}
	
	By the Markov inequality it is enough to bound
	\begin{align*}
	n^{-1/2} \sqrt{E \left[   \left \Vert  R(Y,W) M_{J}(X)M_{J}(X)^{T} \right\Vert^{2}_{e} \right] } \leq n^{-1/2} \sqrt{ E\left[ (R(Y,W))^{2} (trace\{ M_{J}(X)M_{J}(X)^{T}  \})^{2}  \right]   }.
	\end{align*}
	Since
	\begin{align*}
	M_{J}(x)M_{J}(x)^{T} = & \left[
	\begin{array}{cc}
	1 & \mathbf{0}^{T} \\
	\mathbf{0} & q^{J}(x)q^{J}(x)^{T}
	\end{array}
	\right],
	\end{align*}
	the previous display implies that
	\begin{align*}
	n^{-1/2} \sqrt{E \left[   \left \Vert  R(Y,W) M_{J}(X)M_{J}(X)^{T} \right\Vert^{2}_{e} \right] } \leq n^{-1/2} \sqrt{ E\left[ E[(R(Y,W))^{2}|X] (1+||q^{J}(x)||^{2}_{e})^{2} \right]   }.
	\end{align*}	
	
	\medskip
	
	\textsc{Step 3.} By Steps 1-2 and Assumption \ref{ass:reg}
	\begin{align*}
	n^{-1} \sum_{i=1}^{n} g_{J}(Z_{i},\alpha)g_{J}(Z_{i},\alpha)^{T} \leq & \sup_{x} E[R(Y,W) \mid X = x] \times I \\
	& +  O_{\mathbf{P}} \left( \sqrt{\sup_{x}  E[(R(Y,W))^{2}|X=x]  \frac{ 1 + E\left[||q^{J}(X)||^{4}_{e} \right] } {n}  }  \right).
	\end{align*}
	
	Also,
	\begin{align*}
	R(y,w)  \leq  \sup_{\alpha \in \bar{\mathcal{A}}_{L,n}} |\theta - \mu(w)h'(w)|^{2} + 1  	\leq  1 + \overline{\theta}^{2} + (l_{n} \gamma_{K}^{-1} \Gamma_{L,n})^{2}
	\end{align*}
	where the last inequality follows from Assumption \ref{ass:reg} and definition of $\bar{\mathcal{A}}_{L,n}$. So, to the extent that
	\begin{align*}
	(1 + \overline{\theta} + l_{n} \gamma_{K}^{-1} \Gamma_{L,n})^{4} \frac{ 1 + b_{4,J}^{4} } {n} = o(1),
	\end{align*}
	it follows that\begin{align*}
	\sup_{\alpha \in \bar{\mathcal{A}}_{L,n}} e_{max} \left( n^{-1} \sum_{i=1}^{n} g_{J}(Z_{i},\alpha)g_{J}(Z_{i},\alpha)^{T}   \right) \leq (1 + \overline{\theta} + l_{n} \gamma_{K}^{-1} \Gamma_{L,n})^{2}
	\end{align*}
	wpa1. Since $\hat{\alpha}_{L,n} \in \bar{\mathcal{A}}_{L,n}$ wpa1, this implies the result.
\end{proof}

\subsection{Proofs of the Lemmas stated in Appendix \ref{app:conv-rate}}

We now present the proofs of the Lemmas stated in Appendix \ref{app:conv-rate}.

\begin{proof}[\textbf{Proof of Lemma \ref{lem:SJ-bound}}]
	By the mean value theorem,
	\begin{align*}
	S_{J}(\alpha,\lambda,P)  = & s'(0) \lambda^{T} E_{P}[g_{J}(Z,\alpha)] + \frac{1}{2} \lambda^{T} \left\{ \int_{0}^{1} E_{P} \left[ s'' \left( t  \lambda^{T} g_{J}(Z,\alpha)  \right) g_{J}(Z,\alpha) g_{J}(Z,\alpha)^{T}  \right] dt  \right\} \lambda \\
	= & - \lambda^{T} E_{P}[g_{J}(Z,\alpha)] + \frac{1}{2} \lambda^{T} \left\{ \int_{0}^{1} E_{P} \left[ s'' \left( t  \lambda^{T} g_{J}(Z,\alpha)  \right) g_{J}(Z,\alpha) g_{J}(Z,\alpha)^{T}  \right] dt  \right\} \lambda.
	\end{align*}
	Note that
	\begin{align*}
	\int_{0}^{1} E_{P} \left[ s'' \left( t  \lambda^{T} g_{J}(Z,\alpha)  \right) g_{J}(Z,\alpha) g_{J}(Z,\alpha)^{T}  \right] dt \leq \int_{0}^{1} \sup_{z} s'' \left( t  \lambda^{T} g_{J}(z,\alpha)  \right) dt E_{P} \left[ g_{J}(Z,\alpha) g_{J}(Z,\alpha)^{T}  \right].
	\end{align*}
	
	Under our assumptions, $\sup_{z} s'' \left( t  \lambda^{T} g_{J}(z,\alpha)  \right) \leq - \sqrt{C}$ for all $t \in [0,1]$. This and the fact that\begin{align*}
	e_{\min} \left(  E_{P} \left[ g_{J}(Z,\alpha) g_{J}(Z,\alpha)^{T}  \right] \right) \geq \sqrt{C}
	\end{align*}imply that
	\begin{align*}
	S_{J}(\alpha,\lambda,P) \leq - \lambda^{T} E_{P}[g_{J}(Z,\alpha)] - \frac{C}{2} ||\lambda||^{2}_{e} \leq ||\lambda||_{e} ||E_{P}[g_{J}(Z,\alpha)]||_{e} - \frac{C}{2} ||\lambda||^{2}_{e}.
	\end{align*}
	Hence, since $S_{J}(\alpha,0,P) = 0$, by evaluating the previous expression in $\lambda_{1} \in \arg\max_{\lambda \in B(\delta)} S_{J}(\alpha,\lambda,P) $ (it exists by continuity of $S_{J}(\alpha,.,P)$ and compactness of the set), we obtain
	\begin{align*}
	||\lambda_{1}||_{e} \leq 2 C^{-1} ||E_{P}[g_{J}(Z,\alpha)]||_{e}.
	\end{align*}
	
	Since $2 C^{-1} ||E_{P}[g_{J}(Z,\alpha)]||_{e}  < \delta$ by assumption, $\lambda_{1}$ is in the interior of $B(\delta)$. Since, by Lemma \ref{lem:Lambda-charac} --- all the hypothesis of the lemma are satisfied --- $\lambda_{1} \in \Lambda_{J}(\alpha,P)$ with probability higher than $1-\epsilon$, it holds that $\lambda_{1} \in \arg\max_{\lambda \in \Lambda_{J}(\alpha,P)} S_{J}(\alpha,\lambda,P)$ with probability higher than $1-\epsilon$.
	
	Finally, this implies that
	\begin{align*}
	\sup_{\lambda \in \Lambda_{J}(\alpha,P)} S_{J}(\alpha,\lambda,P) = S_{J}(\alpha,\lambda_{1},P) \leq 2 C^{-1} ||E_{P}[g_{J}(Z,\alpha)]||^{2}_{e},
	\end{align*}
	with probability higher than $1-\epsilon$.
\end{proof}

\begin{proof}[\textbf{Proof of Lemma \ref{lem:avg-gJ}}]
	It suffices to show that
	\begin{align*}
		||n^{-1}\sum_{i=1}^{n} g_{J}(Z_{i},\alpha) - E[g_{J}(Z,\alpha)]||^{2}_{e} = O_{\mathbf{P}}\left( \frac{\overline{\theta} + ||\mu h'||^{2}_{L^{2}(\mathbf{P})} + E[||q^{J}(X)||^{2}_{e}] }{n}  \right).
	\end{align*}
	By the Markov inequality, we can study
	\begin{align*}
	E[||n^{-1}\sum_{i=1}^{n} g_{J}(Z_{i},\alpha) - E[g_{J}(Z,\alpha)]||^{2}_{e}] \leq  n^{-1} E[||g_{J}(Z,\alpha)||^{2}_{e}],
	\end{align*}
	and it follows by Lemma \ref{lem:moment-UBound}, $E[||g_{J}(Z,\alpha)||^{2}_{e}] \leq \overline{\theta} + E[|\mu(W) h'(W)|^{2}] + E[||q^{J}(X)||^{2}_{e}]$.
	\end{proof}

\begin{proof}[\textbf{Proof of Lemma \ref{lem:H-bound}}]
	\textsc{Part 1.} By analogous calculations to those  in the proof of Lemma \ref{lem:HJhat-bound} and Lemma A.6 in DIN, $||H_{J}(\alpha,P_{n}) - H_{J}(\alpha,\mathbf{P})||_{e} = O_{\mathbf{P}}(\sqrt{\sup_{x} E[||\rho(Y,W,\alpha)||^{4}_{e}|X=x] b^{4}_{4,J}/n})$. Note that,
	\begin{align*}
		E[||\rho(Y,W,\alpha)||^{4}_{e}|X=x] \leq  E\left[ (1 + |\theta - \mu(W)h'(W)|^{2})^{2}   | X=x \right] \precsim &  \theta^{4} +  || \mu h'||^{4}_{L^{4}(\mathbf{P}_{W})}\\
		\precsim &  \theta^{4} +  || h'||^{4}_{L^{\infty}(\mathbb{W},\mu)}
	\end{align*}
	where the last inequality follows from Assumption \ref{ass:pdf0} and some trivial algebra.
	
	\medskip

	\textsc{Part 2.} From Part 1 and the fact that $A \mapsto e_{min}(A)$ is Lipschitz, is suffices to show the result for $P=\mathbf{P}$. Note that
	\begin{align*}
		E_{\mathbf{P}}[g_{J}(Z,\alpha)g_{J}(Z,\alpha)^{T}] = E[M_{J}(X) E[\rho(Y,W,\alpha)\rho(Y,W,\alpha)^{T}|X]M_{J}(X)^{T}]
	\end{align*}
	where $M_{J}$ is defined in the proof of Lemma \ref{lem:HJhat-bound}.

	We now argue that $C^{-1} I \leq E[\rho(Y,W,\alpha)\rho(Y,W,\alpha)^{T}|X] \leq C I$ for any $\alpha$ in some neighborhood of $\alpha_{0}$. First note that, under Assumption \ref{ass:pdf0}, $E[(\rho_{1}(Y,W,\alpha))^{2} \mid X] = \int (\theta_{0} - \mu(w)h'(w))^{2} \mathbf{p}_{W|X}(w|x)dw \geq C Var_{\mathbf{P}}(\mu(W) h'(W)) > 0 $ some $C>0$. Hence the trace of $E[\rho(Y,W,\alpha)\rho(Y,W,\alpha)^{T}|X]$ is positive uniformly in $x$. Since $\rho_{1}(.,\alpha)$ and $\rho_{2}(.,\alpha)$ are not linearly dependent, by the Cauchy-Schwarz (strict) inequality the determinant is also positive; thus $C^{-1} I \leq E[\rho(Y,W,\alpha)\rho(Y,W,\alpha)^{T}|X]$ uniformly on $X$. The reverse inequality is obtained in a similar fashion.

	Hence under Assumption \ref{ass:reg} the desired result follows.   	
\end{proof}

\begin{proof}[\textbf{Proof of Lemma \ref{lem:norm-g-rate}}]
	By the mean value theorem, for any $\alpha \in \mathcal{A}$, $P$ and $\lambda$ in a neighborhood of 0,
	\begin{align*}
	S_{J}(\alpha,\lambda,P)  = & s'(0) \lambda^{T} E_{P}[g_{J}(Z,\alpha)] + \frac{1}{2} \lambda^{T} \left\{ \int_{0}^{1} E_{P} \left[ s'' \left( t  \lambda^{T} g_{J}(Z,\alpha)  \right) g_{J}(Z,\alpha) g_{J}(Z,\alpha)^{T}  \right] dt  \right\} \lambda \\
	= & - \lambda^{T} E_{P}[g_{J}(Z,\alpha)] + \frac{1}{2} \lambda^{T} \left\{ \int_{0}^{1} E_{P} \left[ s'' \left( t  \lambda^{T} g_{J}(Z,\alpha)  \right) g_{J}(Z,\alpha) g_{J}(Z,\alpha)^{T}  \right] dt  \right\} \lambda.
	\end{align*}
	
	By our assumptions over $s$ (in particular, that it has H\"{o}lder continuous second derivative), it follows 	
	\begin{align*}
	|s''(\lambda^{T}g_{J}(z,\alpha)) - s''(0)| \leq C \times |\lambda^{T}g_{J}(z,\alpha)|
	\end{align*}
	for some $C>0$.	Hence
	\begin{align*}
	& \left| \lambda^{T} \left\{ \int_{0}^{1} E_{P} \left[ s'' \left( t  \lambda^{T} g_{J}(Z,\alpha)  \right) g_{J}(Z,\alpha) g_{J}(Z,\alpha)^{T}  \right] dt  \right\} \lambda - \lambda^{T} \left\{ E_{P} \left[ s'' \left(0\right) g_{J}(Z,\alpha) g_{J}(Z,\alpha)^{T}  \right]  \right\} \lambda \right| \\
	\leq &  \left|  \int_{0}^{1} E_{P} \left[ (s'' \left( t  \lambda^{T} g_{J}(Z,\alpha)  - s''(0)  \right) \lambda^{T}g_{J}(Z,\alpha) g_{J}(Z,\alpha)^{T} \lambda  \right] dt \right|\\
	\leq & C \times E_{P} \left[ |\lambda^{T}g_{J}(Z,\alpha)| \lambda^{T}g_{J}(Z,\alpha) g_{J}(Z,\alpha)^{T} \lambda  \right] \\
	\leq & C ||\lambda||_{e}^{3}  E_{P} \left[ ||g_{J}(Z,\alpha)||^{3}_{e} \right].
	\end{align*}
	
	Therefore, for $\alpha = \hat{\alpha}_{L,n}$ and $P=P_{n}$,
	\begin{align*}
	S_{J}(\alpha,\lambda,P)  \geq &  - \lambda^{T} E_{P}[g_{J}(Z,\alpha)] - \frac{1}{2} \lambda^{T} H_{J}(\alpha,P) \lambda - C ||\lambda||_{e}^{3} \times E_{P} \left[ ||g_{J}(Z,\alpha)||^{3}_{e} \right] \\
	\geq & - \lambda^{T} E_{P}[g_{J}(Z,\alpha)] - \frac{C \times C_{L,n}}{2} ||\lambda||^{2}_{e} - C ||\lambda||_{e}^{3} \times E_{P} \left[ ||g_{J}(Z,\alpha)||^{3}_{e} \right]
	\end{align*}	
	where the second inequality follows from Lemma \ref{lem:HJhat-bound}.
	
	Let $\bar{\lambda} \equiv - \delta_{n} E_{P_{n}}[g_{J}(Z,\hat{\alpha}_{L,n})]/||E_{P_{n}}[g_{J}(Z,\hat{\alpha}_{L,n})]||_{e}$ with $\delta_{n}$ satisfying the assumptions in the statement of the Lemma. Since $||\bar{\lambda}||_{e} = \delta_{n}$ and $\hat{\alpha}_{L,n} \in \bar{\mathcal{A}}_{L,n}$ wpa1 (Lemma \ref{lem:eff-sieve}), by Lemma \ref{lem:Lambda-charac}, this choice of $\delta_{n}$ ensures that  $\bar{\lambda} \in \Lambda_{J}(\hat{\alpha}_{L,n},P_{n})$ wpa1.
	
	The previous expression for $S_{J}(\alpha,\lambda,P)$ yields
	\begin{align*}
	S_{J}(\hat{\alpha}_{L,n},\bar{\lambda},P_{n})  \geq \delta_{n} ||E_{P_{n}}[g_{J}(Z,\hat{\alpha}_{L,n})]||_{e} - \frac{C \times C_{L,n}}{2} \delta^{2}_{n} - C \delta^{3}_{n} \times E_{P_{n}} \left[ ||g_{J}(Z,\hat{\alpha}_{L,n})||^{3}_{e} \right].
	\end{align*}
By definition of $\hat{\alpha}_{L,n}$, $$S_{J}(\hat{\alpha}_{L,n},\bar{\lambda},P_{n}) + \gamma_{K} Pen(\hat{\alpha}_{L,n}) \leq \sup_{\lambda \in \Lambda_{J}(\alpha_{L,0},P_{n})} S_{J}(\alpha_{L,0},\lambda,P_{n}) + \gamma_{K} Pen(\alpha_{L,0})~~ wpa1.$$
	
	By Lemma \ref{lem:sJ-bound-alpha0L}, all the conditions in Lemma \ref{lem:SJ-bound} hold for $(\alpha_{L,0},P_{n})$ wpa1. In this case, the previous inequality implies that $S_{J}(\hat{\alpha}_{L,n},\bar{\lambda},P_{n}) + \gamma_{K} Pen(\hat{\alpha}_{L,n}) \precsim ||E_{P_{n}}[g_{J}(Z,\alpha_{L,0})]||^{2}_{e} + \gamma_{K} Pen(\alpha_{L,0})$ wpa1. Therefore
	\begin{align*}
& \delta_{n} ||E_{P_{n}}[g_{J}(Z,\hat{\alpha}_{L,n})]||_{e} - \frac{C \times C_{L,n}}{2} \delta^{2}_{n} - C \delta^{3}_{n} \times  E_{P_{n}} \left[ ||g_{J}(Z,\hat{\alpha}_{L,n})||^{3}_{e} \right]\\
& \precsim ||E_{P_{n}}[g_{J}(Z,\alpha_{L,0})]||^{2}_{e} + \gamma_{K} Pen(\alpha_{L,0}),~~wpa1.
	\end{align*}
Since $\hat{\alpha}_{L,n} \in \bar{\mathcal{A}}_{L,n}$ wpa1 (Lemma \ref{lem:eff-sieve}), it follows by the proof of Lemma \ref{lem:Lambda-charac} that $||g_{J}(Z,\hat{\alpha}_{L,n})||^{3}_{e} \leq (\overline{\theta} + l_{n} \gamma_{K}^{-1} \Gamma_{L,n} + ||q^{J}(X)||_{e} )^{3}$. We now show that $\delta^{3}_{n} \left\{ (\overline{\theta} + l_{n} \gamma_{K}^{-1} \Gamma_{L,n})^{3} + E_{P_{n}}[||q^{J}(X)||_{e}^{3}]) \right\} = o_{\mathbf{P}}(\delta^{2}_{n}C_{L,n})$, so that the term $ C \delta^{3}_{n} \times  E_{P_{n}} \left[ ||g_{J}(Z,\hat{\alpha}_{L,n})||^{3}_{e} \right] $ can be ignored.
	
	To show this, note by definition of $C_{L,n}$, it suffices to show $\delta^{3}_{n} \left\{ (C_{L,n})^{2} + E_{P_{n}}[||q^{J}(X)||_{e}^{3}]) \right\} = o_{\mathbf{P}}(\delta^{2}_{n}C_{L,n})$. 	By the conditions in the lemma $\delta_{n}C_{L,n} = o(1)$ and taking $C_{L,n} \geq 1$ (if this is not the case, the solution is trivial), and thus $\delta^{3}_{n}  (C_{L,n})^{2} = o(\delta^{2}_{n}C_{L,n})$. By the Markov inequality, it remains to show that $\delta_{n} b_{3,J}^{3} = o_{\mathbf{P}}(C_{L,n})$. As we take $C_{L,n} \geq 1$ and $\delta_{n} b_{3,J}^{3} = o(1)$, the desired equality holds. Hence
	\begin{align*}
	||E_{P_{n}}[g_{J}(Z,\hat{\alpha}_{L,n})]||_{e}  \precsim C_{L,n} \delta_{n} +  \delta^{-1}_{n} \left\{ ||E_{P_{n}}[g_{J}(Z,\alpha_{L,0})]||^{2}_{e} + \gamma_{K} Pen(\alpha_{L,0}) \right\},~~wpa1.
	\end{align*}
\end{proof}	

\begin{proof}[\textbf{Proof of Lemma \ref{lem:moment-UBound}}]
	By definition of $g_{J}$ and simple algebra, it follows that
	\begin{align*}
	||E_{P}[g_{J}(Z,\alpha)]||_{e}  \leq \overline{\theta} + \sup_{w}  |h^{\prime}(w) \mu (w)|  +  E_{P}[||q^{J}(X)||_{e}].
	\end{align*}
	By the fact that $\alpha \in \bar{\mathcal{A}}_{L,n}$, $Pen(\alpha) \leq \mho_{L,n} = l_{n} \gamma^{-1}_{K} \Gamma_{L,n} $. So, by Assumption \ref{ass:reg}(iii) it follows that $\sup_{w}  |h^{\prime}(w) \mu (w)|  \leq \mho_{L,n} = l_{n} \gamma^{-1}_{K} \Gamma_{L,n} $, uniformly on $h$.
\end{proof}

\section{Supplemental Material for Appendix \ref{app:ADT}}
\label{supp:ADT}

\begin{proof}[\textbf{Proof of Lemma \ref{lem:weak-norm}}]
	Recall that $|| \cdot ||^{2}_{w} 	\equiv (G(\alpha_{L,0})[\cdot])^{T} H_{L}^{-1} (G(\alpha_{L,0})[\cdot])$. Since $\alpha \in lin\{ \mathcal{A}_{K} \}$, we can cast $\alpha = (\theta,\varphi^{K}(\cdot)^{T}\pi) $ some $\pi \in \mathbb{R}^{K}$. Hence
	
	\begin{align*}
	G(\alpha_{L,0})[\alpha] = & \left[
	\begin{array}{c}
	\theta - E[\ell(W)\varphi^{K}(W)^{T}\pi] \\
	E[\mathbf{p}_{Y|WX}(h_{L,0}(W) | W,X) q^{J}(X)\varphi^{K}(W)^{T}\pi  ]
	\end{array}
	\right]\\
	= & M_{L} \times (\theta,\pi)^{T}.
	\end{align*}
	Hence
	\begin{align*}
	|| \alpha ||^{2}_{w} = (\theta,\pi) M_{L}^{T} H_{L}^{-1} M_{L}  (\theta,\pi)^{T}.
	\end{align*}
	Thus, it is sufficient to show that $M_{L}^{T} H_{L}^{-1} M_{L}$ is positive definite. By Lemma \ref{lem:H-bound} --- since $\{ \theta^{2}_{L,0} + ||\mu h'_{L,0}||^{2}_{L^{\infty}(\mathbb{W})} \} \sqrt{b^{4}_{4,J}/n} = o(1)$ --- it follows that $e_{min}(H_{L}^{-1}) \geq c^{2}> 0$, Hence,
	\begin{align*}
	|| \alpha ||^{2}_{w} \geq c^{2} \times e_{min}(M_{L}^{T} M_{L}) ||\theta,\pi||^{2}_{e}.
	\end{align*}
	Note that the eigenvalues of $M_{L}^{T} M_{L}$ are those of
	\begin{align*}
	(E[\mathbf{p}_{Y|WX}(h_{L,0}(W) | W,X) q^{J}(X)\varphi^{K}(W)^{T}])^{T}(E[\mathbf{p}_{Y|WX}(h_{L,0}(W) | W,X) q^{J}(X)\varphi^{K}(W)^{T}])
	\end{align*}
	and $1$. By Assumption \ref{ass:rates-LQA}(iv) $E[\mathbf{p}_{Y|WX}(h_{L,0}(W) | W,X) q^{J}(X)\varphi^{K}(W)^{T}]$ has full rank and thus the eigenvalues are positive.
\end{proof}

\begin{proof}[\textbf{Proof of Lemma \ref{lem:HJhat-LQA}}]
	Note that
	\begin{align*}
	\left \Vert H_{J}(\alpha,P_{n})  - H_{J} \right \Vert_{e} \leq  & 	\left \Vert H_{J}(\alpha,P_{n}) - H_{J}(\alpha_{L,0},P_{n}) - \{ H_{J}(\alpha,\mathbf{P}) - H_{J}(\alpha_{L,0},\mathbf{P}) \} \right \Vert_{e} \\
	& + \left \Vert H_{J}(\alpha_{L,0},P_{n}) - H_{J}(\alpha_{L,0},\mathbf{P})  + H_{J}(\alpha,\mathbf{P}) - H_{J} \right \Vert_{e} \\
	\leq & \left \Vert H_{J}(\alpha,P_{n}) - H_{J}(\alpha_{L,0},P_{n}) - \{ H_{J}(\alpha,\mathbf{P}) - H_{J}(\alpha_{L,0},\mathbf{P}) \} \right \Vert_{e} \\
	& + \left \Vert H_{J}(\alpha_{L,0},P_{n}) - H_{J}(\alpha_{L,0},\mathbf{P}) \right \Vert_{e} \\
	& + \left \Vert H_{J}(\alpha,\mathbf{P}) - H_{J} \right \Vert_{e} \\
	\equiv & Term_{1,n} + Term_{2,n} + Term_{3,n}.
	\end{align*}
	
	The term $Term_{1,n}$ is controlled by Assumption \ref{ass:HJ-sec}. By Lemma \ref{lem:H-bound} applied to $\alpha_{L,0}$, $Term_{2,n} = O_{\mathbf{P}} \left( \{ \theta_{L,0} + || h'_{L,0}||_{L^{\infty}(\mathbb{W},\mu)}   \}^{2} \sqrt{b_{4,J}/n} \right)$.
	
	Note that\begin{align*}
	H_{J}(\alpha,\mathbf{P}) - H_{J} = & E_{\mathbf{P}} \left[ M_{J}(X) \{ \rho(Y,W,\alpha)\rho(Y,W,\alpha)^{T} - \rho(Y,W,\alpha_{L,0})\rho(Y,W,\alpha_{L,0})^{T}  \} M_{J}(X)^{T}   \right] \\
	= & E_{\mathbf{P}} \left[ M_{J}(X) E[\{ \rho(Y,W,\alpha)\rho(Y,W,\alpha)^{T} - \rho(Y,W,\alpha_{L,0})\rho(Y,W,\alpha_{L,0})^{T}  \}|X] M_{J}(X)^{T}   \right] \\
	\leq & \sup_{x} |trace\{ E_{\mathbf{P}}[\rho(Y,W,\alpha)\rho(Y,W,\alpha)^{T}  -  \rho(Y,W,\alpha_{L,0})\rho(Y,W,\alpha_{L,0})^{T}   |X=x]  \}| \\
	& \times E_{\mathbf{P}} \left[ M_{J}(X) M_{J}(X)^{T}   \right].
	\end{align*}
	Moreover,
	\begin{align*}
	& |trace\{ E_{\mathbf{P}}[\rho(Y,W,\alpha)\rho(Y,W,\alpha)^{T}  -  \rho(Y,W,\alpha_{L,0})\rho(Y,W,\alpha_{L,0})^{T}   |X=x]  \}| \\
	= & | E_{\mathbf{P}}[\rho_{1}(Y,W,\alpha)^{2}  -  \rho_{1}(Y,W,\alpha_{L,0})^{2}   |X=x]  |  \\
	& + | E_{\mathbf{P}}[\rho_{2}(Y,W,\alpha)^{2}  -  \rho_{2}(Y,W,\alpha_{L,0})^{2}   |X=x]  |\\
	= &  | E_{\mathbf{P}}[(\theta - \mu(W)h'(W))^{2}  -  (\theta_{L,0} - \mu(W)h_{L,0}'(W))^{2}   |X=x]  | \\
	& + | (1-2\tau)| | E_{\mathbf{P}}[ ( 1\{Y \leq h(W)\} - 1\{Y \leq h_{L,0}(W)\} ) |X=x]  | \\
	= & | E_{\mathbf{P}}[\theta^{2} - \theta_{L,0}^{2} - 2\theta \mu(W)h'(W) + (\mu(W)h'(W))^{2} - (\mu(W)h'_{L,0}(W))^{2}  + 2 \theta_{L,0}\mu(W)h_{L,0}'(W)   |X=x]  | \\
	& + | (1-2\tau)| | E_{\mathbf{P}}[ ( F_{Y|W,X}(h(W)|W,X) -  F_{Y|W,X}(h_{L,0}(W)|W,X) ) |X=x]  |.
	\end{align*}
	
	By assumption \ref{ass:pdf0},
	\begin{align*}
	| E_{\mathbf{P}}[ ( F_{Y|W,X}(h(W)|W,X) -  F_{Y|W,X}(h_{L,0}(W)|W,X) ) |X=x]  | \precsim ||h-h_{L,0}||_{L^{2}(Leb)}.
	\end{align*}
	
	Since $E[M_{J}(X)M_{J}(X)^{T}] = I$ by assumption \ref{ass:reg}, it follows that \begin{align*}
	||H_{J}(\alpha,\mathbf{P}) - H_{J}||_{e} = O\left( \mho_{L,n} \times ||\alpha - \alpha_{L,0}||   \right).
	\end{align*}	
\end{proof}

\begin{proof}[\textbf{Proof of Lemma \ref{lem:sec-gJ}}]
	Observe that
	\begin{align*}
	&\left \Vert n^{-1}\sum_{i=1}^{n} g_{J}(Z_{i},\alpha) - g_{J}(Z_{i},\alpha_{L,0})  - E[g_{J}(Z,\alpha) - g_{J}(Z,\alpha_{L,0})] \right \Vert_{e} \\
	\leq & \left | n^{-1}\sum_{i=1}^{n} \mu(W_{i})\{h'(W_{i}) - h_{L,0}'(W_{i})\}  - E[\mu(W)\{h'(W) - h_{L,0}'(W)\}] \right | \\
	& + \left \Vert n^{-1}\sum_{i=1}^{n} (1\{ Y_{i} \leq h(W_{i})  \} - 1\{ Y_{i} \leq h_{L,0}(W_{i})  \})q^{J}(X_{i})  - E[(1\{ Y \leq h(W)  \} - 1\{ Y \leq h_{L,0}(W)  \})q^{J}(X) ] \right \Vert_{e} \\
	\leq & \sqrt{\frac{J}{n} } \left( \sup_{(\theta,h) \in \mathcal{N}_{L,n}} \mathbb{G}_{n}[\mu \cdot (h' - h_{L,0}')] +  \max_{1 \leq j \leq J} \sup_{g \in \bar{\mathcal{G}}_{L,n}} \mathbb{G}_{n}[g \cdot q_{j}]   \right).
	\end{align*}
\end{proof}

\begin{proof}[\textbf{Proof of Lemma \ref{lem:charac-N}}]
	By the triangle inequality and definition of $u^{\ast}_{L,n}$, it suffices to check that $||\alpha - \alpha_{L,0}||_{w} + t \leq \eta_{w,L,n}$, which holds since $\eta_{w,L,n} \succsim l_{n}n^{-1/2}$. Regarding the latter term, by the triangle inequality it suffices to check that $||\alpha - \alpha_{L,0}|| + t ||u^{\ast}_{L,n}|| \leq \eta_{L,n}$. By Lemma \ref{lem:weak-norm} and Assumption \ref{ass:reg}, $||u^{\ast}_{L,n}|| \precsim (e_{min}(M_{L}^{T}M_{L}))^{-1}$. By Assumption \ref{ass:dev-N}(iii), $l_{n}n^{-1/2} (e_{min}(M_{L}^{T}M_{L}))^{-1} = o(\eta_{L,n})$.
	
	It follows that by Assumption \ref{ass:dev-N}, $Pen(\alpha +t u^{\ast}_{L,n}) \leq Pen(\alpha) + n^{-1}/\gamma_{K}$. Since $\alpha \in \bar{\mathcal{A}}_{L,n}$ and $\Gamma_{L,n} \succsim n^{-1}$, it follows that $\alpha + t u^{\ast}_{L,n} \in \bar{\mathcal{A}}_{L,n}$.
\end{proof}

\begin{proof}[\textbf{Proof of Lemma \ref{lem:SJ-bound-PSGEL}}]
	Throughout the proof, let $\alpha \in \{ \hat{\alpha}_{L,n},  \hat{\alpha}^{\nu}_{L,n}   \}$.
	
	By the calculations in the proof of Lemma \ref{lem:Lambda-charac} and the fact that $\alpha \in \bar{\mathcal{A}}_{L,n}$ wpa1 (for $\alpha = \hat{\alpha}^{\nu}_{L,n}$, this follows from Assumption \ref{ass:dev-N}), $||\lambda||_{e} \times ||E_{P_{n}}[g_{J}(Z,\alpha)]||_{e} \precsim  ( ||\lambda||_{e} \overline{\theta} + ||\lambda||_{e} \mho_{L,n} + b^{\varrho}_{\varrho,J} n ||\lambda||_{e}^{\varrho} )$. Since $\lambda \in B(\delta_{n})$, under Assumption \ref{ass:rates-LQA} (with $\varrho = 3$), $|\lambda^{T}g_{J}(z,\alpha)| = o_{\mathbf{P}}(1)$, since $s''$ is continuous, this implies condition 1.
	
	Regarding Condition 3, note that
	\begin{align*}
	||E_{P_{n}}[g_{J}(Z,\alpha)]||_{e} \leq & ||E_{P_{n}}[g_{J}(Z,\alpha)-g_{J}(Z,\alpha_{L,0})] - E_{\mathbf{P}}[g_{J}(Z,\alpha)-g_{J}(Z,\alpha_{L,0})]||_{e} \\
	& + ||E_{P_{n}}[g_{J}(Z,\alpha_{L,0})] ||_{e} + ||E_{\mathbf{P}}[g_{J}(Z,\alpha)-g_{J}(Z,\alpha_{L,0})]||_{e}.
	\end{align*}
	By Lemma \ref{lem:sec-gJ} and Assumption \ref{ass:Donsker-LQA}, the first term in the RHS is of order $\sqrt{J/n} \Delta_{2,J,n}$. By the proof of Lemma \ref{lem:LQA}, the third term is of order $\eta^{2}_{L,n} b_{2,J} + \eta_{w,L,n}$. Finally, by Lemma \ref{lem:avg-gJ}, the second term if of order $\sqrt{ \bar{g}_{L,0}^{2}/n + ||E_{\mathbf{P}}[g_{J}(Z,\alpha_{L,0})]||^{2}_{e}  }$. These results and assumption \ref{ass:rates-LQA} imply the condition.
	
	By the triangle inequality,
	\begin{align*}
	||E_{P_{n}}[g_{J}(Z,\alpha)]||_{e} \leq & ||E_{P_{n}}[g_{J}(Z,\alpha)] - E_{P_{n}}[g_{J}(Z,\alpha_{L,0})] - \{ E_{\mathbf{P}}[g_{J}(Z,\alpha)] - E_{\mathbf{P}}[g_{J}(Z,\alpha_{L,0})] \}||_{e} \\
	& + ||E_{P_{n}}[g_{J}(Z,\alpha_{L,0})]||_{e} + ||E_{\mathbf{P}}[g_{J}(Z,\alpha) - g_{J}(Z,\alpha_{L,0})] ||_{e}.
	\end{align*}
	The first term in the RHS is of order $O_{\mathbf{P}}(\Delta_{2,L,n})$ by Assumption \ref{ass:Donsker-LQA}. By Lemma \ref{lem:avg-gJ}, the second term is of order $O_{\mathbf{P}}( \sqrt{ \frac{\overline{\theta} + ||\mu h'_{L,0} ||^{2}_{L^{2}(\mathbf{P})} + b^{2}_{2,J}}{n}      +   ||E_{\mathbf{P}}[g_{J}(Z,\alpha_{L,0})]||^{2}_{e}      }   ) $. Finally, by the proof of Lemma \ref{lem:LQA} the third term is of order $O_{\mathbf{P}} \left( ||\alpha - \alpha_{L,0}||_{w} + ||\alpha - \alpha_{L,0}||^{2} b_{2,J}  \right)$.
	
	Thus, since $\alpha \in \mathcal{N}_{n}$ wpa1 (by Lemma \ref{lem:charac-N}),
	\begin{align*}
	||E_{P_{n}}[g_{J}(Z,\alpha)]||_{e} = &  O_{\mathbf{P}} \left( \Delta_{2,L,n} +\sqrt{ \frac{\overline{\theta} + ||\mu h'_{L,0} ||^{2}_{L^{2}(\mathbf{P})} + b^{2}_{2,J}}{n}      +   ||E_{\mathbf{P}}[g_{J}(Z,\alpha_{L,0})]||^{2}_{e}      }   + \eta_{w,L,n} + \eta_{L,n}^{2} b_{2,J} \right) \\
	= & O_{\mathbf{P}} \left( \Delta_{2,L,n} +\sqrt{ \frac{\bar{g}_{L,0}^{2} }{n}      +   ||E_{\mathbf{P}}[g_{J}(Z,\alpha_{L,0})]||^{2}_{e}      }   + \eta_{w,L,n} + \eta_{L,n}^{2} b_{2,J} \right)
	\end{align*}
	By Assumption \ref{ass:rates-LQA}(ii), this result implies Condition 4. Condition 5 holds by assumption.	
\end{proof}

\begin{proof}[\textbf{Proof of Lemma \ref{lem:lambda-maxLQA}}]
	By Lemma \ref{lem:H-bound}(i) (applied to $\alpha=\alpha_{L,0}$) it suffices to bound $||\varDelta(\alpha)||_{e}$. By Lemma \ref{lem:avg-gJ}, $||E_{P_{n}}[g_{J}(Z,\alpha_{L,0})]||_{e} = O_{\mathbf{P}}\left( \sqrt{ \frac{\bar{g}_{L,0}^{2}}{n} + ||E[g_{J}(Z,\alpha_{L,0})]||^{2}_{e}  }    \right)$.
	
	Also, by Lemma \ref{lem:H-bound} (applied to $\alpha=\alpha_{L,0}$),
	\begin{align*}
	||G(\alpha_{L,0})[\alpha - \alpha_{L,0}]||_{e} \precsim ||\alpha - \alpha_{L,0}||_{w}.
	\end{align*}
\end{proof}

\begin{proof}[\textbf{Proof of Lemma \ref{lem:ALR-thetahat}}]
	By Lemma \ref{lem:charac-N}, $\hat{\alpha}_{L,n} - tu^{\ast}_{L,n} \in \mathcal{N}_{L,n}$ for all $t = O(l_{n} n^{-1/2})$. By the same calculations of Step 1 in the proof of lemma \ref{lem:QLR-A-rep} and definition of $\hat{\alpha}_{L,n}$, for any $t = O(l_{n} n^{-1/2})$.
	\begin{align*}
	0 \leq &  \left\{ (\varDelta(\hat{\alpha}_{L,n} + tu^{\ast}_{L,n}))^{T}H_{L}^{-1}(\varDelta(\hat{\alpha}_{L,n} + tu^{\ast}_{L,n}))   \right\} - \left\{ (\varDelta(\hat{\alpha}_{L,n}))^{T}H_{L}^{-1}(\varDelta(\hat{\alpha}_{L,n}))   \right\} + rem_{n} \\
	= & \left\{ t^{2} - 2 t (\varDelta(\hat{\alpha}_{L,n})^{T}H_{L}^{-1}(G(\alpha_{L,0})[u^{\ast}_{L,n}])  \right\} + rem_{n},
	\end{align*}
	with $rem_{n} = o_{\mathbf{P}}(n^{-1})$. Taking $t = \pm \sqrt{rem_{n}}$, it follows that
	\begin{align*}
	|(\varDelta(\hat{\alpha}_{L,n})^{T}H_{L}^{-1}(G(\alpha_{L,0})[u^{\ast}_{L,n}])| \precsim \sqrt{rem_{n}}.
	\end{align*}
	Since $\varDelta(\hat{\alpha}_{L,n})^{T}H_{L}^{-1}(G(\alpha_{L,0})[u^{\ast}_{L,n}]) = \langle u^{\ast}_{L,n} , \hat{\alpha}_{L,n} - \alpha_{L,0} \rangle_{w} + n^{-1} \sum_{i=1}^{n} (G(\alpha_{L,0})[u^{\ast}_{L,n}])^{T} H_{L}^{-1} g_{J}(Z_{i},\alpha_{L,0}) $, the desired result follows.
\end{proof}

\begin{proof}[\textbf{Proof of Lemma \ref{lem:anorm}}]
	Let $\zeta_{L,n} \equiv (G(\alpha_{L,0})[u^{\ast}_{n}])^{T}H_{L}^{-1} \{g_{J}(Z_{i},\alpha_{L,0})  - E[g_{J}(Z,\alpha_{L,0})] \} $. It is clear that $E[\zeta_{L,n}] = 0$ and $Var(\zeta_{L,n}) = 1$, so to show asymptotic normality it suffices to show that the Lyapounov condition holds.
	
	By Cauchy-Swarchz inequality and definition of $u^{\ast}_{n}$, for any $\varrho>0$,
	\begin{align*}
	E_{\mathbf{P}}\left[ \left| \zeta_{L,n} \right|^{2+\varrho}   \right] \leq & E_{\mathbf{P}}\left[ ||u^{\ast}_{n}||_{w}^{2+\varrho}  || g_{J}(Z,\alpha_{L,0})  - E[g_{J}(Z,\alpha_{L,0})]||_{e}^{2+\varrho}   \right]\\
	= & E_{\mathbf{P}}\left[ || g_{J}(Z,\alpha_{L,0})  - E[g_{J}(Z,\alpha_{L,0})]||_{e}^{2+\varrho}   \right] \\
	\precsim & E_{\mathbf{P}}\left[ || g_{J}(Z,\alpha_{L,0}) ||_{e}^{2+\varrho}   \right].
	\end{align*}
	By the calculations in the proof of Lemmas \ref{lem:gJ-bound} and \ref{lem:Lambda-charac}, $|| g_{J}(Z,\alpha_{L,0}) ||_{e} \leq \overline{\theta} + || h'_{L,0}||_{L^{\infty}(\mathbb{W},\mu)} + ||q^{J}(X)||_{e}$. Thus
	\begin{align*}
	E_{\mathbf{P}}\left[ \left| \zeta_{L,n} \right|^{2+\varrho}   \right] \precsim   (\overline{\theta} + || h'_{L,0}||_{L^{\infty}(\mathbb{W},\mu)})^{2+\varrho}  + b^{2+\varrho}_{2+\varrho,J}.
	\end{align*}	
	By Assumption \ref{ass:rates-LQA}, $ b^{2+\varrho}_{2+\varrho,J}/n = o(1)$ and $(\overline{\theta} + || h'_{L,0}||_{L^{\infty}(\mathbb{W},\mu)})^{2+\varrho}/n^{2+\varrho} = o(1)$ for some $\varrho> 0$. Thus, the Lyapounov condition holds.
\end{proof}

\end{document}